\renewcommand{\geq}{\geqslant}
\renewcommand{\leq}{\leqslant}
\renewcommand{\ngeq}{\ngeqslant}
\renewcommand{\nleq}{\nleqslant}
\newcommand{\sub}[1]{_{\textup{\tiny{#1}}}}
\newcommand{\converges}{\mathord{\downarrow}}
\newcommand{\diverges}{\mathord{\uparrow}}
\newcommand{\sd}{\bigtriangleup}
\newcommand{\sa}{\mathbin{\raisebox{2pt}{$\bigtriangledown$}}}
\renewcommand{\phi}{\varphi}
\renewcommand{\epsilon}{\varepsilon}
\newtheorem{theorem}{Theorem}[section]
\newtheorem{lemma}[theorem]{Lemma}
\newtheorem{corollary}[theorem]{Corollary}
\newtheorem{proposition}[theorem]{Proposition}
\numberwithin{equation}{section}
\theoremstyle{definition}
\newtheorem{definition}[theorem]{Definition}
\newtheorem{question}[theorem]{Question}
\newtheorem{observation}[theorem]{Observation}
\newcommand{\bd}{\mathbf{d}}
\newcommand{\be}{\mathbf{e}}
\newcommand{\gamd}{\gamma_{\mathbf{d}}}
\newcommand{\game}{\gamma_{\mathbf{e}}}
\newcommand{\cd}{\overline{\mathbf{d}}}
\newcommand{\ce}{\overline{\mathbf{e}}}
\newcommand{\urho}{\underline{\rho}}
\newcommand{\orho}{\overline{\rho}}
\newcommand{\restr}{\upharpoonright}
\newcommand{\uhr}{\upharpoonright}
\newcommand{\mcS}{\mathcal{S}}
\newcommand{\mcM}{\mathcal{M}}
\newcommand{\mcB}{\mathcal{B}}
\newcommand{\mcU}{\mathcal{U}}
\newcommand{\mcC}{\mathcal{C}}
\newcommand{\mcP}{\mathcal{P}}
\newcommand{\mcA}{\mathcal{A}}
\DeclareMathOperator{\cm}{c.m.\,}
\DeclareMathOperator{\aev}{a.e.\,}
\author[D. R. Hirschfeldt]{Denis R. Hirschfeldt}
\address{Department of Mathematics\\University of Chicago}
\email{drh@math.uchicago.edu}
\author[C. G. Jockusch, Jr.]{Carl G. Jockusch, Jr.}
\address{Department of Mathematics\\University of Illinois at
Urbana-Cham\-paign}
\email{cjockusc@gmail.com}
\author[P. E. Schupp]{Paul E. Schupp}
\address{Department of Mathematics\\University of Illinois at
Urbana-Cham\-paign}
\email{schupp@illinois.edu}
\thanks{Jockusch and Schupp gratefully acknowledge the support of the
  Simons Foundation Collaboration Grant for
  Math\-e\-ma\-ti\-cians. Hirschfeldt was partially supported by
  grants DMS-1600543 and DMS-1854279 from the National Science
  Foundation. We thank George Barmpalias, Doug Cenzer, Noam Greenberg,
  Aleko Kechris, Joe Miller, Chris Porter, Richard Shore, Anush
  Tserunyan, and Liang Yu for useful comments and responses to
  queries.}
\keywords{Algorithmic randomness and genericity, Asymptotic density,
Coarse computability, Turing degrees}
\subjclass[2020]{Primary 03D28; Secondary 03B30, 03D25, 03D32, 03F35, 05D10}
\title[The density metric, distances between
degrees, and reverse math] {Coarse computability, the density metric,
Hausdorff distances between Turing degrees, perfect trees, and
reverse mathematics}
\begin{document}

\begin{abstract}
For $A \subseteq \omega$, the \emph{coarse similarity class} of $A$,
denoted by $[A]$, is the set of all $B \subseteq \omega$ such that the
symmetric difference of $A$ and $B$ has asymptotic density $0$. There
is a natural metric $\delta$ on the space $\mathcal{S}$ of coarse
similarity classes defined by letting $\delta([A],[B])$ be the upper
density of the symmetric difference of $A$ and $B$. We study the
metric space of coarse similarity classes under this metric, and show
in particular that between any two distinct points in this space there
are continuum many geodesic paths. We also study subspaces of the form
$\{[A] : A \in \mathcal U\}$ where $\mathcal U$ is closed under Turing
equivalence, and show that there is a tight connection between
topological properties of such a space and computability-theoretic
properties of $\mathcal U$.

We then define a distance between Turing degrees based on Hausdorff
distance in the metric space $(\mathcal{S},\delta)$. We adapt a proof
of Monin to show that the Hausdorff distances between Turing degrees
that occur are exactly $0$, $1/2$, and $1$, and study which of these
values occur most frequently in the senses of Lebesgue measure and
Baire category. We define a degree $\mathbf{a}$ to be
\emph{attractive} if the class of all degrees at distance $1/2$ from
$\mathbf{a}$ has measure $1$, and \emph{dispersive} otherwise. In
particular, we study the distribution of attractive and dispersive
degrees. We also study some properties of the metric space of Turing
degrees under this Hausdorff distance, in particular the question of
which countable metric spaces are isometrically embeddable in it,
giving a graph-theoretic sufficient condition for embeddability.

Motivated by a couple of issues arising in the above work, we also
study the computability-theoretic and reverse-mathematical aspects of
a Ramsey-theoretic theorem due to Mycielski, which in particular
implies that there is a perfect set whose elements are mutually
$1$-random, as well as a perfect set whose elements are mutually
$1$-generic.

Finally, we study the completeness of $(\mathcal S,\delta)$ from the
perspectives of computability theory and reverse mathematics.
\end{abstract}

\maketitle

\clearpage

\tableofcontents

\section{Introduction}
\label{intro}

We call two sets $A, B \subseteq \omega$ \emph{coarsely similar} if
their symmetric difference $A \sd B$ has asymptotic density $0$, i.e.,
$\lim_n \frac{|(A \sd B) \cap [0,n)|}{n}=0$. This is an equivalence
relation on subsets of $\omega$ that arises naturally in studying
coarse computability (see e.g.\ \cite{JSsurv}), as well as in other
areas of computability theory, for example in the work of Greenberg,
Miller, Shen, and Westrick \cite{GMSW}, who showed that a real has
effective Hausdorff dimension $1$ if and only if it is coarsely
similar to a $1$-random real (i.e., one that is random in the sense of
Martin-L\"of). A set $A$ is called \emph{coarsely computable} if it is
coarsely similar to some computable set. Let $\mathcal{S}$ be the set
of equivalence classes of the above equivalence relation.  We consider
a natural metric $\delta$ on $\mathcal{S}$ given by $\delta([A], [B])
= \overline{\rho}(A \sd B)$, where $[A]$ is the equivalence class of
$A$, and $\overline{\rho}$ is upper density, and explore several ways
in which it can interact with Turing reducibility.

We first study the metric space $(\mathcal{S}, \delta)$.  This space
is non-separable and non-compact but is complete and contractible. We
show that any two distinct points in $\mathcal{S}$ are joined by
continuum many geodesic paths. We also study the topological
properties of subspaces of this metric space generated by collections
of sets that are closed under Turing equivalence. We establish close
connections between topological properties of these subspaces and
computability-theoretic properties of their generating collections of
sets.

The second main topic of this paper is the application of the metric
$\delta$ to the study of Turing degrees. Since $\delta$ is bounded,
the notion of Hausdorff distance between subsets of $\mathcal S$
yields a metric on the closed subsets of $(\mathcal{S}, \delta)$. We
define the \emph{closure} $\cd$ of a Turing degree $\mathbf{d}$ to
be the closure of $\{[A] : A \in \mathbf{d}\}$ in $(\mathcal{S},
\delta)$. We show that the Hausdorff distance between closures of
Turing degrees yields a metric $H$ on the set of Turing degrees
$\mathcal D$.  We show how to calculate Hausdorff distances between
closures of Turing degrees using a relativized version of the function
$\Gamma$ (see \cite{ACDJL}), which is based on a generalized version
of coarse computability. We adapt a proof of B. Monin \cite{M} to
show that the distances that occur are exactly $0$, $1/2$, and $1$,
and determine which distances between Turing degrees occur most
frequently in terms of Lebesgue measure and Baire category. We define
a degree $\mathbf{a}$ to be \emph{attractive} if the class of all
degrees at distance $1/2$ from $\mathbf{a}$ has measure $1$, and
\emph{dispersive} otherwise. In particular, we study the distribution
of attractive and dispersive degrees.

We also study which countable metric spaces are isometrically
embeddable in $(\mathcal D,H)$. If $\mathcal M$ is a countable metric
space with all distances equal to $0$, $1/2$, or $1$, let $G_{\mathcal
M}$ be the graph whose vertices are the points of $\mathcal M$, such
that two points are joined by an edge if and only if the distance
between them is $1$. We show that if $G_{\mathcal M}$ is a
comparability graph (i.e., there is a partial order $\prec$ on the set
of vertices of $G_{\mathcal M}$ such that there is an edge between
distinct vertices $x$ and $y$ if and only if they are
$\prec$-comparable), then $\mathcal M$ is isometrically embeddable in
$(\mathcal D, H)$. We also show that the complement of the graph
$G_{(\mathcal D,H)}$ (where pairs of degrees at distance $1/2$ are
joined by an edge) is a connected graph with diameter at least $3$ and
at most $4$.

The interplay between randomness and genericity (i.e., between being
typical in the sense of measure and being typical in the sense of
category) is a recurring theme in this paper. As we will see, having
distance $1/2$ in the Hausdorff metric is related to mutual
randomness, while having distance $1$ is related to relative
genericity. Thus, for instance, both our discussion of attractive and
dispersive degrees in Section \ref{freqsec} and that of isometric
embeddings into $(\mathcal D,H)$ in Section \ref{isosec} rely heavily
on randomness and genericity. For example, one of the results we
obtain in the former section is that if $A$ is weakly $2$-generic and
$B$ is $2$-random, then $B$ computes a set that is weakly $1$-generic
relative to $A$, which, as we will see, implies that $H(A,B)=1$.

At the end of Section \ref{propsec}, we discuss a Ramsey-theoretic
theorem due to Mycielski \cite{My}, which in particular implies that
there is a perfect set whose elements are mutually random (say in the
sense of Martin-L\"of randomness), as well as a perfect set whose
elements are mutually generic. This theorem has several interesting
computability-theoretic and reverse-mathematical aspects, which we
discuss in Section \ref{mycsec}. In particular, we show that there is
a $\emptyset'$-computable perfect tree such that the join of any
nonempty finite collection of pairwise distinct paths is $1$-random.

In Section \ref{compsec}, we discuss the computability-theoretic and
reverse-mathematical strength of the fact that $(\mathcal S,\delta)$
is complete. We finish with a section containing several open
questions.

After the first part of Section \ref{propsec}, we assume familiarity
with computability theory, as in \cite{Snew}, for instance. We will
also use basic concepts and results from the theory of algorithmic
randomness, and refer to \cite{DH} for details. This book also
includes sections on genericity (Section 2.24) and on interactions
between randomness and genericity (Section 8.21). Particularly useful
is van Lambalgen's Theorem, which implies that if $A$ and $B$ are
$1$-random and $A$ is $1$-random relative to $B$, then $B$ is
$1$-random relative to $A$. This fact is one of the reasons that we
work with $1$-randomness below, even though for some results, weaker
notions of algorithmic randomness might suffice. We will also use the
fact that the analog of van Lambalgen's Theorem for $1$-genericity in
place of $1$-randomness holds, as shown by Yu \cite{Yuvl}. In Sections
\ref{mycsec} and \ref{compsec} we will also assume some knowledge of
reverse mathematics, as in \cite{Simpson}, for instance.

We need to note several basic definitions. First we recall our
definition of when two sets are ``essentially the same''.  Since we
are working with subsets of $\omega$, we use classical asymptotic
density from number theory.  For $A \subseteq \omega$ and $n \in
\omega$, let $A \upharpoonright n = \{m < n : m \in A\}$ (which we
also identify with the binary string $\sigma$ of length $n$ such that
$\sigma(m)=1$ if and only if $m \in A$).

\begin{definition}
If $A \subseteq \omega$, then, for $ n \geq 1$, the \emph{density of
$A$ at $n$} is
\[
\rho_n (A) = \frac{ |A \upharpoonright n | }{n}.
\]
The \emph{asymptotic density}, $\rho(A)$, of $A$ is $\lim_n
\rho_n (A)$ if this limit exists.

While the limit for density does not exist in general, the
\emph{upper density} $ \overline{\rho}(A) = \limsup_n \rho_n (A)$ and
the \emph{lower density} $ \underline{\rho}(A) = \liminf_n \rho_n(A)$
always exist.
\end{definition}

Although easy, the following lemma is basic. Here we write $\neg A$
for the complement of $A$ since we will use overlines for closures.

\begin{lemma} \label{lowerupper}
If $A \subseteq \omega$ then $ \underline{\rho} (A) = 1 -
\overline{\rho}( \neg A) $.
\end{lemma}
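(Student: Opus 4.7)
The plan is to reduce the identity to the elementary fact that $\liminf_n (1-x_n) = 1 - \limsup_n x_n$ for any bounded sequence $(x_n)$ of reals, applied to the sequence $x_n = \rho_n(\neg A)$.

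First I would observe that for every $n \geq 1$, the sets $A \uhr n$ and $(\neg A) \uhr n$ partition $\{0, 1, \ldots, n-1\}$, so $|A \uhr n| + |(\neg A) \uhr n| = n$. Dividing by $n$ yields the pointwise identity
\[
\rho_n(A) + \rho_n(\neg A) = 1,
\]
equivalently $\rho_n(A) = 1 - \rho_n(\neg A)$.

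Next I would take $\liminf_n$ on both sides. Using the standard fact that $\liminf_n (c - y_n) = c - \limsup_n y_n$ for any constant $c$ and bounded sequence $(y_n)$, this gives
\[
\underline{\rho}(A) = \liminf_n \rho_n(A) = \liminf_n \bigl(1 - \rho_n(\neg A)\bigr) = 1 - \limsup_n \rho_n(\neg A) = 1 - \overline{\rho}(\neg A),
\]
which is the desired conclusion. There is no real obstacle here; the only point worth a brief remark is justifying $\liminf_n(1 - y_n) = 1 - \limsup_n y_n$, which follows immediately from the definitions of $\liminf$ and $\limsup$ together with the order-reversing property of $y \mapsto 1 - y$.
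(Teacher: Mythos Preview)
Your proof is correct and takes essentially the same approach as the paper: both observe the pointwise identity $\rho_n(A) = 1 - \rho_n(\neg A)$ and then take the $\liminf$ of both sides, using $\liminf_n(1-y_n) = 1 - \limsup_n y_n$. You simply spell out a couple of details the paper leaves implicit.
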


\begin{proof}
Note  that $\rho_n(A) = 1 - \rho_n(\neg A)$ for all $n \geq 1$.  The
lemma follows by taking the lim inf of both sides of this equation.
\end{proof}

We identify sets and their characteristic functions.  The symmetric
difference $ A \sd B = \{ n: A(n) \ne B(n) \} $ is the subset of
$\omega$ where $A$ and $B$ disagree. There does not seem to be a
standard notation for the complement of $ A \sd B $, which is $ \{ n:
A(n) = B(n)\} $, the \emph{symmetric agreement} of $A$ and $B$.  We
find it useful to use $ A \sa B $ to denote $\{n : A(n) = B(n)\}$.

\begin{definition}
\label{def:COARSESIM}
If $A, B \subseteq \omega$, then $A$ and $B$ are \emph{coarsely
similar}, written $A \thicksim\sub{c} B$, if the density of the
symmetric difference of $A$ and $B$ is $0$, that is, $ \rho(A \sd B) =
0$.  Equivalently, $\rho(A \sa B) = 1$.  Given $A$, any set $B$ such
that $B \thicksim\sub{c} A$ is called a \emph{coarse description} of
$A$.
\end{definition}

It is easy to check that coarse similarity is indeed an equivalence
relation on $\mcP(\omega)$.  We write $[A]$ for the coarse similarity
class of the set $A$.  Let $\mcS$ denote the set of all coarse
similarity classes.  There is a natural pseudo-metric on
$\mcP(\omega)$.

\begin{definition}
\label{def:METRIC}
If $A, B \subseteq \omega$,  let $\delta(A,B) = \overline{\rho}(A \sd
B)$.
\end{definition}

A Venn diagram argument shows that $\delta$ satisfies the triangle
inequality and is therefore a pseudo-metric on subsets of $\omega$.
Since $\delta(A,B) = 0$ exactly when $A$ and $B$ are coarsely similar,
$\delta$ is actually a metric on the space $\mcS$ of coarse similarity
classes, and we now work in the metric space $(\mathcal{S}, \delta)$.

Recall that we apply Lebesgue measure in computability theory by
regarding the set $A \subseteq \omega$ as corresponding to the binary
expansion defined by its characteristic function. For any set $A$, the
Strong Law of Large Numbers implies that $\mcM = \{ B : \delta(B,A) =
1/2 \}$ has measure $1$ in $\mcP(\omega)$. It follows that the
complement of $\mcM$ has measure $0$, which implies that any coarse
similarity class has measure $0$.

This metric has probably been rediscovered many times. It has recently
been used on subsets of $\mathbb{Z}$ to study cellular automata on the
line as dynamical systems. See \cite{BFK} and \cite{HM}. The automata
theory literature ascribes this metric to Besicovitch and cites his
well-known book \cite{Bes} on almost periodic functions as a
reference. At least in his book, however, Besicovitch does not
consider arbitrary subsets of $\omega$ or any metric on them. We will
refer to this metric as the \emph{density metric}.

Of course, $\mcP(\omega)$ can be made into an abelian group of exponent
$2$ by defining $A + B = A \sd B$.  The operation $+$ is well-defined
as a map (also denoted $+$) from $\mcS^2$ to $\mcS$ and it is clear
that $+$ is continuous in the metric on $\mcS$. Therefore
$\mathcal{S}$ has the structure of a topological group by defining
$[A] + [B] = [A \sd B]$. The important property of this topological
group is the following.

\begin{observation}
\label{isomobs}
Let $C \subseteq \omega$. Define the translation map $\tau_C: \mcS \to
\mcS$ by $\tau_C([A]) = [A] +[C]$. Since the operation $+$ is $\sd$,
\[
(A  + C)  +  (B + C) = A + B
\]
is the statement that
\[
(A \sd C) \sd (B \sd C) = A \sd B,
\]
which holds. Thus $\tau_C$ is an isometry, and $\mathcal{S}$ acts on itself 
by isometries.  
\end{observation}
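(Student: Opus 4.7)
The plan is to verify three things in turn: that $\tau_C$ is a well-defined map $\mcS\to\mcS$; that it preserves the metric $\delta$; and that the family $\{\tau_C\}_{C\subseteq\omega}$ satisfies the axioms of a (left) group action on $\mcS$. All three will follow from the fact that $(\mcP(\omega),\sd)$ is an abelian group of exponent $2$ whose operation descends to an abelian group operation on $\mcS$, together with a single line of computation about $\overline{\rho}$.

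First I would note that well-definedness has already been observed in the paragraph preceding the statement: the map $+:\mcS^2\to\mcS$ is continuous, hence in particular well-defined, so $[A]\mapsto[A]+[C]$ is a genuine function. Next, to see that $\tau_C$ is an isometry, I would compute directly:
\[
\delta\bigl(\tau_C([A]),\tau_C([B])\bigr)=\overline{\rho}\bigl((A\sd C)\sd(B\sd C)\bigr)=\overline{\rho}(A\sd B)=\delta([A],[B]),
\]
where the middle equality is the associative/commutative/involutive identity in the group $(\mcP(\omega),\sd)$ that is highlighted in the statement. No properties of $\overline{\rho}$ beyond its definition are needed; the point is simply that the \emph{sets} $(A\sd C)\sd(B\sd C)$ and $A\sd B$ are literally equal, so their upper densities coincide.

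Finally, for the action it suffices to check that $\tau_\emptyset$ is the identity on $\mcS$ and that $\tau_{C_1}\circ\tau_{C_2}=\tau_{C_1\sd C_2}$. Both reduce to identities in the group $(\mcP(\omega),\sd)$: $A\sd\emptyset=A$ gives the first, and $(A\sd C_2)\sd C_1=A\sd(C_1\sd C_2)$ gives the second. Since each $\tau_C$ is an isometry, this indeed exhibits $\mcS$ acting on itself by isometries.

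Honestly, there is no real obstacle here; the ``difficulty'' is only to recognize that the proof is purely algebraic, depending on the identity $(A\sd C)\sd(B\sd C)=A\sd B$ in the group of subsets of $\omega$ under symmetric difference, and that $\overline{\rho}$ enters only at the last step as a function that respects equality of sets. The only conceptual point worth flagging explicitly is that because coarse similarity is precisely the kernel of the $\delta$-pseudo-metric, the isometry property on the quotient $\mcS$ is the same statement as the pseudo-isometry property on $\mcP(\omega)$, and neither requires any separate verification of compatibility with $\sim\sub{c}$.
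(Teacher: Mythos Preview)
Your proposal is correct and takes essentially the same approach as the paper: the key identity $(A\sd C)\sd(B\sd C)=A\sd B$ in the group $(\mcP(\omega),\sd)$ immediately gives $\delta(\tau_C([A]),\tau_C([B]))=\delta([A],[B])$. You supply a bit more detail on well-definedness and the action axioms than the paper bothers to record, but the substance is identical.
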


Considering this topological group and its actions is of interest to
computability theory. For instance, Kuyper and Miller \cite{KM}
studied set stabilizers for these actions for the classes of
$1$-random and $1$-generic sets.

\section{Properties of the metric space $(\mcS, \delta)$}
\label{propsec}

In this section, we explore the properties of the metric space $(\mcS,
\delta)$. No knowledge of computability theory is needed to read most
of this section, although our goal later will be to explore
connections between $(\mcS, \delta)$ and computability theory.
  
We first show that the space $\mcS$ is non-separable and non-compact
in a very strong sense.

The following sequence of intervals is basic to studying coarse
computability.

\begin{definition}
\label{idef}
Let $I_n = [n!, (n+1)!)$, and let $\mathcal I(A) = \bigcup_{n \in A}
I_n$.
\end{definition}

\begin{theorem}
\label{compthm}
\begin{enumerate}
    
\item If $\mcA = \{ [A_i] \}$ is any countable subset of $\mcS$, then
there is a class $[B]$ such that $\delta([B], [A_i]) = 1$ for all $i$.  

\item There is a subset $\mcU$ of $\mcS$ of size continuum such that
the members of $\mcU$ are pairwise at distance $1$ from each other.

\end{enumerate}
\end{theorem}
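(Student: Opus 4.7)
The plan is to exploit the key property of the intervals $I_n = [n!, (n+1)!)$: since $|I_n|/(n+1)! = n/(n+1) \to 1$, prescribing $B \cap I_n$ for a single large $n$ forces $\rho_{(n+1)!}(B)$ to be close to the relative density of the prescription inside $I_n$. Both parts are then handled by controlling $B$ one interval $I_n$ at a time.

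For part (1), I would fix a partition of $\omega$ into pairwise disjoint infinite sets $S_0, S_1, \ldots$ (for instance, via a bijection $\omega \to \omega \times \omega$), and define $B$ by setting $B \cap I_n = (\neg A_i) \cap I_n$ for the unique $i$ with $n \in S_i$. For each fixed $i$ and each $n \in S_i$ we have $I_n \subseteq B \sd A_i$, so
\[
\rho_{(n+1)!}(B \sd A_i) \geq \frac{|I_n|}{(n+1)!} = 1 - \frac{1}{n+1}.
\]
Since $S_i$ is infinite, letting $n \to \infty$ along $S_i$ gives $\overline{\rho}(B \sd A_i) = 1$, i.e., $\delta([B], [A_i]) = 1$.

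For part (2), I would reuse the same partition $\{S_k\}_{k \in \omega}$, and for each $f \in 2^\omega$ define $B_f$ by
\[
B_f \cap I_n = \begin{cases} \emptyset & \text{if } f(k) = 0, \\ I_n & \text{if } f(k) = 1, \end{cases}
\]
where $k$ is the unique index with $n \in S_k$. If $f \neq g$ and $k$ is any position at which they differ, then for every $n \in S_k$ we have $B_f \sd B_g \supseteq I_n$, so by the same estimate $\overline{\rho}(B_f \sd B_g) = 1$ and hence $\delta([B_f], [B_g]) = 1$. Setting $\mcU = \{[B_f] : f \in 2^\omega\}$ therefore yields a family of pairwise distance-$1$ classes; distinct $f, g$ give distinct classes (their distance is $1 \neq 0$), so $|\mcU| = 2^{\aleph_0}$.

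The construction is purely combinatorial and I anticipate no real obstacle; the only observation required is that $|I_n|$ dominates $n!$ to the extent that controlling $B$ on a single $I_n$ overrides everything on $[0, n!)$.
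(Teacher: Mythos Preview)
Your proof is correct. Part (1) is essentially identical to the paper's argument: both partition $\omega$ into infinitely many infinite pieces (the paper via a pairing function, you via the $S_i$) and copy $\neg A_i$ onto the intervals $I_n$ indexed by the $i$th piece, obtaining the same $1 - 1/(n+1)$ estimate.

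For part (2), you take a genuinely different but equally elementary route. The paper uses an almost disjoint family $\mathcal C \subseteq \mathcal P(\omega)$ of size continuum (e.g., the paths through a binary tree whose nodes are coded by naturals) and sets $\mcU = \{[\mathcal I(A)] : A \in \mathcal C\}$; for distinct $A, A' \in \mathcal C$ the intersection $A \cap A'$ is finite, so $A \sd A'$ is infinite, and one gets infinitely many $I_n \subseteq \mathcal I(A) \sd \mathcal I(A')$. You instead reuse the partition $\{S_k\}$ from part (1) and index directly by $f \in 2^\omega$, collapsing each $S_k$ to a single bit. This is a bit cleaner---it recycles the part (1) setup and avoids introducing almost disjoint families---while the paper's version has the mild advantage of invoking a standard off-the-shelf object. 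Either way the core mechanism is the same: a continuum-sized family of index sets with pairwise infinite symmetric differences, pushed through $\mathcal I$.
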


\begin{proof}
(1) Let $\langle i,m \rangle$ denote the pairing function from $\omega
\times \omega \to \omega$.  Define $B$ as follows.  If $n = \langle
i,m \rangle$ then $B$ agrees with the complement $\neg A_i$ of
$A_i$ on $I_n$.  So $\rho_{(n+1)! - 1}(B \sa A_i) \leq
1/(n+1)$. Thus $\urho(B \sa A_i) = 0 = 1 - \orho(B \sd A_i)$, and
$\delta([B],[A_i]) = 1$.

(2) Let $\mathcal{C}$ be a collection of continuum many infinite
subsets of $\omega$ such that any two distinct sets in $\mathcal{C}$
have finite intersection.  For example, identify the nodes of the
infinite perfect binary tree $T$ with natural numbers, and let
$\mathcal{C}$ be the set of paths through $T$. Then let $\mathcal{U}
= \{[\mathcal I(A)] : A \in \mathcal{C}\}$. Argue as in the proof of
(1) that any two distinct elements of $\mathcal{U}$ are at distance
$1$ from each other.  Clearly $\mathcal{U}$ has size continuum.
\end{proof}

The first part of the following corollary was shown by Blanchard,
Formenti, and K\r{u}rka \cite{BFK}.

\begin{corollary} 
The space $(\mcS,\delta)$ is not compact. Indeed, open covers need
not have countable subcovers, so $(\mcS,\delta)$ is not Lindel\"{o}f.
\end{corollary}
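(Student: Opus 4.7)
The plan is to derive both the failure of compactness and the (stronger) failure of the Lindel\"of property from Theorem~\ref{compthm}(2), using that every compact space is Lindel\"of; so it suffices to exhibit an open cover of $\mcS$ with no countable subcover.

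I would take the uncountable set $\mcU \subseteq \mcS$ of cardinality continuum produced by Theorem~\ref{compthm}(2), whose members are pairwise at distance exactly $1$, and consider the open cover $\mathcal{O} = \{B([A], 1/2) : [A] \in \mcS\}$ consisting of all open balls of radius $1/2$. This clearly covers $\mcS$. The key observation is that a single ball $B([A], 1/2)$ cannot contain two distinct members $[X_1], [X_2]$ of $\mcU$, for otherwise the triangle inequality would give $\delta([X_1], [X_2]) < 1/2 + 1/2 = 1$, contradicting the defining property of $\mcU$. Consequently, the map sending each $[X] \in \mcU$ to a ball of any subcover that contains it is injective, so every subcover of $\mathcal{O}$ has cardinality at least $|\mcU|$, and hence is uncountable. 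This yields the non-Lindel\"of conclusion, and a fortiori non-compactness.

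There is essentially no obstacle here, since all of the combinatorial content has already been packaged into Theorem~\ref{compthm}(2); the only point requiring any care is to choose the ball radius strictly less than half the separation of $\mcU$ (here $1/2$ versus $1$) so that the triangle-inequality estimate is strict.
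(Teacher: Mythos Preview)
Your proof is correct, but it uses a different part of Theorem~\ref{compthm} than the paper does. The paper's argument invokes part~(1): given any cover by open balls of radius less than~$1$, take a countable subcollection $\{\mathcal B_i\}$; part~(1) produces a point $[P]$ at distance~$1$ from every center, hence outside every $\mathcal B_i$. You instead invoke part~(2), exhibiting a single explicit cover (by balls of radius $1/2$) and using the continuum-sized $1$-separated set $\mcU$ to force any subcover to be uncountable via a pigeonhole/injectivity argument.

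Both routes are short and clean. The paper's version has the mild advantage of showing that \emph{every} cover by sufficiently small balls lacks a countable subcover, not just one particular cover; your version has the advantage of giving a quantitative lower bound (continuum) on the size of any subcover of your specific $\mathcal O$. Either suffices for the corollary as stated.
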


\begin{proof}
Consider any cover of $\mcS$ by open balls, all of whose radii are
less than $1$.  Let $\mcC = \{\mathcal B_i\}$ be any countable subset
of the cover.  By the previous theorem, there is a point $[P]$ at
distance $1$ from all the centers of the $\mathcal B_i$ and thus $[P]
\notin \bigcup_{\mathcal B_i \in \mcC} \mathcal B_i$.
\end{proof}

We now show that $(\mcS, \delta)$ has several ``good'' properties,
namely it is complete, contractible to a point, and geodesic in a
strong sense.  Completeness and pathwise connectedness (using
$\mathbb{Z}$) were shown by completely different arguments by
Blanchard, Formenti, and K\r{u}rka \cite{BFK}, in a paper on automata
theory (see also \cite{HM}).  We give ``computability-theoretic''
proofs.

\begin{definition}
\label{jddef}
Let $J_k$ be the interval $[2^k - 1, 2^{k+1} - 1)$.  For any set $C$,
let $d_k(C)$ be the density of $C$ on $J_k$, that is, $d_k(C) =
\frac{|C \cap J_k|}{2^k}$.
\end{definition}

The following lemma, Lemma 5.10 of \cite{HJMS}, relates
$\overline{\rho}(C)$ to  $\overline{d}(C) = \limsup_k d_k(C)$. We
include the proof for the sake of self-containment.

\begin{lemma}[Hirschfeldt, Jockusch, McNicholl, and Schupp
\cite{HJMS}]
\label{factor2} 
For every set $C$, 
\[
\frac{\overline{d}(C)}{2} \leq \overline{\rho}(C) \leq 2
\overline{d}(C).
\]
\end{lemma}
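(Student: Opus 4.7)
The plan is to compare $|C \cap [0,n)|$ with the block sums $\sum_j |C \cap J_j|$, exploiting the geometric structure: the intervals $J_0, J_1, \ldots, J_{k-1}$ partition $[0, 2^k-1)$, and because $|J_k| = 2^k$ is the dominant term in this partition, the final block $J_k$ contributes roughly half of any initial segment of length $\approx 2^{k+1}$. This asymmetry is exactly what produces the factor of $2$ in both directions.

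For the upper bound $\overline{\rho}(C) \leq 2\overline{d}(C)$, I would fix $\epsilon > 0$, pick $k_0$ so that $d_j(C) < \overline{d}(C) + \epsilon$ for all $j \geq k_0$, and then, given $n$, locate the unique $k$ with $2^k - 1 \leq n < 2^{k+1} - 1$. Since $[0,n) \subseteq \bigcup_{j \leq k} J_j$, one estimates
\[
|C \cap [0,n)| \leq \sum_{j < k_0} 2^j + (\overline{d}(C) + \epsilon)\sum_{j=k_0}^{k} 2^j \leq 2^{k_0} + (\overline{d}(C) + \epsilon)(2^{k+1} - 1).
\]
Dividing by $n \geq 2^k$ and letting $n$ (hence $k$) tend to infinity kills the $2^{k_0}/n$ term and yields $\overline{\rho}(C) \leq 2(\overline{d}(C) + \epsilon)$; letting $\epsilon \to 0$ gives the bound.

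For the lower bound $\overline{d}(C)/2 \leq \overline{\rho}(C)$, I would go in the reverse direction: choose a subsequence $k_i$ with $d_{k_i}(C) \to \overline{d}(C)$, set $n_i = 2^{k_i + 1} - 1$, and observe that $J_{k_i} \subseteq [0, n_i)$, so
\[
\rho_{n_i}(C) \geq \frac{|C \cap J_{k_i}|}{n_i} = \frac{d_{k_i}(C) \cdot 2^{k_i}}{2^{k_i+1} - 1}.
\]
Taking $i \to \infty$ gives $\overline{\rho}(C) \geq \overline{d}(C)/2$.

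Neither direction is deep; the main thing to watch is bookkeeping in the upper bound, namely checking that the fixed contribution from the first $k_0$ blocks is genuinely negligible after division by $n$, so that the $\epsilon$-argument closes cleanly. The lower bound is essentially a one-line computation once the right subsequence of $n$'s is chosen.
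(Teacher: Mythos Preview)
Your proposal is correct and follows essentially the same approach as the paper: both directions compare $\rho_n(C)$ with the block densities $d_j(C)$ via the partition $[0,2^{k+1}-1) = \bigcup_{j \leq k} J_j$, and both handle the finitely many ``bad'' initial blocks in the upper bound by an $\epsilon$-argument (the paper packages this as removing a finite set $F$ from $C$, you split the sum at $k_0$; these are equivalent). One bookkeeping slip: from $2^k - 1 \leq n < 2^{k+1}-1$ you only get $n \geq 2^k - 1$, not $n \geq 2^k$, but since $(2^{k+1}-1)/(2^k-1) \to 2$ this changes nothing.
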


\begin{proof} 
For all $k$,
\[
d_k(C) = \frac{|C \cap J_k|}{2^k} \leq \frac{|C \upharpoonright
2^{k+1}|}{2^k} = 2 \rho_{2^{k+1}} (C).
\] 
Dividing both sides of this inequality by $2$ and then taking the lim
sup of both sides yields the fact that $\frac{\overline{d}(C)}{2} \leq
\overline{\rho}(C)$.

To prove that $\overline{\rho}(C) \leq 2 \overline{d}(C)$, assume that
$k - 1 \in J_n$, so $2^n \leq k < 2^{n+1}$.  Then
\begin{multline*}
\rho_k(C) = \frac{|C \upharpoonright k|}{k} \leq 
\frac{|C \upharpoonright (2^{n+1} - 1)|}{2^n} = 
\frac{\sum_{0 \leq i \leq n}|C \upharpoonright J_i|}{2^n} \\ =
\frac{\sum_{0 \leq i \leq n} 2^i d_i(C)}{2^n} < 2 \max_{i \leq n}
d_i(C).
\end{multline*}

Let $\epsilon > 0$ be given.  Then $d_i(C) < \overline{d}(C) +
\epsilon$ for all sufficiently large $i$.  Hence there is a finite set
$F$ such that $d_i(C \setminus F) < \overline{d}(C \setminus F) +
\epsilon$ for \emph{all} $i$.  Then, by the above inequality applied
to $C\setminus F$, we have $\rho_k(C\setminus F) < 2
(\overline{d}(C\setminus F) + \epsilon)$ for all $k$, so
$\overline{\rho}(C\setminus F) \leq 2\overline{d}(C\setminus F)$.  As
$\overline{\rho}$ and $\overline{d}$ are invariant under finite
changes of their arguments and $\epsilon > 0$ is arbitrary, it follows
that $\overline{\rho}(C) \leq 2 \overline{d}(C)$.
\end{proof}

\begin{theorem}[Blanchard, Formenti, and K\r{u}rka \cite{BFK}]
\label{complthm}
The space $(\mathcal{S},\delta)$ is complete.
\end{theorem}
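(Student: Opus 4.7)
The plan is to construct a limit point from a Cauchy sequence by a ``patchwork'' construction on the dyadic blocks $J_k$ of Definition \ref{jddef}, using Lemma \ref{factor2} as the bridge between the metric $\delta$ (defined via $\overline{\rho}$) and the more tractable block densities $d_k$.

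First, since a Cauchy sequence converges as soon as some subsequence does, I would pass to a subsequence and assume without loss of generality that $\delta([A_i],[A_{i+1}]) < 2^{-i-3}$ for all $i$. By Lemma \ref{factor2}, this gives $\overline{d}(A_i \sd A_{i+1}) \leq 2\overline{\rho}(A_i \sd A_{i+1}) < 2^{-i-2}$. Then I would recursively choose a strictly increasing sequence of thresholds $k_0 < k_1 < k_2 < \cdots$ such that
\[
d_k(A_i \sd A_{i+1}) < 2^{-i-1} \quad \text{for all } k \geq k_i.
\]
This is possible because $\limsup_k d_k(A_i \sd A_{i+1}) < 2^{-i-2}$ forces $d_k(A_i \sd A_{i+1})$ to be below $2^{-i-1}$ from some point on.

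Next, I would define the candidate limit $B$ by pasting: for each $k$, pick the unique $i$ with $k_i \leq k < k_{i+1}$ (and let $i=0$ if $k < k_0$), and let $B$ agree with $A_i$ on the block $J_k$. To verify $[A_i] \to [B]$, fix $i$ and consider any $k \geq k_i$, say with $k_j \leq k < k_{j+1}$ where $j \geq i$. On $J_k$, we have $B \sd A_i = A_j \sd A_i$, and a telescoping inclusion gives
\[
A_j \sd A_i \subseteq \bigcup_{\ell=i}^{j-1} (A_\ell \sd A_{\ell+1}).
\]
Since $k \geq k_\ell$ for each $\ell$ in the range, summing the block-density bounds yields
\[
d_k(B \sd A_i) \leq \sum_{\ell=i}^{j-1} d_k(A_\ell \sd A_{\ell+1}) < \sum_{\ell=i}^{\infty} 2^{-\ell-1} = 2^{-i}.
\]
Hence $\overline{d}(B \sd A_i) \leq 2^{-i}$, and Lemma \ref{factor2} converts this to $\overline{\rho}(B \sd A_i) \leq 2^{-i+1}$, so $\delta([A_i],[B]) \to 0$.

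The main obstacle is bookkeeping rather than conceptual: one has to choose the $k_i$ fast enough that the telescoping estimate over $[i,j-1]$ is uniformly small for every $j \geq i$, while simultaneously absorbing the factor of $2$ that arises in both directions of Lemma \ref{factor2}. Choosing the initial subsequence with an extra $2^{-3}$ of slack, as above, is what makes the two applications of the lemma leave a summable geometric series rather than a divergent one. After that, completeness is immediate, since convergence of a subsequence of a Cauchy sequence forces convergence of the whole sequence to the same limit.
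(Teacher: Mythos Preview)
Your proof is correct and follows essentially the same approach as the paper: pass to a rapidly Cauchy subsequence, use Lemma~\ref{factor2} to move between $\overline{\rho}$ and $\overline{d}$, choose thresholds $k_i$, define the limit by patching on the blocks $J_k$, and verify convergence via Lemma~\ref{factor2} again. The only minor difference is that the paper chooses the $k_n$ so as to control $d_i(C_m \sd C_n)$ for \emph{all} $m<n$ simultaneously (avoiding any telescoping), whereas you control only consecutive differences and recover the general bound by summing a geometric series; both variants are routine.
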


\begin{proof} 
Let $[C_0], [C_1], \dots$ be a Cauchy sequence of similarity classes
with respect to the density metric $\delta$.  By passing to a
subsequence, we can assume that if $m<n$ then $\delta(C_m, C_n) <
2^{-m-1}$, so that by the above lemma, $\overline{d}(C_m \sd C_n) <
2^{-m}$. Then there is a sequence $0=k_0<k_1<\cdots$ such that for all
$m<n$, if $i \geq k_n$ then $d_i(C_m \sd C_n) < 2^{-m}$. Let $C$ be
the unique set such that $C$ and $C_n$ agree on the interval $J_i$ for
each $i \in [k_n,k_{n+1})$.

Fix $m$.  For every $n > m$ and $i \in [k_n,k_{n+1})$, we have $d_i
(C_m \sd C) = d_i (C_m \sd C_n)<2^{-m}$, since $C$ and $C_n$ agree
on the interval $J_i$.  Hence $\overline{d}(C_m \sd C) = \limsup_i
d_i (C_m \sd C) \leq 2^{-m}$.  So by the above lemma we have
\[
\delta(C_m, C) = \overline{\rho}(C_m \sd C) \leq 2 \overline{d}(C_m
\sd C) \leq 2^{-m + 1}.
\]
Hence $\lim_m \delta(C_m, C) = 0$ and the sequence $\{[C_m]\}$
converges to $[C]$.
\end{proof}

The completeness of $(\mathcal S,\delta)$ raises the question of how
difficult it is to obtain the limit of a Cauchy sequence from the
sequence, in the senses of computability theory and reverse
mathematics. As this issue is not directly related to what we will
pursue in the next few sections, we leave it to Section \ref{compsec}
below.

The first part of the following theorem was shown by Blanchard,
Formenti, and K\r{u}rka \cite{BFK}.

\begin{theorem}
\label{conthm}
The space $(\mcS,\delta)$ is pathwise connected. Indeed, it is
contractible.
\end{theorem}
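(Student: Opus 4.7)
The plan is to prove contractibility directly, which immediately yields pathwise connectedness. The strategy is to build a one-parameter family of ``cut-off'' sets $\{E_t\}_{t \in [0,1]}$ with $E_0 = \emptyset$, $E_1 = \omega$, $E_s \subseteq E_t$ for $s \leq t$, and such that $t \mapsto [E_t]$ is Lipschitz continuous in $(\mcS,\delta)$. The contraction will then be defined by $H([A],t) = [A \setminus E_t]$, which sends $[A]$ to itself at $t=0$ and to $[\emptyset]$ at $t=1$.

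First I would construct the family using the intervals $J_k = [2^k - 1, 2^{k+1}-1)$ of length $2^k$ from Definition \ref{jddef}: declare $E_t \cap J_k$ to be the first $\lfloor t \cdot 2^k \rfloor$ elements of $J_k$. Monotonicity in $t$ is immediate, and $E_0 = \emptyset$ and $E_1 = \omega$ by inspection. For continuity, observe that for $s < t$,
\[
|(E_t \setminus E_s) \cap J_k| = \lfloor t \cdot 2^k \rfloor - \lfloor s \cdot 2^k \rfloor \leq (t-s) \cdot 2^k + 1,
\]
so $d_k(E_t \sd E_s) \leq (t-s) + 2^{-k}$. Taking the lim sup in $k$ gives $\overline{d}(E_t \sd E_s) \leq t - s$, and Lemma \ref{factor2} then yields $\delta([E_s],[E_t]) \leq 2(t-s)$.

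Next I would check that $H([A],t) = [A \setminus E_t]$ is a well-defined continuous map on $\mcS \times [0,1]$. Well-definedness follows from $(A \setminus E_t) \sd (B \setminus E_t) \subseteq A \sd B$. For joint continuity, by the triangle inequality and the identities $(A \setminus E_s) \sd (A \setminus E_t) = A \cap (E_s \sd E_t)$ and $(A \setminus E_t) \sd (B \setminus E_t) = (A \sd B) \setminus E_t$,
\[
\delta(H([A],s), H([B],t)) \leq \overline{\rho}(E_s \sd E_t) + \overline{\rho}(A \sd B) \leq 2|s-t| + \delta([A],[B]),
\]
so $H$ is jointly Lipschitz. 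Since $H([A],0) = [A]$ and $H([A],1) = [\emptyset]$, this exhibits $\mcS$ as contractible to $[\emptyset]$, and pathwise connectedness follows by taking slices.

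There is no serious obstacle here; the only subtlety is the factor of $2$ lost when converting bounds on $\overline{d}$ to bounds on $\overline{\rho}$ via Lemma \ref{factor2}, but this merely inflates the Lipschitz constant and does not affect continuity. The construction is ``computability-theoretic'' in the sense requested by the paragraph preceding the theorem, since the intervals $J_k$ and the filtering by $E_t$ are the natural discrete analogues used throughout coarse computability.
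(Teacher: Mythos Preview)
Your argument is correct and follows the same overall strategy as the paper: build a monotone one-parameter family of ``cut-off'' sets, show it varies continuously in $\delta$, and contract by intersecting (or, equivalently, subtracting). The implementation differs in two small but noteworthy ways. First, the paper uses the partition $L_i$ with $|L_i|=i$ rather than the dyadic $J_k$, and computes directly that $\rho(C_r)=r$; this gives the exact equality $\delta([C_s],[C_t])=|s-t|$, i.e.\ an \emph{isometric} embedding of $[0,1]$, whereas your route through Lemma~\ref{factor2} yields only Lipschitz constant $2$. For contractibility this is immaterial, but the paper's sharper constant is what makes the subsequent geodesic argument (Theorem~\ref{geodesic}) and the Hilbert-cube embedding (Observation~\ref{hilbert}) go through. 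Second, the paper writes the contraction as $[A]\mapsto [A\cap C_r]$ while you write $[A]\mapsto [A\setminus E_t]$; these are the same up to replacing $C_r$ by $\omega\setminus E_{1-r}$, and your continuity check via the identities $(A\setminus E_s)\sd(A\setminus E_t)=A\cap(E_s\sd E_t)$ and $(A\setminus E_t)\sd(B\setminus E_t)=(A\sd B)\setminus E_t$ is exactly parallel to the paper's inclusion $A_r\sd B_s\subseteq (A\sd B)\cup(C_r\sd C_s)$.
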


\begin{proof}
First consider how we would make a path from $[\emptyset]$ to
$[\omega]$.

For every real $r \in [0, 1]$ in the unit interval, we will define a
set $C_r$.  We will have $C_s \subseteq C_r$ if $s \leq r$, and $C_r$
will have density $r$.  From this property it follows that if $0 \leq
s \leq r \leq 1$ then
\[
\delta(C_r, C_s) = \orho(C_r \sd C_s) = \orho(C_r \setminus C_s) =
\rho(C_r) - \rho(C_s) = r- s.
\]
Define $p:[0,1] \to \mcS$ by $p(r) = [C_r]$.  Then $p$ preserves the
metric and hence is continuous. Furthermore, we will have $C_0 =
\emptyset$ and $C_1 = \omega$.

Here is the construction of $C_r$.  Partition $\omega$ into
consecutive intervals $L_1, L_2, \dots$ with $|L_i| = i$ for all $i$.
Given $r \in [0,1]$, let $C_r$ be such that $C_r \cap L_i = [m_i , m_i
+ \lfloor ri \rfloor)$, where $m_i$ is the least element of
$L_i$. That is, $C_r \cap L_i$ is the longest initial segment of $L_i$
whose density within $L_i$ does not exceed $r$.
     
We now verify that $C_r$ has the properties claimed above.  It is
obvious that $C_s \subseteq C_r$ if $s \leq r$.  It remains to be
shown that $\rho(C_r) = r$ for all $r \in [0, 1]$.  Fix $r$ and let
$d_i$ be the density of $C_r$ on $L_i$, that is, $d_i = |C_r \cap L_i|
/ i$.  Since $|C_r \cap L_i| = \lfloor ri \rfloor$, we have $r -
\frac{1}{i} \leq d_i \leq r$.  It follows that $\lim_i d_i = r$.

For every $i$, we have $|C_r \upharpoonright m_i| = \sum_{j < i} |C_r
\cap L_j| = \sum_{j < i} j d_j$.  Hence $\rho_{m_i}(C_r) = \sum_{j <
i} j d_j / m_i$ is the weighted average of $d_1, d_2, \dots, d_{i -
1}$, where $d_j$ has weight $j$.  It follows that $\lim_i
\rho_{m_i} (C_r) = r $. 
              
If $n \in L_i$ then
\[
\frac{{m_i} \rho_{m_i} (C_r)}{m_i + i} \quad \leq \quad \rho_{n+1}
(C_r) \quad \leq \quad \frac{m_i \rho_{m_i} (C_r) + i}{m_i}.
\]              
Now let $n$ approach infinity, so that $i$ approaches infinity also.
Since $\lim_i \rho_{m_ i} (C_r) = r$ and $\lim_i i / m_i = 0$, we
have $\rho(C_r) = \lim_n \rho_{n+1}(C_r) = r$.  This completes the
proof that there is a path from $[\emptyset]$ to $[\omega]$.
  
For arbitrary $A \subseteq \omega$ define $A_r = A \cap C_r$ where
$C_r$ is as above.  To obtain a path from $[\emptyset]$ to $[A]$,
define $p_A:[0,1] \to \mcS$ by $p_A(r) = A_r$.  Then $p_A(0) =
[\emptyset]$ and $p_A(1) = [A]$.  To show that $p_A$ is continuous, it
suffices to show that $\delta( p(s),p(r) ) \leq |r - s|$ for all $s,r
\in [0,1]$.  But this holds since
\begin{multline*}
\delta( p(s),p(r) ) = \orho( (A \cap C_s) \sd  (A \cap C_r) ) =
\orho( A \cap (C_s \sd C_r) ) \\ \leq \orho( C_s \sd C_r) =
\delta(C_s, C_r) = |r - s|.
\end{multline*}

For a path between two arbitrary classes $[A]$ and $[B]$, use the
reverse of the path from $[\emptyset]$ to $[A]$ followed by the path
from $[\emptyset]$ to $[B]$.

To check contractibility, define $\Phi: \mcS \times [0,1] \to \mcS$ by
$\Phi([A], r) = [A_r]$.  By definition, for all $[A]$, we have
$\Phi([A], 1) = [A]$ and $\Phi([A], 0) = [\emptyset]$.  To verify that
$\Phi$ is continuous check that
\[ 
A_r \sd B_r \subseteq (A \sd B) \cup (C_r \sd C_s).
\]
\end{proof}

The path just constructed from $[\emptyset]$ to an arbitrary point
$[A]$ is called the \emph{uniform path} between $[\emptyset]$ and
$[A]$.  In general, these uniform paths need not be geodesics,
although the path between $[\emptyset]$ and $[\omega]$ is a geodesic.
It turns out that $(\mcS, \delta)$ is actually a geodesic metric
space.

\begin{theorem} 
\label{geodesic}
The space $(\mcS, \delta)$ is a geodesic metric space.
\end{theorem}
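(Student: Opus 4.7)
The plan is to invoke the translation isometries from Observation \ref{isomobs} to reduce to the case of finding, for an arbitrary $B \subseteq \omega$, a geodesic from $[\emptyset]$ to $[B]$. Given $[A], [B] \in \mcS$, set $D = A \sd B$ so that $\delta([A], [B]) = \orho(D)$; an isometric embedding $\gamma \colon [0, \orho(D)] \to \mcS$ with $\gamma(0) = [\emptyset]$ and $\gamma(\orho(D)) = [D]$ yields a geodesic $\tau_A \circ \gamma$ from $[A]$ to $[B]$. The case $\orho(D) = 0$ is handled by a constant path, so I focus on a fixed $B$ with $d := \orho(B) > 0$ (hence $B$ is infinite).

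To construct $\gamma$, I employ the partition $\omega = L_1 \sqcup L_2 \sqcup \cdots$ with $|L_i| = i$ from the proof of Theorem \ref{conthm}. Let $k_i = |B \cap L_i|$ and enumerate $B \cap L_i$ in increasing order as $b_{i,1} < \cdots < b_{i, k_i}$. For each $r \in [0, d]$, define
\[
B_r = \bigcup_{i \geq 1} \{b_{i,j} : 1 \leq j \leq \lfloor (r/d) k_i \rfloor\},
\]
and set $\gamma(r) = [B_r]$. Then $B_0 = \emptyset$, $B_d = B$, and $B_r \subseteq B_s$ whenever $r \leq s$, so $\delta(\gamma(r), \gamma(s)) = \orho(B_s \setminus B_r)$ for $r \leq s$. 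The remaining task is to show this equals $s - r$.

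The key observation is that $|(B_s \setminus B_r) \cap L_i| = \lfloor (s/d) k_i \rfloor - \lfloor (r/d) k_i \rfloor = ((s-r)/d)\, k_i + O(1)$, with the implied constant at most $1$. Summing over $L_1, \ldots, L_{i-1}$, bounding the residual contribution inside $L_i$ by $|L_i| = i$, and dividing by $n \in L_i$, I obtain
\[
\rho_n(B_s \setminus B_r) = \frac{s - r}{d}\, \rho_n(B) + O(i/n).
\]
Since $n \geq m_i = i(i-1)/2$, the error term is $O(1/i)$ and tends to $0$. Taking $\limsup$ gives $\orho(B_s \setminus B_r) \leq s - r$; choosing a sequence $n_k \to \infty$ along which $\rho_{n_k}(B) \to d$ and applying the same estimate yields $\orho(B_s \setminus B_r) \geq s - r$. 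The main technical point is controlling the cumulative rounding error $O(i)$ against the denominator $n \geq i(i-1)/2$, which is exactly where the quadratic growth of $m_i$ makes the normalization succeed.
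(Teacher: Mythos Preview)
Your proof is correct and follows the same overall strategy as the paper: reduce via the translation isometry $\tau_A$ to building a geodesic from $[\emptyset]$ to an arbitrary class, then construct a nested family of subsets by taking initial fractions of the target set within blocks, and verify that upper densities of differences scale linearly.

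The one minor difference is in how the blocks are chosen. The paper partitions the target set $A$ itself into consecutive pieces $L_n^A$ with $|L_n^A| = n$ and takes the first $\lfloor sn \rfloor$ elements of each; it then argues separately that $\limsup_n \rho_{k_n^A}(\cdot) = \orho(\cdot)$ for the relevant sets, where $k_n^A = \max L_n^A + 1$. You instead keep the fixed partition $\{L_i\}$ of $\omega$ from Theorem~\ref{conthm} and take a fraction of $B \cap L_i$ in each block. This buys you the uniform pointwise estimate $\rho_n(B_s \setminus B_r) = \tfrac{s-r}{d}\,\rho_n(B) + O(1/i)$ valid for \emph{every} $n$, from which both the upper bound (via $\limsup$) and the lower bound (via a subsequence realizing $\orho(B)$) drop out immediately, without the separate subsequence lemma the paper needs. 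The two arguments are equivalent in difficulty; yours is slightly more streamlined in the verification step.
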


\begin{proof}
As before, we first want to construct a geodesic path between
$[\emptyset]$ and an arbitrary point $[A]$.  The idea is to relativize
the previous construction by working within the set $A$. We suppose
that $[A] \ne [\emptyset]$.  Let $L_0^A,L_1^A, \cdots$ partition $A$
into successive disjoint intervals with $|L_n^A| = n$ and $\max(L_n^A)
< \min(L_{n+1}^A)$ for all $n$.

Define $f:[0,1] \to \mcS$ as follows:
\[ 
f(s) = \left[\bigcup_n F_n^{s,A}\right],
\]
where $F_n^{s,A}$ consists of the first $\lfloor sn \rfloor$ many
elements of $L_n^A$.

It is clear that $f(0) = [\emptyset]$ and $f(1) = [A]$. We want to
show that the length of the constructed path is indeed
$\delta([\emptyset],[A])$. To do that, we show that if $0 \leq s \leq
t \leq 1$ then
\begin{equation}
\label{geoeq} 
\delta([f(s)],[f(t)]) = (t-s)\delta([\emptyset],[A]).
\end{equation}

Let $k_n^A = \max{L_n^A} + 1$. The idea is to show first that
\[ 
\frac{|f(t) \upharpoonright k_n^A| - |f(s) \upharpoonright k_n^A|}{k_n^A}
\approx \frac{t|A\upharpoonright k_n^A| - s|A\upharpoonright k_n^A|}{k_n^A},
\]
and the error in this approximation approaches $0$ as $n \to \infty$.

By the definition of $k_n^A$, we have $A \upharpoonright k_n^A =
\bigcup_{i \leq n} (A \upharpoonright L_i^A)$.  Since the $L_i^A$ are
pairwise disjoint we have
\[ 
|A\upharpoonright k_n^A|  = \sum_{i \leq n} |A\upharpoonright L_i^A| =
\sum_{i \leq n} i = \frac{n(n+1)}{2}. 
\]
Then
\[ 
|f(t) \upharpoonright k_n^A|  = \sum_{i \leq n} |f(t) \upharpoonright
L_i^A| = \sum_{i \leq n} \lfloor it \rfloor. 
\]
So
\[ 
\sum_{i \leq n} ( it - 1) \leq  |f(t)\upharpoonright k_n^A| \leq \sum_{i
\leq n}  it,
\]
yielding
\[ 
t\left[ \frac{n(n+1)}{2}\right] - n - 1 \leq  |f(t)\upharpoonright
k_n^A| \leq t\left [\frac{n(n+1)}{2}\right].
\]
Since $k_n^A = \max L_n^A \geq \frac{n(n+1)}{2}$, if we divide by $k_n^A$
we obtain 
\[
\lim_{n \to \infty} \left( \frac{t|A \upharpoonright k_n^A|}{k_n^A} -
\frac{|f(t) \upharpoonright k_n^A|}{k_n^A} \right) = 0, 
\]
and the same holds with $t$ replaced by $s$.

To complete the proof of Equation (\ref{geoeq}), we need to show that
\begin{enumerate} 

\item $\limsup_n  \rho_{k_n^A}(f(t) \sd f(s))= \orho(f(t) \sd f(s))$ and

\item $\limsup_n  \rho_{k_n^A}(A) = \orho(A)$.

\end{enumerate}

Clearly, $\leq$ holds in both lines.  We prove $\geq$ for part (2),
and the proof for part (1) is essentially the same.  Let $x_n$ be such
that $k_n^A \leq x_n \leq k_{n+1}^A$. By definition,
\[
\rho_{x_n}(A) = \frac{|A \upharpoonright x_n|}{x_n} \leq \frac{|A
  \upharpoonright x_n| + (n+1)}{x_n}.
\]
Since $x_n \geq k_n^A \geq \frac{n(n+1)}{2}$ we have $ \rho_{x_n}(A)
\leq \rho_{k_n^A}(A) + \text{o}(n)$ and the equation follows.

We have established that there is a geodesic from $[\emptyset]$ to an
arbitrary point $[C]$. Given arbitrary $[A]$ and $[B]$ there is a
geodesic from $[\emptyset]$ to $[A + B]$.  Applying the isometry
$\tau_A$ from Observation \ref{isomobs} to this geodesic yields a
geodesic from $[\emptyset] + [A] = [A]$ to $[A + B] + [A] = [B]$.
\end{proof}

\begin{observation}
\label{hilbert}
It is interesting to note that $(\mathcal S,\delta)$ is a rather rich
space. Consider the sets $C_r$ in the proof of Theorem
\ref{conthm}. The argument above with $A=\omega$ shows that the
subspace $\{C_r : r \in [0,1]\}$ of $(\mathcal S,\delta)$ is
homeomorphic to $[0,1]$. Let $R_k = \{m : \ 2^k \mid m \,\mathbin{\&}\, 2^{(k +
1)} \nmid m\}$. Let $i_0^k<i_1^k<\cdots$ be the elements of
$R_k$. For sets $X_0,X_1,\ldots$, let $\bigoplus_k^{\mathcal R} X_k =
\{i^k_n : n \in X_k\}$. It is not difficult to adapt this argument to
show that the subspace $\{\bigoplus^{\mathcal R}_i C_{r_i} :
r_0,r_1,\ldots \in [0,1]\}$ of $(\mathcal S,\delta)$ is homeomorphic
to the Hilbert cube $[0,1]^\omega$. Thus, for instance, every second
countable normal space embeds into $(\mathcal S,\delta)$.
\end{observation}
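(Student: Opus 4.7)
The plan is to define the map $\Psi \colon [0,1]^\omega \to \mcS$ by $\Psi(r_0, r_1, \ldots) = [\bigoplus^{\mathcal R}_i C_{r_i}]$ and show that it is an isometric embedding of the Hilbert cube (with a standard metric) onto the subspace described in the statement. The final sentence will then follow from Urysohn's metrization theorem, since every second countable normal space is regular Hausdorff and hence homeomorphic to a subspace of $[0,1]^\omega$.

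The first step is to compute how the order-preserving bijection $\beta_k \colon \omega \to R_k$ given by $n \mapsto i_n^k$ interacts with density. Because $\beta_k$ carries initial segments of $\omega$ onto initial segments of $R_k$, and $R_k$ itself has density $2^{-(k+1)}$, one checks that $\rho(\beta_k(S)) = \rho(S)\cdot 2^{-(k+1)}$ whenever $\rho(S)$ exists. Writing $X_{\bar r} = \bigsqcup_k \beta_k(C_{r_k})$, the disjointness of the $R_k$ gives $X_{\bar r} \sd X_{\bar s} = \bigsqcup_k \beta_k(C_{r_k} \sd C_{s_k})$, and since Theorem \ref{conthm} shows $\rho(C_r) = r$, we have $\rho(C_{r_k} \sd C_{s_k}) = |r_k - s_k|$; hence each piece $\beta_k(C_{r_k} \sd C_{s_k})$ has density $|r_k - s_k|\cdot 2^{-(k+1)}$ in $\omega$.

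The step I expect to be the main technical point is exchanging the density limit with the infinite sum over $k$. For any cutoff $K$, the partial sum $\sum_{k < K} |\beta_k(C_{r_k} \sd C_{s_k}) \cap [0,N)|/N$ converges as $N \to \infty$ to $\sum_{k < K} 2^{-(k+1)}|r_k - s_k|$, while the tail is dominated uniformly in $N$ by $\sum_{k \geq K} |R_k \cap [0,N)|/N$, which is at most $2^{-K} + o(1)$ because $\rho(R_k) = 2^{-(k+1)}$. Letting $N \to \infty$ and then $K \to \infty$ shows that the honest density (not just the upper density) of $X_{\bar r} \sd X_{\bar s}$ exists, and equals
\[
\delta(\Psi(\bar r), \Psi(\bar s)) = \sum_{k=0}^{\infty} 2^{-(k+1)}|r_k - s_k| =: d_H(\bar r, \bar s).
\]

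Now $d_H$ is a standard metric on $[0,1]^\omega$ inducing the product topology, so $\Psi$ is an isometric embedding of the Hilbert cube whose image is exactly the subspace in the statement, giving the asserted homeomorphism. For the final claim, Urysohn's metrization theorem yields that every second countable normal (hence regular Hausdorff) space embeds into $[0,1]^\omega$, and composing with $\Psi$ gives an embedding into $(\mcS, \delta)$. The main obstacle throughout is the density-sum interchange; the geometric decay $\rho(R_k) = 2^{-(k+1)}$ is exactly what makes the tail uniformly negligible and forces the upper density to coincide with the genuine limit, which is essential for interpreting the sum as $\delta$ rather than merely an upper bound on it.
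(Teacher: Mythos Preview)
Your proof is correct and fleshes out exactly the adaptation the paper has in mind: the paper only remarks that the isometry $r \mapsto [C_r]$ from $[0,1]$ into $(\mathcal S,\delta)$ can be extended coordinatewise using the $R_k$-decomposition, and your computation that $\delta(\Psi(\bar r),\Psi(\bar s)) = \sum_k 2^{-(k+1)}|r_k - s_k|$ via the tail estimate is precisely that adaptation. The invocation of Urysohn for the final sentence is also what is intended.
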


\begin{theorem}
\label{contgeothm}
Let $[A]$ and $[B]$ be distinct points in $\mathcal{S}$. Then there
are continuum many geodesics from $[A]$ to $[B]$.
\end{theorem}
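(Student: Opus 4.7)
The plan is to reduce to the case $[A] = [\emptyset]$ using the isometry $\tau_A$ from Observation \ref{isomobs}, which sends $[\emptyset]$ to $[A]$ and $[A \sd B]$ to $[B]$. Since isometries map geodesics to geodesics, it suffices to construct continuum many geodesics from $[\emptyset]$ to $[C]$ whenever $[C] \neq [\emptyset]$; taking $C = A \sd B$ at the end then yields the desired family from $[A]$ to $[B]$.

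Fix such a $C$ with $d = \overline{\rho}(C) > 0$, and reuse the partition $C = \bigcup_n L_n^C$ with $|L_n^C| = n$ from the proof of Theorem \ref{geodesic}. Inspecting that proof, the verification that the path is a geodesic only uses the sizes $|F_n^{s,C}| = \lfloor sn \rfloor$ and not which $\lfloor sn \rfloor$ elements are chosen, so any nested family of subsets of $L_n^C$ of the prescribed sizes yields a geodesic from $[\emptyset]$ to $[C]$. I will split each $L_n^C$ into its first half $H_n^0$ of size $\lceil n/2 \rceil$ and its second half $H_n^1$ of size $\lfloor n/2 \rfloor$, and for each $X \in 2^\omega$ define a geodesic $g_X$ by $g_X(s) = [\bigcup_n F_n^{s,C,X}]$, where $F_n^{s,C,X}$ is the set of the first $\lfloor sn \rfloor$ elements of $L_n^C$ under the ordering that lists $H_n^{X(n)}$ before $H_n^{1-X(n)}$. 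At $s = 1/2$ the block $F_n^{1/2,C,X}$ equals $H_n^{X(n)}$ when $n$ is even and differs from it by one element when $n$ is odd, so whenever $X(n) \neq X'(n)$ the blockwise contribution to the symmetric difference of representatives is $L_n^C$ minus at most one point. Hence, identifying $g_X(1/2)$ with a representative,
\[
g_X(1/2) \sd g_{X'}(1/2) \supseteq \left(\bigcup_{n:\, X(n) \neq X'(n)} L_n^C\right) \setminus E,
\]
where $E$ has at most one point per block, so $\overline{\rho}(E) = 0$.

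The remaining step is to pick a continuum-sized family of $X$'s that produces positive upper density on the right-hand side for every pair. I would choose an increasing sequence $N_0 < N_1 < \cdots$ with $\rho_{N_k}(C) \geq d/2$ that grows fast enough in terms of $d$ so that the thresholds $K_k$, defined as the largest $n$ with $L_n^C \subseteq [0, N_k]$, satisfy $K_{k-1}^2 \leq K_k^2 / 2$. For each $r \in 2^\omega$ set $X_r(n) = r(k)$ for $n \in (K_{k-1}, K_k]$. If $r \neq r'$ and $k_0$ is their least index of disagreement, then $X_r$ and $X_{r'}$ disagree throughout $(K_{k_0-1}, K_{k_0}]$, and $\bigcup_{n \in (K_{k_0-1}, K_{k_0}]} L_n^C$ lies in $[0, N_{k_0}]$ with cardinality at least $(K_{k_0}^2 - K_{k_0-1}^2)/2 \geq K_{k_0}^2/4$; a short density calculation shows this is comparable to $d N_{k_0}$. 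This gives a uniform positive lower bound on $\rho_{N_{k_0}}(g_{X_r}(1/2) \sd g_{X_{r'}}(1/2))$, hence positive upper density, so the midpoints are pairwise distinct and $\{g_{X_r}\}_{r \in 2^\omega}$ consists of continuum many distinct geodesics.

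The hard part will be the density argument in the last paragraph: the indexing must be arranged so that any single-bit disagreement between $r$ and $r'$ forces a whole geometric block of $L_n^C$'s into the symmetric difference of the midpoints, which yields positive upper density uniformly over all pairs regardless of $d$. The reduction via $\tau_A$ and the freedom to vary the ordering inside each $L_n^C$ both follow directly from Observation \ref{isomobs} and the proof of Theorem \ref{geodesic}.
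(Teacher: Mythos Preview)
Your reduction to $[\emptyset]$ via $\tau_A$ and your observation that the geodesic argument in Theorem \ref{geodesic} depends only on the sizes $|F_n^{s,C}|$ (so any nested choice of $\lfloor sn\rfloor$ elements from $L_n^C$ still yields a geodesic) are both correct and give a clean route to constructing many geodesics. The gap is in the last step. You compute a positive lower bound for $\rho_{N_{k_0}}\bigl(g_{X_r}(1/2)\sd g_{X_{r'}}(1/2)\bigr)$ at the \emph{single} index $N_{k_0}$ corresponding to the least $k$ where $r,r'$ differ, and then write ``hence positive upper density''. But one large value of $\rho_{N_{k_0}}$ says nothing about $\limsup_n \rho_n$. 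In fact, if $r$ and $r'$ differ at only finitely many $k$, then $X_r$ and $X_{r'}$ differ on only finitely many blocks $(K_{k-1},K_k]$, so $g_{X_r}(1/2)\sd g_{X_{r'}}(1/2)$ is contained in a finite union of the finite sets $L_n^C$ and is therefore finite. Thus $[g_{X_r}(1/2)]=[g_{X_{r'}}(1/2)]$ in that case, and your family $\{g_{X_r}:r\in 2^\omega\}$ collapses on each $=^*$-class of $r$'s.

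The fix is straightforward: restrict $r$ to a continuum-sized family in which any two distinct elements differ at infinitely many coordinates (for instance, pass to $2^\omega/{=^*}$, which still has size continuum, or parametrize by an almost-disjoint family). Then your density estimate $\rho_{N_k}\gtrsim d/4$ holds at infinitely many $N_k$, which does give $\overline{\rho}>0$ and hence distinct midpoints. For comparison, the paper's proof works directly with $A,B$ rather than reducing to $[\emptyset]$: it enumerates $A\sd B$ as $d_0,d_1,\ldots$, and for each $X\subseteq\omega$ of density $1/2$ defines a midpoint $F(X)$ that copies $A$ on $\{d_i:i\in X\}$ and $B$ on $\{d_i:i\notin X\}$. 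The paper then takes care to build a continuum family of such $X$'s with $\underline{\rho}(X\sd X')>0$ for every pair (using the sets $C_r\oplus\neg C_r$ from Theorem \ref{conthm}), which is exactly the step you need to add.
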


\begin{proof}
It is enough to show that
\[
M = \left\{[Y] : \delta([A], [Y]) = \delta([Y], [B]) = \frac{1}{2}
\delta([A], [B])\right\}
\]
is uncountable, because for each $[Y] \in M$ there is a geodesic path
from $[A]$ to $[Y]$ and a geodesic path from $[Y]$ to $B$, by Theorem
\ref{geodesic}. Joining these two paths together yields a geodesic
path from $[A]$ to $[B]$ with midpoint $[Y]$, and obviously paths with
distinct midpoints are distinct.

Since $[A] \neq [B]$, the symmetric difference $A \sd B$ is infinite,
so let $d_0, d_1, \dots$ list the elements of $A \sd B$ in increasing
order.  We now define a function $F : 2^\omega \to 2^\omega$, which
will have the property that $F(X) \in M$ whenever $\rho(X) = 1/2$.
The idea is that $F(X)$ copies the common value of $A$ and $B$ at
points where $A$ and $B$ agree, and at other points $d_i$, it copies
$A$ if $i \in X$ and otherwise copies $B$. Thus we define
\[
F(X)(n)=
\begin{cases}
A(n) & \text{if } A(n) = B(n) \text{ or } (n = d_i \,\mathbin{\&}\, i \in
X)\\ B(n) & \text{if } n = d_i \,\mathbin{\&}\, i \notin X.
\end{cases}
\]

\begin{lemma}
If $\rho(X) = \frac{1}{2}$, then $[F(X)] \in M$.
\end{lemma}

\begin{proof}
Recall that we denote the complement of $X$ by $\neg X$. Suppose that
$d_k < n \leq d_{k+1}$. By the definition of $F$,
\[
|(A \sd F(X)) \upharpoonright n|  =  |\neg X \upharpoonright
k|.
\]
Dividing both sides by $n$ yields
\[
\frac{|(A \sd F(X)) \upharpoonright n|}{n}  =  \frac{|\neg X
  \upharpoonright k|} {k}\cdot \frac{k}{n}.
\]
Then, by the definition of $\rho$, it follows that
\[
\rho_n (A \sd F(X)) = \rho_k (\neg X) \cdot \rho_n (A \sd B).
\]
Taking the lim sup of both sides as $n$ goes to infinity, and hence
also $k$ goes to infinity, we obtain
\[
\overline{\rho}(A \sd F(X)) = \frac{1}{2} \overline{\rho}(A \sd
B).
\]
Hence, by the definition of $\delta$,
\[\delta([A], [F(X)]) = \frac{1}{2} \delta([A],[B]).
\]
A similar argument shows that $\delta([B], [F(X)]) = \frac{1}{2} \delta([A]
, [B])$, completing the proof of the lemma.
\end{proof}

\begin{lemma}   
If $\underline{\rho}(X_0 \sd X_1) > 0$, then $[F(X_0] \neq [F(X_1)]$.
\end{lemma}

\begin{proof} 
One can argue as in the proof of the previous theorem that if $d_k < n
\leq d_{k+1}$ then
\[
\rho_n (F(X_0) \sd F(X_1)) = \rho_k (X_0 \sd X_1) \cdot \rho_n (A \sd
B).
\]
Choose a real $\epsilon > 0$ and a number $n_0$ such that $(\forall n
\geq n_0)[ \rho_n (A \sd B) \geq \epsilon]$.  Then for all $n \geq
n_0$ we have
\[
\rho_n(F(X) \sd F(X_1)) \geq \epsilon \cdot \rho_n(A \sd B).
\]
Taking the lim sup of both sides we obtain
\[
\delta([F(X_0)], [F(X_1)]) \geq \epsilon \delta([A], [B]) > 0.
\]
It follows that $[F(X_0)] \neq [F(X_1)]$.
\end{proof}

To complete the proof of the theorem, it suffices to show that there
is a family $\mathcal C \subseteq 2^\omega$ of size continuum such that
every element of $\mathcal{C}$ has density $1/2$, and the symmetric
difference of any two distinct elements of $\mathcal C$ has positive
lower density, since then by the lemmas $\{[F(X)] : X \in
\mathcal{C}\}$ is a subset of $M$ of size continuum.

Let $C_r$ be as in the proof of Theorem \ref{conthm}, let $X_r = C_r
\oplus \neg C_r$, and let $\mathcal C = \{X_r : r \in [0,1]\}$. Then
$\mathcal C$ has size continuum, and every element $X_r$ of $\mathcal
C$ has density $1/2$, since for each $n$, exactly one of $2n$ and
$2n+1$ is in $X_r$. Let $r \neq s$. As shown in the proof of Theorem
\ref{conthm}, $\rho(C_r)=r$ and $\rho(C_s)=s$, so
$\underline{\rho}(C_r \sd C_s)>0$. Furthermore, for each $n$, we have
$\rho_{2n}(X_r \sd X_s) = \rho_n (C_r \sd C_s)$, so
$\underline{\rho}(X_r \sd X_s)>0$.
\end{proof}

The properties of the family $\mathcal C$ in the proof of Theorem
\ref{contgeothm} suggest the question of whether we can build a family
of size continuum such that each element has density $1/2$, and the
symmetric difference of any two distinct elements also has density
$1/2$. Such a family would behave as we expect pairwise mutually
random sets to behave, and indeed it suffices to build a family of
size continuum of pairwise mutually $1$-random sets. (Of course,
$1$-randomness is considerably more than what is needed
here. Computable randomness would suffice, as would even the fairly
weak notion of Church stochasticity (see \cite[Definition
7.4.1]{DH}).)

The existence of such a family of $1$-randoms is a natural question,
but it appears not to have been directly addressed in the literature,
except in an unpublished manuscript of Miller and Yu
\cite{MilYu}. They gave a direct construction of such a family, but
noted [personal communication] that its existence also follows from a
general Ramsey-theoretic theorem due to Jan Mycielski \cite[Theorem
  1]{My}. (A simpler version of this theorem, which is sufficient for
this purpose, is cited as Theorem 10.3.15 in \cite{DH}.)  We are
grateful to Alekos Kechris for bringing \cite{My} to our attention.
We also thank Anush Tserunyan very much for formulating the result in
a form that is much easier for us to apply and showing us how it can
be proved using extensions of some exercises in Kechris's book
\cite{K}. (This occurred before we were even aware of the possibility
of applying \cite{My} here.) We will give Tserunyan's formulation and
sketch her proof in this section, and will discuss a different proof
in Section \ref{mycsec}.

For any set $A$, let $(A)^n$ denote the set of all ordered $n$-tuples
of distinct elements of $A$, and let $A^n$ denote the set of all
ordered $n$-tuples of elements of $A$.  We consider the usual Lebesgue
measure on $[0, 1]^n$, where $[0, 1]$ is the unit interval.

\begin{theorem}[Mycielski \cite{My}]
\label{Ramsey}  
For any sequence of relations $\{R_k\}_{k \in \omega}$ with $R_k
\subseteq [0, 1]^{n_k}$ such that $R_k$ is of measure $1$ for all $k$,
there is a homeomorphic copy $C \subseteq [0, 1]$ of the Cantor space
$2^\omega$ such that $(C)^{n_k} \subseteq R_k$ for all $k \in \omega$.
\end{theorem}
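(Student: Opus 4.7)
The plan is to build a Cantor scheme of nonempty closed subintervals $(I_s)_{s\in 2^{<\omega}}$ of $[0,1]$, with $I_{s0}, I_{s1}$ disjoint closed subintervals of $I_s$ and $\operatorname{diam}(I_s)\to 0$, so that $C:=\bigcap_m\bigcup_{s\in 2^m}I_s$ is automatically a homeomorphic copy of $2^\omega$ in $[0,1]$. Any $n_k$-tuple of distinct points of $C$ lies in some tuple of distinct intervals $(I_{s_1},\dots,I_{s_{n_k}})$ at a common level $m$, so letting $N_k=[0,1]^{n_k}\setminus R_k$ (a null set), it suffices to arrange that for every $k$ and every tuple of distinct $s_1,\dots,s_{n_k}\in 2^{<\omega}$, $(\prod_i P_{s_i})\cap N_k=\emptyset$, where $P_s=C\cap I_s$. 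For each $k,j$, choose by outer regularity an open set $O_{k,j}\supseteq N_k\cup\Delta_{n_k}$ of Lebesgue measure $<2^{-j}$, where $\Delta_{n_k}=\{(x_1,\dots,x_{n_k}):x_i=x_j\text{ for some }i\neq j\}$ is the closed null fat diagonal. Since $N_k\subseteq\bigcap_j O_{k,j}$ and the relevant closed products are compact, it is enough to arrange, for each such task, that some finite-stage product of refinements is disjoint from some $O_{k,j}$.

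The key geometric ingredient is a sub-box lemma: given closed intervals $J_1,\dots,J_n\subseteq[0,1]$ of positive length and an open set $O\supseteq\Delta_n$ with $\lambda(O)<\prod_i\lambda(J_i)$ (where $\lambda$ denotes Lebesgue measure), there exist pairwise disjoint closed subintervals $J'_i\subseteq J_i$ of arbitrarily small prescribed diameter with $\prod_i J'_i\cap O=\emptyset$. Indeed, $\lambda(O\cap\prod_i J_i)\leq\lambda(O)<\prod_i\lambda(J_i)$, so there is a point $(x_1,\dots,x_n)\in\prod_i J_i\setminus O$; the inclusion $\Delta_n\subseteq O$ forces the $x_i$'s to be pairwise distinct; and openness of the complement of $O$ supplies a product neighborhood of this point disjoint from $O$, inside which one carves out the desired subintervals.

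The construction then proceeds by a stage-by-stage refinement driven by bookkeeping. Enumerate all tasks $(k,(t_1,\dots,t_{n_k}))$ with the $t_i$ distinct elements of $2^{<\omega}$ and handle them one at a time. To handle $(k,(t_1,\dots,t_{n_k}))$ with $t_i\in 2^{m_0}$, wait for a stage $m+1\geq m_0$ at which all relevant parent intervals are still comfortably large, then iterate through all distinct refinements $(u_1,\dots,u_{n_k})\in(2^{m+1})^{n_k}$ with $u_i\supseteq t_i$, applying the sub-box lemma to each in turn (with $O=O_{k,j}$ for $j$ chosen large enough that $2^{-j}$ is below the current $\prod_i\lambda(I_{u_i})$) so that after the iteration $\prod_i I_{u_i}\cap O_{k,j}=\emptyset$ for every such refinement; complete the stage by picking the remaining level-$(m+1)$ intervals as disjoint splits of their parents shrunk to diameter $<2^{-(m+1)}$. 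Because $\prod_i I_{u_i}\cap O_{k,j}=\emptyset$ is inherited by any further refinement, all previously handled tasks stay handled, and the resulting $C$ satisfies $(C)^{n_k}\subseteq R_k$ for every $k$. The main obstacle I anticipate is the bookkeeping: scheduling tasks and choosing the $j$'s so that the measure hypothesis of the sub-box lemma always holds at the moment of application, while simultaneously forcing the interval diameters to $0$.
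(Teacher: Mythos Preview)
Your proposal is correct and follows the classical direct construction of Mycielski's theorem, but it takes a genuinely different route from the paper's proof of this statement. The paper's proof sketch (due to Tserunyan) is indirect: it passes to the \emph{density topology} on $[0,1]^n$, observes that measure-$0$ sets are exactly the meager sets in that topology, notes that the density topology is perfect and Choquet, and then invokes the category-theoretic Ramsey theorem for such spaces from Kechris's book. This is conceptually elegant and short, but it presupposes familiarity with the density topology and with the abstract Choquet-space machinery. Your approach, by contrast, is entirely elementary and self-contained: the sub-box lemma is the only analytic input, and the rest is finitary bookkeeping on a Cantor scheme. Your argument is also close in spirit to the effectivized proof the paper gives later in Section~\ref{mycsec} (along the lines of Todorcevic), so it has the additional virtue of being the natural starting point for computability-theoretic refinements. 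A couple of minor expository points: you should make explicit that the preliminary split into level-$(m+1)$ intervals happens \emph{before} you begin iterating the sub-box lemma over tuples (your phrase ``the remaining level-$(m+1)$ intervals'' slightly obscures this), and you should note that the index $j$ may vary from tuple to tuple within a single task---which is fine, since $N_k\subseteq O_{k,j}$ for every $j$, so avoiding any one $O_{k,j}$ on a given product suffices.
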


\begin{proof}[Proof sketch\ \emph{(Tserunyan {[private
communication]})}.]
We first turn this measure-the\-o\-re\-tic statement into a
topological one by considering the (Lebesgue) density topology on $[0,
1]$, as defined in \cite[Exercise (17.47.ii)]{K}.  Note that the
density topology on $[0, 1]$ can be similarly defined on $[0, 1]^n$ by
using open balls in $[0, 1]^n$ in place of open intervals in $(0, 1)$
in the definition of $\varphi(A)$ in \cite[Exercise (17.47.i)]{K}.
The remaining parts of \cite[Exercise (17.47)]{K} continue to hold for
$[0, 1]^n$ in place of $[0,1]$.  The subsets of $[0, 1]^n$ of Lebesgue
measure $0$ are exactly the meager sets in the density topology by
\cite[Exercise (17.47.iii)]{K}, extended to $[0, 1]^n$, and this
topological space is Choquet by \cite[Exercise (17.47.vi)]{K} and is
clearly perfect, since nonempty open sets have positive Lebesgue
measure.  Thus, the Ramsey Theorem for perfect Choquet spaces with a
weaker metric \cite[Exercise (19.5)]{K} applies for each fixed
$n_k$.  The latter result is easily extended to cover a sequence of
relations of varying arity instead of a single relation, so the result
follows.
\end{proof}

The above theorem also holds for $2^\omega$ (equipped with its usual
fair coin toss measure) in place of $[0, 1]$ because the binary
expansion map from $2^\omega$ to $[0, 1]$ is measure-preserving and
almost a homeomorphism.  More precisely, this map induces a
homeomorphism from a co-countable subset of $2^\omega$ (the set of
binary sequences with infinitely many $1$'s) to a co-countable subset
of $[0, 1]$ (the set of non-dyadic reals in $[0, 1]$). 

The relation that holds of $(X,Y)$ if and only if $X$ and $Y$ are
mutually $1$-random has measure $1$, so we have the following.

\begin{corollary}
\label{perfrandcor}
There is a nonempty perfect set whose elements are pairwise mutually
$1$-random.
\end{corollary}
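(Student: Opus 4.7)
The plan is to apply Theorem \ref{Ramsey} with a single binary relation. Let $R \subseteq (2^\omega)^2$ be defined by $(X,Y) \in R$ iff $X \oplus Y$ is $1$-random, which is equivalent (by van Lambalgen's Theorem) to $X$ and $Y$ being mutually $1$-random. My first step is to verify that $R$ has Lebesgue measure $1$ in $(2^\omega)^2$. The join map $(X,Y) \mapsto X \oplus Y$ is a measure-preserving bijection from $(2^\omega)^2$, equipped with the product of fair-coin measures, onto $2^\omega$ equipped with the fair-coin measure. Since the set of $1$-random reals has measure $1$ in $2^\omega$, its preimage $R$ has measure $1$.

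Next I would invoke the $2^\omega$ version of Theorem \ref{Ramsey} (justified in the paragraph following its proof) with the single relation $R$ of arity $n_0 = 2$. This yields a set $C \subseteq 2^\omega$ homeomorphic to Cantor space, in particular nonempty and perfect, such that $(C)^2 \subseteq R$. Since $(C)^2$ consists by definition of ordered pairs of \emph{distinct} elements of $C$, this is exactly the assertion that any two distinct elements of $C$ are mutually $1$-random, which is what the corollary claims.

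This is essentially a one-line consequence of Mycielski's theorem, so there is no substantive obstacle: the only points worth flagging are the measure-preservation of $\oplus$ (which is standard) and the availability of the $2^\omega$ version of Theorem \ref{Ramsey} (already noted in the excerpt). Were one instead to start directly from the $[0,1]$ version, one would first pull the mutual-$1$-randomness relation back through the binary-expansion map to a measure-$1$ relation on $[0,1]^2$ before applying the theorem, and then push the resulting perfect set back through the expansion map, losing at most countably many points and hence not affecting the existence of a perfect subset with the desired property.
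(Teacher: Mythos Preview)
Your proof is correct and follows exactly the same approach as the paper: the paper simply observes that the relation of mutual $1$-randomness has measure $1$ and then invokes Theorem \ref{Ramsey} (in its $2^\omega$ form). Your write-up supplies a bit more detail—making explicit that $\oplus$ is measure-preserving and that $(C)^2$ consists of ordered pairs of distinct elements—but the argument is the same.
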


Of course, there is nothing special about $1$-randomness
here. Mycielski's Theorem applies just as well to other notions of
algorithmic randomness.

The following corollary was first proved by a direct construction,
which we will return to in Section \ref{mycsec}.

\begin{corollary}
\label{perfhalfcor}
There is a perfect $\mathcal{C} \subseteq 2^\omega$ such that every
element of $\mathcal{C}$ has density $1/2$ and the symmetric
difference of any two distinct elements of $\mathcal{C}$ also has
density $1/2$.
\end{corollary}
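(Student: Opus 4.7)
The plan is to apply Mycielski's Theorem (Theorem \ref{Ramsey}) in its $2^\omega$-version mentioned in the paragraph after its proof, feeding it two measure-one relations that encode the two conditions required of $\mathcal{C}$.

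First, define the unary relation $R_0 \subseteq 2^\omega$ by $R_0 = \{X : \rho(X) = 1/2\}$. By the Strong Law of Large Numbers, $R_0$ has fair-coin measure $1$. Next, define the binary relation $R_1 \subseteq (2^\omega)^2$ by
\[
R_1 = \{(X,Y) : \rho(X \sd Y) = 1/2\}.
\]
Under the product measure on $(2^\omega)^2$, each coordinate of $X \sd Y$ is the XOR of two independent fair bits and is therefore itself a fair bit, and these bits are jointly independent. Hence $X \sd Y$ is distributed as a fair-coin sequence on $2^\omega$, so by the Strong Law of Large Numbers (combined with Fubini to push the SLLN statement from the image back to the product), $R_1$ has measure $1$ in $(2^\omega)^2$.

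Now apply Theorem \ref{Ramsey} (transferred to $2^\omega$) to the sequence $(R_0, R_1)$ with arities $(n_0, n_1) = (1, 2)$. This yields a homeomorphic copy $\mathcal{C} \subseteq 2^\omega$ of Cantor space such that $(\mathcal{C})^1 \subseteq R_0$ and $(\mathcal{C})^2 \subseteq R_1$. The first inclusion says that every $X \in \mathcal{C}$ has density $1/2$, and the second says that for any two distinct $X, Y \in \mathcal{C}$, the symmetric difference $X \sd Y$ has density $1/2$. Since $\mathcal{C}$ is homeomorphic to $2^\omega$, it is nonempty and perfect, completing the argument.

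The main (and essentially only) obstacle is the verification that $R_1$ has measure $1$: one must notice that XOR of two independent fair bits is again fair, which reduces $\rho(X \sd Y) = 1/2$ for product-typical $(X,Y)$ to a standard SLLN statement. Everything else is a direct invocation of previously established machinery: Mycielski's theorem handles the passage from ``measure-one constraints'' to ``a perfect set satisfying them on all tuples,'' and the theorem's statement already accommodates finitely (in fact countably) many relations of different arities simultaneously.
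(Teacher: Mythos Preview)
Your proof is correct and follows essentially the same approach as the paper: both obtain the result as an application of Mycielski's Theorem~\ref{Ramsey}. The only difference is that the paper routes the argument through Corollary~\ref{perfrandcor} (a perfect set of pairwise mutually $1$-random reals, which then automatically have the required density properties), whereas you feed the density relations $R_0$ and $R_1$ into Mycielski's Theorem directly---a slightly more economical instantiation of the same method.
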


We can strengthen Corollary \ref{perfrandcor} by using the fact that
each of the relations $\{(X_0,\ldots,X_n) : \textrm{each $X_i$ is
  $1$-random relative to } \bigoplus_{j \leq n,\ j \neq i} X_j\}$ has
measure $1$.

\begin{corollary}
\label{perfmutcor}
There is a nonempty perfect set $\mathcal P$ such that if $\mathcal F
\subset \mathcal P$ is a nonempty finite set, then the join of the
elements of $\mathcal F$ is $1$-random.
\end{corollary}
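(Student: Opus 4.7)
The plan is to apply Theorem \ref{Ramsey} (working in $2^\omega$ in place of $[0,1]$, as justified immediately after that theorem) to the sequence of relations singled out just before the corollary. For each $n \geq 0$, let
\[
R_n = \bigl\{(X_0,\ldots,X_n) \in (2^\omega)^{n+1} : \text{for each } i \leq n,\ X_i \text{ is $1$-random relative to } {\textstyle\bigoplus_{j \leq n,\, j \neq i}} X_j\bigr\}.
\]
Each $R_n$ has Lebesgue measure $1$: for each fixed $i \leq n$, Fubini together with the fact that, for any oracle $Z$, almost every $X \in 2^\omega$ is $1$-random relative to $Z$ shows that the set of tuples failing the condition at index $i$ is null, and there are only finitely many such conditions. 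Applying Mycielski's theorem then produces a set $\mathcal P \subseteq 2^\omega$, homeomorphic to Cantor space (hence nonempty and perfect), such that $(\mathcal P)^{n+1} \subseteq R_n$ for every $n$.

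To verify that $\mathcal P$ has the desired property, I would prove by induction on $n$ that whenever $Y_0,\ldots,Y_n$ are pairwise distinct elements of $\mathcal P$, the join $Y_0 \oplus \cdots \oplus Y_n$ is $1$-random. The base case $n=0$ is exactly the condition defining $R_0$, which says each member of $\mathcal P$ is $1$-random. For the inductive step, observe first that $(Y_0,\ldots,Y_{n+1}) \in R_{n+1}$ implies $(Y_0,\ldots,Y_n) \in R_n$: if $Y_i$ (for $i \leq n$) is $1$-random relative to the larger oracle $\bigoplus_{j \leq n+1,\, j \neq i} Y_j$, then it is certainly $1$-random relative to the smaller oracle $\bigoplus_{j \leq n,\, j \neq i} Y_j$. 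The inductive hypothesis then gives that $Y_0 \oplus \cdots \oplus Y_n$ is $1$-random, while the condition from $R_{n+1}$ with index $i = n+1$ yields that $Y_{n+1}$ is $1$-random relative to $Y_0 \oplus \cdots \oplus Y_n$. Van Lambalgen's Theorem now combines these two facts into the $1$-randomness of $Y_0 \oplus \cdots \oplus Y_{n+1}$.

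The only points requiring any care are the measure-$1$ check for each $R_n$ and the correct direction of van Lambalgen's Theorem, neither of which is a genuine obstacle. All the real work has been outsourced to Theorem \ref{Ramsey}, which in a single stroke produces a perfect set avoiding the null set arising at every arity simultaneously.
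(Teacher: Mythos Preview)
Your proof is correct and follows exactly the approach the paper indicates: apply Mycielski's theorem to the measure-$1$ relations $R_n$ defined just before the corollary, and then use van Lambalgen's Theorem inductively to conclude that joins of finitely many distinct elements are $1$-random. The paper states this as a one-line application, whereas you spell out both the Fubini argument for $\mu(R_n)=1$ and the van Lambalgen induction; these are precisely the details the paper leaves implicit.
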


It is not possible to extend this corollary to countable sets
$\mathcal F$, because if $\mathcal P$ is perfect and $X \in \mathcal
P$, then there are initial segments $\sigma_0 \prec \sigma_1 \prec
\cdots$ of $X$ such that for each $i$ there is an $X_i \in \mathcal P$
for which $\sigma_i$ is the longest common initial segment of $X_i$
and $X$, and then $X \leq\sub{T} \bigoplus_i X_i$.

It is not possible for $\mathcal P$ to have positive measure,
even in Corollary \ref{perfrandcor}, because such a $\mathcal P$ would
be an antichain with respect to Turing reducibility, and Yu \cite{Yu}
has shown that such an antichain cannot have positive measure.

In Section \ref{mycsec}, we will give a proof of Corollary
\ref{perfmutcor} as an effectivization of a proof of Mycielski's
Theorem along the lines of the one outlined in \cite{Tod}, and will
also discuss the computability-theoretic and reverse-mathematical
content of the above corollaries. It is worth noting that the
relativized form of Corollary \ref{perfmutcor} is in fact equivalent
to Mycielski's Theorem, in the sense that the latter can be proved
easily from it. We will give the argument in Section \ref{mycsec}, a
version of which was first given by Miller and Yu \cite{MilYu}.

Mycielski also proved an analog of Theorem \ref{Ramsey} for category,
which implies that there is a nonempty perfect set $\mathcal P
\subseteq 2^\omega$ such that if $\mathcal F \subset \mathcal P$ is a
nonempty finite set, then the join of the elements of $\mathcal F$ is
$1$-generic, and hence, in particular, any two distinct elements of
$\mathcal P$ are mutually $1$-generic.

\begin{theorem}[Mycielski \cite{My}]
\label{Ramseyforcat}
For any sequence of relations $\{R_k\}_{k \in \omega}$ with $R_k
\subseteq [0, 1]^{n_k}$ such that $R_k$ is comeager for all $k$, there
is a homeomorphic copy $C \subseteq [0, 1]$ of the Cantor space
$2^\omega$ such that $(C)^{n_k} \subseteq R_k$ for all $k \in \omega$.
\end{theorem}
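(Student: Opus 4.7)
The plan is to observe that the category version is actually simpler than the measure version just proved: no auxiliary topology is needed. The density-topology detour in the proof of Theorem \ref{Ramsey} served only to convert ``measure $1$'' into ``comeager'' so that \cite[Exercise (19.5)]{K} could be applied. In the present theorem the relations $R_k$ are already comeager in the usual Euclidean topology on $[0,1]^{n_k}$, so we can apply Exercise (19.5) directly, without invoking the density topology at all.

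Concretely, my first step is to note that for each $n$ the Euclidean space $[0,1]^n$ is a perfect Polish space, hence perfect Choquet, with a complete compatible metric. Applied to a single fixed arity $n_k$, \cite[Exercise (19.5)]{K} produces a homeomorphic copy $C_k \subseteq [0, 1]$ of $2^\omega$ with $(C_k)^{n_k} \subseteq R_k$. The second step is to extend from one relation to the countable sequence $\{R_k\}_{k \in \omega}$ of varying arities, exactly as was done in the proof of Theorem \ref{Ramsey}: one runs a single Cantor-scheme construction in $[0,1]$, and at stage $k$ one uses the Choquet player's winning strategy on $[0,1]^{n_k}$ to push the relevant tuples of current clopen approximations into a dense $G_\delta$ subset of $R_k$, while also shrinking diameters below $2^{-k}$.

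The main (and quite minor) point to check is that this interleaved Cantor-scheme construction still yields a homeomorphic copy of $2^\omega$. This is standard: the shrinking-diameter condition together with the completeness of the usual metric on $[0,1]$ ensures that each branch of the scheme determines a unique limit point and that the induced map $2^\omega \to [0,1]$ is a continuous injection, hence a homeomorphism onto its image $C$. By construction, for each $k$ any $n_k$-tuple of distinct points of $C$ lies in a basic open box chosen to be inside $R_k$, so $(C)^{n_k} \subseteq R_k$. Since the Choquet bookkeeping and the arity-extension are already packaged in the proof of Exercise (19.5) and in our proof of Theorem \ref{Ramsey}, the hard work has already been done, and the category version follows at once.
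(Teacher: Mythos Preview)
Your proposal is correct: since the $R_k$ are already comeager in the standard topology, \cite[Exercise (19.5)]{K} applies to $[0,1]$ (or $2^\omega$) directly, and the extension to a sequence of relations of varying arity goes through exactly as in the proof sketch of Theorem~\ref{Ramsey}. There is no gap.

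However, the paper does not prove Theorem~\ref{Ramseyforcat} this way. It defers the proof to Theorem~\ref{myccatthm}, where it gives a direct construction: one forces with finite partial perfect trees, shows that for each c.e.\ set $W_e$ the set of conditions whose leaves all meet-or-avoid $W_e$ (in every tuple) is dense, and concludes that a sufficiently generic real codes a perfect tree whose finite joins of distinct paths are all $1$-generic. The general statement then follows by relativization. The difference is not cosmetic. Your argument is shorter and cleaner as a pure existence proof, but it treats \cite[Exercise (19.5)]{K} as a black box and yields no effective information. The paper's construction is explicit enough to be analyzed computability-theoretically: it shows that the dense sets $D_e$ are in fact pb-dense, so any array noncomputable degree (relative to the appropriate oracle) computes a witnessing tree (Theorem~\ref{ancthm}), and it feeds directly into the reverse-mathematical analysis of Section~\ref{mycsec3}. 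So your route buys brevity; the paper's route buys effective bounds.
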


As with Theorem \ref{Ramsey}, this result also holds for $2^\omega$,
for the same reason. We will prove it (in the $2^\omega$ version) as
Theorem \ref{myccatthm} below, and discuss its computability-theoretic
and reverse-mathematical content in Section \ref{mycsec3}.

\section{Turing invariant subspaces of $(\mathcal S,\delta)$}
\label{invsec}

In this section, we begin to examine some interactions between
$(\mathcal S,\delta)$ and computability theory by considering subspaces
of this metric space arising from sets of Turing degrees. In
particular, we discuss conditions under which such subspaces can have
some of the properties of $(\mathcal S,\delta)$ discussed in the
previous section. Say that a subset of $\mathcal P(\omega)$ is
\emph{Turing invariant} if it closed under Turing equivalence (i.e.,
it is a union of Turing degrees). A particular example of a Turing
invariant set is a \emph{Turing ideal}, i.e., a subset of $\mathcal
P(\omega)$ that is closed downward under Turing reducibility and
closed under finite joins. Say that $U \subseteq \mathcal
P(\omega)$ \emph{generates} $\mathcal U \subseteq \mathcal S$ if
$\mathcal U=\{[A] : A \in U\}$. We will be interested in subspaces of
$(\mathcal S,\delta)$ that are generated by Turing invariant sets.

We first note that it does not matter whether we consider sets closed
under Turing equivalence or sets that are downward closed under Turing
reducibility.

\begin{lemma}
\label{downclosed}
If $U$ is Turing invariant then
\[ \{[A] : A \in U\} = \{[B] : (\exists A \in U) [B \leq\sub{T} A]\}.
\]
\end{lemma}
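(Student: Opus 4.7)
The forward inclusion is trivial: if $A \in U$ then $A \leq\sub{T} A$, so $[A]$ belongs to the right-hand set. The content of the lemma is the reverse inclusion, and my plan is to prove it by a standard ``sparse coding'' trick: given a witness $A \in U$ with $B \leq\sub{T} A$, I will build a set $C$ that is coarsely similar to $B$ but Turing equivalent to $A$, so that $C \in U$ by Turing invariance and $[C] = [B]$ lies in the left-hand set.

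For the construction, fix any computable set $S \subseteq \omega$ of density $0$ (for instance $S = \{n! : n \geq 1\}$) together with a computable bijection $f : \omega \to S$. Define
\[
C = (B \setminus S) \cup f(A) = (B \setminus S) \cup \{f(n) : n \in A\}.
\]
By construction $C \sd B \subseteq S$, so $\rho(C \sd B) = 0$, giving $[C] = [B]$. Moreover, since $S$ and $f$ are computable and $B \leq\sub{T} A$, both $B \setminus S$ and $f(A)$ are computable from $A$, so $C \leq\sub{T} A$; conversely, $n \in A$ iff $f(n) \in C$, and since $f$ and $S$ are computable we get $A \leq\sub{T} C$. Hence $C \equiv\sub{T} A$, and Turing invariance of $U$ yields $C \in U$.

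There is essentially no obstacle here — the only thing to keep in mind is that the density-zero ``scratch space'' $S$ must be computable (so that encoding $A$ on $S$ does not increase the Turing degree beyond that of $A$) and that the coding bijection $f$ is computable (so that $A$ can be recovered from $C$). Once those choices are in place, the three verifications — coarse similarity of $C$ and $B$, $C \leq\sub{T} A$, and $A \leq\sub{T} C$ — are immediate.
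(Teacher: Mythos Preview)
Your proof is correct and essentially identical to the paper's: the paper takes a coinfinite computable set of density $1$, codes $A$ into its (density-$0$) complement via the increasing enumeration $d_0<d_1<\cdots$, and verifies the same three facts. Your $S$ is exactly the complement of the paper's set and your $f$ is its enumeration, so the two arguments differ only in notation.
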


\begin{proof}
If $A \in U$ and $B \leq\sub{T} A$, then we can take a coinfinite
computable set $C$ of density $1$, list the elements of its complement
as $d_0<d_1<\cdots$, and define $D = (B \cap C) \cup \{d_n : n \in
A\}$. Then $D \equiv\sub{T} A$, so $D \in U$, and $[D]=[B]$.
\end{proof}

We now note a couple of other interesting facts about subsets of
$\mathcal S$ generated by Turing invariant sets. Recall that $J_k =
[2^k - 1, 2^{k+1} - 1)$.

\begin{definition}
\label{jdef}
Let $\mathcal J(A) = \bigcup_{k \in A} J_k$.
\end{definition}

\begin{lemma}
\label{jlem}
If $\delta(\mathcal J(A),C)<1/4$ then $A \leq\sub{T} C$.
\end{lemma}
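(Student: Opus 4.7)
The plan is to exploit the block structure of $\mathcal J(A)$, convert the hypothesis on $\overline\rho$ into a per-block statement via Lemma \ref{factor2}, and then read $A(k)$ directly off $C\cap J_k$ for all sufficiently large $k$. Since $|J_k|=2^k$, one bit of information, namely $A(k)$, is stored in an interval of length $2^k$, leaving ample room for a majority-vote decoding.

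First I would record the elementary observation that, because $\mathcal J(A)\cap J_k$ equals $J_k$ when $k\in A$ and $\emptyset$ when $k\notin A$,
\[
d_k\bigl(\mathcal J(A)\sd C\bigr)=
\begin{cases}
1-|C\cap J_k|/2^k & \text{if } k\in A,\\
|C\cap J_k|/2^k & \text{if } k\notin A.
\end{cases}
\]
Consequently, whenever $d_k(\mathcal J(A)\sd C)<1/2$, one has the equivalence $k\in A \iff |C\cap J_k|>2^{k-1}$.

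The substantive step is to promote this equivalence from those $k$ where the block density happens to be small to a cofinite set of $k$. Here I would invoke Lemma \ref{factor2}: the hypothesis $\overline\rho(\mathcal J(A)\sd C)<1/4$ yields
\[
\overline d(\mathcal J(A)\sd C)\leq 2\,\overline\rho(\mathcal J(A)\sd C)<\frac{1}{2},
\]
and since $\overline d$ is a $\limsup$, this in turn produces an $N$ with $d_k(\mathcal J(A)\sd C)<1/2$ for every $k\geq N$.

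The Turing reduction is then immediate: the functional hard-codes $A(0),\dots,A(N-1)$, and on any input $k\geq N$ it uses oracle $C$ to compute $|C\cap J_k|$ and outputs $1$ iff this count exceeds $2^{k-1}$. No real obstacle is anticipated; the only genuinely nontrivial ingredient is Lemma \ref{factor2}, which is needed precisely because the strict bound $1/4$ in the hypothesis translates through the factor of $2$ to the strict bound $1/2$ required to separate the two block-level cases.
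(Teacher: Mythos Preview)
Your proof is correct and follows essentially the same approach as the paper: the paper's one-sentence proof invokes Lemma~\ref{factor2} to conclude that for all but finitely many $n$, more than half of the bits of $C$ on $J_n$ agree with $A(n)$, which is exactly your majority-vote decoding. Your write-up simply spells out the details the paper leaves implicit.
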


\begin{proof}
If $\delta(\mathcal J(A),C)<1/4$ then, by Lemma \ref{factor2}, for all
but finitely many $n$, more than half of the bits in $C
\upharpoonright J_n$ are equal to $A(n)$.
\end{proof}

Recall that $\mathcal S$ has the structure of a topological
group under the operation defined by $[A] + [B] = [A \sd B]$.

\begin{proposition}
Let $U$ be downward closed under Turing reducibility. Then $\{[A] : A
\in U\}$ is a subgroup of $\mathcal S$ if and only if $U$ is a Turing
ideal.
\end{proposition}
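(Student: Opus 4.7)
The plan is to handle the two implications separately. The ``if'' direction will be a straightforward bookkeeping: assuming $U$ is a nonempty Turing ideal, I note that $[\emptyset] \in \mathcal{U}$ by downward closure and that every element of $\mathcal{S}$ is self-inverse under $+$, so it suffices to establish closure under $+$. Given $[A], [B] \in \mathcal{U}$ with representatives $A', B' \in U$, the join $A' \oplus B'$ lies in $U$ by closure under finite joins, and $A' \sd B' \leq\sub{T} A' \oplus B'$ then yields $A' \sd B' \in U$ by downward closure, so $[A]+[B] = [A' \sd B'] \in \mathcal{U}$.

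The ``only if'' direction is the substantive one. The main obstacle is that coarse similarity is strictly coarser than Turing equivalence, so the hypothesis that $[A]+[B] \in \mathcal{U}$ only produces some $D \in U$ with $D \thicksim\sub{c} A \sd B$, and in general such a $D$ need not compute $A \sd B$ at all. The way around this is to use $\mathcal{J}$ as a ``self-repairing'' encoding: by Lemma \ref{jlem}, any set within distance $1/4$ of $\mathcal{J}(X)$ already computes $X$, so after applying $\mathcal{J}$ the passage to a coarsely similar set costs us nothing.

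Concretely, given $A, B \in U$, I will deduce $A \oplus B \in U$ as follows. First split $A$ and $B$ onto disjoint index sets by letting $E_0 = \{2n : n \in A\}$ and $E_1 = \{2n+1 : n \in B\}$, so that $E_0 \equiv\sub{T} A$ and $E_1 \equiv\sub{T} B$; by downward closure, both lie in $U$, and since $\mathcal{J}(X) \equiv\sub{T} X$ we also get $\mathcal{J}(E_0), \mathcal{J}(E_1) \in U$, hence $[\mathcal{J}(E_0)], [\mathcal{J}(E_1)] \in \mathcal{U}$. Because $E_0$ consists only of even numbers and $E_1$ only of odd numbers, the block unions $\mathcal{J}(E_0)$ and $\mathcal{J}(E_1)$ use disjoint collections of the $J_k$, so
\[
\mathcal{J}(E_0) \sd \mathcal{J}(E_1) \;=\; \mathcal{J}(E_0 \cup E_1) \;=\; \mathcal{J}(A \oplus B).
\]
The subgroup hypothesis applied to $[\mathcal{J}(E_0)] + [\mathcal{J}(E_1)]$ then produces $D \in U$ with $\delta(D, \mathcal{J}(A \oplus B)) = 0 < 1/4$, and Lemma \ref{jlem} delivers $A \oplus B \leq\sub{T} D$. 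Downward closure of $U$ then gives $A \oplus B \in U$, completing the verification that $U$ is a Turing ideal. The parity splitting is just what is needed to convert the ambient group operation $\sd$ (which on these two inputs behaves like union) into the join $\oplus$, so that the $\mathcal{J}$-trick can recover the correct object.
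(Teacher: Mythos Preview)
Your proof is correct and follows essentially the same approach as the paper: both directions match, with the key ``only if'' step using the $\mathcal{J}$ encoding together with Lemma~\ref{jlem} so that a coarse description of the encoded set already computes the join. The only cosmetic difference is that the paper places the even/odd split outside $\mathcal{J}$ (working with $\mathcal{J}(B)\oplus\mathcal{J}(C)$ and then extracting $B$ and $C$ separately), whereas you place it inside (arriving at $\mathcal{J}(A\oplus B)$ directly and applying Lemma~\ref{jlem} once).
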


\begin{proof}
Suppose that $U$ is a Turing ideal. If $B,C \in U$ then $B \sd C
\leq\sub{T} B \oplus C$ is also in $U$. Since each element of
$\mathcal S$ is its own inverse, it follows that $\{[A] : A \in U\}$
is a subgroup of $\mathcal S$.

Now suppose that $\mathcal U=\{[A] : A \in U\}$ is a subgroup of
$\mathcal S$ and let $B,C \in U$. Then $[\mathcal J(B) \oplus
\emptyset],[\emptyset \oplus \mathcal J(C)] \in \mathcal U$, so
$[\mathcal J(B) \oplus \mathcal J(C)] = [(\mathcal J(B) \oplus
\emptyset) \sd (\emptyset \oplus \mathcal J(C))] \in \mathcal U$. Thus
$U$ contains a coarse description of $\mathcal J(B) \oplus \mathcal
J(C)$. This description computes coarse descriptions of $\mathcal
J(B)$ and $\mathcal J(C)$, and hence computes both $B$ and $C$. It
follows that $B \oplus C \in U$.
\end{proof}

We say that a collection $U$ of sets is \emph{cofinal in the Turing
degrees} if for every $B$, there is an $A \in U$ such that $B
\leq\sub{T} A$. The following fact follows from Lemmas
\ref{downclosed} and \ref{jlem}.

\begin{proposition}
\label{cofcor}
Let $U$ be Turing invariant. Then $U$ is cofinal in the Turing degrees
if and only if $\{[A] : A \in U\}=\mathcal S$.
\end{proposition}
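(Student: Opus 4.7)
The plan is to prove both directions by directly invoking the two lemmas the authors signpost.

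For the forward direction, I would assume $U$ is cofinal in the Turing degrees and take an arbitrary $B \subseteq \omega$. By cofinality there is $A \in U$ with $B \leq\sub{T} A$, and then Lemma \ref{downclosed} immediately gives $[B] \in \{[C] : C \in U\}$. Since $B$ was arbitrary, $\{[A] : A \in U\} = \mathcal S$. This direction is essentially a restatement of Lemma \ref{downclosed}.

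For the reverse direction, I would assume $\{[A] : A \in U\} = \mathcal S$ and show $U$ is cofinal. Given an arbitrary $B$, the point $[\mathcal J(B)]$ lies in $\mathcal S$, so by hypothesis there is some $A \in U$ with $[A] = [\mathcal J(B)]$, i.e., $\delta(A, \mathcal J(B)) = 0 < 1/4$. Lemma \ref{jlem} then yields $B \leq\sub{T} A$, so $U$ is cofinal.

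There is no real obstacle here; the only subtlety is recognizing why the coding $\mathcal J$ is the natural bridge in the reverse direction: coarse similarity by itself is too weak to recover $B$ from a representative (two coarsely similar sets can have very different Turing degrees), but Lemma \ref{jlem} tells us that spreading the bits of $B$ across the dyadic blocks $J_k$ makes the information robust enough that any set within density distance $< 1/4$ of $\mathcal J(B)$ computes $B$ by a majority vote on each block. That is the whole content; both implications compile to a few lines once those lemmas are in hand.
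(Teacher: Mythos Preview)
Your proof is correct and follows exactly the approach the paper indicates: the forward direction via Lemma \ref{downclosed} and the reverse direction via the coding $\mathcal J$ together with Lemma \ref{jlem}. The paper does not spell out the argument beyond citing these two lemmas, and you have filled in the details precisely as intended.
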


The space $(\mathcal S,\delta)$ has many compact subspaces. (For
example, the subspace consisting of all the $[C_r]$ defined in the
proof of Theorem \ref{conthm} is homeomorphic to the interval
$[0,1]$.) However, none of them can be generated by a Turing invariant
set.

\begin{theorem}
If $\mathcal U \subseteq \mathcal S$ is nonempty and generated by a
Turing invariant set, then $\mathcal U$ is not compact.
\end{theorem}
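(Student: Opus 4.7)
The plan is to produce a sequence $[B_0], [B_1], \ldots$ in $\mathcal{U}$ whose pairwise $\delta$-distances are all equal to $1$. Since such a sequence has no Cauchy subsequence, it has no convergent subsequence in $\mathcal{U}$, so $\mathcal{U}$ fails to be sequentially compact and hence fails to be compact.

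I first fix any $A \in U$, which exists because $\mathcal{U}$ is nonempty. Because $U$ is Turing invariant, $A \sd C \equiv\sub{T} A$ lies in $U$ for every computable set $C$, so the task reduces to exhibiting computable sets $C_0, C_1, \ldots$ with $\orho(C_i \sd C_j) = 1$ whenever $i \neq j$; setting $B_i := A \sd C_i$ will then give $\delta([B_i], [B_j]) = \orho(C_i \sd C_j) = 1$, as required.

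To build the $C_i$, I partition $\omega$ into infinite computable sets $S_0, S_1, \ldots$ and set $C_i = \mathcal{I}(S_i)$ in the notation of Definition \ref{idef}. Each $C_i$ is computable. Whenever $n \in S_i$, the inclusion $I_n \subseteq C_i$ gives $\rho_{(n+1)!}(C_i) \geq 1 - 1/(n+1)$, so $\orho(C_i) = 1$ because $S_i$ is infinite. The pairwise disjointness of the $S_i$ forces $C_i \cap C_j = \emptyset$ for $i \neq j$, so $C_i \sd C_j \supseteq C_i$ and consequently $\orho(C_i \sd C_j) = 1$. There is no real obstacle in any of this; the content of the argument is the observation that Turing invariance, applied to a single $A \in U$, drags an entire translate of the computable Turing degree into $\mathcal{U}$, and the $\mathcal{I}$ construction already used in the proof of Theorem \ref{compthm} supplies enough pairwise $\delta$-far computable sets to witness noncompactness.
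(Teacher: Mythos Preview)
Your proof is correct. Both your argument and the paper's build an infinite sequence of points in $\mathcal U$ at pairwise $\delta$-distance $1$ and conclude noncompactness from that, so the overall strategy is the same. The execution differs slightly: the paper iteratively constructs $A_{n+1} \leq\sub{T} A_0$ using the diagonalization from Theorem~\ref{compthm}(1), and then invokes Lemma~\ref{downclosed} to see that each $[A_i]$ lies in $\mathcal U$; you instead translate a single $A \in U$ by computable sets $C_i$, using Observation~\ref{isomobs} to transfer distances and noting that $A \sd C_i \equiv\sub{T} A$ so that Turing invariance alone (without Lemma~\ref{downclosed}) gives membership in $U$. Your route is a bit more direct in that it avoids the recursive construction and the downward-closure lemma, at the modest cost of explicitly invoking the group-isometry structure. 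The paper finishes by exhibiting an open cover with no finite subcover, while you use sequential compactness; in a metric space these are equivalent, so either ending is fine.
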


\begin{proof}
Let $U$ be a Turing invariant set that generates $\mathcal U$. The
construction in part (1) of the proof of Theorem \ref{compthm} shows
that for sets $A_0,\ldots,A_n$, there is a set $A_{n+1} \leq\sub{T}
A_0 \oplus \cdots \oplus A_n$ such that $\delta([A_{n+1}],[A_i])=1$
for all $i \leq n$. So we can take any $A_0 \in U$ and build an
infinite sequence $A_0,A_1,\ldots \leq\sub{T} A_0$ such that
$\delta([A_i],[A_j]))=1$ for all $i \neq j$. Each $[A_i]$ is in
$\mathcal U$.

Now take any open cover of $\mathcal U$ by open balls, all of whose
radii are less than $1/2$. None of these balls can contain both
$[A_i]$ and $[A_j]$ for $i \neq j$, so this cover has no finite
subcover.
\end{proof}

We now turn to completeness. Since $(\mathcal S,\delta)$ is itself
complete, a subspace is complete if and only if it is closed. The
space $(\mathcal S,\delta)$ has many kinds of closed subspaces (for
example, the closure $\cd$ of a degree $\bd$ in Definition
\ref{closdef} below), but if a nonempty $\mathcal U \subsetneq
\mathcal S$ is generated by a Turing invariant set, then it is
difficult for $\mathcal U$ to be closed, as we now show.

The following sets defined in \cite{JS1} are very useful in
studying asymptotic density and computability.

\begin{definition} 
\label{rdef}
Let $R_k = \{m :  \ 2^k \mid m \,\mathbin{\&}\, 2^{(k + 1)} \nmid m\}$, and let
$\mathcal R(A) = \bigcup_{k \in A} R_k$.
\end{definition}

\begin{lemma}[{Asher Kach [personal communication]}]
\label{Rlem}
The set $\mathcal{R}(A)$ is a limit of computable sets for every $A
\subseteq \omega$.
\end{lemma}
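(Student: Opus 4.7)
The plan is to interpret ``limit of computable sets'' in the sense of the metric $\delta$ on $\mathcal{S}$ (which is the topic of the paper), and to exhibit for each $A \subseteq \omega$ a sequence of computable sets converging to $\mathcal{R}(A)$. The construction will simply truncate $A$ to a finite initial segment.

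First I would record the basic density of $R_k$. Since $m \in R_k$ iff $m = 2^k(2j+1)$ for some $j \geq 0$, the sets $R_0, R_1, \ldots$ partition $\omega \setminus \{0\}$, and $\rho(R_k) = 2^{-(k+1)}$. In particular each $R_k$ is a computable set and the index $k \mapsto R_k$ is uniform.

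Next, for each $n$, set
\[
C_n \;=\; \bigcup_{k \in A \cap [0,n]} R_k.
\]
Although $A$ itself need not be computable, the set $A \cap [0,n]$ is a \emph{finite} subset of $\omega$, hence decidable, so $C_n$ is a finite union of computable sets indexed by a decidable finite set, and is therefore computable. Now
\[
C_n \mathbin{\sd} \mathcal{R}(A) \;=\; \bigcup_{k \in A,\ k > n} R_k \;\subseteq\; \bigcup_{k > n} R_k,
\]
and the rightmost set has asymptotic density $\sum_{k > n} 2^{-(k+1)} = 2^{-(n+1)}$. Consequently
\[
\delta(C_n, \mathcal{R}(A)) \;=\; \overline{\rho}(C_n \mathbin{\sd} \mathcal{R}(A)) \;\leq\; 2^{-(n+1)},
\]
which tends to $0$ as $n \to \infty$. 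Hence $[\mathcal{R}(A)] = \lim_n [C_n]$ in $(\mathcal{S},\delta)$.

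There is no substantive obstacle here; the proof is essentially a tail estimate for a geometric series. The only point worth emphasizing is the asymmetry between the oracle and the approximants: we do not need $A$ to be computable, because truncating $A$ at level $n$ yields only finitely many bits of information, and the density contributed by the discarded bits decays geometrically in $n$. This is precisely the feature of the partition $\{R_k\}$ that makes it useful in density-based computability theory.
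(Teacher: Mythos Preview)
Your proof is correct and is essentially the same as the paper's: you approximate $\mathcal{R}(A)$ by $\mathcal{R}(A \cap [0,n])$ and bound the symmetric difference by the tail $\bigcup_{k>n} R_k$, whose density tends to $0$. The only difference is that you compute the tail density explicitly as $2^{-(n+1)}$, whereas the paper simply notes it goes to $0$.
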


\begin{proof}
Let $A_k=A \cap [0,k]$.  Each $\mathcal{R}(A_k)$ is computable, and
$\mathcal{R}(A) \sd \mathcal{R}(A_k) \subseteq \bigcup_{j \geq k}
R_j$. Since the density of the latter set goes to $0$ as $k$ goes to
infinity, so does $\delta(\mathcal R(A),\mathcal R(A_k))$.
\end{proof}

The following lemma is a relativized version of one direction of
Theorem 2.19 of \cite{JS1}.

\begin{lemma}
\label{Rlem2}
If $C$ is a coarse description of $\mathcal{R}(A)$ then $A \leq\sub{T}
C'$.
\end{lemma}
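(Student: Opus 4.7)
The plan is to show that $A$ is $\Delta_2^C$ by observing that for each $k$, the set $\mathcal{R}(A)$ behaves in a dichotomous way on $R_k$: it either contains all of $R_k$ (when $k \in A$) or none of $R_k$ (when $k \notin A$). Since $C$ is a coarse description of $\mathcal{R}(A)$ and $R_k$ has positive density $1/2^{k+1}$, this dichotomy should be detectable by looking at the asymptotic density of $C$ restricted to $R_k$, and such a limit is computable from $C'$.

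First I would verify the crucial density calculation: for fixed $k$, since $|R_k \cap [0,n)| = \Theta(n/2^{k+1})$, and since $\rho(C \sd \mathcal{R}(A)) = 0$ means $|(C \sd \mathcal{R}(A)) \cap [0,n)| = o(n)$, we get
\[
\frac{|(C \sd \mathcal{R}(A)) \cap R_k \cap [0,n)|}{|R_k \cap [0,n)|} \leq \frac{o(n)}{n/2^{k+2}} \longrightarrow 0
\]
as $n \to \infty$, for each fixed $k$. Then, using $\mathcal{R}(A) \cap R_k = R_k$ or $\emptyset$ depending on whether $k \in A$, I conclude that
\[
\lim_{n \to \infty} \frac{|C \cap R_k \cap [0,n)|}{|R_k \cap [0,n)|} = A(k).
\]

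Next I would argue that this limit can be recovered by $C'$. The predicate $|C \cap R_k \cap [0,n)| > \tfrac{1}{2} |R_k \cap [0,n)|$ is uniformly $C$-computable in $k$ and $n$, so
\[
k \in A \;\iff\; (\exists N)(\forall n \geq N)\bigl[\, 2|C \cap R_k \cap [0,n)| > |R_k \cap [0,n)| \,\bigr],
\]
and symmetrically $k \notin A$ is given by the analogous statement with the inequality reversed. Both sides of the dichotomy are $\Sigma_2^C$, so $A$ is $\Delta_2^C$, and hence $A \leq\sub{T} C'$ by the Limit Lemma.

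No step here looks genuinely hard; the only thing to take care with is that the statement $\rho(C \sd \mathcal{R}(A)) = 0$ gives $o(n)$ errors over $[0,n)$ while $R_k$ has density $1/2^{k+1}$, so the errors are also $o$ of the size of $R_k \cap [0,n)$ for each fixed $k$ — this is what ensures that the density of $C$ inside $R_k$ converges to the indicator $A(k)$ rather than to some intermediate value.
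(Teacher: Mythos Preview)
Your proposal is correct and follows essentially the same approach as the paper's proof: both argue that the relative density of $C$ within $R_k$ converges to $A(k)$ (since $R_k$ has positive density and $C \sd \mathcal{R}(A)$ has density $0$), and then observe that this limit can be recovered from $C'$ via a $\Sigma^{0,C}_2$ search. The paper phrases the last step as directly using $C'$ to find an $m$ past which $\rho^k_n(C)$ stays above or below $1/2$, while you phrase it as showing $A$ is $\Delta^{0,C}_2$ and invoking the Limit Lemma, but these are the same argument.
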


\begin{proof}
For each $k$, the density of $C$ within $R_k$ must be $1$ if $k \in
A$, and $0$ if $k \notin A$. Let
\[
\rho^k_n(C) = \frac{|(C \cap R_k) \upharpoonright n|}{|R_k
\upharpoonright n|}.
\] 
Given $k$, we can use $C'$ to search for an $m$ such that either
$\rho^k_n(C) > \frac{1}{2}$ for all $n \geq m$, or $\rho^k_n(C) <
\frac{1}{2}$ for all $n \geq m$. In the first case $k \in A$, while in
the latter $k \notin A$.
\end{proof}

Recall the notation $\bigoplus_k^{\mathcal R} X_k$ from Observation
\ref{hilbert}. If $C$ is a coarse description of
$\bigoplus_k^{\mathcal R} X_k$ then $\{n : i^k_n \in C\}$ is a coarse
description of $X_k$ for each $k$, since $R_k$ has positive density,
so we have the following fact.

\begin{lemma}
\label{rpluslem}
Every coarse description of $\bigoplus_k^{\mathcal R} X_k$ computes a
coarse description of $X_k$ for each $k$.
\end{lemma}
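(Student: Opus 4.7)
The plan is to argue exactly as the sentence just before the lemma suggests: fix a coarse description $C$ of $Y := \bigoplus_k^{\mathcal R} X_k$, and for each $k$ define $D_k = \{n : i^k_n \in C\}$. Since the function $n \mapsto i^k_n$ enumerates $R_k$ and is computable (uniformly in $k$), $D_k$ is computable from $C$, so the only content is to verify that $\rho(D_k \sd X_k) = 0$.

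The key observation is that by the definition of $\bigoplus^{\mathcal R}$, we have $i^k_n \in Y$ iff $n \in X_k$, so $n \in D_k \sd X_k$ iff $i^k_n \in C \sd Y$. In other words, the enumeration $n \mapsto i^k_n$ bijects $D_k \sd X_k$ with $(C \sd Y) \cap R_k$. Hence, counting up to $N$,
\[
|(D_k \sd X_k) \cap [0,N)| = |(C \sd Y) \cap \{i^k_0, \ldots, i^k_{N-1}\}| \leq |(C \sd Y) \cap [0,i^k_N)|.
\]

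Now I would use the fact that $R_k$ has positive density $2^{-(k+1)}$: explicitly, $i^k_N = (2N+1)2^k$, so $i^k_N \leq 2^{k+1}(N+1)$. Dividing the displayed inequality by $N$ gives
\[
\rho_N(D_k \sd X_k) \leq 2^{k+1}\cdot \frac{N+1}{N}\cdot \rho_{i^k_N}(C \sd Y),
\]
and since $\rho(C \sd Y) = 0$ and $2^{k+1}$ is a fixed constant (for fixed $k$), the right-hand side tends to $0$ as $N \to \infty$. Thus $\rho(D_k \sd X_k) = 0$, so $D_k$ is a coarse description of $X_k$ that is computable from $C$.

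There is no real obstacle here; the lemma is essentially a direct calculation exploiting the fact that $R_k$ is a sparse but positive-density ``stretching'' of $\omega$, so that a density-$0$ error on all of $\omega$ induces a density-$0$ error within $R_k$, which in turn pulls back through the bijection $n \mapsto i^k_n$ to a density-$0$ error on $\omega$.
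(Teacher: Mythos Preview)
Your proof is correct and follows exactly the approach the paper indicates in the sentence preceding the lemma: take $D_k = \{n : i^k_n \in C\}$ and use that $R_k$ has positive density to show the error set has density $0$. The paper does not give a separate proof beyond that sentence, so your argument simply fills in the routine density calculation.
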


We can now give necessary conditions for a subspace of $(\mathcal
S,\delta)$ generated by a Turing invariant set to be closed, and hence
complete.

\begin{theorem}
\label{clthm}
If $\mathcal U \subseteq \mathcal S$ is generated by a Turing
invariant set $U$ and is closed, then the following hold.
\begin{enumerate}

\item For every $X$ and every $A \in U$, there is a $C \in U$ such
that $A \leq\sub{T} C$ and $X \leq\sub{T} C'$.

\item Every countable Turing ideal contained in $U$ has an upper bound
in $U$.

\end{enumerate}
\end{theorem}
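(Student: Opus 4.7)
The plan is to exploit closure of $\mathcal U$ by producing, in each part, a Cauchy sequence of coarse similarity classes in $\mathcal U$ whose $\delta$-limit automatically lies in $\mathcal U$, and is engineered so that \emph{any} coarse description of it encodes the required Turing-theoretic data. The encodings are built from the operators $\mathcal J$ and $\mathcal R$, and rely on four facts already available in the paper: Lemma~\ref{jlem}, that every coarse description of $\mathcal J(A)$ computes $A$; Lemma~\ref{Rlem}, that $\mathcal R(X)$ is a $\delta$-limit of the computable sets $\mathcal R(X \cap [0,k])$, with $\mathcal R(X) \sd \mathcal R(X \cap [0,k]) \subseteq \bigcup_{j \geq k} R_j$; Lemma~\ref{Rlem2}, that every coarse description of $\mathcal R(X)$ computes $X$ in the jump; and Lemma~\ref{rpluslem}, that every coarse description of $\bigoplus^{\mathcal R}_k Y_k$ computes a coarse description of each $Y_k$.

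For part~(1), given $A \in U$ and $X$, let $X_k = X \cap [0,k]$, target $Z = \mathcal J(A) \oplus \mathcal R(X)$, and take finite approximations $Z_k = \mathcal J(A) \oplus \mathcal R(X_k)$. Each $\mathcal R(X_k)$ is computable and $\mathcal J(A) \equiv\sub{T} A$, so $Z_k \equiv\sub{T} A$ and hence $Z_k \in U$ by Turing invariance. A short calculation with the symmetric difference of the joins reduces $\delta([Z_k],[Z])$ to $\tfrac12 \orho(\mathcal R(X_k) \sd \mathcal R(X))$, which tends to $0$ by Lemma~\ref{Rlem}. Closure of $\mathcal U$ then yields some $C \in U$ with $[C] = [Z]$. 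The even bits of $C$ form a coarse description of $\mathcal J(A)$, giving $A \leq\sub{T} C$ by Lemma~\ref{jlem}, and the odd bits form a coarse description of $\mathcal R(X)$, giving $X \leq\sub{T} C'$ by Lemma~\ref{Rlem2}.

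For part~(2), enumerate the ideal as $I = \{X_0, X_1, \ldots\}$ and set $W = \bigoplus^{\mathcal R}_k \mathcal J(X_k)$. For each $n$, let $W_n$ agree with $W$ on $\bigcup_{k \leq n} R_k$ and be empty elsewhere. Because $I$ is closed under finite joins, $X_0 \oplus \cdots \oplus X_n$ lies in $I \subseteq U$, and $W_n$ is computable from this join, so by Lemma~\ref{downclosed} we have $[W_n] \in \mathcal U$. Since $W \sd W_n \subseteq \bigcup_{k > n} R_k$, a set of density $2^{-(n+1)}$, $\delta([W_n],[W]) \to 0$, so closure of $\mathcal U$ delivers $C \in U$ with $[C] = [W]$. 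Lemma~\ref{rpluslem} then supplies, uniformly in $k$, a $C$-computable coarse description of $\mathcal J(X_k)$, and Lemma~\ref{jlem} converts each into a $C$-computation of $X_k$, giving $X_k \leq\sub{T} C$ for all $k$.

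The main difficulty is the choice of encoding: one needs target sets whose coarse similarity classes are robust enough that every coarse description preserves the full Turing content that must be recovered, yet whose natural finite approximations already live in $\mathcal U$ rather than merely approximating points outside it. The division of labor between $\mathcal J$, which hard-codes a single stream against density-zero perturbation, and $\mathcal R$, which multiplexes countably many streams into one set whose coarse descriptions still recover each summand up to coarse similarity, is precisely what makes both parts work; once the encoding is fixed, verifying Turing invariance of the approximations and computing upper bounds on $\delta$ is routine.
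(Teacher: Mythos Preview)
Your proof is correct and follows essentially the same approach as the paper: both parts build the identical target sets $\mathcal J(A)\oplus\mathcal R(X)$ and $\bigoplus^{\mathcal R}_k \mathcal J(X_k)$, approximate them by the same finite truncations lying in $\mathcal U$, invoke closure to obtain a coarse description $C\in U$, and then apply Lemmas~\ref{jlem}, \ref{Rlem2}, and~\ref{rpluslem} exactly as you do to extract the required Turing reductions. Your write-up is slightly more explicit about the $\delta$-estimates, but the argument is the same.
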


\begin{proof}
(1) Fix $X$ and $A \in U$. We have $[\mathcal J(A) \oplus B] \in
\mathcal U$ for all computable $B$, so an easy adaptation of the proof
of Lemma \ref{Rlem} shows that $[\mathcal J(A) \oplus \mathcal R(X)]
\in \mathcal U$, i.e., there is a $C \in U$ that is a coarse description
of $\mathcal J(A) \oplus \mathcal R(X)$. This $C$ computes coarse
descriptions of both $\mathcal J(A)$ and $\mathcal R(X)$, so by Lemmas
\ref{jlem} and \ref{Rlem2}, $A \leq\sub{T} C$ and $X \leq\sub{T} C'$.

(2) Suppose that $\mathcal I$ is a countable Turing ideal contained in
$U$. Let $A_0,A_1,\ldots$ be the elements of $\mathcal I$. Let
$i_0^k<i_1^k<\cdots$ be the elements of $R_k$. Let $Z_n =
\bigoplus_k^{\mathcal R} X_k$ where $X_k=\mathcal J(A_k)$ for $k<n$
and $X_k=\emptyset$ for $k \geq n$, and let $Z = \bigoplus_k^{\mathcal
R} \mathcal J(A_k)$. Then each $Z_n$ is computable from finitely
many elements of $\mathcal I$, and hence is in $\mathcal I$. It is
also clear that $[Z]$ is the limit of the $[Z_n]$ in $(\mathcal
S,\delta)$, so $[Z] \in \mathcal U$. That is, there is a $Y \in U$
that is a coarse description of $Z$. By Lemma \ref{rpluslem}, $Y$
computes a coarse description of $\mathcal J(A_k)$ for each $k$, and
hence $Y$ computes each $A_k$, i.e., $Y$ is an upper bound for
$\mathcal I$.
\end{proof}

\begin{corollary}
\label{clocontcor}
If  $\mathcal U \subseteq \mathcal S$ is nonempty, generated by a
Turing invariant set, and closed, then it has size continuum.
\end{corollary}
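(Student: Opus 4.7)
The plan is to produce continuum many pairwise distinct elements of $\mathcal U$, parametrized by $X \in 2^\omega$. Fix any $A \in U$ (available since $\mathcal U$ is nonempty). For each $X \in 2^\omega$ I claim that $[\mathcal J(A) \oplus \mathcal R(X)] \in \mathcal U$, and that distinct $X$'s give distinct such classes; this immediately yields the result.

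For membership, I would first note that $A \equiv\sub{T} \mathcal J(A)$, since the intervals $J_k$ are uniformly computable and nonempty, so from $\mathcal J(A)$ one reads off $A(k)$ by inspecting any element of $J_k$. Hence $\mathcal J(A) \oplus B \leq\sub{T} A$ for every computable $B$, and by Lemma \ref{downclosed} we have $[\mathcal J(A) \oplus B] \in \mathcal U$ for every such $B$. Taking $B = \mathcal R(X \restr k)$, which is finite and therefore computable, we obtain $[\mathcal J(A) \oplus \mathcal R(X \restr k)] \in \mathcal U$ for every $k$. Moreover $\mathcal R(X) \sd \mathcal R(X \restr k) \subseteq \bigcup_{j \geq k} R_j$, a set of density $2^{-k}$, so $[\mathcal J(A) \oplus \mathcal R(X \restr k)]$ converges in $(\mathcal S,\delta)$ to $[\mathcal J(A) \oplus \mathcal R(X)]$; closedness of $\mathcal U$ then places this limit in $\mathcal U$. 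This is essentially the argument of Theorem \ref{clthm}(1), simply made uniform in $X$.

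For distinctness, given $X_1 \neq X_2$, the sets $\mathcal J(A) \oplus \mathcal R(X_1)$ and $\mathcal J(A) \oplus \mathcal R(X_2)$ agree on all even coordinates, while on the odd coordinates their symmetric difference is a shifted copy of $\mathcal R(X_1 \sd X_2)$. Picking any $k \in X_1 \sd X_2$, the block $R_k$ has (lower) density $2^{-k-1} > 0$, so $\mathcal R(X_1 \sd X_2)$ has positive density, and hence
\[
\delta\bigl([\mathcal J(A) \oplus \mathcal R(X_1)],\,[\mathcal J(A) \oplus \mathcal R(X_2)]\bigr) > 0.
\]
Thus $X \mapsto [\mathcal J(A) \oplus \mathcal R(X)]$ is injective, furnishing continuum many elements of $\mathcal U$.

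There is no substantive obstacle: the membership step is already packaged inside the proof of Theorem \ref{clthm}(1), and the only additional ingredient is the trivial density computation yielding injectivity. The roles of the hypotheses are transparent—Turing invariance (via Lemma \ref{downclosed}) puts the computable approximations into $\mathcal U$, and closedness delivers the whole continuum-parameter family of limits.
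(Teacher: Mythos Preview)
Your proof is correct, but takes a different route from the paper's. The paper invokes Theorem~\ref{clthm}(1) as a black box: since $\{C' : C \in U\}$ is cofinal in the Turing degrees, it has size continuum, hence $U$ contains sets of continuum many distinct degrees, and Lemma~\ref{jlem} then shows that $A \mapsto [\mathcal J(A)]$ sends distinct degrees to distinct classes in $\mathcal U$. Your argument instead unpacks the \emph{construction} inside the proof of Theorem~\ref{clthm}(1) and makes it uniform in the parameter $X \in 2^\omega$, producing an explicit injection $X \mapsto [\mathcal J(A) \oplus \mathcal R(X)]$ into $\mathcal U$. Your approach is more direct and self-contained---it does not appeal to the cardinality of cofinal sets of degrees and yields an explicit continuum-sized family---whereas the paper's is shorter because it reuses the conclusion of Theorem~\ref{clthm} rather than its proof.
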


\begin{proof}
Let $U$ be a Turing invariant set generating $\mathcal U$. By the
theorem, the set of jumps of elements of $U$ is cofinal in the Turing
degrees, and hence has size continuum, so $U$ itself has size
continuum. Thus the set of degrees of elements of $U$ has size
continuum. If $A$ and $B$ have different degrees, then $[\mathcal
J(A)] \neq [\mathcal J(B)]$, by Lemma \ref{jlem}, so $\mathcal U$
 has size continuum.
\end{proof}

It is of course natural to ask whether the conditions in the above
theorem can hold for any nonempty $\mathcal U \subsetneq \mathcal S$
generated by a Turing invariant set. We will discuss this question at
the end of this section.  We do not know whether these necessary
conditions are also sufficient, but we will show that if we strengthen
condition (2) from countable Turing ideals to all countable subsets of
$U$, then we do obtain sufficient conditions for a subspace of
$(\mathcal S,\delta)$ generated by a Turing invariant set to be
closed. Of course, in that case, the downward closure of $U$ under
Turing reducibility is a Turing ideal, so we might as well state our
result for Turing ideals, which also allows us to simplify condition
(1). We say that a collection $U$ of sets is \emph{jump-cofinal in the
Turing degrees} if for every $X$, there is a $C \in U$ such that $X
\leq\sub{T} C'$. It is easy to see that if $U$ is a nonempty Turing
ideal, then $U$ satisfies condition (1) if and only if it is
jump-cofinal in the Turing degrees.

We will use the relativized form of the following result, which
follows from a theorem due to J. Miller (see \cite{HJMS}).

\begin{lemma}[Hirschfeldt, Jockusch, McNicholl, and Schupp
{\cite[Corollary 5.11]{HJMS}}]
\label{hjmslem}
Suppose there is a $\emptyset'$-computable function $f$ such that, for
all $e$, we have that $\Phi_{f(e)}$ is total and $\{0,1\}$-valued, and
$\overline{\rho}(B \bigtriangleup \Phi_{f(e)}) \leq 2^{-e}$.  Then $B$
is coarsely computable.
\end{lemma}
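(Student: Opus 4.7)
The approach is to construct a single computable set $D$ with $\rho(B \sd D) = 0$, which immediately witnesses that $B$ is coarsely computable. The first step is to apply the Limit Lemma: since $f \leq\sub{T} \emptyset'$, there is a total computable function $\hat{f}(e, s)$ with $\lim_s \hat{f}(e, s) = f(e)$ for every $e$. Using $\hat{f}$, I would form a uniformly computable sequence of total $\{0,1\}$-valued functions $g_e$ by setting $g_e(n) = \Phi_{\hat{f}(e, n)}(n)[n]$, with the convention that the output is $0$ if the computation fails to halt in $n$ steps with a $\{0,1\}$-value. Because $\hat{f}(e, n) = f(e)$ for all sufficiently large $n$ with $e$ fixed, the function $g_e$ differs from $\Phi_{f(e)}$ on a finite set, so $\overline{\rho}(B \sd g_e) = \overline{\rho}(B \sd \Phi_{f(e)}) \leq 2^{-e}$. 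This reduces the statement to: from a uniformly computable family of total $\{0,1\}$-valued functions $g_e$ with $\overline{\rho}(B \sd g_e) \leq 2^{-e}$, produce a single computable $D$ coarsely similar to $B$.

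Next, I would assemble $D$ blockwise on the dyadic blocks $J_k = [2^k - 1, 2^{k+1} - 1)$ of Definition \ref{jddef}, using Lemma \ref{factor2} to pass between the upper density $\overline{\rho}$ and the block density $\overline{d}$. Choose a computable, slowly increasing function $k \mapsto e(k)$ with $e(k) \to \infty$ and set $D \uhr J_k = g_{e(k)} \uhr J_k$. Then $D$ is computable, and on each block $d_k(B \sd D) = d_k(B \sd g_{e(k)})$. The goal becomes showing $\overline{d}(B \sd D) = 0$, since by Lemma \ref{factor2} this yields $\overline{\rho}(B \sd D) = 0$ as required. From Lemma \ref{factor2} applied to $B \sd g_e$, we know $\overline{d}(B \sd g_e) \leq 2 \cdot 2^{-e}$, so for each $e$ the block-densities $d_k(B \sd g_e)$ are eventually bounded by, say, $3 \cdot 2^{-e}$; combined with $e(k) \to \infty$, this will drive $d_k(B \sd D)$ to $0$ provided the ``bad'' blocks (where $d_k(B \sd g_e)$ still exceeds this bound) contribute negligibly.

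The main technical obstacle is exactly the tuning of $e(k)$. The threshold $K_e$ past which $d_k(B \sd g_e) \leq 3 \cdot 2^{-e}$ depends on both $e$ and $B$ and is not computably bounded in $e$, so a fixed rate of growth such as $e(k) = \lfloor \log k \rfloor$ need not suffice: if $e(k)$ first takes the value $e$ at some $k < K_e$, then $d_k(B \sd D)$ could be large at that $k$. The standard Miller-style resolution, following \cite{HJMS}, is to let $e(k)$ increase very slowly, holding each level for so many consecutive blocks that for every fixed $e$ all but a vanishing fraction of the blocks on which $g_e$ is used satisfy the density bound. Phrased dually, one shows that the sequence $(g_e)$ is Cauchy in $(\mathcal{S},\delta)$ (which follows from the triangle inequality and $\overline{\rho}(B \sd g_e) \leq 2^{-e}$), and then invokes the completeness of $(\mathcal{S},\delta)$ from Theorem \ref{complthm} together with a computable refinement of that argument to produce a computable representative $D$ of the limit class $[B]$. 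Either route, carried out carefully, delivers $\overline{d}(B \sd D) = 0$, hence $\overline{\rho}(B \sd D) = 0$ by Lemma \ref{factor2}, so $D$ is a computable coarse description of $B$.
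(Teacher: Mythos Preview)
The paper does not prove this lemma; it cites \cite{HJMS} and notes that it ``follows from a theorem due to J.\ Miller,'' namely Theorem~\ref{milthm} (every strongly $\delta$-Cauchy sequence has a limit computable from the sequence). Your Route~2 is exactly this approach: the uniformly computable $g_e$'s form a strongly $\delta$-Cauchy sequence (by the triangle inequality through $B$), and Miller's trust-based construction, described in Section~\ref{compsec}, yields a computable limit. So your second route matches the paper's intended argument.

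However, two points deserve correction. First, your definition $g_e(n) = \Phi_{\hat f(e,n)}(n)[n]$ does not give $g_e =^* \Phi_{f(e)}$: even once $\hat f(e,n)=f(e)$, the computation $\Phi_{f(e)}(n)$ may take more than $n$ steps, so $g_e$ and $\Phi_{f(e)}$ can differ on an infinite set of arbitrary upper density. The standard fix is to search: let $g_e(n)=\Phi_{\hat f(e,s)}(n)$ for the least $s\geq n$ with $\Phi_{\hat f(e,s)}(n)[s]\converges\in\{0,1\}$; totality of $\Phi_{f(e)}$ guarantees such an $s$ exists, and then $g_e(n)=\Phi_{f(e)}(n)$ for all $n$ past the settling time of $\hat f(e,\cdot)$.

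Second, your primary Route~1 does not work as written. Setting $D\uhr J_k = g_{e(k)}\uhr J_k$ for any predetermined computable schedule $e(k)\to\infty$, however slow, cannot control $\overline d(B\sd D)=\limsup_k d_k(B\sd D)$: the threshold $K_e$ (past which $d_k(B\sd g_e)$ is small) is not computably bounded, so for each $e$ the first block on which you use $g_e$ may have $d_k(B\sd g_e)$ close to~$1$, and infinitely many such blocks force $\overline d(B\sd D)=1$. ``Holding each level for many blocks'' does not help, because $\overline d$ is a $\limsup$, not an average. The missing idea is precisely Miller's \emph{adaptive} mechanism: on block $J_k$ one uses $g_n$ for the largest $n\leq k$ such that $g_n$ is trusted on $J_k$ by every $g_m$ with $m<n$ (e.g.\ $d_k(g_m\sd g_n)<2^{-m+1}$), so the choice reacts to the actual block densities rather than following a fixed schedule. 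That is your Route~2, and it is the one that works.
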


\begin{theorem}
\label{clidthm}
Let $\mathcal U \subseteq \mathcal S$ be nonempty and generated by a
Turing ideal $U$. Then $\mathcal U$ is closed if and only if $U$ is
jump-cofinal in the Turing degrees and every countable subset of $U$
has an upper bound in $U$.
\end{theorem}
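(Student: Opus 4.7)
The plan is to handle the two directions separately, leveraging Theorems \ref{clthm} and \ref{complthm} and the relativized form of Lemma \ref{hjmslem}.

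For the forward direction, both conditions follow from Theorem \ref{clthm} using that $U$ is a Turing ideal. Since $\mathcal U$ is nonempty, fix any $A \in U$; condition (1) of Theorem \ref{clthm} applied to this $A$ and arbitrary $X$ produces $C \in U$ with $X \leq\sub{T} C'$, which is jump-cofinality. For the second condition, given a countable $S \subseteq U$, the Turing ideal $\langle S\rangle$ generated by $S$ is a countable Turing ideal contained in $U$ (because $U$ is itself an ideal), so condition (2) of Theorem \ref{clthm} furnishes an upper bound for $\langle S\rangle$ in $U$, which also bounds $S$.

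For the backward direction, I would take a Cauchy sequence $\{[C_n]\}$ in $\mathcal U$, pass to a subsequence with $\delta([C_m],[C_n]) < 2^{-m-1}$ for $m < n$, pick representatives $C_n \in U$, and use Theorem \ref{complthm} to produce the limit $[C]$; the goal is a coarse description of $C$ lying in $U$. Applying the countable upper-bound hypothesis to $\{C_n\}$ yields $Y \in U$ with $C_n \leq\sub{T} Y$ for all $n$. Applying jump-cofinality to the set $X = Y' \oplus \bigoplus_n^{\mathcal R} C_n$ (which need not be in $U$, but this is allowed since jump-cofinality is stated for every external $X$) yields $W \in U$ with $X \leq\sub{T} W'$. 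Setting $V = Y \oplus W \in U$, we obtain $V \geq\sub{T} Y$ (so each $C_n \leq\sub{T} V$) while $V' \geq\sub{T} Y' \oplus \bigoplus_n^{\mathcal R} C_n$; in particular $V'$ computes the sequence $\{k_n\}$ of the proof of Theorem \ref{complthm} and uniformly recovers each $C_n$ via Lemma \ref{rpluslem}.

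Following the completeness construction, I would define $A_e$ to agree with $C_m$ on $J_i$ for $i \in [k_m,k_{m+1})$ and $m < e$, and with $C_e$ on $J_i$ for $i \geq k_e$. The defining property of $\{k_n\}$ gives $\overline{d}(A_e \sd C) \leq 2^{-e}$ and hence $\delta(A_e,C) \leq 2^{-e+1}$ by Lemma \ref{factor2}. Each $A_e$ is $V$-computable, being assembled from the finite data $k_1,\ldots,k_e$ together with the $V$-computable sets $C_0,\ldots,C_e$. I would then produce a $V'$-computable function $f$ with $\Phi_{f(e)}^V = A_{e+1}$, so that the relativized form of Lemma \ref{hjmslem} delivers a coarse description of $C$ that is $\leq\sub{T} V$, hence in $U$, yielding $[C] \in \mathcal U$.

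The hardest step is showing that $f$ can indeed be made $V'$-computable, which reduces to producing, $V'$-computably in $n$, a $V$-index for each $C_n$. The $C_n$'s are $V$-computable only non-uniformly, but the uniform $V'$-access to their bits provided by $\bigoplus_n^{\mathcal R} C_n \leq\sub{T} V'$ should let $V'$ certify candidate $V$-oracle programs by pointwise comparison; this is precisely why we invoke jump-cofinality against a set that packages the sequence-join rather than against $Y$ alone. If this direct route proves too delicate, a fallback is to iterate jump-cofinality so that $V'$ absorbs the index function itself, and then assemble $f$ from the resulting $V'$-computable data.
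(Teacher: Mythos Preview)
Your forward direction and the overall architecture of the backward direction are correct and match the paper. The gap is in your primary route to obtaining $V$-indices for the $C_n$.

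The sentence ``the uniform $V'$-access to their bits provided by $\bigoplus_n^{\mathcal R} C_n \leq\sub{T} V'$ should let $V'$ certify candidate $V$-oracle programs by pointwise comparison'' does not work. The predicate $\Phi_e^V = C_n$ is $\Pi^{0,V}_2$ (universally over inputs, existentially over stages of convergence, then an equality check), and $V'$ only decides $\Sigma^{0,V}_1$ and $\Pi^{0,V}_1$ questions. Having $V'$ uniformly compute the bits of each $C_n$ does not help: comparing $\Phi_e^V$ with $C_n$ on any finite set of inputs never certifies total agreement, and there is no way to leverage those bits to descend from a $V'$-index to a $V$-index. So packaging $\bigoplus_n^{\mathcal R} C_n$ (or $Y'$) into the set you feed to jump-cofinality buys nothing toward this step.

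What does work is exactly your fallback, and it is what the paper does: having found $Y \in U$ bounding all $C_n$, choose indices $e_n$ with $\Phi_{e_n}^Y = C_n$, and apply jump-cofinality directly to (an encoding of) the function $n \mapsto e_n$, obtaining $W \in U$ with this function $\leq\sub{T} W'$. Then $V = Y \oplus W \in U$ satisfies $V \geq\sub{T} Y$ and $V'$ computes $n \mapsto e_n$; composing with a fixed reduction of $Y$ to $V$ converts $Y$-indices to $V$-indices, yielding the required $V'$-computable $f$ with $\Phi_{f(n)}^V = C_n$. No iteration is needed, a single application suffices. Note also that your auxiliary sets $A_e$ are unnecessary: once the subsequence satisfies $\delta(C_m,C_n) < 2^{-m-1}$ for $m<n$, the limit $C$ already satisfies $\delta(C,C_m) \leq 2^{-m-1}$, so the $C_m$ themselves can be fed directly into the relativized Lemma~\ref{hjmslem}. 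This is how the paper proceeds.
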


\begin{proof}
The ``only if'' direction follows from Theorem \ref{clthm} and the
fact that every countable subset of $U$ is contained in a countable
subideal of $U$. For the ``if'' direction, let $B_0,B_1,\ldots \in U$
be such that $[B_0],[B_1],\ldots$ is a Cauchy sequence, and let $[B]$
be its limit. By passing to a subsequence, we can assume that
$\delta([B_m],[B_n]) < 2^{-m}$ for all $m<n$. Let $D \in U$ be such
that $B_0,B_1,\ldots \leq\sub{T} D$ (which exists because the subideal
of $U$ generated by the $B_i$ has an upper bound in $U$). Let $e_i$ be
such that $\Phi_{e_i}^D=B_i$ for all $i$. There is a $C \in U$ such
that $D \leq\sub{T} C$ and the function $i \mapsto e_i$ is computable
in $C'$. Then there is a function $f \leq\sub{T} C'$ such that
$\Phi_{f(i)}^C=B_i$ for all $i$. By the relativized form of Lemma
\ref{hjmslem}, $B$ is coarsely $C$-computable, so $[B] \in \mathcal
U$.
\end{proof}

Again, we will discuss the question of whether there are any $\mathcal
U \subsetneq \mathcal S$ that satisfy the conditions in this theorem
at the end of this section.

We now turn to the properties in Theorems \ref{conthm} and
\ref{geodesic}, for which we have the following exact
characterization, which again involves condition (1) above. A real is
\emph{right-c.e.}\ if the set of rationals greater than it is c.e.,
and \emph{left-c.e.}\ if the set of rationals less than it is c.e. If
a real is both right-c.e.\ and left-c.e., then it is computable.

\begin{lemma}
\label{rightlem}
For any sets $A$ and $B$, we have that $\delta(A,B)$ is
right-c.e.\ relative to $(A \oplus B)'$.
\end{lemma}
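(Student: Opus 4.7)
The plan is to unpack the definition of $\delta(A,B)$ as a $\limsup$ of computable (from $A \oplus B$) quantities, and then show that the relevant set of rationals is $\Sigma^0_2$ in $A \oplus B$, hence c.e.\ in $(A \oplus B)'$. Recall that a real $r$ is right-c.e.\ relative to an oracle $X$ exactly when the set of rationals $q$ with $q > r$ is c.e.\ in $X$, so this is what I need to show for $r = \delta(A,B) = \overline{\rho}(A \sd B) = \limsup_n \rho_n(A \sd B)$.

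First I would observe that for each $n$, the quantity $\rho_n(A \sd B) = |(A \sd B) \uhr n|/n$ is a rational number that can be computed from $A \oplus B$ uniformly in $n$. So the binary predicate $P(n,q) \Leftrightarrow \rho_n(A \sd B) < q$, for $n \in \omega$ and $q \in \bQ$, is decidable from $A \oplus B$.

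Next I would reduce ``$q > \limsup_n \rho_n(A \sd B)$'' to a $\Sigma^0_2(A \oplus B)$ statement. The cleanest equivalence is
\[
q > \limsup_n \rho_n(A \sd B) \ \Longleftrightarrow\ (\exists k)(\exists N)(\forall n \geq N)\bigl[\rho_n(A \sd B) < q - 2^{-k}\bigr].
\]
The left-to-right direction: if $q > L := \limsup_n \rho_n(A \sd B)$, pick $k$ with $2^{-k} < (q-L)/2$; then eventually $\rho_n(A \sd B) < L + 2^{-k} < q - 2^{-k}$. The right-to-left direction: if the right-hand side holds for some $k,N$, then $\limsup_n \rho_n(A \sd B) \leq q - 2^{-k} < q$. (The same idea works with any rational gap, e.g.\ $1/k$ in place of $2^{-k}$.)

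Finally, since the matrix $\rho_n(A \sd B) < q - 2^{-k}$ is uniformly decidable from $A \oplus B$, the whole displayed statement is $\Sigma^0_2(A \oplus B)$, and therefore c.e.\ relative to $(A \oplus B)'$, uniformly in $q$. Hence the set $\{q \in \bQ : q > \delta(A,B)\}$ is c.e.\ in $(A \oplus B)'$, which is exactly the assertion that $\delta(A,B)$ is right-c.e.\ relative to $(A \oplus B)'$. There is no substantive obstacle here; the only thing to be careful about is not using the (false in general) equivalence $\limsup_n a_n < q \Leftrightarrow (\exists N)(\forall n \geq N)[a_n < q]$, but rather inserting the small rational gap $2^{-k}$ as above.
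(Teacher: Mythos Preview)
Your proof is correct and follows essentially the same approach as the paper: show that $\{q \in \bQ : q > \delta(A,B)\}$ is $\Sigma^0_2(A \oplus B)$ and hence c.e.\ in $(A \oplus B)'$. The only minor difference is in handling the boundary issue you flag: the paper disposes of it by first observing that if $\delta(A,B)$ is rational then it is computable (so trivially right-c.e.), and otherwise the simpler equivalence $q > \delta(A,B) \Leftrightarrow (\exists m)(\forall n>m)[\rho_n(A \sd B) < q]$ already holds since $q \neq \delta(A,B)$; your insertion of the gap $2^{-k}$ achieves the same end without the case split.
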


\begin{proof}
We can assume $\delta(A,B)$ is irrational, since every rational is a
computable real. Then for a rational $q$, we have $q > \delta(A,B)$ if
and only if $(\exists m)(\forall n>m)[\rho_n(A \sd B) < q]$, which is
a c.e.\ condition relative to $(A \oplus B)'$.
\end{proof}

\begin{theorem}
\label{conthm2}
Let $\mathcal U \subseteq \mathcal S$ be nonempty and generated by a
Turing invariant set $U$. The following are equivalent.
\begin{enumerate}

\item $\mathcal U$ is connected.

\item $\mathcal U$ is pathwise connected.

\item $\mathcal U$ is contractible.

\item $\mathcal U$ is geodesic.

\item For every real $r$ and every $A \in U$, there is a $B \in U$
such that $A \leq\sub{T} B$ and $r$ is right-c.e.\ relative to $B'$.

\item For every real $r$ and every $A \in U$, there is a $C \in U$
such that $A \leq\sub{T} C$ and $r \leq\sub{T} C'$.

\end{enumerate}
\end{theorem}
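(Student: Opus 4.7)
The plan is to prove the cycle $(6)\Rightarrow(4)\Rightarrow(2)\Rightarrow(1)\Rightarrow(5)\Leftrightarrow(6)$, with offshoot $(6)\Rightarrow(3)\Rightarrow(2)$. The implications $(4)\Rightarrow(2)$, $(3)\Rightarrow(2)$, and $(2)\Rightarrow(1)$ are standard general topology, while $(6)\Rightarrow(5)$ is trivial. For $(5)\Rightarrow(6)$, given $A\in U$ and a real $r$, apply $(5)$ first to $(A,r)$ to obtain $B\in U$ with $A\leq\sub{T} B$ and $r$ right-c.e.\ in $B'$, then apply $(5)$ to $(B,1-r)$ to obtain $E\in U$ with $B\leq\sub{T} E$ and $1-r$ right-c.e.\ in $E'$; then $r$ is both right- and left-c.e.\ in $E'$, whence $r\leq\sub{T} E'$.

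For $(6)\Rightarrow(3)$ and $(6)\Rightarrow(4)$: by Lemma \ref{downclosed}, I may assume $U$ is downward closed, so $[\emptyset]\in\mathcal U$. I would follow the constructions of Theorems \ref{conthm} and \ref{geodesic} in the ambient space $(\mathcal S,\delta)$, forcing each intermediate class into $\mathcal U$ via the relativized form of Lemma \ref{hjmslem}. For the contraction $\Phi([A],t)=[A\cap C_t]$ from Theorem \ref{conthm}: condition $(6)$ gives $E\in U$ with $A\leq\sub{T} E$ and $t\leq\sub{T} E'$; for each rational $t'$, the set $A\cap C_{t'}$ is $E$-computable, and $\overline{\rho}((A\cap C_t)\sd(A\cap C_{t'}))\leq|t-t'|$, so $E'$-uniform rational approximations to $t$ yield $E$-computable coarse approximants to $A\cap C_t$, and the relativized Lemma \ref{hjmslem} places a coarse description of $A\cap C_t$ into $U$. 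An analogous argument, iterating $(6)$ to absorb both geodesic endpoints $P,Q$ and the parameter $s$ into the jump of a single $E\in U$ with $P\leq\sub{T} E$ and $Q\oplus s\leq\sub{T} E'$, handles $(6)\Rightarrow(4)$.

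For $(1)\Rightarrow(5)$ I argue by contrapositive. Suppose $(5)$ fails with witnesses $A\in U$ and $r\in(0,1)$. Consider the open sets
\[
V=\{[X]\in\mathcal U:\delta([A],[X])<r\}\quad\text{and}\quad W=\{[X]\in\mathcal U:\delta([A],[X])>r\},
\]
both nonempty: $[A]\in V$, and Theorem \ref{compthm}(1) yields $[X]\in\mathcal U$ with $\delta([A],[X])=1$, so $[X]\in W$. A disconnection will follow if no $[X]\in\mathcal U$ realizes $\delta([A],[X])=r$. If $X\in U$ were to satisfy $\delta(A,X)=r$ then by Lemma \ref{rightlem}, $r$ would be right-c.e.\ in $(A\oplus X)'$; to contradict the failure of $(5)$, I would produce $B\in U$ with $A\leq\sub{T} B$ and $(A\oplus X)'\leq\sub{T} B'$ by lifting $X$ to a set $Y$ coarsely equivalent to $X$ that also codes $A$ on a density-$0$ computable set (as in the proof of Lemma \ref{downclosed}), so $Y\equiv\sub{T} A\oplus X$ and $A\leq\sub{T} Y$.

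The main obstacle is placing this $Y$ in $U$: since $U$ is only Turing invariant, having $\deg(A),\deg(X)\in\widehat U:=\{\deg(Z):Z\in U\}$ does not force $\deg(A\oplus X)\in\widehat U$. I plan to resolve this by a further application of connectedness---showing that if no suitable $Y$ exists, then the subspace of $\mathcal U$ consisting of classes having a representative in $U$ of degree above $\deg(A)$ is a proper clopen subset of $\mathcal U$ containing $[A]$, contradicting $(1)$. The key tool will be Lemma \ref{jlem}: since $[\mathcal J(Z)]$ lets the metric recover $\deg(Z)$, degree-theoretic strata in $U$ translate to metric structure in $\mathcal U$, and a join failure in $\widehat U$ should produce a genuine topological gap. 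Verifying clopenness of the relevant subspace is the most delicate point, and where the bulk of the work will lie.
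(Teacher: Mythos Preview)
The main gap is in your $(1)\Rightarrow(5)$ argument. You correctly identify the obstruction---placing something Turing-equivalent to $A\oplus X$ into $U$ when $U$ is merely Turing invariant---but your proposed clopen-subspace resolution is not a real plan: there is no evident reason the set of classes in $\mathcal U$ admitting a $U$-representative of degree $\geq\deg(A)$ should be open or closed, and Lemma~\ref{jlem} does not supply one. The paper bypasses the join problem entirely by centering the disconnection not at $[A]$ but at $[\mathcal J(A)]$ (which lies in $\mathcal U$ since $\mathcal J(A)\equiv\sub{T} A$), and replacing $r$ by some $s<1/4$ with $r-s\in\bQ$ (so that $s$ is right-c.e.\ in $B'$ iff $r$ is). Then if $B\in U$ satisfies $\delta(B,\mathcal J(A))=s<1/4$, Lemma~\ref{jlem} forces $A\leq\sub{T} B$ outright, whence $B\oplus\mathcal J(A)\equiv\sub{T} B\oplus A\equiv\sub{T} B$, and Lemma~\ref{rightlem} makes $s$ right-c.e.\ in $B'$, contradicting the failure of $(5)$. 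The idea you are missing is precisely that $\mathcal J(A)$ is designed so that \emph{metric proximity enforces Turing reducibility}, which absorbs the troublesome join into $B$ itself.

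There is a smaller issue in your $(6)\Rightarrow(4)$ sketch: with $P\leq\sub{T} E$ but only $Q\leq\sub{T} E'$, the rational-parameter geodesic points (which require both $P$ and $Q$ to compute) are merely $E'$-computable, not $E$-computable, so the relativized Lemma~\ref{hjmslem} does not apply as you describe. The paper's route for $(6)\Rightarrow(3)$ avoids Lemma~\ref{hjmslem} altogether: from $r\leq\sub{T} C'$ the limit lemma yields a $C$-computable (not $C'$-computable) sequence of rationals $q_i\to r$, and one directly builds a single $C$-computable set $D_r$, using $q_i$ on the $i$th interval $L_i$, that is coarsely equivalent to $C_r$. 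You should follow that pattern rather than route through Lemma~\ref{hjmslem}.
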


\begin{proof}
By Lemma \ref{downclosed}, we can assume that $U$ is downward
closed under Turing reducibility.

It is well-known that (3) and (4) both imply (2), which in turn
implies (1).

Suppose that (5) holds and fix a real $r$ and a set $A \in U$. Let $B
\in U$ be such that $A \leq\sub{T} B$ and $r$ is right-c.e.\ relative
to $B'$. Now let $C \in U$ be such that $B \leq\sub{T} C$ and $1-r$ is
right-c.e.\ relative to $C'$, which implies that $r$ is left-c.e.\
relative to $C'$. Since $B' \leq\sub{T} C'$, we have that $r$ is both
right-c.e.\ and left-c.e.\ relative to $C'$, so $r
\leq\sub{T} C'$. Thus (5) implies (6). Clearly (6) implies (5), so (5)
and (6) are equivalent.

Now suppose that (5) fails. We show that $\mathcal U$ is not
connected. Let $r$ and $A \in U$ be such that if $B \in U$ and $A
\leq\sub{T} B$, then $r$ is not right-c.e.\ relative to $B'$. If $A$
is computable, then Lemma \ref{rightlem} implies that there is no $B
\in U$ with $\delta(A,B)=r$, which allows us to separate $\mathcal U$
into the disjoint nonempty open sets $\{[Y] \in \mathcal U :
\delta(Y,A)<r\}$ and $\{[Y] \in \mathcal U : \delta(Y,A) > r\}$. Since
we cannot assume that $A$ is computable, we replace $A$ by $\mathcal
J(A)$ (which is in $\mathcal U$), and replace $r$ by a real $s$ that
differs from $r$ by a rational and is small enough to ensure that
every $B$ such that $\delta(B,\mathcal J(A))<s$ computes $A$.

In detail: Let $s<1/4$ differ from $r$ by a rational. Note that $s$
cannot be right-c.e.\ relative to $B'$ for any $B \geq\sub{T} A$ in
$U$. Let $\mathcal D$ consist of all $[Y] \in \mathcal U$ such that
$\delta(Y,\mathcal J(A)) < s$, and let $\mathcal E$ consist of all
$[Y] \in \mathcal U$ such that $\delta(Y,\mathcal J(A)) > s$. Then
$\mathcal D$ and $\mathcal E$ are open in $\mathcal U$, disjoint, and
nonempty. (We have $[\mathcal J(A)] \in \mathcal D$, and $U$ contains
sets of density $0$ and $1$, at least one of which is in $\mathcal
E$.)

If $B \in U$ and $\delta(B,\mathcal J(A)) = s$, then
$\delta(B,\mathcal J(A)) < 1/4$, so that by Lemma \ref{jlem}, $A
\leq\sub{T} B$, and hence $s$ is not right-c.e.\ relative to $B'$. But
$ B \oplus \mathcal J(A) \equiv\sub{T} B \oplus A \equiv\sub{T} B$, so
this fact contradicts Lemma \ref{rightlem}. Thus there is no such $B$,
and hence $\mathcal U = \mathcal D \cup \mathcal E$ is not connected.

Finally, suppose that (6) holds. We first show that $\mathcal U$ is
contractible. It is enough to show that for $A \in U$ and each of the
sets $A_r$ in the proof of Theorem 2.7, $[A_r] \in \mathcal U$. We use
the notation of that proof. The basic idea is that $C_r$ has density
very close to $r$ within each $L_i$, but we can instead take a
sequence $q_0,q_1,\ldots$ of rationals with limit $r$, and build a set
that is defined like $C_r$ but has density very close to $q_i$ within
each $L_i$. We can then show that this set is coarsely equivalent to
$C_r$, and thus its intersection with $A$ is coarsely equivalent to
$A_r$.

In detail: Given $r$, let $C \in U$ be such that $A \leq\sub{T} C$ and
there is a $C$-computable sequence of rationals $q_0,q_1,\ldots$ with
limit $r$. Define $D_r$ by letting $D_r \cap L_i = [m_i,m_i+\lfloor
q_ii \rfloor]$. Let $B_r = A \cap D_r$. Then $B_r \leq\sub{T} A
\oplus C \equiv\sub{T} C$, so $B_r \in U$. Furthermore,
$\delta(A_r,B_r) = \overline{\rho}(A_r \sd B_r) \leq
\overline{\rho}(C_r \sd D_r) = \delta(C_r,D_r)$. Fix $\epsilon>0$. Let
$s<t$ be such that $r \in (s,t)$ and $t-s<\epsilon$. Then $C_r \sd D_r
\subseteq^* C_s \sd C_t$, so $\delta(C_r,D_r) \leq
\delta(C_s,C_t)=t-s<\epsilon$. (Here $\subseteq^*$ is containment up to
finitely many elements.) Since $\epsilon$ is arbitrary,
$\delta(A_r,B_r) \leq \delta(C_r,D_r)=0$, that is, $[A_r]=[B_r] \in
\mathcal U$.

Essentially the same argument, working with $L_i^A$ instead of $L_i$,
shows that $\mathcal U$ is geodesic.
\end{proof}

By Proposition \ref{cofcor}, the following examples of sets satisfying
the conditions of Theorem \ref{conthm2} are nontrivial.

\begin{corollary}
\label{friedcor}
Let $X$ be noncomputable. Then $\{[A] : X \nleq\sub{T} A\}$, thought
of as a subspace of $(\mathcal S,\delta)$, is pathwise connected (and
hence connected), and is in fact contractible and geodesic.
\end{corollary}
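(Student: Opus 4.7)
The plan is to apply Theorem~\ref{conthm2} to the Turing invariant set $U=\{A:X\nleq\sub{T}A\}$, which generates the subspace $\{[A]:A\in U\}$ of $(\mathcal{S},\delta)$. Among the equivalent conditions (1)--(6) of that theorem, condition~(6) is the most efficient to verify: for every real $r$ and every $A\in U$, there should exist $C\in U$ with $A\leq\sub{T}C$ and $r\leq\sub{T}C'$. Verifying (6) simultaneously yields pathwise connectedness, connectedness, contractibility, and the geodesic property.

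Fix $r$ and $A$ with $X\nleq\sub{T}A$; the task is to produce $C\geq\sub{T}A$ with $r\leq\sub{T}C'$ and $X\nleq\sub{T}C$. I would do this by a finite-extension forcing construction, building $C$ as $\bigcup_s\sigma_s$ for an increasing sequence of strings, with $A$ coded on a sparse computable set of positions so that $A\leq\sub{T}C$ automatically. Some stages satisfy Friedberg-style jump-coding requirements, arranging for each bit of $r$ to be recovered from $C'$ via the eventual behaviour of a designated $C$-computable procedure (so that $r\leq\sub{T}C'$). The remaining stages satisfy cone-avoidance requirements of the form $\Phi_e^C\neq X$: given the current string $\sigma$ and index $e$, one either extends to some $\tau\succeq\sigma$ witnessing a disagreement $\Phi_e^\tau(n)\downarrow\neq X(n)$, or, failing that, forces $\Phi_e^C$ to be partial at some $n$ by extending into an appropriate $\Pi^0_1$ subclass of strings. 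The noncomputability of $X$ together with the Jockusch--Soare cone-avoidance basis argument guarantees that at every such stage at least one of these alternatives is available.

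The main obstacle is reconciling the positive requirement (coding $r$ into $C'$) with the negative requirement (keeping $X$ out of the Turing degree of $C$): the former tends to make $C$ powerful, while the latter demands that $C$ be weak. The classical resolution is a Jockusch--Shore-style jump-inversion-with-cone-avoidance theorem, asserting that if $X\nleq\sub{T}A$ and $Y\geq\sub{T}A'$ then there is a $C\geq\sub{T}A$ with $C'\equiv\sub{T}Y$ and $X\nleq\sub{T}C$; this is obtained by performing the relativized Friedberg jump-inversion construction inside a suitable $\Pi^0_1(A')$ class and then invoking the cone-avoidance basis theorem. Taking $Y=A'\oplus r$ delivers the $C$ required by condition~(6), and Theorem~\ref{conthm2} then yields all four stated properties of the corollary.
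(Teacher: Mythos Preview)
Your proposal is correct and follows essentially the same approach as the paper: verify condition~(6) of Theorem~\ref{conthm2} by invoking the relativized Friedberg jump inversion theorem combined with cone avoidance, which the paper cites as \cite[Exercise~4.18]{Ler}. You supply more detail on the finite-extension construction than the paper does, but the strategy is identical.
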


\begin{proof}
By the theorem, it is enough to show that if $X \nleq\sub{T} A$ then
for any real $r$ there is a $C \geq\sub{T} A$ such that $X
\nleq\sub{T} C$ and $r \leq\sub{T} C'$. The Friedberg Jump Inversion
Theorem can be combined with cone avoidance, as noted in
\cite[Exercise 4.18]{Ler}. Relativizing this fact to $A$ produces the
desired $C$.
\end{proof}

Combining Theorems \ref{clthm} and \ref{conthm2} yields the following
consequence.

\begin{corollary}
Let $\mathcal U \subseteq \mathcal S$ be nonempty, generated by a
Turing invariant set, and closed. Then it is pathwise connected (and
hence connected), and is in fact contractible and geodesic.
\end{corollary}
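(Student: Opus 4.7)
The plan is to combine the two previous theorems directly: Theorem \ref{clthm} supplies the closure hypothesis in a usable form, and Theorem \ref{conthm2} converts that form into the desired topological properties.

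First I would let $U$ be a Turing invariant set generating $\mathcal U$, and invoke Theorem \ref{clthm}(1), which applies since $\mathcal U$ is closed and generated by a Turing invariant set. This gives that for every set $X$ and every $A \in U$, there is a $C \in U$ with $A \leq\sub{T} C$ and $X \leq\sub{T} C'$.

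Next I would verify condition (6) of Theorem \ref{conthm2}: for every real $r$ and every $A \in U$, there exists $C \in U$ with $A \leq\sub{T} C$ and $r \leq\sub{T} C'$. But a real is coded by a set (for instance, the set of rationals below it, under a fixed computable enumeration of $\bQ$), so this is simply an instance of the conclusion of Theorem \ref{clthm}(1) with $X$ taken to be that coding set. Hence condition (6) holds.

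Finally, since condition (6) of Theorem \ref{conthm2} implies conditions (1)--(4) of that theorem, $\mathcal U$ is connected, pathwise connected, contractible, and geodesic, which is exactly the claim.

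There is no real obstacle here; the only minor care needed is the identification of a real with a set so that Theorem \ref{clthm}(1) can be applied to feed Theorem \ref{conthm2}(6), but this identification is standard and used implicitly throughout the paper.
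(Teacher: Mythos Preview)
Your proposal is correct and matches the paper's own argument exactly: the paper simply says the corollary follows by combining Theorems~\ref{clthm} and~\ref{conthm2}, and you have spelled out precisely how that combination works (Theorem~\ref{clthm}(1) yields condition~(6) of Theorem~\ref{conthm2}, which in turn gives the topological conclusions).
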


The following corollary has the same proof as Corollary
\ref{clocontcor}.

\begin{corollary}
If  $\mathcal U \subseteq \mathcal S$ is nonempty, generated by a
Turing invariant set, and connected, then it has size continuum.
\end{corollary}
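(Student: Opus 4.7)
The plan is to mimic the proof of Corollary \ref{clocontcor} almost verbatim, using Theorem \ref{conthm2} in place of Theorem \ref{clthm}. Let $U$ be a Turing invariant set generating $\mathcal U$. Since $\mathcal U$ is connected, condition (1) of Theorem \ref{conthm2} holds, and hence so does condition (6): for every real $r$ and every $A \in U$, there is a $C \in U$ with $A \leq\sub{T} C$ and $r \leq\sub{T} C'$. In particular, the set of jumps of elements of $U$ is cofinal in the Turing degrees.

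Because each Turing degree has only countably many predecessors while there are continuum many Turing degrees in total, any cofinal family of degrees must itself have size continuum. Hence the jumps of elements of $U$ realize continuum many distinct degrees, which forces $U$ to contain continuum many elements of pairwise distinct Turing degree.

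Finally, if $A, B \in U$ have distinct Turing degrees, then $\mathcal{J}(A), \mathcal{J}(B) \in U$ (since $\mathcal{J}(X) \equiv\sub{T} X$ and $U$ is Turing invariant), and by Lemma \ref{jlem} we have $[\mathcal{J}(A)] \neq [\mathcal{J}(B)]$. Thus $\mathcal U$ contains continuum many distinct coarse similarity classes. There is no real obstacle here: once Theorem \ref{conthm2} is in place, the argument is a mechanical adaptation of the one given for the closed case, exploiting the fact that both hypotheses (closedness and connectedness) lead, via different routes, to the same key consequence — namely jump-cofinality of $U$ in the Turing degrees.
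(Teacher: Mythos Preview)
Your proof is correct and takes essentially the same approach as the paper, which simply notes that the corollary ``has the same proof as Corollary \ref{clocontcor}.'' You have correctly identified that Theorem \ref{conthm2} (connectedness $\Rightarrow$ condition (6)) plays the role that Theorem \ref{clthm} played in the closed case, yielding jump-cofinality of $U$, after which the argument is identical.
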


As an example of the negative application of the above theorems, we
can take $U$ to be the set of hyperimmune-free degrees. This is an
uncountable set, downward closed under Turing reducibility, and every
degree above $\mathbf{0''}$ is the double jump of a hyperimmune-free
degree, so there might seem to be some hope that $\mathcal U = \{[A] :
A \in U\}$ is connected. However no hyperimmune-free degree can be
high, so by Theorems \ref{clthm} and \ref{conthm2}, $\mathcal U$ is
neither connected nor closed.

If $U$ is a Turing ideal, then conditions 5 and 6 in Theorem
\ref{conthm2} can be replaced by the following simpler equivalent
ones:
\begin{itemize}

\item[(5$'$)] For every real $r$ there is a $B \in U$ such that $r$ is
right-c.e.\ relative to $B'$.

\item[(6$'$)] For every real $r$ there is a $C \in U$ such that
$r \leq\sub{T} C'$, i.e., $U$ is jump-cofinal in the Turing degrees.

\end{itemize}
We now show that it is possible for a Turing ideal to satisfy these
conditions without being cofinal in the Turing degrees, using the
following definition.

\begin{definition}
\label{ptdef}
A \emph{perfect tree} is a map $T : 2^{<\omega} \rightarrow
2^{<\omega}$ such that for each $\sigma$, the strings $T(\sigma^\frown
0)$ and $T(\sigma^\frown 1)$ are incompatible and both properly extend
$T(\sigma)$. Let $T(A) = \bigcup_{\sigma \prec A} T(\sigma)$. We say
that $X$ is a \emph{path through $T$} if there is 
an $A$ such that $X = T(A)$.
\end{definition}

\begin{proposition}
\label{treeexthm}
There exists a $\emptyset'$-computable perfect tree $T$ such that if
$\mathcal F$ is a nonempty finite collection of paths through $T$,
then $\emptyset' \nleq\sub{T} \bigoplus_{Y \in \mathcal F} Y$.
\end{proposition}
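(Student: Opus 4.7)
The plan is to build $T$ stage-by-stage using $\emptyset'$ as oracle, maintaining at each stage a finite partial perfect tree and extending it by handling one requirement per stage. For each index $e$ and each nonempty tuple $\vec{\alpha}=(\alpha_1,\ldots,\alpha_n)$ of pairwise distinct strings in $2^{<\omega}$, the requirement $R_{e,\vec{\alpha}}$ says that every tuple of pairwise distinct $\vec{A}\in(2^\omega)^n$ with $\alpha_i\prec A_i$ satisfies $\Phi_e^{\bigoplus_i T(A_i)}\ne\emptyset'$. Since any $n$ pairwise distinct $A_i\in 2^\omega$ are already distinct at some finite level $k$, the tuple $(A_i\restr k)$ is a leaf-tuple of the partial tree at the stage when those strings are leaves, so it suffices to handle all \emph{leaf-requirements}: those $R_{e,\vec{\alpha}}$ in which each $\alpha_i$ is a leaf of the partial tree at the time of handling.

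The heart of the argument is the following dichotomy. For any tuple $\vec{\beta}$ and any index $e$, at least one of the following holds: (A) there exist $\vec{\tau}$ with $\tau_i\succeq\beta_i$ and some $x$ such that $\Phi_e^{\bigoplus_i\tau_i}(x)\converges$ and differs from $\emptyset'(x)$; or (B) there exist $\vec{\tau}$ with $\tau_i\succeq\beta_i$ and some $x$ such that $\Phi_e^{\bigoplus_i\tau_i'}(x)\diverges$ for every $\vec{\tau}'\succeq\vec{\tau}$. Both conditions are $\Sigma^0_1(\emptyset')$: (A) is directly so, and (B) is an existential over a $\Pi^0_1$ matrix, which is $\emptyset'$-decidable. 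Thus a $\emptyset'$-oracle can enumerate witnesses to each. For the proof, suppose both fail: then (from failure of (A)) any convergent $\Phi_e^{\bigoplus_i\tau_i}(x)$ with $\vec{\tau}\succeq\vec{\beta}$ equals $\emptyset'(x)$, and (from failure of (B)) for every $x$ some extension $\vec{\tau}'\succeq\vec{\beta}$ has $\Phi_e^{\bigoplus_i\tau_i'}(x)\converges$. The unrelativized algorithm that on input $x$ searches for such a $\vec{\tau}'\succeq\vec{\beta}$ and outputs the resulting value then computes $\emptyset'$, a contradiction.

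At a stage handling a leaf-requirement $R_{e,\vec{\alpha}}$ we use $\emptyset'$ to run the two searches from the dichotomy in parallel, take the first witness $\vec{\tau}$ produced, and permanently set $T(\alpha_i{}^\frown w):=\tau_i{}^\frown w$ for $w\in\{0,1\}$ and each $i$. This forces every future path $T(A_i)$ with $\alpha_i\prec A_i$ to extend $\tau_i$, so $\Phi_e^{\bigoplus_i T(A_i)}$ either disagrees with $\emptyset'$ at the witness input (case A) or diverges there (case B); in either case $R_{e,\vec{\alpha}}$ is satisfied. The construction interleaves handling stages with ``growth'' stages that split leaves in a fair round-robin order so that every string of $2^{<\omega}$ eventually becomes a leaf of the partial tree, and a standard priority bookkeeping ensures that every pair $(e,\vec{\alpha})$ either is handled directly as a leaf-requirement or else has all relevant leaf-refinements $R_{e,\vec{\beta}}$ with $\beta_i\succeq\alpha_i$ handled, which collectively imply $R_{e,\vec{\alpha}}$.

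The main obstacle I anticipate is purely bookkeeping: we must arrange that $R_{e,\vec{\alpha}}$ is handled only while every $\alpha_i$ is still a leaf, so that the commitments $T(\alpha_i{}^\frown w):=\tau_i{}^\frown w$ do not conflict with prior commitments, while also ensuring each requirement (or enough of its refinements) eventually gets handled. All of the conceptual work lies in the dichotomy above, whose proof rests on the observation that simultaneous effective non-disagreement and density of convergence would render $\emptyset'$ computable with no oracle.
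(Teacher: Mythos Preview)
Your proposal is correct. The dichotomy you state is exactly right: if neither (A) nor (B) held above $\vec\beta$, then the unrelativized search-and-output procedure would compute $\emptyset'$, a contradiction; and since (A) is $\Sigma^0_1(\emptyset')$ and (B) is $\Sigma^0_2$ (hence $\Sigma^0_1(\emptyset')$), a $\emptyset'$-oracle can dovetail the two searches and always find a witness. Once you have the witness $\vec\tau$, committing the tree above the relevant leaves to pass through $\vec\tau$ permanently diagonalizes $\Phi_e^{\bigoplus_i T(A_i)}$ against $\emptyset'$ for all paths routed through those leaves. The bookkeeping is indeed routine; the cleanest way to organize it is to keep the partial tree balanced (domain $2^{\leq s}$ at stage $s$), and at stage $s$ handle one pair $(e,n)$ by iterating over \emph{all} $n$-tuples of current leaves, updating the tentative leaf-images after each tuple before moving to the next. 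Handling each $(e,n)$ at cofinally many stages then covers every eventual tuple of distinct paths.

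This is a genuinely different route from the paper's argument. The paper explicitly notes that a direct construction (your approach) is possible, but chooses instead to derive the proposition from a later theorem (Theorem~\ref{perfeff}) asserting the existence of a $\emptyset'$-computable perfect tree whose finite joins of distinct paths are all $1$-random, together with van Lambalgen's Theorem: if some finite join of paths computed $\emptyset'$, then any further $\emptyset'$-computable path $X$ through $T$ would be $1$-random relative to that join and hence relative to $\emptyset'$, contradicting $X\leq\sub{T}\emptyset'$. The paper's route is short once Theorem~\ref{perfeff} is in hand, and illustrates how randomness yields cone-avoidance; your route is self-contained and more elementary, needing no randomness machinery, and is essentially the standard Sacks-style perfect-tree forcing for cone avoidance carried out uniformly for all finite joins.
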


\begin{proof}
This proposition can be proved directly, and Yu Liang [personal
communication] has noted that it also follows easily from the
result of Binns and Simpson \cite{BS} that there is a nonempty
$\Pi^0_1$ class $\mathcal C$ such that if $\mathcal F$ is a nonempty
finite collection of elements of $\mathcal C$ and $X$ is an element of
$\mathcal C \setminus \mathcal F$, then $X \nleq\sub{T} \bigoplus_{Y
\in \mathcal F} Y$.

The proposition also follows from results in Section \ref{mycsec}. In
Theorem \ref{perfeff}, we will show that there is a
$\emptyset'$-computable perfect tree $T$ such for any nonempty finite
collection $\mathcal F$ of paths through $T$, the set $\bigoplus_{Y
\in \mathcal F} Y$ is $1$-random. We will do the same for
$1$-genericity in place of $1$-randomness in the proof of Theorem
\ref{myccatthm}, as noted in Theorem \ref{ancthm}. Either of these
trees can be used to prove the theorem. Let us do it using
$1$-randomness.

Let $T$ be as above, and suppose there is a nonempty finite collection
$\mathcal F$ of paths through $T$ such that $\emptyset' \leq\sub{T}
\bigoplus_{Y \in \mathcal F} Y$. If $Z$ is computable then $T(Z)$ is
$\emptyset'$-computable, so there is a $\emptyset'$-computable path
$X$ through $T$ such that $X \notin \mathcal F$. Then, by van
Lambalgen's Theorem, $X$ is $1$-random relative to $\bigoplus_{Y \in
\mathcal F} Y$, and hence relative to $\emptyset'$, contradicting
the fact that $X \leq\sub{T} \emptyset'$.
\end{proof}

\begin{proposition}
Let $T$ be as in Proposition \ref{treeexthm} and let $U$ be the
downward closure under Turing reducibility of the class of all
$\bigoplus_{Y \in \mathcal F} Y$ such that $\mathcal F$ is a nonempty
finite collection of paths through $T$. Then $U$ is a Turing ideal and
is not cofinal in the Turing degrees. Furthermore, $U$ is jump-cofinal
in the Turing degrees, and hence $\{[A] : A \in U\}$, thought of as a
subspace of $(\mathcal S,\delta)$, is pathwise connected (and hence
connected), and is in fact contractible and geodesic.
\end{proposition}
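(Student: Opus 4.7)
The plan is to verify each claim of the proposition in turn, saving the jump-cofinality assertion (the only substantive step) for last, and then invoke Theorem~\ref{conthm2} to obtain the topological conclusions.

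First I would check that $U$ is a Turing ideal. Downward closure holds by construction. For closure under finite joins, note that if $A_1, A_2 \in U$, then each $A_i$ is computable from some finite join $\bigoplus_{Y \in \mathcal F_i} Y$ of paths through $T$; since $\mathcal F_1 \cup \mathcal F_2$ is still a nonempty finite collection of paths, $A_1 \oplus A_2$ lies below an element of $U$ and so belongs to $U$. Non-cofinality in the Turing degrees is immediate from Proposition~\ref{treeexthm}: no element of $U$ can compute $\emptyset'$, since every element sits below some finite join of paths through $T$, and such a join does not compute $\emptyset'$.

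The key step is jump-cofinality. Since $T$ is a perfect tree, the branching property in Definition~\ref{ptdef} lets us recover the ``address'' $Z \in 2^\omega$ of a path $T(Z)$ from $T(Z)$ together with $T$ itself: reading along $T(Z)$, at each level we can tell from $T(\sigma^\frown 0)$ and $T(\sigma^\frown 1)$ which extension of $T(\sigma)$ is an initial segment of $T(Z)$, and so compute the next bit of $Z$. Because $T \leq\sub{T} \emptyset'$, this gives $Z \leq\sub{T} T(Z) \oplus \emptyset'$ and also $T(Z) \leq\sub{T} Z \oplus \emptyset'$, so $T(Z) \oplus \emptyset' \equiv\sub{T} Z \oplus \emptyset'$, and hence $(T(Z))' \geq\sub{T} Z \oplus \emptyset'$. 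Now given an arbitrary real $r$, take $Z = r$ and set $C = T(r)$. Then $C$ is a path through $T$, so $C \in U$, and $C' \geq\sub{T} r \oplus \emptyset' \geq\sub{T} r$, giving jump-cofinality.

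Finally, with (1) and jump-cofinality in hand, condition~(6$'$) stated just before Definition~\ref{ptdef} is satisfied, so Theorem~\ref{conthm2} yields that $\{[A] : A \in U\}$ is contractible, geodesic, and pathwise connected (and hence connected) as a subspace of $(\mathcal S,\delta)$. I expect no real obstacle here: the only nonroutine step is recognizing that perfectness of $T$ together with $T \leq\sub{T} \emptyset'$ forces $(T(Z))' \equiv\sub{T} Z \oplus \emptyset'$, which makes the ``address'' trick work with $Z = r$.
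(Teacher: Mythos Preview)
Your proof is correct and follows essentially the same approach as the paper's: both establish jump-cofinality by observing that for any $Z$, the path $T(Z)$ lies in $U$ and $Z \leq\sub{T} T \oplus T(Z) \leq\sub{T} \emptyset' \oplus T(Z) \leq\sub{T} T(Z)'$, then invoke Theorem~\ref{conthm2}. One minor slip: in your final paragraph you assert $(T(Z))' \equiv\sub{T} Z \oplus \emptyset'$, but only $\geq\sub{T}$ follows from your argument (and only $\geq\sub{T}$ is needed).
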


\begin{proof}
That $U$ is a Turing ideal and is not cofinal in the Turing degrees is
clear from its definition. To see that $U$ is jump-cofinal in the
Turing degrees, fix a set $Z$. We can compute $Z$ from $T \oplus T(Z)
\leq\sub{T} \emptyset' \oplus T(Z) \leq\sub{T} T(Z)'$, and $T(Z) \in
U$.
\end{proof}

In general, we cannot expect an exact computability-theoretic
criterion for connectedness for arbitrary subspaces of $(\mathcal
S,\delta)$. For example, if we let $\mathcal U$ consist of $[C_r]$ for
all the sets $C_r$ in the proof of Theorem 2.7, then $\mathcal U$ is
pathwise connected, but if we add a single point $[A]$ to $\mathcal U$ for
a set $A$ that does not have density, then the resulting set is no
longer connected. Adding $[A]$ has no effect as far as Turing degrees
go, however, since the $C_r$ already have all possible
degrees. Nevertheless, one direction of Theorem \ref{conthm2} can be
adapted as follows.

\begin{theorem}
Let the subspace $\mathcal U \subseteq \mathcal S$ generated by $U$ be
connected and have at least two points. Then for every real $r$ and
every $A \in U$, there is a $C \in U$ such that $r \leq\sub{T} (A
\oplus C)'$.
\end{theorem}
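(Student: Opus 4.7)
The plan is to prove the theorem by contrapositive, adapting the direction (1)$\Rightarrow$(5) of Theorem~\ref{conthm2}. Suppose there exist $A \in U$ and a real $r$ such that $r \not\leq\sub{T} (A \oplus C)'$ for every $C \in U$; I will show that $\mathcal U$ is disconnected, contradicting the hypothesis.

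First I would reduce to a convenient normalization of $r$. Since the Turing degree of a real is preserved under rational shifts, $r$ may be replaced by any $s$ with $s - r \in \mathbb Q$. Since $\mathcal U$ has at least two points, the continuous function $f \colon \mathcal U \to [0,1]$ defined by $f([Y]) = \delta([Y],[A])$ has as its image a nondegenerate connected subset of $[0,1]$ containing $0$, hence an interval $[0,\alpha)$ or $[0,\alpha]$ with $\alpha > 0$. By a rational shift I may assume $r \in (0,\alpha)$.

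Next I would attempt the natural separation, mimicking the Turing-invariant case. Set
\[
\mathcal D = \{[Y] \in \mathcal U : \delta(Y,A) < r\}, \qquad
\mathcal E = \{[Y] \in \mathcal U : \delta(Y,A) > r\}.
\]
Both are open in $\mathcal U$, disjoint, and nonempty: $[A] \in \mathcal D$, and since $r < \alpha$ some $[Y] \in \mathcal U$ lies in $\mathcal E$. The separation succeeds provided no $B \in U$ satisfies $\delta(B,A) = r$, in which case $\mathcal U = \mathcal D \cup \mathcal E$ is a disconnection.

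The hardest part is ruling out the existence of such a $B$; this is where the argument must diverge from the Turing-invariant setting. By Lemma~\ref{rightlem}, a $B \in U$ with $\delta(B,A) = r$ would make $r$ only \emph{right-c.e.}\ in $(A \oplus B)'$, which is strictly weaker than $r \leq\sub{T} (A \oplus B)'$ and hence fails to contradict the assumption. To bridge this gap I would introduce the companion continuous function $G \colon \mathcal U \to [0,1]$ defined by $G([Y]) = \underline{\rho}(Y \sd A) = 1 - \delta([Y], [\neg A])$; if some $B' \in U$ had $G([B']) = r$, then $r$ would be left-c.e.\ in $(A \oplus B')'$. I would then split into cases according to how the failure $r \not\leq\sub{T} (A \oplus C)'$ is witnessed, writing $U = U_R \cup U_L$ where $U_R$ consists of $C \in U$ for which $r$ is not right-c.e.\ in $(A \oplus C)'$ and $U_L$ those for which $r$ is not left-c.e. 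If $U = U_R$, the $f$-separation above succeeds; by symmetry, if $U = U_L$, the analogous $G$-separation succeeds (with nonemptiness of the $G$-based open sets coming from the same interval-image argument applied to $G$). The main obstacle is the remaining mixed case, where neither $U_R$ nor $U_L$ is all of $U$; here I expect to use the continuous map $K = (f,G) \colon \mathcal U \to [0,1]^2$ together with the connectedness hypothesis to produce a single $C \in U$ whose density $\rho(C \sd A)$ exists and equals $r$, which by Lemma~\ref{rightlem} and its left-c.e.\ analogue forces $r \leq\sub{T} (A \oplus C)'$, contradicting the assumption outright.
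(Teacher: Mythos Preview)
Your diagnosis of the difficulty is right: without Turing invariance, $\delta(B,A)=r$ only makes $r$ right-c.e.\ in $(A\oplus B)'$, not computable from it. But your repair has gaps. In the case $U=U_L$ you claim a $G$-separation succeeds ``by the same interval-image argument'', yet the image of $G$ on $\mathcal U$ can be $\{0\}$ even for a nontrivial connected $\mathcal U$: take $A=\emptyset$ and let $\mathcal U$ be the geodesic of Theorem~\ref{geodesic} from $[\emptyset]$ to some $[B]$ with $\urho(B)=0<\orho(B)$; every point on that geodesic is the class of a subset of $B$ and hence has lower density~$0$, so $G^{-1}\bigl((r,1]\bigr)=\emptyset$ for every $r>0$ and no $G$-based disconnection arises. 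More seriously, the mixed case is unresolved: connectedness of the image of $K=(f,G)$ does not force the existence of $C\in U$ with $f([C])=G([C])=r$. A connected subset of $\{(x,y):0\le y\le x\le 1\}$ containing $(0,0)$ and meeting both the lines $x=r$ and $y=r$ need not contain $(r,r)$; the L-shaped path from $(0,0)$ to $(1,0)$ to $(1,1)$ is an explicit obstruction.

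The paper takes a different route. Instead of your case split, it first asserts (citing ``the same argument'' as the $(5)\Leftrightarrow(6)$ step in Theorem~\ref{conthm2}) that the failure of the conclusion may be upgraded to: there exist $r$ and $A\in U$ such that $r$ is not right-c.e.\ in $(A\oplus B)'$ for \emph{any} $B\in U$. Granting this, the $f$-separation at a rational shift $s$ of $r$ (with $s$ small enough that some point of $\mathcal U$ lies at distance greater than $s$ from $[A]$) succeeds immediately, since $\delta(B,A)=s$ would make $s$ right-c.e.\ in $(A\oplus B)'$ by Lemma~\ref{rightlem}. You did not attempt this reduction, and it is what replaces your unfinished mixed-case analysis.
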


\begin{proof}
Assume that $\mathcal U$ has at least two points but the condition in
the second sentence of the theorem fails.  The same argument as in the
proof of Theorem \ref{conthm2} shows that there exist a real $r$ and
an $A \in U$ such that for all $B \in U$, we have that $r$ is not
right-c.e.\ relative to $(A \oplus B)'$. Now repeat the argument that
not (5) implies not (1) in the proof of Theorem \ref{conthm2}, but
with $A$ itself in place of $\mathcal J(A)$. The only other
significant difference is that we now choose $s$ small enough so that
there is a point in $\mathcal U$ at distance greater than $s$ from
$[A]$, to ensure that $\mathcal E$ is nonempty.
\end{proof}

Let us return to the question of the existence of sets $\mathcal U
\subsetneq \mathcal S$ satisfying the conditions in Theorem
\ref{clthm}, or the stronger ones in Theorem \ref{clidthm},
i.e.\ nonempty closed sets $\mathcal U \subsetneq \mathcal S$
generated by Turing invariant sets (or ideals). We do not know
whether such sets exist in an absolute sense, but they do exist if we
assume the Continuum Hypothesis (CH).

\begin{theorem}[Richard Shore {[personal communication]}]
Assuming CH, there is a Turing ideal $U \subsetneq 2^\omega$ that is
jump-cofinal and such that every countable subset of $U$ has an upper
bound in $U$.
\end{theorem}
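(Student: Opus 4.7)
Under CH we have $|2^\omega| = \aleph_1$, so we enumerate the reals as $\{X_\alpha : \alpha < \omega_1\}$ and the countable subsets of $2^\omega$ as $\{S_\alpha : \alpha < \omega_1\}$. Fix a noncomputable set $Y_0$ that we will keep outside of $U$. We build an increasing chain $(U_\alpha)_{\alpha \leq \omega_1}$ of countable Turing ideals with $U_0$ the computable ideal, taking unions at limits, and obtaining $U_{\alpha+1}$ from $U_\alpha$ by adjoining one new generator $B_\alpha$; throughout, we maintain the invariant $Y_0 \notin U_\alpha$. A bookkeeping function $\pi : \omega_1 \to \omega_1 \times 2$ that hits each value cofinally often decides at each successor stage whether to handle a jump-cofinality request for some $X_\beta$ (by choosing $B_\alpha$ with $X_\beta \leq\sub{T} B_\alpha'$) or an upper-bound request for some $S_\beta$ already lying inside $U_\alpha$ (by choosing $B_\alpha$ to be an upper bound for $S_\beta$); if a requested $S_\beta$ is not yet a subset of $U_\alpha$ we simply skip and return to it later. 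Any countable $S \subseteq U := \bigcup_{\alpha < \omega_1} U_\alpha$ lies inside some $U_\gamma$ (since $\omega_1$ has uncountable cofinality), so cofinality of $\pi$ ensures that every jump-cofinality and upper-bound request is eventually discharged.

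The construction rests on two extension lemmas. The \emph{upper bound lemma} asserts: if $V$ is a countable Turing ideal with $Y_0 \notin V$, then there is $B$ with $A \leq\sub{T} B$ for all $A \in V$ and $Y_0 \nleq\sub{T} B$. This is immediate from Spector's exact pair theorem: take $G, H$ with $V = \{Z : Z \leq\sub{T} G \text{ and } Z \leq\sub{T} H\}$; since $Y_0 \notin V$, at least one of $G, H$ fails to compute $Y_0$, and we let $B$ be it. The \emph{jump-cofinal lemma} asserts: if $V$ is as above and $X$ is any real, then there is $B$ with $X \leq\sub{T} B'$ such that the ideal generated by $V \cup \{B\}$ still omits $Y_0$. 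For this, first apply the upper bound lemma to get an upper bound $G$ of $V$ with $Y_0 \nleq\sub{T} G$, then perform a cone-avoiding relative Friedberg jump inversion to produce $B \geq\sub{T} G$ with $B' \equiv\sub{T} X \oplus G'$ and $Y_0 \nleq\sub{T} B$. In both lemmas, since the chosen $B$ dominates every element of $V$, the ideal generated by $V \cup \{B\}$ is the downward closure of $\{B\}$, so $Y_0 \nleq\sub{T} B$ is exactly what preserves the invariant. Limit stages preserve the invariant trivially, so the final $U$ is a proper Turing ideal that is jump-cofinal and contains upper bounds for all its countable subsets.

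\textbf{Main obstacle.} The delicate ingredient is the cone-avoiding relative Friedberg jump inversion inside the jump-cofinal lemma. Standard Friedberg inversion only gives $B \leq\sub{T} X \oplus G'$, so when $Y_0 \leq\sub{T} X \oplus G'$ one cannot automatically conclude $Y_0 \nleq\sub{T} B$. The required strengthening is a finite-injury priority construction that interleaves Friedberg's coding (which forces $B' \equiv\sub{T} X \oplus G'$) with Kleene-Post diagonalization against each requirement $\Phi_e^B = Y_0$; the diagonalization step succeeds because any systematic failure to extend the approximation so as to refute $\Phi_e^B = Y_0$ would yield a uniformly $G$-computable recovery of $Y_0$, contradicting $Y_0 \nleq\sub{T} G$.
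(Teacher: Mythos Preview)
Your proposal is correct and uses the same two key ingredients as the paper: Spector's exact pair theorem to obtain upper bounds avoiding a fixed noncomputable set, and cone-avoiding relativized Friedberg jump inversion to secure jump-cofinality. The paper's organization is slightly leaner: it directly builds an increasing $\omega_1$-chain of sets $(C_\alpha)$, applying exact pairs only at limit ordinals and jump inversion at successors, so upper bounds for countable subsets come for free from the chain structure and there is no need to enumerate countable subsets or interleave a separate class of upper-bound requests via bookkeeping.
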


\begin{proof}
Assuming CH, let $(X_\alpha)_{\alpha < \omega_1}$ list
$2^\omega$. Let $D$ be noncomputable. We define a sequence
$(C_\alpha)_{\alpha < \omega_1}$ with $C_\alpha \leq\sub{T} C_\beta$
for $\alpha < \beta$, such that $X_\alpha \leq\sub{T} C_{\alpha+1}'$
for all $\alpha < \omega_1$ but $D \nleq\sub{T} C_\alpha$ for all
$\alpha < \omega_1$. Then $\{Y : (\exists \alpha<\omega_1)[Y
\leq\sub{T} C_\alpha]\}$ is our desired ideal.

Let $C_0=\emptyset$. Given $C_\alpha$ such that $D \nleq\sub{T}
C_\alpha$, as in the proof of Corollary \ref{friedcor}, relativizing
the combination of the Friedberg Jump Inversion Theorem with
cone-avoidance, we see that there is a $Z \geq\sub{T} C_\alpha$ such
that $X_\alpha \leq\sub{T} Z'$ and $D \nleq\sub{T} Z$. Let
$C_{\alpha+1}=Z$. For a limit ordinal $\gamma < \omega_1$, given
$C_\alpha \ngeq\sub{T} D$ for all $\alpha<\gamma$, let $Z_0,Z_1$ be an
exact pair for the ideal $\mathcal I = \{Y : (\exists \alpha<\gamma)[Y
\leq\sub{T} C_\alpha]\}$, as constructed by Spector
\cite{Spec}. That is, $Y \in \mathcal I$ if and only if $Y \leq\sub{T}
X_0$ and $Y \leq\sub{T} X_1$. Since $D \notin \mathcal I$, there is an
$i<2$ such that $D \nleq\sub{T} X_i$. Let $C_\gamma = X_i$.
\end{proof}

\section{Computability theory  and Hausdorff distance}

As mentioned in the introduction, a \emph{coarse description} of $A$
is a set $C$ such that $\delta(A,C)=0$, and a set is \emph{coarsely
computable} if it has a computable coarse description. Even if $A$ is
not coarsely computable, one can measure how closely $A$ can be
approximated by computable sets.  Let $r$ be a real number such that
$0 \leq r \leq 1$. A set $A$ is \emph{coarsely computable at density
$r$} if there is a computable set $C$ such that the symmetric
agreement between $A$ and $C$ has lower density at least $r$, that is,
$\urho(A \sa C) \geq r$. A set $C$ such that $\urho(A \sa C) \geq r$
is called an \emph{$r$-description} of $A$. Then, as in \cite{HJMS},
define the \emph{coarse computability bound} $\gamma(A)$ of $A$ by
\[
\gamma(A) = \sup\{r : A \text{ is coarsely computable at density }
r\}.
\]
We can relativize the definition of coarsely computable sets in
\cite{JS1} and the coarse computability bound $\gamma$ in \cite{HJMS}
to any Turing degree $\bd$.

\begin{definition}
The set $A$ is \emph{coarsely $\bd$-computable} if it has a coarse
description computable from $\bd$.

The set $A$ is  \emph{coarsely $\bd$-computable at density $r$} if
there is an $r$-description $B$ of $A$ such that $B \leq\sub{T} \bd$.

The \emph{coarse $\bd$-computability bound} of a set $A$ is
\[ 
\gamma_{\bd}(A) = \sup\{r : {A \text{ is coarsely }
\mathbf{d}\text{-computable at density }  r}\}.
\] 
If $D$ is a set whose degree is $\bd$, we also write $\gamma_D(A)$ for
$\gamma_{\bd}(A)$.
\end{definition}

Note that these definitions depend only on similarity classes, so we
can consider them to be defined on such classes, and in particular let
$\gamma_{\bd}([A])=\gamma_{\bd}(A)$ for any $[A] \in \mathcal
S$. Observe that $\gamma_{\bd}([A]) = 1$ if and only if $[A]$ is a
limit of coarse similarity classes of $\bd$-computable sets.

Our goal is to use the topology of $\mcS$ to investigate coarse
computability and Turing degrees. For any degree $\mathbf{d}$,
there are sets $A$ with $\gamma_{\mathbf{d}}(A) = 0$.  (For example,
if $A$ is weakly $1$-generic relative to $\mathbf{d}$, by the
relativization of a result in \cite{HJMS} that we will revisit in
Theorem \ref{genthm}.)  The following result follows from relativizing
the theorem for $\gamma$ given in \cite[Theorem 3.4]{HJMS}.  The proof
in that paper is a slightly messy computability-theoretic
construction, but the result is obvious in the present context.

\begin{theorem}
For a degree $\mathbf{d}$, if $0 \leq r \leq 1$ then there is a
set $B$ with $\gamma_{\mathbf{d}}(B) = r$.
\end{theorem}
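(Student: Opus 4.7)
The plan is to reformulate $\gamma_{\mathbf{d}}$ as a distance in $(\mathcal{S},\delta)$ and then reduce to the geodesic structure established in Theorem \ref{geodesic}. Unwinding the definitions, a set $B$ is coarsely $\mathbf{d}$-computable at density $s$ iff there is $C \leq\sub{T} \mathbf{d}$ with $\underline{\rho}(B \sa C) \geq s$; since $B \sa C$ is the complement of $B \sd C$, Lemma \ref{lowerupper} gives $\underline{\rho}(B \sa C) = 1 - \delta([B],[C])$. Writing $\mathcal{D}_{\mathbf{d}} = \{[C] : C \leq\sub{T} \mathbf{d}\}$ and $\mathrm{dist}([B], \mathcal{D}_{\mathbf{d}}) = \inf\{\delta([B],[C]) : [C] \in \mathcal{D}_{\mathbf{d}}\}$, this yields
\[
\gamma_{\mathbf{d}}(B) = \sup\{s : (\exists C \leq\sub{T} \mathbf{d})\ \delta([B],[C]) \leq 1-s\} = 1 - \mathrm{dist}([B],\mathcal{D}_{\mathbf{d}}).
\]
Hence it suffices, given $r \in [0,1]$, to produce $B$ with $\mathrm{dist}([B],\mathcal{D}_{\mathbf{d}}) = 1-r$.

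I would first produce an $A$ that is maximally far from $\mathcal{D}_{\mathbf{d}}$. By Lemma \ref{downclosed}, $\mathcal{D}_{\mathbf{d}} = \{[C] : C \in \mathbf{d}\}$, and each Turing degree contains only countably many sets (the range of the Turing functionals applied to any representative), so $\mathcal{D}_{\mathbf{d}}$ is countable. Applying Theorem \ref{compthm}(1) to this collection produces a set $A$ with $\delta([A],[C]) = 1$ for every $C \leq\sub{T} \mathbf{d}$. In particular $\delta([A],[\emptyset]) = 1$, since $\emptyset \leq\sub{T} \mathbf{d}$.

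Now, by Theorem \ref{geodesic}, there is a geodesic $\gamma:[0,1] \to \mathcal{S}$ with $\gamma(0) = [\emptyset]$ and $\gamma(1) = [A]$. Set $[B] = \gamma(1-r)$, so that $\delta([B],[\emptyset]) = 1-r$ and $\delta([B],[A]) = r$. Since $[\emptyset] \in \mathcal{D}_{\mathbf{d}}$, the first equation gives $\mathrm{dist}([B],\mathcal{D}_{\mathbf{d}}) \leq 1-r$. For the reverse inequality, for any $C \leq\sub{T} \mathbf{d}$ the triangle inequality yields
\[
1 = \delta([A],[C]) \leq \delta([A],[B]) + \delta([B],[C]) = r + \delta([B],[C]),
\]
so $\delta([B],[C]) \geq 1-r$. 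Thus $\mathrm{dist}([B],\mathcal{D}_{\mathbf{d}}) = 1-r$ and $\gamma_{\mathbf{d}}(B) = r$.

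There is no substantive obstacle here; the whole argument is a short reduction to earlier results, which is exactly why the authors remark that the conclusion is obvious in the present context. The only points requiring a moment's care are the initial identification of $\gamma_{\mathbf{d}}(B)$ with $1 - \mathrm{dist}([B],\mathcal{D}_{\mathbf{d}})$ and the countability of $\mathcal{D}_{\mathbf{d}}$, which is needed in order to invoke Theorem \ref{compthm}(1).
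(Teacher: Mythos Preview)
Your proof is correct and follows essentially the same outline as the paper's: obtain a set $A$ with $\gamma_{\mathbf{d}}(A)=0$, connect $[\emptyset]$ to $[A]$ by a path, and read off $[B]$ along that path. The differences are minor and technical. The paper obtains $A$ by citing the existence of a set weakly $1$-generic relative to $\mathbf{d}$ (mentioned in the paragraph before the theorem), while you obtain it from Theorem~\ref{compthm}(1) via the countability of $\mathcal{D}_{\mathbf{d}}$; both are fine. The paper then uses an arbitrary path, observes that $\gamma_{\mathbf{d}}$ is continuous along it (this is the Lipschitz property of $x\mapsto \delta(x,\mathcal{D}_{\mathbf{d}})$, which is your identification $\gamma_{\mathbf{d}}(B)=1-\mathrm{dist}([B],\mathcal{D}_{\mathbf{d}})$), and invokes the Intermediate Value Theorem. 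You instead use a geodesic and a triangle-inequality computation to pin down the exact point $\gamma(1-r)$. Your version is slightly more explicit about which point works; the paper's version is a line shorter and works along any path, not just a geodesic.
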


\begin{proof} 
Let $\alpha$ be a path from $[\emptyset]$ to a set $[A]$ with
$\gamma_{\mathbf{d}}(A) = 0$.  The function $\gamma_{\mathbf{d}}$ is
continuous along $\alpha$, so there is a point $[B]$ on $\alpha$ with
$\gamma_{\mathbf{d}}(B) = r$ by the Intermediate Value Theorem.
\end{proof}

If $[A]$ is coarsely $\bd$-computable, then $\gamma_{\bd}([A]) = 1$,
but it follows from Lemmas \ref{Rlem} and \ref{Rlem2} that the
converse fails for all degrees $\bd$.

\begin{definition} 
The \emph{core}, $\kappa(\bd)$, of a degree $\bd$ is the
collection of all coarse similarity classes $[A]$ such that $A$ is
computable from $\bd$. So $[A] \in \kappa(\bd)$ if and only if there
is a set $D \leq\sub{T} \bd$ such that $A \sim\sub{c} D$.
\end{definition}

By Lemma \ref{downclosed}, $\kappa(\bd)$ is also the collection
of all coarse similarity classes $[A]$ such that $A \in \bd$.  It is
clear that cores are countable since $\bd$ computes only countably
many sets. 

\begin{lemma}
If $\bd$ and $\be$ are degrees, then $\bd \leq \be$ if and only
if $\kappa(\bd) \subseteq \kappa(\be)$. Thus $\bd=\be$ if and only if
$\kappa(\bd)=\kappa(\be)$.
\end{lemma}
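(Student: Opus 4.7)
The plan is to prove the nontrivial direction by exploiting the fact that the sets $\mathcal J(A)$ are ``rigid'' in the sense of Lemma \ref{jlem}: any sufficiently close coarse neighbor of $\mathcal J(A)$ already computes $A$.

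The forward direction is immediate from the definitions. If $\bd \leq \be$ and $[A] \in \kappa(\bd)$, then some $D \leq\sub{T} \bd$ satisfies $A \sim\sub{c} D$; but then $D \leq\sub{T} \be$ as well, witnessing $[A] \in \kappa(\be)$.

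For the reverse direction, suppose $\kappa(\bd) \subseteq \kappa(\be)$, and fix any $A \in \bd$. The key move is to look at $\mathcal J(A)$ instead of $A$ itself, since $A \equiv\sub{T} \mathcal J(A)$ and so $[\mathcal J(A)] \in \kappa(\bd)$. By hypothesis, $[\mathcal J(A)] \in \kappa(\be)$, so there is a set $D$ with $D \leq\sub{T} \be$ and $\mathcal J(A) \sim\sub{c} D$, i.e.\ $\delta(\mathcal J(A), D) = 0 < 1/4$. Applying Lemma \ref{jlem}, we conclude $A \leq\sub{T} D \leq\sub{T} \be$, so $\bd = \deg(A) \leq \be$. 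The second sentence of the lemma then follows by antisymmetry of $\leq$ on the degrees.

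There is really no obstacle here beyond recognizing that one cannot argue directly from $A \in \bd$ --- since $[A] \in \kappa(\be)$ only gives a \emph{coarse} description of $A$ by some $\be$-computable set, not a computation of $A$ itself --- and must instead invoke a representative of $\bd$ whose coarse similarity class encodes its full Turing content. The sets $\mathcal J(A)$ introduced in Definition \ref{jdef} are exactly such representatives, and Lemma \ref{jlem} is precisely the tool that converts a coarse approximation into a Turing computation.
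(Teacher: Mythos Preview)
Your proof is correct and follows essentially the same approach as the paper: pass from $A \in \bd$ to $\mathcal J(A)$, use the hypothesis to obtain an $\be$-computable coarse description of $\mathcal J(A)$, and then invoke Lemma~\ref{jlem} to recover $A \leq\sub{T} \be$. The paper's proof is terser but uses the identical idea.
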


\begin{proof}
Suppose that $\kappa(\bd) \subseteq \kappa(\be)$ and let $D \in
\bd$. Then $[\mathcal J(D)] \in \kappa(\bd)$, where $\mathcal J(D)$ is
as in Definition \ref{jdef}, so there is an $\be$-computable coarse
description of $\mathcal J(D)$. By Lemma \ref{jlem}, $\bd \leq
\be$. The other direction is obvious.
\end{proof}

\begin{definition}
\label{closdef}
The \emph{closure} ${\cd}$ of the degree $\bd$ is the closure
of $\kappa(\bd)$ in the metric space $(\mcS,\delta)$. 
\end{definition}

So $\cd$ is exactly the set of those classes that are limits of points
of $\kappa(\bd)$ in the sense of the metric topology induced by
$\delta$. Thus $\cd = \{[A] : \gamma_{\mathbf d}(A) = 1\}$.

\begin{observation}
Recall that $\mcS$ has a group structure as described at the end of
Section \ref{intro}. Note that $\kappa(\bd)$ is a subgroup of $\mcS$
since if we can compute $A$ and $B$ from $\bd$, then we can compute
their symmetric difference from $\bd$. Therefore, $\cd$ is also a
subgroup since the closure of a subgroup of a topological group is
again a subgroup. Indeed, since $\cd$ is closed in $(\mathcal
S,\delta)$ and $\kappa(\bd)$ is countable, $\cd$ is also a Polish
space, and hence is a Polish group.
\end{observation}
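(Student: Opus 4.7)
The plan is to verify four assertions in turn: that $\kappa(\bd)$ is a subgroup of $(\mathcal{S},+)$; that $\cd$ inherits this subgroup structure from $\kappa(\bd)$; that $\cd$ is Polish as a topological space; and, combining the two, that $\cd$ is a Polish group.

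For the first assertion, given $[A],[B] \in \kappa(\bd)$ I would use the definition to pick $\bd$-computable representatives $A_0 \sim\sub{c} A$ and $B_0 \sim\sub{c} B$. Then $A_0 \sd B_0 \leq\sub{T} A_0 \oplus B_0 \leq\sub{T} \bd$, so $[A] + [B] = [A_0 \sd B_0] \in \kappa(\bd)$. The identity $[\emptyset]$ lies in $\kappa(\bd)$, and every element of $\mcS$ is self-inverse, so the subgroup conditions are met. The small subtlety worth flagging is exactly that passing to a coarse similarity class may take us outside the $\bd$-computable sets, which is why choosing the representatives $A_0,B_0$ explicitly matters.

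For the second assertion, I would invoke the standard topological-group fact that the closure of a subgroup is again a subgroup, specialized to $(\mcS,\delta,+)$. Concretely, given $[A],[B] \in \cd$, choose sequences $[A_n],[B_n] \in \kappa(\bd)$ with $[A_n] \to [A]$ and $[B_n] \to [B]$. By the continuity of $+$ on $\mcS$ (noted just before Observation~\ref{isomobs}), $[A_n]+[B_n] \to [A]+[B]$, and each $[A_n]+[B_n]$ lies in $\kappa(\bd) \subseteq \cd$ by the first assertion, so $[A]+[B] \in \cd$ since $\cd$ is closed.

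For the third assertion, the space $\cd$ is a closed subset of the complete metric space $(\mcS,\delta)$, which is complete by Theorem~\ref{complthm}, so $\cd$ is itself complete. Moreover, $\kappa(\bd)$ is countable (since $\bd$ computes only countably many sets) and dense in $\cd$ by the very definition of closure, so $\cd$ is separable. A separable complete metric space is Polish. The fourth assertion then follows immediately: the group operation $+$ is continuous on all of $\mcS$, hence continuous on the subgroup $\cd$, which is therefore a topological group whose underlying space is Polish, i.e., a Polish group. I do not anticipate a genuine obstacle here; the proof is a sequence of routine verifications, with the only point requiring care being the choice of $\bd$-computable representatives in the first step.
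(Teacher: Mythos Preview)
Your proposal is correct and follows exactly the approach sketched in the paper's observation: the paper simply notes that $\kappa(\bd)$ is closed under $\sd$, invokes the topological-group fact that closures of subgroups are subgroups, and observes that $\cd$ is closed with a countable dense subset. You have supplied the routine details (choosing $\bd$-computable representatives, using continuity of $+$ to pass to limits, and unpacking why closed-plus-separable-plus-complete gives Polish) that the paper leaves implicit, but the argument is the same.
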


In order to prove that closures determine degrees, that is, $\bd \leq
\be$ if and only if $\cd \subseteq \ce$, we use a relativized version
of the sets $\mathcal R(C)$, defined using the notation in Observation
\ref{hilbert}. Recall that we denote the complement of $A$ by $\neg
A$.

\begin{definition}
For sets $A$ and $C$, let $\mathcal R^A(C) = \bigoplus^{\mathcal R}_n
X_n$, where $X_n = A$ if $n \in C$, and $X_n = \neg A$ if $n \notin
C$. In particular, $\mathcal R^\omega(C)=\mathcal R(C)$ for all $C$.
\end{definition}

We note some properties of this definition.

\begin{lemma}
\begin{enumerate}
         
\item[(i)] For all $A$ and $C$, the set $\mathcal{R}^A(C)$ is a
limit of $A$-computable sets.
                 
\item[(ii)] For all sets $A$, $C$, and $G$, if
$\gamma_G(\mathcal{R}^A(C)) = 1$ then $\gamma_G(A) = 1$.

\item[(iii)] For all sets $A$, $C_1$, and $C_2$, if $C_1 \neq C_2$,
then $[\mathcal{R}^A(C_1)] \neq [\mathcal{R}^A(C_2)]$.

\end{enumerate}
\end{lemma}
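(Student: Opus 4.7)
The plan is to handle the three parts in turn, by tracking how $\mathcal{R}^A(C)$ decomposes along the partition $\{R_n\}_n$ in a manner parallel to the arguments already seen for $\mathcal{R}$ (Lemmas \ref{Rlem}, \ref{Rlem2}, and \ref{rpluslem}).

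For (i), I would adapt the proof of Lemma \ref{Rlem}. Setting $C_k = C \cap [0,k]$, the sets $\mathcal{R}^A(C)$ and $\mathcal{R}^A(C_k)$ use the same choice of $X_n$ (namely $A$ or $\neg A$) within each $R_n$ for $n \leq k$, so their symmetric difference is contained in $\bigcup_{n>k} R_n$, which has density $\sum_{n>k} 2^{-(n+1)} = 2^{-(k+1)} \to 0$. Each $\mathcal{R}^A(C_k)$ is $A$-computable since the finite set $C_k$ can be coded into a computable function while $A$ (and hence $\neg A$) is accessible from the oracle, so $\mathcal{R}^A(C)$ is a limit of $A$-computable sets in $(\mathcal{S},\delta)$.

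For (ii), the point is that a single interval $R_0$ already carries a full copy of $X_0 \in \{A, \neg A\}$ via $m \mapsto i^0_m$, so a coarse description of $\mathcal{R}^A(C)$ yields a coarse description of $X_0$ (and hence of $A$, since $\gamma_G$ is invariant under complementation: if $B \leq\sub{T} G$ approximates $\neg A$ coarsely, then $\neg B \leq\sub{T} G$ approximates $A$ equally well). Given $\epsilon > 0$, fix a $G$-computable $D$ with $\delta(D,\mathcal{R}^A(C)) < \epsilon/2$ and define $D'(m) = D(i^0_m)$, so that $D' \leq\sub{T} G$. Since $R_0$ has density $1/2$ (and $i^0_m = 2m+1$), a short computation yields $\rho_N(D' \sd X_0) \leq 2\rho_{2N}(D \sd \mathcal{R}^A(C))$, hence $\delta(D', X_0) \leq 2\,\delta(D,\mathcal{R}^A(C)) < \epsilon$. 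Letting $\epsilon \to 0$ gives $\gamma_G(X_0) = 1$, and therefore $\gamma_G(A) = 1$.

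For (iii), pick any $n \in C_1 \sd C_2$, say $n \in C_1 \setminus C_2$. Then within $R_n$ we have $\mathcal{R}^A(C_1) = \{i^n_m : m \in A\}$ and $\mathcal{R}^A(C_2) = \{i^n_m : m \in \neg A\}$, so $(\mathcal{R}^A(C_1) \sd \mathcal{R}^A(C_2)) \cap R_n = R_n$. Since $\overline{\rho}(R_n) = 2^{-(n+1)} > 0$, we conclude $\delta([\mathcal{R}^A(C_1)],[\mathcal{R}^A(C_2)]) \geq 2^{-(n+1)} > 0$, so the two coarse similarity classes are distinct. None of the three parts presents a real obstacle; the only point that requires any care is the factor of $2$ arising in (ii), which is harmless because the hypothesis $\gamma_G(\mathcal{R}^A(C)) = 1$ allows $\epsilon$ to be taken arbitrarily small.
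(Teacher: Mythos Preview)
Your proposal is correct and follows essentially the same approach as the paper: part (i) adapts Lemma \ref{Rlem}, part (iii) uses that the two sets differ on all of $R_n$, and part (ii) reads off $X_0 \in \{A,\neg A\}$ from the $R_0$-column. The only cosmetic difference is that for (ii) the paper observes $\mathcal{R}^A(C) = Y \oplus A$ (or $Y \oplus \neg A$, depending on whether $0 \in C$) and invokes the known fact that $\gamma_G(X \oplus Y)=1$ implies $\gamma_G(X)=\gamma_G(Y)=1$, whereas you carry out the underlying density calculation by hand; both amount to the same argument.
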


\begin{proof}
The proof of part (i) is essentially the same as for $A = \omega$.    
         
For part (ii), note first that if $\gamma_G (X \oplus Y) = 1$ then
$\gamma_G(X) = \gamma_G(Y) = 1$.  This fact is pointed out for $G =
\emptyset$ in Lemma 2.4 of \cite{AHJ}, and the relativization to
arbitrary $G$ is routine.  Now assume that $\gamma_G (\mathcal{R}^A
(C) ) = 1$.  First consider the case where $0 \in C$.  Then
$\mathcal{R}^A(C) = Y \oplus A$ for some $Y$, since $A$ is coded into
the odds in $\mathcal{R}^A(C)$.  It follows that $\gamma_G(A) = 1$.
If $0 \notin C$, then $\mathcal{R}^A(C) = Y \oplus \neg A$ for some
$Y$, so $\gamma_G(\neg A) = 1$, and again it follows that $\gamma_G(A)
= 1$.
 
Part (iii) follows from the fact that every $R_n = \{m : \ 2^k \mid m
\,\mathbin{\&}\, 2^{(k + 1)} \nmid m\}$ has positive density, and if $C_1$ and
$C_2$ differ at $n$, then $\mathcal{R}^A(C_1)$ and
$\mathcal{R}^A(C_2)$ differ at every point of $R_n$.
\end{proof}

The following theorem shows that closures of distinct degrees
are indeed distinct.

\begin{theorem}
\label{T:different}  
If $\mathbf{d}$ and $\mathbf{e}$ are degrees and $\mathbf{d}
\nleq \be$, then there are continuum many similarity classes that
belong to the closure of $\mathbf{d}$ but not to the closure of
$\mathbf{e}$.
\end{theorem}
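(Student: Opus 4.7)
The plan is to exhibit continuum many classes $[\mathcal R^A(C)] \in \cd \setminus \ce$ by taking $A$ to be the ``padded'' version $\mathcal J(D)$ of a representative $D$ of $\bd$, so that any sufficiently close $\be$-computable approximation to $A$ would be forced to compute $D$.

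First I would fix $D \in \bd$ and set $A = \mathcal J(D)$, which is $D$-computable (and hence in $\bd$). The key preliminary claim is that $\gamma_{\be}(A) < 1$. Indeed, if $\gamma_{\be}(A) = 1$, then for every $\epsilon > 0$ there is a $\be$-computable set $B$ with $\delta(A,B) < \epsilon$; choosing $\epsilon < 1/4$ and applying Lemma \ref{jlem} gives $D \leq\sub{T} B \leq\sub{T} \be$, contradicting $\bd \nleq \be$.

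Next, for each $C \subseteq \omega$ I consider the class $[\mathcal R^A(C)]$. By part (i) of the lemma just before Theorem \ref{T:different}, $\mathcal R^A(C)$ is a limit of $A$-computable sets. Since $A \leq\sub{T} \bd$, these sets are $\bd$-computable, so $\gamma_{\bd}(\mathcal R^A(C)) = 1$ and thus $[\mathcal R^A(C)] \in \cd$. On the other hand, part (ii) of that lemma, applied with $G \in \be$, tells us that if $\gamma_{\be}(\mathcal R^A(C)) = 1$ then $\gamma_{\be}(A) = 1$, contradicting the claim above. Hence $[\mathcal R^A(C)] \notin \ce$ for every $C$.

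Finally, part (iii) of the lemma guarantees that distinct $C$'s yield distinct similarity classes $[\mathcal R^A(C)]$, so the map $C \mapsto [\mathcal R^A(C)]$ is an injection from $2^\omega$ into $\cd \setminus \ce$, producing the desired continuum many classes. The only real content is the passage from ``$\bd \nleq \be$'' to ``$\gamma_{\be}(A) < 1$ for some specific $A \in \bd$'', which is precisely what the $\mathcal J$ construction and Lemma \ref{jlem} are designed to handle; everything else is bookkeeping with the properties of $\mathcal R^A(\cdot)$.
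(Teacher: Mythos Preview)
Your proof is correct and follows essentially the same approach as the paper's: fix $A = \mathcal J(D)$, use part (i) of the preceding lemma to put each $[\mathcal R^A(C)]$ in $\cd$, use part (ii) together with Lemma \ref{jlem} to exclude it from $\ce$, and use part (iii) for injectivity. The only cosmetic difference is that you isolate the claim $\gamma_{\be}(\mathcal J(D)) < 1$ up front, whereas the paper derives the same contradiction inline at the end.
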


\begin{proof}
Suppose that $\mathbf{d \nleq e}$.  We first construct a single set
$F$ whose similarity class belongs to the closure of $\mathbf{d}$ but
not the closure of $\mathbf{e}$.  Let $D$ and $E$ be sets of degree
$\mathbf{d}$ and $\mathbf{e}$ respectively.  Let $A = \mathcal J(D)$
(as defined in Definition \ref{jdef}), which is Turing equivalent to
$D$.  Choose any set $C$ and let $F = \mathcal{R}^A(C)$.  We must show
that $[F]$ belongs to the closure of $\mathbf{d}$ but not the closure
of $\mathbf{e}$.

To show that $[F]$ belongs to the closure of $\mathbf{d}$, apply part
(i) of the above lemma, using the fact that $A$ is Turing equivalent
to $D$.
 
To show that $[F]$ is not in the closure of $\mathbf{e}$ it suffices
to prove that $\gamma_{\be}(F) < 1$.  Suppose for a contradiction that
$\gamma_E(F) = 1$, so $\gamma_E (\mathcal{R}^A(C)) = 1$.  It follows
from part (ii) of the lemma that $\gamma_E(A) = 1$, so
$\gamma_E(\mathcal J(D)) = 1$.  But then $D$ would be computable from
$E$ by Lemma \ref{jlem}, contradicting the hypothesis that $\mathbf{d
\nleq e}$.
  
To complete the proof of the theorem, just note that there are
continuum many choices for $C$, and apply part (iii) of the lemma.
\end{proof}

We have noted that closures of degrees are subgroups. Any group
$G$ of exponent $2$ has a well-defined dimension, $\dim(G)$, as a
vector space over the field of two elements.
 
\begin{observation}
If $\mathbf{d}$ and  $\mathbf{e}$ are degrees then 
\[ 
\dim \left( \frac{ \overline{\mathbf{e}} }{ \overline{\mathbf{d}} \cap
\overline{\mathbf{e}} } \right)
\]
is either $0$ (exactly when $\be \leq \bd$) or the cardinality of the
continuum.
\end{observation}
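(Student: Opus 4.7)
The approach is to dispatch the zero-dimensional case using the earlier structural results and, in the contrary case, to build $2^{\aleph_0}$ independent vectors in $\ce / (\cd \cap \ce)$ by sharpening the construction of Theorem \ref{T:different}. If $\be \leq \bd$, then the earlier lemma on $\kappa$ and Turing reducibility gives $\kappa(\be) \subseteq \kappa(\bd) \subseteq \cd$, and since $\cd$ is closed this yields $\ce \subseteq \cd$, so $\cd \cap \ce = \ce$ and the dimension is $0$. Conversely, if $\be \nleq \bd$, then Theorem \ref{T:different} (with the roles of $\bd$ and $\be$ swapped) already puts classes in $\ce \setminus \cd$, so the quotient is nonzero, and the task is to upgrade this cardinality statement to $\mathbb{F}_2$-linear independence. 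The upper bound $\dim \leq 2^{\aleph_0}$ is immediate from $|\mathcal{S}| = 2^{\aleph_0}$.

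For the lower bound I would fix $E$ of degree $\be$, set $A = \mathcal J(E)$, and define $F_C = \mathcal R^A(C)$ for each $C \subseteq \omega$, so that part (i) of the lemma immediately preceding Theorem \ref{T:different} gives $[F_C] \in \ce$. The key choice is a family $\{C_\alpha : \alpha < 2^{\aleph_0}\} \subseteq \mathcal P(\omega)$ that is $\mathbb F_2$-linearly independent modulo the countable subgroup of $\bd'$-computable sets; such a family exists by a standard basis-extension argument, since $\mathcal P(\omega)$ has $\mathbb F_2$-dimension $2^{\aleph_0}$. A blockwise computation on each $R_n$ would then establish that for distinct indices $\alpha_1,\ldots,\alpha_k$, writing $Y = C_{\alpha_1} \sd \cdots \sd C_{\alpha_k}$,
\[
\sum_{j=1}^k F_{C_{\alpha_j}} =
\begin{cases}
\mathcal R^A(Y) & \text{if $k$ is odd,}\\
\mathcal R(Y) & \text{if $k$ is even,}
\end{cases}
\]
since on each $R_n$ the odd-parity count of ``$A$''-contributions survives while the even-parity count cancels.

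The main obstacle is exactly this parity split. For odd $k$, membership of the sum in $\cd$ would by part (ii) of the same lemma force $\gamma_{\bd}(A) = 1$, i.e., $[\mathcal J(E)] \in \cd$, which by Lemma \ref{jlem} gives $\be \leq \bd$, contradicting the hypothesis. For even $k$ the ``$A$''s cancel completely and the sum reduces to $\mathcal R(Y)$; here Lemma \ref{Rlem2} says a coarse description of $\mathcal R(Y)$ computes $Y$ only one jump up, so $[\mathcal R(Y)] \in \cd$ would imply $Y \leq\sub{T} \bd'$, contradicting the independence of $\{C_\alpha\}$ modulo the $\bd'$-computable sets. This is precisely why the family $\{C_\alpha\}$ must be chosen independent modulo $\bd'$ rather than merely modulo $\bd$: mere $\bd$-independence would leave the even-$k$ sums potentially landing in $\cd$ through the jump. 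Once both parities are ruled out, no nontrivial $\mathbb F_2$-combination of the $[F_{C_\alpha}]$ lies in $\cd$, so the $2^{\aleph_0}$ classes $[F_{C_\alpha}]$ are $\mathbb F_2$-independent in $\ce / (\cd \cap \ce)$.
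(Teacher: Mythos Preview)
Your odd-$k$ analysis is fine, but the even-$k$ case collapses. You claim that $[\mathcal R(Y)] \in \cd$ would force $Y \leq\sub{T} \bd'$ via Lemma~\ref{Rlem2}, but that lemma needs an actual coarse \emph{description} of $\mathcal R(Y)$, whereas membership in $\cd$ only says $\gamma_{\bd}(\mathcal R(Y))=1$. These are very different: by Lemma~\ref{Rlem}, $\mathcal R(Y)$ is a limit of \emph{computable} sets for \emph{every} $Y$, so $[\mathcal R(Y)]\in\overline{\mathbf 0}\subseteq\cd\cap\ce$ always. Hence every even-length sum of your $[F_{C_\alpha}]$'s lands in $\cd\cap\ce$, which means any two of them differ by an element of $\cd\cap\ce$. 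Your entire family projects to a \emph{single} nonzero coset in $\ce/(\cd\cap\ce)$, yielding dimension $\geq 1$, not $2^{\aleph_0}$. The ``independence modulo $\bd'$-computable sets'' device is aimed at the wrong target and cannot rescue this.

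The fix is to replace $\mathcal R^A(C)$ by the variant $G_C=\bigoplus^{\mathcal R}_n Y_n$ with $Y_n=A=\mathcal J(E)$ for $n\in C$ and $Y_n=\emptyset$ for $n\notin C$. Now $C\mapsto [G_C]$ is a genuine group homomorphism $\mathcal P(\omega)\to\ce$ (the parity split disappears: $G_{C_1}\sd G_{C_2}=G_{C_1\sd C_2}$), and $[G_C]\in\ce$ by the same limit argument as Lemma~\ref{Rlem}. For $C\neq\emptyset$ pick $n\in C$; reading off the $R_n$-column of any $\bd$-computable $r$-description of $G_C$ gives a $\bd$-computable $r'$-description of $\mathcal J(E)$ with $r'\to 1$ as $r\to 1$ (this is the content of Lemma~\ref{rpluslem}/part~(ii) of the lemma before Theorem~\ref{T:different}), so $[G_C]\in\cd$ would give $\gamma_{\bd}(\mathcal J(E))=1$ and hence $\be\leq\bd$ by Lemma~\ref{jlem}. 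Thus the composite $\mathcal P(\omega)\to\ce\to\ce/(\cd\cap\ce)$ has trivial kernel, and since an $\mathbb F_2$-vector space of cardinality $2^{\aleph_0}$ has dimension $2^{\aleph_0}$, you are done. (The paper states the observation without proof; this is presumably the intended route.)
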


We can consider the distance from a single point to a subset of
$\mathcal S$ in the usual way.

\begin{definition}
\label{SETDISTANCE}
If  $[A] \in \mathcal{S}$ and $\mcB \subseteq \mathcal{S}$, then 
\[
\delta([A], \mcB) = \inf\{\delta([A],[B]) : [B] \in \mathcal{B}\}.
\]
\end{definition}

There is also a natural definition of the ``computational distance''
between a set and the closure $\cd$ of a degree $\bd$, namely:
\begin{definition}
$c([A], \cd) = 1 - \gamd(A)$.
\end{definition}

Lemma \ref{lowerupper} shows that
\[
\gamma_{\bd}([A]) = 1 - \delta([A], \cd),
\]
so $c([A], \cd) = \delta([A], \cd)$, so the computational distance
equals the metric distance. This fact shows that the density metric is the
correct metric for our situation.

If $\mathcal{A}, \mcB$ are subsets of a metric space, the
\emph{Hausdorff distance} between them is, roughly speaking, the
greatest distance from a point in either set to the other set.  More
precisely, for the metric space $(\mathcal{S}, \delta)$ the definition
is as follows: If $\mathcal{A}, \mcB \subseteq \mathcal{S}$ then the
Hausdorff distance between them is given by
\begin{equation}
\label{H1}
H(\mathcal{A},\mcB) = \max\{\sup_{x \in \mathcal{A}}
\delta(x,\mcB),\sup_{y \in \mcB} \delta(y,\mathcal{A})\}.
\end{equation}
The Hausdorff distance is always a pseudo-metric on the subsets of a
metric space, and is a metric on its closed bounded subsets. Since
$\delta$ is bounded, it is a metric on the closed subsets of
$(\mathcal{S},\delta)$.

In any metric space, the Hausdorff distance between two subsets of the
space is the same as the Hausdorff distance between their closures.
Thus, for any degrees $\mathbf{d}, \mathbf{e}$, the Hausdorff
distance between their cores is the same as the Hausdorff distance
between their closures, and it seems that this distance is a
reasonable measure of the distance between the degrees. We will
therefore use the following definition.

\begin{definition}   
The Hausdorff distance, $H(\mathbf{d}, \mathbf{e})$, between the
degrees $\bd$ and $\be$ is the Hausdorff distance $H(\cd, \ce)$
between their closures in the space $(\mcS,\delta)$. If $D \in \bd$
and $E \in \be$, we also write $H(D,E)$ for $H(\mathbf{d},
\mathbf{e})$.
\end{definition}

Using Equation \eqref{H1} we have
\begin{equation}\label{H2}
H(\mathbf{d}, \mathbf{e}) = \max\{\sup_{[A] \in \cd} \{1 -
\game([A]) \}, \sup_{[B] \in \ce} \{ 1 - \gamd([B]) \} \}.
\end{equation}
Let $\mathcal{D}$ denote the set of all Turing degrees.  In order to
calculate these Hausdorff distances between degrees, we use a
relativized version of the function $\Gamma: \mathcal{D} \to [0, 1]$
defined in \cite{ACDJL}.  We first review the definition of $\Gamma$
and some of its known properties.

For $\mathbf{a} \in \mathcal{D}$, define
\[
\Gamma(\mathbf{a})  = \inf\{\gamma(A) : A \leq\sub{T} \mathbf{a}\}.
\]
Obviously, $\Gamma(\mathbf{0}) = 1$.  We define $\Gamma$ on sets by
$\Gamma(A) = \Gamma(\mathbf{a})$, where $\mathbf{a}$ is the degree of
$A$. Let $\mathcal I(A)$ be as in Definition \ref{idef}. It was proved
in \cite{HJMS} that for all $A$, if $\gamma(\mathcal I(A)) > 1/2$ then
$A$ is computable.  It follows that if $\Gamma(\mathbf{a}) > 1/2$ then
$\mathbf{a} = \mathbf{0}$, so $\Gamma(\mathbf{a}) = 1$.  It was also
proved in \cite{HJMS} that if the degree $\mathbf{a}$ is hyperimmune
or PA, then $\Gamma(\mathbf{a}) = 0$.  Furthermore, it was proved in
\cite{ACDJL} that if the degree $\mathbf{a}$ is either nonzero and
computably traceable or both hyperimmune-free and $1$-random, then
$\Gamma(\mathbf{a}) = 1/2$.  These results showed that the range $R$
of $\Gamma$ satisfies $\{0, \ 1/2, \ 1\} \subseteq R \subseteq [0,
1/2] \cup \{1\}$, and in fact there are continuum many degrees
$\mathbf{a_1}$ such that $\Gamma(\mathbf{a_1}) = 1/2$ and also there
are continuum many degrees $\mathbf{a_2}$ such that
$\Gamma(\mathbf{a_2}) = 0$.  Several years later, these results were
capped off  in \cite{M} by B. Monin, who proved that, for all degrees
$\mathbf{a}$, if $\Gamma(\mathbf{a}) < 1/2$ then $\Gamma(\mathbf{a}) =
0$, thus establishing that $R = \{0, \ 1/2, \ 1\}$.

We can use $\Gamma$ to calculate the Hausdorff distance between
$\mathbf{0}$ and an arbitrary degree $\mathbf{a}$.  Namely, it is easy
to see that $H(\mathbf{0}, \mathbf{a}) = 1 - \Gamma(\mathbf{a})$.  In
order to find Hausdorff distances between arbitrary pairs of degrees
we need to relativize $\Gamma$.  This is done in the obvious way, by
defining
\begin{equation}
\label{G}
\Gamma_{\mathbf c}(\mathbf{a})= \inf \{\gamma_{\mathbf c} (A) : A
\leq\sub{T} \mathbf{a}\}.
\end{equation}
If $C \in \mathbf{c}$ and $A \in \mathbf{a}$ then we also write
$\Gamma_C(A)$ for $\Gamma_{\mathbf{c}}(\mathbf{a})$.

Many of the above results on $\Gamma$ relativize routinely to
$\Gamma_\mathbf{c}$ for an arbitrary degree $\mathbf{c}$ as noted in
the following proposition.

\begin{proposition}
\label{rel}
Let $\mathbf{a}, \mathbf{c}$ be degrees.
\begin{enumerate}

\item $\Gamma_{\mathbf{c}} (\mathbf{a}) = 1$ if and only if $\mathbf{a}
\leq \mathbf{c}$.

\item If $\Gamma_{\mathbf{c}} (\mathbf{a}) > 1/2$, then $\mathbf{a}
\leq \mathbf{c}$, so $\Gamma_{\mathbf{c}} (\mathbf{a}) = 1$.
        
\item There exist continuum many degrees $\mathbf{a_1} > \mathbf{c}$
such that $\Gamma_{\mathbf{c}}(\mathbf{a_1}) = 1/2$.
        
\item There exist continuum many degrees $\mathbf{a_2} > \mathbf{c}$
such that $\Gamma_{\mathbf{c}}(\mathbf{a_2}) = 0$.

\end{enumerate}
\end{proposition}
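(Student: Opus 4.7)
The plan is to relativize each of the four corresponding unrelativized results cited just before the proposition, so that the argument splits into two easy parts and two slightly richer ones. Throughout, I will freely use Lemma \ref{jlem} and the fact that $\gamma_{\mathbf{c}}([A]) = 1$ exactly when $[A]$ lies in the closure of the $\mathbf{c}$-computable sets.

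For part (1), the backward direction is immediate: if $\mathbf{a} \leq \mathbf{c}$, then every $A \leq\sub{T} \mathbf{a}$ is $\mathbf{c}$-computable, giving $\gamma_{\mathbf{c}}(A)=1$. For the forward direction, suppose $\Gamma_{\mathbf{c}}(\mathbf{a})=1$ and pick any $A \in \mathbf{a}$. Since $\mathcal{J}(A) \equiv\sub{T} A$, we have $\gamma_{\mathbf{c}}(\mathcal{J}(A))=1$, so there is a $\mathbf{c}$-computable set $B$ with $\delta(\mathcal{J}(A),B)<1/4$. Lemma \ref{jlem} then yields $A \leq\sub{T} B \leq\sub{T} \mathbf{c}$, so $\mathbf{a} \leq \mathbf{c}$. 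Part (2) is the relativization of the theorem from \cite{HJMS} that $\gamma(\mathcal{I}(A))>1/2$ forces $A$ to be computable; the original proof goes through with $\mathbf{c}$ added as an oracle throughout. Granting this, if $\Gamma_{\mathbf{c}}(\mathbf{a})>1/2$ and $A \in \mathbf{a}$, then $\mathcal{I}(A) \leq\sub{T} \mathbf{a}$ gives $\gamma_{\mathbf{c}}(\mathcal{I}(A)) \geq \Gamma_{\mathbf{c}}(\mathbf{a})>1/2$, hence $A \leq\sub{T} \mathbf{c}$; now apply part (1) to conclude $\Gamma_{\mathbf{c}}(\mathbf{a})=1$.

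For parts (3) and (4), the plan is to use the appropriate relativized criteria producing values $1/2$ and $0$, together with the existence of continuum many degrees with those relativized properties that lie strictly above $\mathbf{c}$. For (4), the relativized statement is that if $\mathbf{a}$ is hyperimmune relative to $\mathbf{c}$ (or $\mathbf{c}$-PA), then $\Gamma_{\mathbf{c}}(\mathbf{a})=0$; this follows from the proofs in \cite{HJMS} with an oracle adjoined. To produce continuum many such degrees above $\mathbf{c}$, I can use the degrees of sets $1$-generic relative to $\mathbf{c}$: any such set is of hyperimmune degree over $\mathbf{c}$ and strictly above $\mathbf{c}$, and by standard forcing arguments there are continuum many $\mathbf{c}$-generic degrees. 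For (3), the relativized statement is that a degree that is both hyperimmune-free relative to $\mathbf{c}$ and $1$-random relative to $\mathbf{c}$ (equivalently, nonzero over $\mathbf{c}$ and $\mathbf{c}$-computably traceable) has $\Gamma_{\mathbf{c}}$-value exactly $1/2$; this is the relativization from \cite{ACDJL}. A relativized Terwijn--Zambella construction then produces continuum many $\mathbf{c}$-computably traceable degrees strictly above $\mathbf{c}$, as needed.

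The main obstacle is verifying that the existence-of-continuum-many clauses in (3) and (4) genuinely go through after relativization. This is not deep, since the constructions in \cite{HJMS} and \cite{ACDJL} are all carried out on a perfect tree of requirements, and the only change when relativizing to $\mathbf{c}$ is that the tree is built $\mathbf{c}$-effectively instead of effectively; distinct paths then yield distinct degrees, giving a continuum-sized family. Everything else in the proposition reduces to a one-line invocation of Lemma \ref{jlem} or of the relativized $\gamma(\mathcal{I}(A))$ theorem.
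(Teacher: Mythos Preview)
Your approach is exactly the paper's: the authors simply write ``We omit the routine proof,'' so routine relativization of the cited results from \cite{HJMS} and \cite{ACDJL} is precisely what is intended, and your parts (1) and (2) are clean.

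There is, however, a small but genuine slip in parts (3) and (4). You assert that a set $1$-generic relative to $\mathbf{c}$ is ``strictly above $\mathbf{c}$,'' and similarly that the relativized Terwijn--Zambella construction produces $\mathbf{c}$-computably traceable degrees ``strictly above $\mathbf{c}$.'' Neither is true: a $\mathbf{c}$-$1$-generic $G$ is typically Turing incomparable with any $C\in\mathbf{c}$ (for instance a $2$-generic is $1$-generic relative to $\emptyset'$ but does not compute $\emptyset'$), and paths through a relativized hyperimmune-free or traceability tree need not compute the oracle either. The fix is immediate. For (4), pass to $\deg(G\oplus C)$: since $\Gamma_{\mathbf{c}}$ is order-reversing and $\deg(G\oplus C)\geq\deg(G)$, you still get $\Gamma_{\mathbf{c}}(\deg(G\oplus C))=0$, and now $\deg(G\oplus C)>\mathbf{c}$; a perfect tree of mutually $\mathbf{c}$-generic sets then gives continuum many such joins of distinct degree. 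For (3), the analogous join need not preserve $\mathbf{c}$-traceability, so instead code $C$ into the initial $\mathbf{c}$-computable perfect tree before running the relativized construction; every path then computes $C$, and the traceability argument goes through unchanged inside that tree.
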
       

We omit the routine proof. The result of Monin also relativizes, but
more care is needed for that, and we will deal with it later in
Theorem \ref{Monin}. The next proposition shows how the relativized
version of $\Gamma$ can be used to compute Hausdorff distances between
degrees.

\begin{proposition}
\label{Hformula}
For any two degrees $\mathbf{d}$ and $\mathbf{e}$, we have
\[
H(\mathbf{d}, \mathbf{e}) = 1 - \min\{\Gamma_{\mathbf{d}} (\mathbf{e}), 
\Gamma_{\mathbf{e}} (\mathbf{d})\}.
\]
\end{proposition}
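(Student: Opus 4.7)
The plan is to start from the defining equation \eqref{H2} and show that each of the two suprema appearing there equals the corresponding $1-\Gamma$. By symmetry it is enough to verify
\[
\sup_{[B] \in \ce}\bigl(1 - \gamd([B])\bigr) \;=\; 1 - \Gamma_{\bd}(\be),
\]
which, since taking $1-(\cdot)$ interchanges sup and inf, amounts to
\[
\inf_{[B] \in \ce} \gamd([B]) \;=\; \Gamma_{\bd}(\be) \;=\; \inf\{\gamd(B) : B \leq\sub{T} \be\}.
\]
The right-hand side is exactly $\inf_{[B] \in \kappa(\be)} \gamd([B])$, because $\gamd$ depends only on the coarse similarity class. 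So the real content is that the infimum over the closure $\ce$ equals the infimum over the dense subset $\kappa(\be)$.

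The key step is to show that $\gamd \colon \mcS \to [0,1]$ is continuous; in fact I would show it is $1$-Lipschitz with respect to $\delta$. Suppose $C \leq\sub{T} \bd$ witnesses that $A$ is coarsely $\bd$-computable at density $r$, i.e.\ $\urho(A \sa C) \geq r$. If $\delta([A],[A']) \leq \eta$, then $\orho(A \sd A') \leq \eta$, and the triangle inequality for $\orho$ applied to $A' \sd C \subseteq (A \sd A') \cup (A \sd C)$ gives $\orho(A' \sd C) \leq \orho(A \sd C) + \eta$. Applying Lemma \ref{lowerupper} to both sides yields $\urho(A' \sa C) \geq \urho(A \sa C) - \eta \geq r - \eta$. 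Taking the supremum over witnesses $C$ gives $\gamd([A']) \geq \gamd([A]) - \delta([A],[A'])$, and symmetry produces the $1$-Lipschitz bound $|\gamd([A]) - \gamd([A'])| \leq \delta([A],[A'])$.

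With continuity in hand, the equality of infima is immediate: $\kappa(\be) \subseteq \ce$ gives $\leq$, and conversely any $[A] \in \ce$ is the limit of some sequence $[A_n] \in \kappa(\be)$, whence $\gamd([A]) = \lim_n \gamd([A_n]) \geq \inf_{[B] \in \kappa(\be)} \gamd([B])$; taking the infimum over $[A] \in \ce$ yields $\geq$. So $\inf_{\ce} \gamd = \Gamma_{\bd}(\be)$, and the symmetric argument with the roles of $\bd$ and $\be$ swapped gives $\inf_{\cd} \game = \Gamma_{\be}(\bd)$. Substituting into \eqref{H2} produces
\[
H(\bd,\be) = \max\{1 - \Gamma_{\be}(\bd),\; 1 - \Gamma_{\bd}(\be)\} = 1 - \min\{\Gamma_{\bd}(\be), \Gamma_{\be}(\bd)\},
\]
as required. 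There is no serious obstacle; the only point that requires care is the bookkeeping in verifying the $1$-Lipschitz estimate, and in particular remembering that it is $\urho$ (not $\orho$) that controls the density of agreement, so that one must pass through Lemma \ref{lowerupper}.
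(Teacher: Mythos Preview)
Your proof is correct and follows the same route as the paper's, deriving the formula from Equation~\eqref{H2} and the definition~\eqref{G} of $\Gamma_{\mathbf{c}}$. The paper's argument is terser: it simply writes $\inf_{[A] \in \cd}\game([A]) = \Gamma_{\mathbf{e}}(\mathbf{d})$ ``by~\eqref{G}'' without pausing to justify passing from the closure $\cd$ to the core $\kappa(\bd)$, whereas you explicitly supply the $1$-Lipschitz continuity of $\gamd$ that makes this step rigorous.
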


\begin{proof}
The proposition follows directly from Equations \eqref{H2} and
\eqref{G}.  To express the right-hand side of \eqref{H2} in terms of
$H$, first note by \eqref{G} that
\[
\sup_{[A] \in  \cd} \{1 - \game([A])\} = 1 - \inf_{[A] \in
 \cd}\game([A]) = 1 - \Gamma_{\mathbf{e}}(\mathbf{d}).
\]
Of course, the same result holds if $\mathbf{d}$ and $\mathbf{e}$ are
interchanged.

Then by equation \eqref{H2},
\[
H(\mathbf{d}, \mathbf{e}) = \max\{1 - \Gamma_{\mathbf{e}}(\mathbf{d}),
1 - \Gamma_{\mathbf{d}}(\mathbf{e})\} = 1 -
\min\{\Gamma_{\mathbf{e}}(\mathbf{d}),
\Gamma_{\mathbf{d}}(\mathbf{e})\}.
\]
\end{proof}

The next corollary follows immediately from Propositions \ref{rel} and
\ref{Hformula}.

\begin{corollary}
If $\mathbf{a} \leq \mathbf{b}$, then $H(\mathbf{a}, \mathbf{b} ) =
1 - \Gamma_ \mathbf{a} (\mathbf{b})$.
\end{corollary}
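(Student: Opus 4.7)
The plan is to simply combine the two preceding propositions. By Proposition \ref{Hformula}, we have
\[
H(\mathbf{a},\mathbf{b}) = 1 - \min\{\Gamma_{\mathbf{a}}(\mathbf{b}),\,\Gamma_{\mathbf{b}}(\mathbf{a})\},
\]
so the task reduces to identifying which of $\Gamma_{\mathbf{a}}(\mathbf{b})$ and $\Gamma_{\mathbf{b}}(\mathbf{a})$ achieves the minimum under the hypothesis $\mathbf{a}\leq\mathbf{b}$.

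Next, I would apply Proposition \ref{rel}(1), which says that $\Gamma_{\mathbf{c}}(\mathbf{d})=1$ if and only if $\mathbf{d}\leq\mathbf{c}$. Taking $\mathbf{c}=\mathbf{b}$ and $\mathbf{d}=\mathbf{a}$ and using the assumption $\mathbf{a}\leq\mathbf{b}$, this yields $\Gamma_{\mathbf{b}}(\mathbf{a})=1$.

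Since $\Gamma$ is always at most $1$ (its supremum is taken over reals in $[0,1]$), it follows that
\[
\min\{\Gamma_{\mathbf{a}}(\mathbf{b}),\,\Gamma_{\mathbf{b}}(\mathbf{a})\} = \min\{\Gamma_{\mathbf{a}}(\mathbf{b}),\,1\} = \Gamma_{\mathbf{a}}(\mathbf{b}),
\]
and substituting back into the formula from Proposition \ref{Hformula} gives the claimed equality $H(\mathbf{a},\mathbf{b}) = 1-\Gamma_{\mathbf{a}}(\mathbf{b})$. There is no real obstacle here; the entire content lies in having already established Propositions \ref{rel} and \ref{Hformula}, and the corollary is just the observation that under comparability one of the two relativized $\Gamma$-values is trivially $1$.
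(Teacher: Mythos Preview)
Your proof is correct and follows exactly the approach indicated in the paper, which simply states that the corollary follows immediately from Propositions \ref{rel} and \ref{Hformula}. Your argument spells out precisely this: Proposition \ref{Hformula} gives $H(\mathbf{a},\mathbf{b}) = 1 - \min\{\Gamma_{\mathbf{a}}(\mathbf{b}),\Gamma_{\mathbf{b}}(\mathbf{a})\}$, and Proposition \ref{rel}(1) forces $\Gamma_{\mathbf{b}}(\mathbf{a})=1$ under the hypothesis $\mathbf{a}\leq\mathbf{b}$, so the minimum is $\Gamma_{\mathbf{a}}(\mathbf{b})$.
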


Note that $H$ respects the ordering of degrees in the
sense that if $\mathbf{a}, \mathbf{b}, \mathbf{c},\mathbf{d}$
are degrees and $\mathbf{a} \leq \mathbf{b} \leq \mathbf{c} \leq
\mathbf{d}$, then $H(\mathbf{b}, \mathbf{c}) \leq H(\mathbf{a},
\mathbf{d})$. To prove that this is the case, observe that
\[
H(\mathbf{b}, \mathbf{c}) = 1 - \Gamma_{\mathbf{b}} (\mathbf{c}) \leq
1 - \Gamma_{\mathbf{a}}(\mathbf{d}) = H(\mathbf{a}, \mathbf{d}),
\]  
since $\Gamma_{\mathbf{b}} (\mathbf{c}) \geq \Gamma_{\mathbf{a}}
(\mathbf{c}) \geq \Gamma_{\mathbf{a}} (\mathbf{d})$.

The next corollary also follows immediately from Propositions
\ref{rel} and \ref{Hformula}.
  
\begin{corollary}
\label{Hvalues} 
Let $\mathbf{d}$ and $\mathbf{e}$ be degrees.
\begin{enumerate}

\item $H(\mathbf{d}, \mathbf{e}) = 0$ if and only if $\mathbf{d} =
\mathbf{e}$.

\item  $H(\mathbf{d}, \mathbf{e}) = 1/2$ if and only if one of the
following holds:
\begin{gather*}
\Gamma_\mathbf{d}(\mathbf{e}) = \Gamma_\mathbf{e}(\mathbf{d}) = 1/2\\
\mathbf{d \leq e} \, \mathbin{\&} \, \Gamma_\mathbf{d}(\mathbf{e}) =  1/2\\
\mathbf{e \leq d} \, \mathbin{\&} \, \Gamma_\mathbf{e}(\mathbf{d}) = 1/2.
\end{gather*}

\item $H(\mathbf{d}, \mathbf{e}) = 1$ if and only if either
$\Gamma_\mathbf{d}(\mathbf{e}) = 0$ or $\Gamma_\mathbf{e}(\mathbf{d})
= 0$.

\end{enumerate}
\end{corollary}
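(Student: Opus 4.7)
The plan is to use Proposition \ref{Hformula}, which gives $H(\mathbf{d},\mathbf{e}) = 1 - \min\{\Gamma_{\mathbf{d}}(\mathbf{e}), \Gamma_{\mathbf{e}}(\mathbf{d})\}$, to convert each of the three desired equivalences into a statement about when the minimum equals $1$, $1/2$, or $0$. Each case is then handled by unpacking the minimum into conditions on the individual $\Gamma$ values and translating these via Proposition \ref{rel}.

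Parts (1) and (3) are immediate. For (1), $\min = 1$ forces $\Gamma_{\mathbf{d}}(\mathbf{e}) = \Gamma_{\mathbf{e}}(\mathbf{d}) = 1$, and by Proposition \ref{rel}(1) this amounts to $\mathbf{d} \leq \mathbf{e}$ and $\mathbf{e} \leq \mathbf{d}$, i.e., $\mathbf{d} = \mathbf{e}$. For (3), $\min = 0$ states directly that one of the two $\Gamma$ values vanishes, which is exactly the claimed disjunction.

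Part (2) requires a small case analysis, which I expect to be the only mildly delicate step. For the forward direction, assume the minimum equals $1/2$: then one of $\Gamma_{\mathbf{d}}(\mathbf{e}), \Gamma_{\mathbf{e}}(\mathbf{d})$ equals $1/2$ while the other is $\geq 1/2$. The key observation is that by Proposition \ref{rel}(2), any $\Gamma$ value strictly greater than $1/2$ must in fact equal $1$, and Proposition \ref{rel}(1) then supplies the corresponding order relation on the degrees. So there are exactly three possibilities, matching the three cases in the statement: both $\Gamma$ values equal $1/2$; $\Gamma_{\mathbf{d}}(\mathbf{e}) = 1/2$ with $\mathbf{d} \leq \mathbf{e}$ (which is the case $\Gamma_{\mathbf{e}}(\mathbf{d}) = 1$); or the symmetric third case. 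For the converse, each listed condition yields $\min = 1/2$ by appealing to Proposition \ref{rel}(1) to supply the value $1$ in the presence of an order relation, so that the minimum is determined by the explicitly listed $1/2$. No deeper content is needed beyond this bookkeeping.
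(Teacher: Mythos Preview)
Your proposal is correct and takes essentially the same approach as the paper, which simply states that the corollary follows immediately from Propositions \ref{rel} and \ref{Hformula} without writing out the case analysis. You have supplied precisely the details that the paper leaves implicit.
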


The following corollary follows immediately from the above corollary
together with Proposition \ref{rel}.

\begin{corollary}
\label{ballsize}
\begin{enumerate}

\item For every degree $\mathbf{c}$ there are continuum many
degrees $\mathbf{d} > \mathbf{c}$ such that $H(\mathbf{c}, \mathbf{d})
= 1/2$. 

\item For every degree $\mathbf{c}$ there are continuum many
degrees $\mathbf{d} > \mathbf{c}$ such that $H(\mathbf{c}, \mathbf{d})
= 1$.

\end{enumerate}
\end{corollary}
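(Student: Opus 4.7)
The plan is to derive both parts directly from Corollary \ref{Hvalues} combined with the existence results in parts (3) and (4) of Proposition \ref{rel}. The key observation is that when one of the two degrees being compared lies below the other, the Hausdorff distance formula in Proposition \ref{Hformula} collapses, because the larger one's $\Gamma$-value computed relative to itself is $1$ by Proposition \ref{rel}(1). So for $\mathbf{c} \leq \mathbf{d}$, we have $\Gamma_{\mathbf{d}}(\mathbf{c}) = 1$, and the Hausdorff distance is simply $H(\mathbf{c},\mathbf{d}) = 1 - \Gamma_{\mathbf{c}}(\mathbf{d})$.

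For part (1), I would apply Proposition \ref{rel}(3) to obtain continuum many degrees $\mathbf{d} > \mathbf{c}$ with $\Gamma_{\mathbf{c}}(\mathbf{d}) = 1/2$. For each such $\mathbf{d}$, the observation above gives $H(\mathbf{c},\mathbf{d}) = 1 - 1/2 = 1/2$, which is the middle case of Corollary \ref{Hvalues}(2). For part (2), I would apply Proposition \ref{rel}(4) to obtain continuum many $\mathbf{d} > \mathbf{c}$ with $\Gamma_{\mathbf{c}}(\mathbf{d}) = 0$; the same observation yields $H(\mathbf{c},\mathbf{d}) = 1$, matching Corollary \ref{Hvalues}(3).

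There is no real obstacle here, since all the nontrivial work has already been done in Proposition \ref{rel} (whose proof the paper describes as routine relativization of existing results) and in the computation of Hausdorff distances via $\Gamma$. The only thing to verify is that we are picking up the correct case of Corollary \ref{Hvalues}(2) in part (1), namely $\mathbf{c} \leq \mathbf{d} \,\mathbin{\&}\, \Gamma_\mathbf{c}(\mathbf{d}) = 1/2$, rather than one of the other two scenarios listed there. This is automatic from our choice of $\mathbf{d}$ above $\mathbf{c}$.
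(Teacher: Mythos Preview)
Your proposal is correct and is exactly the approach the paper takes: the paper simply states that the corollary follows immediately from Corollary \ref{Hvalues} together with Proposition \ref{rel}, and you have spelled out the details of that deduction.
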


\begin{corollary}  
$(\mathcal{D},H)$  is a metric space.
\end{corollary}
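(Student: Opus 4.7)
The plan is to assemble the corollary from pieces already established in the excerpt, since all the real content has been proved in the preceding results. Recall from the discussion just before Definition of $H(\mathbf{d},\mathbf{e})$ that Hausdorff distance is in general a pseudo-metric on subsets of any metric space, and is a genuine metric on the closed bounded subsets of a bounded metric space. Since $\delta \leq 1$ on $\mathcal{S}$, the Hausdorff distance $H$ on the collection of closed subsets of $(\mathcal{S},\delta)$ is a metric. In particular, nonnegativity, symmetry, and the triangle inequality all come for free.

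Concretely, I would first observe that $\overline{\mathbf{d}}$ is by definition a closed subset of $(\mathcal{S},\delta)$ for every degree $\mathbf{d}$ (Definition \ref{closdef}), so the assignment $\mathbf{d} \mapsto \overline{\mathbf{d}}$ sends $\mathcal{D}$ into the space of closed subsets of $(\mathcal{S},\delta)$. Pulling back the Hausdorff metric through this assignment automatically yields a pseudo-metric on $\mathcal{D}$ that satisfies symmetry and the triangle inequality, because these properties are inherited from $H$ on closed sets.

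The one thing that must still be verified is that this pseudo-metric is actually a metric, that is, that $H(\mathbf{d},\mathbf{e}) = 0$ forces $\mathbf{d} = \mathbf{e}$. This is precisely the injectivity of the map $\mathbf{d} \mapsto \overline{\mathbf{d}}$, and it is exactly what Theorem \ref{T:different} provides: whenever $\mathbf{d} \nleq \mathbf{e}$, the closure $\overline{\mathbf{d}}$ contains points outside $\overline{\mathbf{e}}$ (indeed, continuum many of them), so $\overline{\mathbf{d}} \neq \overline{\mathbf{e}}$ and a fortiori $H(\mathbf{d},\mathbf{e}) > 0$. Alternatively, this is recorded directly in Corollary \ref{Hvalues}(1).

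There is no genuine obstacle here: the entire content of the corollary reduces to citing that Hausdorff distance is a metric on closed bounded subsets of $(\mathcal{S},\delta)$ together with Theorem \ref{T:different} for separation of points. The proof will therefore be only a few lines, essentially a bookkeeping paragraph recording these two inputs.
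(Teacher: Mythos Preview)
Your proposal is correct and essentially matches the paper's proof: both note that $H$ is automatically a pseudo-metric and then verify separation of points. The paper cites Corollary \ref{Hvalues}(1) directly for the implication $H(\mathbf{d},\mathbf{e})=0 \Rightarrow \mathbf{d}=\mathbf{e}$, whereas you lead with Theorem \ref{T:different} (injectivity of $\mathbf{d}\mapsto\overline{\mathbf{d}}$) and mention Corollary \ref{Hvalues}(1) as an alternative; either citation suffices, so there is no substantive difference.
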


\begin{proof}
As mentioned above, $H$ is a pseudo-metric on the subsets of any
metric space. To show that it is a metric on $\mathcal{D}$, assume that
$H(\mathbf{d}, \mathbf{e}) = 0$. It then follows from part (1) of the
previous proposition that $\mathbf{d} = \mathbf{e}$.
\end{proof}

The following result is a relativized form of Monin's theorem in
\cite{M} that if $\Gamma(A) < 1/2$ then $\Gamma(A) = 0$. However, some
care is needed to prove it, as explained below.

\begin{theorem}[Monin \cite{M}, relativized] 
\label{Monin}
For all sets $A$ and $D$, if  $\Gamma_D(A)  < 1/2$ then $\Gamma_D(A) = 0$.
\end{theorem}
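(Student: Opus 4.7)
The plan is to relativize Monin's argument from \cite{M}, which handles the case $D = \emptyset$. Assume $\Gamma_D(A) < 1/2$ and fix $B \leq\sub{T} A$ with $\gamma_D(B) = r < 1/2$. To conclude $\Gamma_D(A) = 0$ it suffices to produce, for each $\varepsilon > 0$, a set $B^* \leq\sub{T} A$ with $\gamma_D(B^*) \leq \varepsilon$, since $B^* \leq\sub{T} A$ gives an element of the set over which $\Gamma_D(A)$ is an infimum.

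The strategy I would pursue is an amplification construction. The hypothesis $\gamma_D(B) \leq r$ says that every $D$-computable set $C$ satisfies $\urho(B \sa C) \leq r$, so along infinitely many thresholds $n$ every single $D$-computable candidate disagrees with $B$ on more than a $(1 - r) > 1/2$ fraction of bits of $B \upharpoonright n$. I would partition $\omega$ into consecutive blocks $I_0, I_1, \ldots$ of rapidly increasing length and, on each $I_n$, distribute $k = k(n)$ many ``copies'' of $B$ obtained by reading off $B$ along $A$-computable sequences of positions (independent shifts, or flips chosen by an $A$-computable device). Any $D$-computable $C^*$ that agrees with the constructed $B^*$ on a large fraction of $I_n$ must, by averaging across the $k$ copies within $I_n$ and unshuffling, yield a $D$-computable approximation to $B$ whose agreement density exceeds $r$, contradicting $\gamma_D(B) = r$. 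Letting $k(n) \to \infty$ at an appropriate rate should drive $\gamma_D(B^*)$ below $\varepsilon$.

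The main obstacle, and the reason the authors flag that ``more care is needed'', is to keep the amplified object $B^*$ reducible to $A$ rather than to $A \oplus D$. The notion $\gamma_D$ quantifies over indices of $D$-computable sets, and a naive mimicry of the unrelativized construction would enumerate those indices using $D$ itself, contaminating the oracle for $B^*$. My plan is to arrange the amplification so that the construction of $B^*$ from $B$ depends only on $A$-computable combinatorial data (block boundaries, block-wise densities of $B$, the $A$-computable shifts), while the contradiction against $\gamma_D(B) \leq r$ is obtained \emph{externally}: given a hypothetical $D$-computable $C^*$ approximating $B^*$ too well, we define from $C^*$ a $D$-computable $C$ approximating $B$ too well, using the block/shift structure fixed in advance. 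Provided this separation of concerns can be maintained, the counting argument on blocks is a quantitative refinement of the density estimates already present in Section 2 (in particular Lemma \ref{factor2}), and the theorem follows.
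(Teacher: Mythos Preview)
Your proposal has a genuine gap, and it is precisely the obstacle that made Monin's theorem hard in the first place. The amplification scheme you describe faces the following dilemma. Suppose $B^*$ is built from $B$ on blocks $I_n$ via some combinatorial device (shifts, flips, replication) encoded by a parameter $P$. For $B^*$ to lie below $A$ you need $P \leq\sub{T} A$; that much is fine. But now consider the ``unshuffling'' step: given a $D$-computable $C^*$ that agrees well with $B^*$, you propose to produce a $D$-computable $C$ agreeing well with $B$ by undoing the device. Undoing the device requires $P$. If $P$ is $D$-computable (for instance, if you make it outright computable so that the unshuffling is $D$-effective), then the device is visible to $D$: take any $D$-computable $C$ with $\urho(B \sa C) \geq r - \epsilon$, apply the same device to $C$, and obtain a $D$-computable $C^*$ with $\urho(B^* \sa C^*) \geq r - \epsilon$. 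So $\gamma_D(B^*) \geq r$ and nothing has been amplified. If instead $P$ is not $D$-computable, then the $C$ you build from $C^*$ and $P$ is only $(D \oplus P)$-computable, and you get no contradiction with $\gamma_D(B) = r$. Averaging or majority-voting over $k$ copies does not escape this: the adversary can simply use $k$ copies of its best single approximation, matching the agreement copy-by-copy.

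The paper's proof (following Monin) takes a completely different route that sidesteps this dilemma. From $\gamma_D(X) < 1/2$ one first extracts, via Hamming-ball counting (Monin's Theorem~3.8 and Corollary~3.9), an $X$-computable sequence of strings $\{\sigma_n\}$ with a combinatorial ``infinitely often equal'' property against all suitably bounded $D$-computable functions. One then passes through error-correcting codes (Monin's Theorem~2.4) and a trace-compression step to obtain an $X$-computable function $h$ that is infinitely often equal to every $D$-computable function bounded by $2^{2^n}$; finally Monin's Theorem~3.6 converts such an $h$ into a set $B \leq\sub{T} h \leq\sub{T} X$ with $\gamma_D(B) < \epsilon$. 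The crucial feature, which the paper checks step by step, is that every \emph{construction} in this chain uses only an $X$-oracle; the $D$-oracle appears only in the \emph{properties} being verified (statements of the form ``for every $D$-computable $g$, \ldots''). This separation is what your proposal gestures at in its last paragraph, but the i.o.e./coding machinery is what actually makes it possible; a direct block-amplification does not.
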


\begin{proof}
The proof of this result is a straightforward modification of Monin's proof
for the case where $D$ is computable, but it is somewhat lengthy
because Monin's proof involves so many steps.  First, if we state
Monin's theorem in terms of $\gamma$ rather than $\Gamma$, it says the
following:
\[
(\forall \epsilon > 0) (\forall X) [ \text{if } \gamma(X) < 1/2 \text{
    then }(\exists Y \leq\sub{T} X) [\gamma(Y) < \epsilon]].
\]
Any proof of Monin's theorem should easily relativize to show the
following:
\[
(\forall D)(\forall \epsilon > 0) (\forall X) [\text {if } \gamma_D
  (X) < 1/2 \text{ then } (\exists Y \leq\sub{T} X \oplus D) [\gamma_D
    (Y) < \epsilon]].
\]
However, this relativization is not good enough for us.  We need to
ensure that $Y \leq\sub{T} X$, not merely that $Y \leq\sub{T} X \oplus
D$.  In fact, this stronger result comes right out of Monin's proof:
The set $Y$ is constructed from $X$ by a series of intermediate steps
in his argument.  If one relativizes this argument to $D$, one never
uses a $D$-oracle to construct a set or function, although the
constructed objects have certain properties pertaining to
$D$-computable functions.

We follow the proof of Theorem 3.8 of Monin's paper \cite{M} and use
his notation.  We assume the reader is familiar with Monin's proof and
has access to his paper.  We start with a set $X$, an oracle $D$ such
that $\gamma_D (X) < 1/2$, and a real $\epsilon > 0$.  Our goal is to
construct a set $B \leq\sub{T} X$ with $\gamma_D(B) < \epsilon$. We do
so via the following steps, where all references are to \cite{M}.
\begin{enumerate}

\item Define the notion of infinitely often equal as in Definition
  3.3, but with ``computable'' replaced by ``$D$-computable''.

\item From $X$ compute a sequence of strings $\{\sigma_n\}_{n \in
\omega}$ as in Corollary 3.9.  This corollary uses Theorem 3.8. The
proof of that theorem allows us to choose this sequence of strings to
be $X$-computable, rather than merely $(X \oplus D)$-computable.

\item  Using Theorem 2.4, construct for each $n$ a set
$C_n$ of strings as described in the proof of Theorem 3.11.
The sets $C_n$ are uniformly computable, and relativization to $D$
has no effect on this step.  Let $C_n$ be effectively listed as
$\tau_0^n, \tau_1^n ,\dots$.

\item  As in the proof of Theorem 3.11, define $T_n = \{i :
\delta(\sigma_n, \tau^n_i) \leq 1/2 - \epsilon'\}$, where
$\epsilon'$ is such that $0<\epsilon'<\epsilon$ and $\delta$ is
normalized Hamming distance as defined in Definition 2.1. Then, as
shown in the proof of Theorem 3.11, there is an $L$ such that $|T_n|
\leq L$ for all $n$.  Also. the sets $T_n$ are uniformly computable
from the sequence $\{\sigma_n\}_{n \in \omega}$ in (2), again
without using a $D$-oracle.

\item  Show that for every suitably bounded $D$-computable function
$g$, we have $g(n) \in T_n$ for infinitely many $ n$, as in the
proof of Theorem 3.11.  (This step does not involve a construction,
so we do not worry that it involves $D$-computable functions.)

\item Again as in the proof of Theorem 3.11, show that there is a
sequence of sets $\{T'_n\}_{n \in \omega}$, uniformly computable
from the sequence $\{T_n\}_{n \in \omega}$ (not using $D$), with
each $T'_n$ of size at most $L$ and $\max T'_n \leq 2^{L 2^n}$ for
all $n$, such that for every $D$-computable function $g$
bounded by $2^{L2^n}$, we have $g(n) \in T'_n$ for infinitely many
$n$.  

\item Continuing to follow the proof of Theorem 3.11, show that there
is a function $h$ computable from the sequence $\{T'_n\}_{n \in
\omega}$ (not using $D$) such that for every $D$-computable
function $g$ bounded by $2^{2^n}$ there exist infinitely many $n$
for which $h(n) = g(n)$.

\item    Use the proof of Theorem 3.6 to show that there
is a set $B \leq\sub{T} h$ (not $h \oplus D)$ such that $\gamma_D(B) <
\epsilon$. We have $B \leq\sub{T} X$ since
\[
B \leq\sub{T} h \leq\sub{T} \{T'_n\}_{n \in \omega} \leq\sub{T}
\{T_n\}_{n \in \omega} \leq\sub{T} \{\sigma_n\}_{n \in \omega}   \leq\sub{T}   X.
\]

\end{enumerate}
\end{proof}

The following corollary follows at once from the theorem and
Proposition \ref{rel}.

\begin{corollary}
For every degree $\mathbf{c}$ the range of $\Gamma_{\mathbf c}$ is
$\{0, 1/2, 1\}$.
\end{corollary}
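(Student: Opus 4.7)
The plan is essentially a two-line deduction combining the relativized Monin theorem with Proposition \ref{rel}, so I would organize the proof as follows.

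First, I would argue that the range is contained in $\{0, 1/2, 1\}$. Let $\mathbf{a}$ be any degree and consider $r = \Gamma_{\mathbf c}(\mathbf{a}) \in [0,1]$. If $r > 1/2$, then Proposition \ref{rel}(2) forces $\mathbf{a} \leq \mathbf{c}$, and then Proposition \ref{rel}(1) gives $r = 1$. If $r < 1/2$, then picking any $A \in \mathbf{a}$ and any $C \in \mathbf{c}$, the definition of $\Gamma_{\mathbf c}$ as an infimum together with the fact that $\Gamma_C(A) \leq r < 1/2$ lets me invoke Theorem \ref{Monin} to conclude $\Gamma_C(A) = 0$, hence $\Gamma_{\mathbf c}(\mathbf{a}) = 0$. (Strictly speaking $\Gamma_{\mathbf c}(\mathbf a)$ is an infimum over $A \leq\sub{T} \mathbf a$, but since $\Gamma_C(A)$ depends only on the degrees of $A$ and $C$, this equals $\Gamma_C(A)$ when $A \in \mathbf a$; so the application of Theorem \ref{Monin} is immediate.) Therefore $r \in \{0,1/2,1\}$.

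Second, I would note that each of the three values is actually attained. The value $1$ is attained by $\mathbf{a} = \mathbf{c}$ via Proposition \ref{rel}(1). The value $1/2$ is attained by any of the continuum many degrees produced in Proposition \ref{rel}(3), and the value $0$ by any of those produced in Proposition \ref{rel}(4). This gives equality of the range with $\{0,1/2,1\}$.

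There is no serious obstacle here, since all the work has already been done in Theorem \ref{Monin} and Proposition \ref{rel}; the only thing one has to be a bit careful about is the quantifier structure when applying Theorem \ref{Monin}, namely that the hypothesis $\Gamma_D(A) < 1/2$ really does follow from $\Gamma_{\mathbf c}(\mathbf a) < 1/2$ for any representatives $D \in \mathbf c$ and $A \in \mathbf a$, which is immediate from the fact that $\gamma_D$ is invariant under coarse similarity and depends only on the Turing degree of $D$.
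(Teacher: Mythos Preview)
Your proof is correct and follows exactly the same approach as the paper, which simply states that the corollary follows at once from Theorem \ref{Monin} and Proposition \ref{rel}. Your writeup just spells out the easy deductions that the paper leaves implicit.
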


This corollary and Proposition \ref{Hformula} yield the following.

\begin{corollary}
\label{3Hvalues}
The possible Hausdorff distances between degrees are exactly $0$, $1/2$,
and $1$.
\end{corollary}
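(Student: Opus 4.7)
The plan is to combine the previous corollary (that the range of each $\Gamma_{\mathbf{c}}$ is $\{0, 1/2, 1\}$) with the formula for $H$ in terms of $\Gamma$ given by Proposition \ref{Hformula}.

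First I would note that by Proposition \ref{Hformula},
\[
H(\mathbf{d}, \mathbf{e}) = 1 - \min\{\Gamma_{\mathbf{d}}(\mathbf{e}), \Gamma_{\mathbf{e}}(\mathbf{d})\}.
\]
Since both $\Gamma_{\mathbf{d}}(\mathbf{e})$ and $\Gamma_{\mathbf{e}}(\mathbf{d})$ lie in $\{0, 1/2, 1\}$ by the preceding corollary (which itself follows from Theorem \ref{Monin} combined with Proposition \ref{rel}), their minimum also lies in $\{0, 1/2, 1\}$. Hence $H(\mathbf{d}, \mathbf{e}) \in \{0, 1/2, 1\}$.

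To conclude that all three values are attained, I would note that $H(\mathbf{d},\mathbf{d}) = 0$ since $H$ is a metric on $\mathcal D$, and the values $1/2$ and $1$ are realized by Corollary \ref{ballsize}, which provides, for every degree $\mathbf{c}$, degrees $\mathbf{d} > \mathbf{c}$ with $H(\mathbf{c},\mathbf{d}) = 1/2$ and degrees $\mathbf{d} > \mathbf{c}$ with $H(\mathbf{c},\mathbf{d}) = 1$. There is no real obstacle here, since all the substantive work has been done in establishing Proposition \ref{Hformula} and Theorem \ref{Monin}; this corollary is a direct bookkeeping consequence.
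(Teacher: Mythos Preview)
Your proposal is correct and follows essentially the same approach as the paper, which simply notes that the corollary follows from Proposition~\ref{Hformula} together with the preceding corollary on the range of $\Gamma_{\mathbf{c}}$. Your additional remark that all three values are actually attained (via $H(\mathbf{d},\mathbf{d})=0$ and Corollary~\ref{ballsize}) makes explicit the ``exactly'' part of the statement, which the paper leaves implicit.
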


Although we do not pursue this idea further here, it is worth noting
that $H$ can be extended from degrees to downward-closed sets of
degrees in a straightforward way. Let $\mathcal E$ be the collection
of all downward-closed sets of degrees. Then $(\mathcal E,H)$ is still
a $0,1/2,1$-valued metric space, and we can think of $(\mathcal D,H)$
as a subspace by identifying a degree with its lower cone. An
intermediate space that could also be worth studying is that of Turing
ideals.

\section{Hausdorff distance, Lebesgue measure, and Baire category}
\label{freqsec}

It is natural to ask which Hausdorff distance occurs ``most
frequently'' between pairs of degrees.  The answer depends on whether
the question is formalized using Lebesgue measure or Baire
category. Indeed, as we will see in this and the next section, the
interplay between typicality in the sense of measure (as captured
computability-theoretically in notions such as $1$-randomness) and
typicality in the sense of category (as captured
computability-theoretically in notions such as (weak) $1$-genericity)
seems central to understanding the structure of $(\mathcal
D,H)$. Randomness leads to constructions of degrees at distance $1/2$,
while genericity leads to constructions of degrees at distance $1$.
We use $(\cm A)$ to abbreviate ``for comeager many $A$'' (in the usual
topology on $2^\omega$) and $(\aev A)$ to abbreviate ``for almost
every $A$'' (in the usual coin-toss measure on $2^\omega$).

We first consider Baire category, in the usual topology on
$2^\omega$. Recall that we write $\Gamma_A$ for $\Gamma_{\mathbf{a}}$
where $\mathbf{a}$ is the degree of $A$.  In \cite{HJMS} it is shown
that if $B$ is weakly $1$-generic then $\gamma(B)=0$. The proof of
this result relativizes to establish the following fact.

\begin{theorem}[Hirschfeldt, Jockusch, McNicholl, and Schupp
{\cite[proof of Theorem 2.2, relativized]{HJMS}}]
\label{genthm}
If $B$ is weakly $1$-generic relative to $A$, then $\gamma_A(B)=0$, so
$\Gamma_A(B)=0$, and hence $H(A,B)=1$. Therefore, for all $C
\geq\sub{T} B$, we have $\Gamma_A(C) = 0$, and hence $H(A, C) = 1$.
\end{theorem}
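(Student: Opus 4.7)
The plan is to prove the main claim $\gamma_A(B)=0$ directly from the definition of weak $1$-genericity relative to $A$; the rest of the theorem will then follow immediately. Unpacking the definition, to show $\gamma_A(B)=0$ I need to verify that for every $A$-computable set $C$, $\underline{\rho}(B \sa C) = 0$, or equivalently (by Lemma \ref{lowerupper}) $\overline{\rho}(B \sd C) = 1$. So fix an arbitrary $A$-computable set $C$ and an arbitrary $n\in\omega$, and set
\[
E_{n,C} = \{ \sigma \in 2^{<\omega} : |\sigma| > n \,\mathbin{\&}\, \rho_{|\sigma|}(\sigma \sd (C \uhr |\sigma|)) > 1 - 1/n \}.
\]
Since $C$ is $A$-computable, $E_{n,C}$ is $A$-c.e.\ (in fact $A$-computable as a set of strings).

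The main step is to check that $E_{n,C}$ is dense. Given any string $\tau$ of length $k$, I extend $\tau$ by appending the bits of $\neg C$ on the positions $k, k+1, \ldots, N-1$ for some $N$ to be chosen. The resulting string $\sigma$ of length $N$ disagrees with $C \uhr N$ at every position in $[k, N)$, so $\rho_N(\sigma \sd (C \uhr N)) \geq (N-k)/N$. Choosing $N$ large enough that $N>n$ and $k/N < 1/n$ puts $\sigma$ into $E_{n,C}$. Thus $E_{n,C}$ is dense. Because $B$ is weakly $1$-generic relative to $A$, $B$ has an initial segment $B \uhr m_n$ in $E_{n,C}$ for some $m_n > n$. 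The $m_n$ are unbounded, and $\rho_{m_n}(B \sd C) > 1 - 1/n$, so $\overline{\rho}(B \sd C) = 1$. Since $C$ was arbitrary, this establishes $\gamma_A(B)=0$.

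To finish, note that $\Gamma_A(B) = \inf\{\gamma_A(X) : X \leq\sub{T} B\} \leq \gamma_A(B) = 0$, so $\Gamma_A(B)=0$, and by Corollary \ref{Hvalues}(3), $H(A,B)=1$. For the last sentence of the theorem, if $C \geq\sub{T} B$ then $B$ itself is in the infimum defining $\Gamma_A(C)$, so $\Gamma_A(C) \leq \gamma_A(B) = 0$, whence $H(A,C)=1$ by the same corollary.

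The only mildly delicate point is arranging that the lengths $m_n$ witnessing disagreement along $B$ are unbounded rather than clustering near some fixed value; this is handled by the explicit clause $|\sigma| > n$ in the definition of $E_{n,C}$, which preserves density because the construction producing a witness $\sigma$ extending $\tau$ works for all sufficiently large $N$. Beyond this, everything is a routine relativization of the $A = \emptyset$ argument from \cite{HJMS}.
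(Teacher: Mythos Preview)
Your proof is correct and is precisely the relativized version of the argument from \cite{HJMS} that the paper invokes; the paper itself gives no details beyond stating that the unrelativized proof relativizes, so you have simply filled in what the paper leaves implicit. The deductions of $\Gamma_A(B)=0$, $H(A,B)=1$, and the extension to $C\geq\sub{T} B$ via Corollary~\ref{Hvalues}(3) are all exactly as intended.
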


\begin{corollary}
\label{H3}
$(\forall A)(\cm B)[H(A,B)=1]$.
\end{corollary}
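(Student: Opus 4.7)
The plan is very short because essentially all the work has already been done in Theorem \ref{genthm}. Fix an arbitrary set $A$. The corollary asserts that the set $\{B : H(A,B)=1\}$ is comeager in $2^\omega$ under the usual topology.

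First I would invoke the standard fact that the collection of sets that are weakly $1$-generic relative to $A$ is comeager in $2^\omega$. This is because weak $1$-genericity relative to $A$ is defined by requiring that $B$ meet every dense $\Sigma^0_1(A)$ set of strings, and there are only countably many such dense sets, each of which determines a dense open subset of $2^\omega$; the intersection of countably many dense open sets is comeager by the Baire Category Theorem.

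Then I would apply Theorem \ref{genthm} directly: for every $B$ that is weakly $1$-generic relative to $A$, we have $\Gamma_A(B) = 0$, and hence $H(A,B) = 1$. Since the set of such $B$ is comeager and is contained in $\{B : H(A,B)=1\}$, the latter set is comeager as well. Since $A$ was arbitrary, this yields $(\forall A)(\cm B)[H(A,B) = 1]$.

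There is no real obstacle here; the corollary is just the combination of the relativized Theorem \ref{genthm} with the standard comeagerness of the weakly $1$-generic (relative to $A$) reals, so the proof is essentially a single sentence.
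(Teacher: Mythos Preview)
Your proposal is correct and matches the paper's approach exactly: the corollary is stated without proof in the paper because it follows immediately from Theorem~\ref{genthm} together with the standard fact that the weakly $1$-generic (relative to $A$) reals are comeager, which is precisely what you wrote out.
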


It also follows that there is an uncountable family $\mathcal{C}$ of
degrees such that any two distinct degrees $\mathbf{a}, \mathbf{b} \in
\mathcal{C}$ satisfy $H(\mathbf{a}, \mathbf{b}) = 1$: Consider a
family $\mathcal{C}$ with the property that any two distinct degrees
$\mathbf{a}, \mathbf{b}$ in $\mathcal{C}$ satisfy $H(\mathbf{a},
\mathbf{b}) = 1$.  If $\mathcal{C}$ is countable, then it follows from
the previous corollary that there is a $\mathbf{c} \notin \mathcal C$
such that $\mathcal C \cup \{\mathbf{c}\}$ also has this
property. Thus $\mathcal C$ is not maximal. So any maximal family with
this property is uncountable. We can improve this result using
Mycielski's Theorem for category, which we stated as Theorem
\ref{Ramseyforcat} and will prove as Theorem \ref{myccatthm} below.

\begin{corollary}
\label{distance1}
There is a family $\mathcal{C}$ of degrees of size continuum such that
any two distinct degrees $\mathbf{a}, \mathbf{b} \in \mathcal{C}$
satisfy $H(\mathbf{a}, \mathbf{b}) = 1$.
\end{corollary}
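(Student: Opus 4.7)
The plan is to apply Mycielski's theorem for category (Theorem \ref{Ramseyforcat}, in its $2^\omega$ formulation as Theorem \ref{myccatthm}) to the binary relation
\[
R = \{(X,Y) \in 2^\omega \times 2^\omega : X \oplus Y \text{ is } 1\text{-generic}\}.
\]
Since the map $(X,Y) \mapsto X \oplus Y$ is a homeomorphism between $(2^\omega)^2$ and $2^\omega$, and the set of $1$-generics is comeager in $2^\omega$, the relation $R$ is comeager in $(2^\omega)^2$. Hence Mycielski's theorem yields a perfect set $\mathcal P \subseteq 2^\omega$ such that $(X,Y) \in R$ for every pair of distinct $X,Y \in \mathcal P$.

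Next I would invoke Yu's analogue of van Lambalgen's Theorem for $1$-genericity (cited in the introduction) to conclude that, for distinct $X,Y \in \mathcal P$, both $X$ is $1$-generic relative to $Y$ and $Y$ is $1$-generic relative to $X$. Since $1$-genericity relative to $Z$ implies weak $1$-genericity relative to $Z$, Theorem \ref{genthm} then gives $H(X,Y) = 1$.

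To convert the continuum many reals in $\mathcal P$ into continuum many degrees, I would observe that if $X$ is $1$-generic relative to $Y$ then $X \not\leq\sub{T} Y$: otherwise the $\Sigma^0_1(Y)$ set $S = \{\sigma \in 2^{<\omega} : (\exists n<|\sigma|)\, \sigma(n) \neq X(n)\}$ would have no initial segment of $X$ in it yet would be met by every extension of any $\tau \prec X$ (take either extension disagreeing with $X$), contradicting $1$-genericity of $X$ relative to $Y$. Consequently the map $X \mapsto \mathbf{deg}(X)$ restricted to $\mathcal P$ is injective, and $\mathcal C = \{\mathbf{deg}(X) : X \in \mathcal P\}$ is the desired family of continuum many degrees pairwise at Hausdorff distance $1$.

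The only delicate step is the verification that the relation of being mutually $1$-generic is comeager, together with the appeal to the category version of Mycielski's theorem; once that tool is in hand, everything else is routine. In particular, this avoids the bootstrapping argument that only produced an uncountable family, and gives the continuum bound directly.
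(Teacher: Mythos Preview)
Your proof is correct and follows essentially the same route the paper intends: apply Mycielski's theorem for category to obtain a perfect set of pairwise mutually $1$-generic reals (the paper states this consequence explicitly after Theorem~\ref{Ramseyforcat}), then invoke Theorem~\ref{genthm} to get $H=1$ for each pair and pass to degrees. Your explicit verification that the degree map is injective on $\mathcal P$ is a detail the paper leaves implicit; note that you could also shortcut it by observing that $\gamma_Y(X)=0$ from Theorem~\ref{genthm} already forces $X \nleq\sub{T} Y$.
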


We now consider analogous results for measure. We first examine
Hausdorff distances from the least degree $\mathbf{0}$.

\begin{proposition}
\label{M}
$(\aev B)[H(\emptyset,B)=1]$.
\end{proposition}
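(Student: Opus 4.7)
The plan is to use the formula $H(\emptyset,B) = 1 - \Gamma(B)$, which follows from the corollary to Proposition~\ref{Hformula} since $\mathbf{0} \leq B$ for every $B$. This reduces the proposition to showing that $\Gamma(B) = 0$ for almost every $B$.

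To establish this, I would combine two known ingredients. First, the classical fact (essentially due to Kurtz) that almost every $B \in 2^\omega$ is of hyperimmune degree: every $1$-random real has hyperimmune degree, and the class of $1$-randoms has measure $1$. Second, the result from \cite{HJMS} recalled earlier in this section, that if $\mathbf{a}$ is of hyperimmune degree then $\Gamma(\mathbf{a}) = 0$. Composing these gives $\Gamma(B) = 0$ for almost every $B$, which by the reduction above yields $H(\emptyset,B) = 1$ for almost every $B$.

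No step in this argument poses a serious obstacle; the proof is essentially a short combination of previously cited facts with a standard measure-theoretic property of the Turing degrees. The only conceptual point worth flagging is the initial reduction $H(\emptyset,B) = 1 \iff \Gamma(B) = 0$, which is specific to this pair because $\emptyset$ is computable from every $B$ (so $\Gamma_B(\emptyset) = 1$ always, and only the direction $\Gamma_\emptyset(B) = 0$ in Corollary~\ref{Hvalues}(3) is available). One could also observe that this argument mirrors the Baire category version (Corollary~\ref{H3}) with hyperimmune degree playing the measure-theoretic role that weak $1$-genericity plays in the category setting, consistent with the randomness/genericity dichotomy emphasized throughout this section.
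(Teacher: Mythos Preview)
Your approach is essentially the same as the paper's: reduce to showing $\Gamma(B)=0$ almost everywhere, then use that almost every set has hyperimmune degree together with the \cite{HJMS} fact that $\Gamma(\mathbf{a})=0$ for hyperimmune $\mathbf{a}$. The paper attributes the measure-one hyperimmunity result to Martin and routes the $\Gamma=0$ conclusion through Theorem~\ref{genthm} (hyperimmune degrees compute weakly $1$-generics), but this is the same argument.

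There is, however, a genuine error in your justification of the measure-one hyperimmunity step. You write that ``every $1$-random real has hyperimmune degree,'' and this is false: there exist $1$-random sets of hyperimmune-free degree, obtained for instance by applying the hyperimmune-free basis theorem to a nonempty $\Pi^0_1$ class of $1$-randoms. The paper itself uses such sets elsewhere (see the proof of Corollary~\ref{excor} and the discussion of \cite{ACDJL}, where hyperimmune-free $1$-randoms have $\Gamma=1/2$, not $0$). The correct citation is simply the theorem of Martin (or Kurtz) that almost every set has hyperimmune degree; this is a direct measure computation and does not go through $1$-randomness. With that correction, your proof is complete and matches the paper's.
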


\begin{proof}
By a result of D. A. Martin (unpublished, see e.g.\ \cite[Theorem
8.21.1]{DH}), almost every set is of hyperimmune degree. As noted in
\cite{HJMS}, since every hyperimmune degree computes a weakly
$1$-generic, it follows from Theorem \ref{genthm} that
$\Gamma(\mathbf{b})=0$ for every hyperimmune degree $\mathbf{b}$, so
$H(\mathbf{0},\mathbf{b})=1$ for every such $\mathbf{b}$.
\end{proof}

One might hope to prove by relativizing the above proposition that for
every $A$, we have $H(A,B)=1$ for almost every $B$, which would be in
complete analogy with Corollary \ref{H3}. However, relativization
yields only the result that for every $A$, we have $H(A, A \oplus
B)=1$ for almost every $B$. In fact, we have the following result. We
state it in terms of the notion of Church stochasticity (see
\cite[Definition 7.4.1]{DH}), which is implied by $1$-randomness (and
even by computable randomness), but all we need is the fact that if
$B$ is Church stochastic relative to $A$, then for any infinite
$A$-computable set $C$, the density of $B$ within $C$ is $1/2$.

\begin{theorem}
If $B$ is Church stochastic relative to $A$ then $\Gamma_B(A) \geq
1/2$, so if $A$ and $B$ are Church stochastic relative to each other
then $H(A,B)=1/2$.
\end{theorem}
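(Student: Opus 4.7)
The plan is to use $B$ itself as a $B$-computable $1/2$-description of every $X \leq_{\textup{T}} A$. Fix any $X \leq_{\textup{T}} A$ and assume both $X$ and $\neg X$ are infinite (otherwise $X$ is computable and $\gamma_B(X)=1$). Since $X$ is $A$-computable, so is $\neg X$, so the hypothesis that $B$ is Church stochastic relative to $A$ guarantees that the density of $B$ within $X$ and within $\neg X$ both equal $1/2$.

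The next step is a direct density computation. Writing $a_n = |X\uhr n|$ and $b_n = |\neg X\uhr n|$, so $a_n+b_n=n$, I decompose
\[
|(X\sd B)\cap[0,n)| \;=\; |X\cap\neg B\cap[0,n)| + |\neg X\cap B\cap[0,n)|,
\]
and apply Church stochasticity to each term, yielding
$|X\cap\neg B\cap[0,n)| = (1/2+o(1))\,a_n$ and
$|\neg X\cap B\cap[0,n)| = (1/2+o(1))\,b_n$.
Summing and dividing by $n$ (using $a_n+b_n=n$ to absorb the error terms into $o(1)$) gives $\rho_n(X\sd B)\to 1/2$, hence $\urho(X\sa B)=1/2$. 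Thus $B$ is a $B$-computable $(1/2)$-description of $X$, so $\gamma_B(X)\geq 1/2$. Taking the infimum over $X\leq_{\textup{T}} A$ gives $\Gamma_B(A)\geq 1/2$, which is the first assertion.

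For the symmetric conclusion, I would combine three ingredients: the half just proved applied symmetrically gives $\Gamma_A(B)\geq 1/2$; Proposition \ref{Hformula} then yields $H(A,B)=1-\min\{\Gamma_A(B),\Gamma_B(A)\}\leq 1/2$. To rule out $H(A,B)=0$, observe that if $B$ is Church stochastic relative to $A$ and $B$ is infinite, then taking $C=B$ as the $A$-computable set witnesses that $B$ within $B$ has density $1$, not $1/2$; hence $B\nleq_{\textup{T}} A$, and symmetrically $A\nleq_{\textup{T}} B$. By Proposition \ref{rel}(1), $\Gamma_A(B),\Gamma_B(A)<1$, so by Theorem \ref{Monin} each value lies in $\{0,1/2,1\}$, and the lower bound $\geq 1/2$ forces both to equal $1/2$. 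Therefore $H(A,B)=1/2$.

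The only non-routine step is verifying the density limit, and even there the main point is simply that the two $o(1)$ error terms arising from Church stochasticity are combined against the partition $a_n+b_n=n$, so no uniformity in the convergence is needed; everything else is bookkeeping with Proposition \ref{Hformula}, Proposition \ref{rel}, and Theorem \ref{Monin}.
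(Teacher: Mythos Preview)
Your proof is correct and follows essentially the same approach as the paper: use $B$ itself as a $B$-computable $\tfrac12$-description of every $A$-computable set, via Church stochasticity applied to $X$ and $\neg X$, then combine with Turing incomparability. The only minor point is that invoking Theorem~\ref{Monin} is unnecessary: once you know $A$ and $B$ are incomparable, Proposition~\ref{rel}(2) already gives $\Gamma_A(B),\Gamma_B(A)\leq 1/2$, which together with your lower bound forces equality.
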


\begin{proof}
Suppose that $B$ is Church stochastic relative to $A$. Let $C
\leq\sub{T} A$ be infinite and coinfinite. Then the density of $B$
within $C$ and the density of $B$ within the complement of $C$ must
both be $1/2$. Thus $\rho(C \sd B) = 1/2$. That is, $B$ is a
$1/2$-description of every  infinite, coinfinite $A$-computable set,
and hence $\Gamma_B(A) \geq 1/2$.

If $A$ and $B$ are Church stochastic relative to each other then
$\Gamma_B(A) \geq 1/2$ and $\Gamma_A(B) \geq 1/2$, but $A$ and $B$ are
also Turing incomparable, so in fact $\Gamma_B(A) = \Gamma_A(B) =
1/2$, and hence $H(A,B)=1/2$.
\end{proof}

The following corollary holds for computable randomness as well, but
we state it for $1$-randomness as that is the version we will use
below.

\begin{corollary}
\label{randcor}
If $B$ is $1$-random relative to $A$ then $\Gamma_B(A) \geq 1/2$, so
if we also have $A \nleq\sub{T} B$, then $\Gamma_B(A)=1/2$. Thus, if
$A$ and $B$ are relatively $1$-random, then $H(A,B)=1/2$.
\end{corollary}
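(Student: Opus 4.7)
The plan is to derive this as a more-or-less direct consequence of the preceding theorem, combined with the relativized Monin theorem (Theorem \ref{Monin}) and the Hausdorff-distance formula in Proposition \ref{Hformula}. The first clause, that $1$-randomness of $B$ relative to $A$ implies $\Gamma_B(A)\geq 1/2$, is immediate: $1$-randomness relative to $A$ implies computable randomness relative to $A$, which in turn implies Church stochasticity relative to $A$, so the theorem just proved applies directly.

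For the second clause, assume $A\nleq\sub{T} B$. By Proposition \ref{rel}(1), $\Gamma_B(A)=1$ would force $A\leq\sub{T} B$, so $\Gamma_B(A)<1$. By the relativized Monin theorem, the range of $\Gamma_B$ is contained in $\{0,1/2,1\}$, so $\Gamma_B(A)\in\{0,1/2\}$. Combined with the lower bound $\Gamma_B(A)\geq 1/2$ from the first clause, this pins down $\Gamma_B(A)=1/2$.

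For the final clause, suppose $A$ and $B$ are mutually $1$-random. Then in particular neither is computable from the other, since a set cannot be $1$-random relative to any oracle that computes it. Applying the second clause in both directions gives $\Gamma_B(A)=\Gamma_A(B)=1/2$, and then Proposition \ref{Hformula} yields
\[
H(A,B)=1-\min\{\Gamma_A(B),\Gamma_B(A)\}=1-\tfrac{1}{2}=\tfrac{1}{2}.
\]
There is no real obstacle here, since all the machinery is already in place; the only point that might deserve an explicit sentence is the observation that mutual $1$-randomness precludes Turing comparability, which is what allows the Monin-range argument to kick in.
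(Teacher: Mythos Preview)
Your argument is correct and follows essentially the same route as the paper: reduce to the preceding theorem via the implication $1$-randomness $\Rightarrow$ Church stochasticity, then use Turing incomparability to pin down the value. The one difference is in the second clause: you invoke the relativized Monin theorem to restrict $\Gamma_B(A)$ to $\{0,1/2,1\}$, but this is heavier than necessary. Proposition~\ref{rel}(2) alone gives that $\Gamma_B(A)>1/2$ implies $A\leq\sub{T} B$, so $A\nleq\sub{T} B$ forces $\Gamma_B(A)\leq 1/2$ directly, without needing Monin's result. This is how the paper (implicitly, via the proof of the preceding theorem) handles it. Your version is not wrong, just slightly overpowered.
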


\begin{corollary}
\label{excor}
For every $A$ there exists a $B$ such that $H(A, B) = 1/2$ and  $B$ is $1$-random relative to $A$.
\end{corollary}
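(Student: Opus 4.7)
The plan is to use Corollary \ref{randcor} and Proposition \ref{Hformula} by producing a $B$ that is simultaneously $1$-random relative to $A$ and of hyperimmune-free degree relative to $A$. For such a $B$, Corollary \ref{randcor} gives $\Gamma_B(A) \geq 1/2$, while the relativization to $A$ of the result of Andrews, Cai, Diamondstone, Jockusch, and Lempp (from \cite{ACDJL}) that every set which is both hyperimmune-free and $1$-random has $\Gamma$-value $1/2$ gives $\Gamma_A(B) = 1/2$. Then Proposition \ref{Hformula} yields
\[
H(A, B) = 1 - \min\{\Gamma_A(B), \Gamma_B(A)\} = 1/2,
\]
and $B$ is $1$-random relative to $A$ by construction, so the corollary will follow.

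To construct such a $B$, I would fix a $\Pi^0_1(A)$ class $\mathcal{P} \subseteq 2^\omega$ of positive measure all of whose members are $1$-random relative to $A$---for example, the complement of the first component $U_1$ of a universal Martin-L\"of test relative to $A$---and then apply the hyperimmune-free basis theorem relativized to $A$ to extract a member $B \in \mathcal{P}$ whose degree is hyperimmune-free relative to $A$. By membership in $\mathcal{P}$, this $B$ is automatically $1$-random relative to $A$.

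The main thing to verify is the invoked relativization of the $\Gamma = 1/2$ result from \cite{ACDJL}; the original argument should go through with an $A$-oracle throughout since nothing in it is non-uniform in the oracle, but this would need to be checked carefully. (A minor corner case: when $A$ is computable one has $A \leq\sub{T} B$ trivially and hence $\Gamma_B(A) = 1$ rather than $1/2$, but since $\min\{1/2, 1\} = 1/2$ the conclusion $H(A, B) = 1/2$ is unaffected.)
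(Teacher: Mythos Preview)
Your proposal is correct and takes essentially the same approach as the paper: apply the relativized hyperimmune-free basis theorem to a $\Pi^{0,A}_1$ class of $A$-randoms to obtain $B$ that is both $1$-random and of hyperimmune-free degree relative to $A$, then invoke Corollary~\ref{randcor} for $\Gamma_B(A)$ and the relativized form of \cite[Corollary 1.13]{ACDJL} for $\Gamma_A(B)=1/2$. The paper's writeup is slightly terser (it asserts $\Gamma_B(A)=1/2$ directly rather than $\geq 1/2$), but your handling of the corner case via the $\min$ is if anything more careful.
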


\begin{proof}
Let $B$ be $1$-random relative to $A$ and such that every
$B$-computable function is dominated by an $A$-computable
function. Such a set exists by the relativized version of the
hyperimmune-free basis theorem. Then $\Gamma_A(B)=1/2$ by the
relativized form of \cite[Corollary 1.13]{ACDJL}, while
$\Gamma_B(A)=1/2$ by Corollary \ref{randcor}. Thus $H(A,B)=1/2$.
\end{proof}

The class of $1$-randoms has measure $1$. Furthermore, if $A$ is
$1$-random then the class of sets that are $1$-random relative to $A$
also has measure $1$, and if $B$ is in this class then $A$ and $B$ are
relatively $1$-random, by van Lambalgen's Theorem. Thus we have the
following fact.

\begin{corollary}
$(\aev A)(\aev  B)[H(A,B) = 1/2]$.
\end{corollary}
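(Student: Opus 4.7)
The plan is to combine the previous corollary with van Lambalgen's theorem and the basic measure-theoretic fact that the class of $1$-randoms has full measure. Explicitly, I would argue as follows.

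First, let $\mathcal{R}$ denote the class of $1$-random sets, which has Lebesgue measure $1$. For each $A \in \mathcal{R}$, let $\mathcal{R}^A$ denote the class of sets $1$-random relative to $A$; by relativizing the standard measure result, $\mathcal{R}^A$ also has measure $1$. So the set of pairs $(A,B)$ with $A \in \mathcal{R}$ and $B \in \mathcal{R}^A$ has measure $1$ in $2^\omega \times 2^\omega$, by Fubini.

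Next, fix such a pair $(A,B)$. Since $A$ is $1$-random and $B$ is $1$-random relative to $A$, van Lambalgen's theorem (stated in the introduction) yields that $A$ is also $1$-random relative to $B$; that is, $A$ and $B$ are relatively $1$-random in both directions. Corollary \ref{randcor} then gives $H(A,B) = 1/2$.

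Finally, applying Fubini again translates this full-measure statement in the product space into the iterated form $(\aev A)(\aev B)[H(A,B) = 1/2]$. There is no serious obstacle here; the only subtlety worth noting is the appeal to van Lambalgen's theorem, which is exactly what is needed to upgrade the measure-$1$ condition ``$B$ is $1$-random relative to $A$'' to genuine mutual $1$-randomness, thereby feeding directly into Corollary \ref{randcor}.
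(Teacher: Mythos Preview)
Your proposal is correct and follows essentially the same approach as the paper: use that the class of $1$-randoms has measure $1$, that for each $1$-random $A$ the sets $1$-random relative to $A$ form a measure-$1$ class, invoke van Lambalgen's Theorem to get mutual $1$-randomness, and then apply Corollary \ref{randcor}. The only difference is that you phrase the measure-theoretic bookkeeping via Fubini, whereas the paper states the iterated quantifier form directly.
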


It also follows that there is an uncountable subset $\mathcal{C}$ of
$2^\omega$ such that if $A$ and $B$ are any two distinct element of
$\mathcal{C}$, then $H(A, B) = 1/2$: Let $\mathcal E$ be the class of
all $A$ such that $H(A,B) = 1/2$ for almost every $B$. By the previous
corollary, $\mathcal E$ has measure $1$.  By Zorn's Lemma there is a
maximal family $\mathcal{C} \subseteq \mathcal{E}$ with the property
in the statement of the corollary. It follows from the previous
corollary and the countable additivity of Lebesgue measure that
$\mathcal{C}$ cannot be countable. We can improve on this result by
applying Corollary \ref{perfrandcor}.

\begin{corollary}
\label{perfhalf}
There is a subset $\mathcal{C}$ of $2^\omega$ of size continuum such that
if $A$ and $B$ are any two distinct element of $\mathcal{C}$, then $H(A,
B) = 1/2$.
\end{corollary}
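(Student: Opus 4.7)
The plan is to combine Corollary \ref{perfrandcor} with Corollary \ref{randcor} in essentially one step. By Corollary \ref{perfrandcor}, there is a nonempty perfect set $\mathcal{P} \subseteq 2^\omega$ whose elements are pairwise mutually $1$-random. Take $\mathcal{C} = \mathcal{P}$.

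First I would note that a nonempty perfect subset of $2^\omega$ has size continuum, so $|\mathcal{C}| = 2^{\aleph_0}$. Next, given any two distinct $A, B \in \mathcal{C}$, the defining property of $\mathcal{P}$ gives that $A$ and $B$ are mutually $1$-random. Corollary \ref{randcor} then directly yields $H(A,B) = 1/2$.

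There is no real obstacle here: all the heavy lifting has been done in Corollary \ref{perfrandcor} (via Mycielski's theorem, Theorem \ref{Ramsey}) and in Corollary \ref{randcor} (which in turn rests on the relativized form of Monin's theorem and van Lambalgen's Theorem). The only thing to double-check is the size claim, and that is standard: any nonempty perfect subset of a Polish space has cardinality continuum. So the proof is a two-line citation of the two earlier corollaries.
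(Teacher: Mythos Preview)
Your proof is correct and is exactly the argument the paper intends: the text preceding the corollary says ``We can improve on this result by applying Corollary \ref{perfrandcor},'' and you have simply spelled that out together with the citation of Corollary \ref{randcor}. One small aside: Corollary \ref{randcor} does not actually rely on Monin's theorem (it only needs Proposition \ref{rel}(2) to rule out values in $(1/2,1)$), so you can drop that from your parenthetical.
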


Kolmogorov's $0$-$1$ Law implies that any measurable collection of
sets that is closed under Turing equivalence has measure $0$ or
$1$. In particular, for every $A$, the class of all $B$ such that
$H(A,B)=1$ always has measure $0$ or $1$. As we have seen, if $A$ is
$1$-random then this class has measure $0$. 

\begin{definition}
\label{attdef}
A set $A$, and the degree of $A$, are \emph{attractive} if the class
of all $B$ such that $H(A,B)=1/2$ has measure $1$, or equivalently, the
class of all $B$ such that $H(A,B)=1$ has measure $0$. Otherwise, $A$
and its degree are \emph{dispersive}.
\end{definition}

It follows from Proposition \ref{M} that $\emptyset$ is dispersive,
and it follows from Corollary \ref{randcor} and van Lambalgen's
Theorem that every $1$-random set is attractive.

\begin{proposition}
The class of attractive degrees is closed upwards. Equivalently, the
class of dispersive degrees is closed downwards.
\end{proposition}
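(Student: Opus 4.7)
The plan is to reduce attractiveness to a single-sided condition on $\Gamma_A(B)$ and then exploit the monotonicity of $\Gamma_A(B)$ in $A$.

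First, I would observe that for any fixed $A$, almost every $B$ is $1$-random relative to $A$, hence Church stochastic relative to $A$, so by the theorem preceding Corollary~\ref{randcor} we have $\Gamma_B(A) \geq 1/2$ for almost every $B$. Combined with the characterization of $H(A,B) = 1$ in Corollary~\ref{Hvalues}, this shows that $\{B : H(A,B) = 1\}$ has measure zero if and only if $\{B : \Gamma_A(B) = 0\}$ does; equivalently, using Theorem~\ref{Monin}, $A$ is attractive if and only if $\Gamma_A(B) \geq 1/2$ for almost every $B$.

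Second, I would note that if $A \leq\sub{T} C$ then $\gamma_A(Y) \leq \gamma_C(Y)$ for every $Y$, since any $A$-computable approximation is also a $C$-computable approximation. Taking the infimum over $Y \leq\sub{T} B$ preserves this inequality, so $\Gamma_A(B) \leq \Gamma_C(B)$ for every $B$.

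Combining these, suppose $A$ is attractive and $A \leq\sub{T} C$. Then $\{B : \Gamma_A(B) = 0\}$ has measure zero, and by monotonicity $\{B : \Gamma_C(B) = 0\} \subseteq \{B : \Gamma_A(B) = 0\}$, so the former has measure zero as well. Applying the reduction from the first paragraph to $C$ in place of $A$ then yields that $C$ is attractive. I do not anticipate any significant obstacle: once the ``$\Gamma_B(A) = 0$'' branch of the $H(A,B) = 1$ dichotomy is ruled out for almost every $B$ using results already established in the paper, attractiveness becomes an almost immediate monotone consequence of Turing reducibility.
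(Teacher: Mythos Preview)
Your proof is correct and follows essentially the same approach as the paper's: both use that $\Gamma_B(A) \geq 1/2$ for almost every $B$ (via relative $1$-randomness) to reduce the $H(A,B)=1$ dichotomy to the single condition $\Gamma_A(B)=0$, and then invoke the monotonicity $\Gamma_A(B) \leq \Gamma_C(B)$ when $A \leq\sub{T} C$. The only cosmetic differences are that the paper phrases the argument as downward closure of dispersive degrees rather than upward closure of attractive ones, and that your invocation of Theorem~\ref{Monin} to reformulate attractiveness as ``$\Gamma_A(B) \geq 1/2$ for almost every $B$'' is not actually needed, since the containment $\{B : \Gamma_C(B)=0\} \subseteq \{B : \Gamma_A(B)=0\}$ already suffices.
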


\begin{proof}
Suppose that $A$ is dispersive and $C \leq\sub{T} A$. By Corollary
\ref{Hvalues}, we have $(\aev B) [\Gamma_A (B) = 0 \text{ or }
\Gamma_B(A) = 0]$.  By Corollary \ref{randcor}, we have $\Gamma_B(A)
\geq 1/2$ for almost every $B$, so $\Gamma_A(B)=0$ for almost every
$B$. Since $C \leq\sub{T} A$, we have $\Gamma_C (B) \leq \Gamma_A (B)$
for all $B$.  Hence $\Gamma_C (B) = 0$ for almost every $B$. It
follows from Corollary \ref{Hvalues} that $H(C, B) = 1$ for almost
every $B$, so $C$ is dispersive.
\end{proof}

It follows from the above proposition and the remark just before it
that if a set computes a $1$-random then it is attractive.  In
particular, $\emptyset'$ is attractive.  We will see in Observation
\ref{aedobs} that not every attractive set computes a $1$-random, but
let us first discuss the dispersive sets. The following proposition
follows at once from Theorem \ref{genthm} and will be used frequently
to show that sets are dispersive.

\begin{proposition}
If almost every set computes a set that is weakly $1$-generic relative
to $A$, then $A$ is dispersive.
\end{proposition}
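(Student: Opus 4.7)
The plan is to apply Kolmogorov's $0$-$1$ Law together with Theorem \ref{genthm} and the hypothesis. Recall that by Kolmogorov's $0$-$1$ Law (as noted just before Definition \ref{attdef}), the class $\{B : H(A,B)=1\}$, being closed under Turing equivalence, has measure $0$ or $1$. Moreover, by Corollary \ref{randcor} combined with van Lambalgen's Theorem, almost every $B$ satisfies $H(A,B) \in \{1/2, 1\}$ (and the set of $B$ with $H(A,B) = 0$, namely $\{B : B \equiv_T A\}$, has measure $0$). Thus $A$ is dispersive if and only if $\{B : H(A,B) = 1\}$ has measure $1$, and it will suffice to show the latter.

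To that end, let $\mathcal{G} = \{B : B \text{ computes some } C \text{ that is weakly } 1\text{-generic relative to } A\}$. By hypothesis, $\mathcal{G}$ has measure $1$. Fix any $B \in \mathcal{G}$ and let $C \leq\sub{T} B$ be weakly $1$-generic relative to $A$. The second sentence of Theorem \ref{genthm} applies: since $B \geq\sub{T} C$ and $C$ is weakly $1$-generic relative to $A$, we get $H(A,B) = 1$. Therefore $\mathcal{G} \subseteq \{B : H(A,B) = 1\}$, so the latter set has measure $1$, and $A$ is dispersive.

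There is essentially no obstacle here beyond unpacking the definitions; the entire statement is a direct corollary of Theorem \ref{genthm}, whose ``Therefore'' clause is precisely engineered so that the witness $C$ for weak $1$-genericity relative to $A$ need only be computable from $B$, not equal to $B$. This upward-closure under $\leq\sub{T}$ is what makes the measure-$1$ hypothesis transfer directly from the class of computers-of-generics to the class of $B$ with $H(A,B) = 1$, without any further construction.
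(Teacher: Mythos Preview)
Your proof is correct and follows exactly the approach the paper intends: the paper simply states that the proposition ``follows at once from Theorem~\ref{genthm}'' without further argument, and your write-up is a faithful unpacking of that one-line justification. The only minor redundancy is your first paragraph---since showing $\{B : H(A,B)=1\}$ has measure $1$ already implies it does not have measure $0$, you do not actually need Kolmogorov's $0$-$1$ Law or the dichotomy to conclude dispersiveness---but this does no harm.
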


Recall from Proposition \ref{M} that the empty set is dispersive. It
follows from known results that there are noncomputable dispersive
sets. A degree $\textbf{a}$ is \emph{low for weak $1$-genericity} if
every weakly $1$-generic is still weakly $1$-generic relative to
$\textbf{a}$. If $\textbf{a}$ is low for weak $1$-genericity, then
$\textbf{a}$ is dispersive by the above proposition, since almost
every degree is hyperimmune, and thus computes a set that is weakly
$1$-generic relative to $\textbf{a}$. Stephan and Yu \cite{SY} showed
that a degree is low for weak $1$-genericity if and only if it is
hyperimmune-free and not diagonally noncomputable. It follows that all
sets that are sufficiently generic for forcing with computable perfect
trees are dispersive. The following theorems give further examples. In
particular, we show that there is a high c.e.\ degree that is
dispersive and hence, as we will remark, that there is a computably
random set that is dispersive. We also show that every low
c.e.\ degree is dispersive, and that every weakly $2$-generic degree
is dispersive, so that the class of dispersive sets is
comeager. Indeed, we will show that if $A$ is weakly $2$-generic and
$B$ is $2$-random, then $H(A,B)=1$.

We begin with the following result. Although we will strengthen it
below, we include a proof because it will be helpful in explaining the
proofs of these stronger versions.

\begin{theorem}
\label{cethm}
There is a noncomputable c.e.\ set $A$ such that almost every set
computes a set that is weakly $1$-generic relative to $A$, and hence
$A$ is dispersive.
\end{theorem}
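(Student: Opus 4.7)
My plan is to build $A$ as a c.e.\ noncomputable set by a finite-injury priority argument, simultaneously with a single Turing functional $\Psi$ such that
\[
\mu\bigl\{B \in 2^\omega : \Psi^B \text{ is weakly $1$-generic relative to } A\bigr\} = 1.
\]
Since $\Psi^B \leq_T B$, this suffices for the theorem. The noncomputability of $A$ is handled by the standard requirements $P_e : A \neq \Phi_e$, met by a Friedberg--Muchnik-style witness-and-enumerate strategy; it is only the genericity bookkeeping that is new.

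For each $e$, I would impose a measure-theoretic requirement $Q_e$ demanding that the measure of $B \in 2^\omega$ for which $\Psi^B$ fails to meet the $e$-th $A$-c.e.\ dense set of strings $W_e^A$ is at most $2^{-e}$; this is vacuous when $W_e^A$ is not dense. Since $\sum_e 2^{-e} < \infty$, a Borel--Cantelli-style argument then gives that for almost every $B$, $\Psi^B$ meets every $W_e^A$ with at most finitely many exceptions, and a routine modification of $\Psi$ (built in to the construction) handles those finitely many sets on a further measure-zero set, yielding weak $1$-genericity of $\Psi^B$ relative to $A$ for almost every $B$.

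To meet $Q_e$, the functional $\Psi$ is defined in stages by the following scheme: for $B$ ranging over a basic open cylinder $[\beta]$, the current initial segment $\tau$ of $\Psi^B$ is determined by $\beta$; the construction then searches, for increasing $s$, for an extension $\sigma \succeq \tau$ in $W_{e,s}^{A_s}$, and commits $\Psi^B$ to extend $\sigma$ on all $B \in [\beta]$ once such a $\sigma$ is found. To convert membership in $W_{e,s}^{A_s}$ into the honest membership in $W_e^A$ that $Q_e$ requires, the $A$-construction installs a restraint forbidding enumeration into $A$ below the use of this computation. This is precisely the point at which $P$-requirements and $Q$-commitments come into conflict: enumerating a witness $x_e$ into $A$ to satisfy $P_e$ may lie below the use of some active $Q$-commitment and thereby injure it. A standard finite-injury priority ordering, with each $P_e$ allowed to injure only higher-indexed $Q$-commitments, ensures that each commitment is injured only finitely often and eventually stabilizes.

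The main obstacle is the measure-theoretic bookkeeping: we must verify that across all injuries and re-commitments, the total measure of $B$'s for which the $Q_e$-commitment permanently fails is bounded by $2^{-e}$. The strategy I would use is to split $[\beta]$ geometrically upon injury, rebuilding the commitment on a shorter sub-cylinder $[\beta']$ and treating the discarded portion as a measure loss of size at most $2^{-|\beta'|}$; arranging the $|\beta'|$'s to grow sufficiently fast makes the total loss in $Q_e$ geometrically summable and keeps it under the $2^{-e}$ budget. The finiteness of injury then guarantees that the measure of permanently-bad $B$'s in $Q_e$ is within budget, and the union bound over $e$ yields the desired measure-one conclusion.
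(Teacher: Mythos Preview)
Your high-level architecture is the paper's: a finite-injury construction of a noncomputable c.e.\ $A$ together with a Turing functional $\Psi$, aiming for $\Psi^B$ to be weakly $1$-generic relative to $A$ on a positive-measure set of $B$, with Kolmogorov's $0$--$1$ law upgrading this to measure $1$. The priority tension between your $P_e$ and $Q_e$ is also the same as in the paper.

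The genuine gap is in your $Q_e$ mechanism. As you describe it, $Q_e$ works on a single cylinder $[\beta]$, searching for an extension of $\Psi^\beta$ in $W_e^A$. Two problems. First, if $Q_e$ acts only on one cylinder, then $\Psi^B$ need not meet $W_e^A$ for any $B \notin [\beta]$, so the ``bad set'' for $Q_e$ is nearly all of $2^\omega$, not of measure $2^{-e}$. Second, if instead you mean that $Q_e$ runs such searches on every cylinder in parallel, then whenever $W_e^A$ is not dense the search stalls everywhere and $\Psi^B$ is partial for every $B$; saying the requirement is ``vacuous when $W_e^A$ is not dense'' does not address totality of $\Psi^B$. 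Your injury picture (split $[\beta]$, discard measure upon re-commitment) is orthogonal to both of these issues and does not reflect what actually costs measure here.

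The paper's key device, which your plan is missing, is that $Q_e$ processes the $2^{e+2}$ strings of a fixed length $e+2$ \emph{sequentially}: it claims one string $\sigma_k$ of that length, freezes the growth of $\Psi$ above $[\sigma_k]$ while leaving $\Psi$ free to grow elsewhere, and waits until every current value $\Psi^\tau$ for $\tau \succcurlyeq \sigma_k$ has an extension in $W_e^A$; then it commits those extensions, releases $\sigma_k$, and moves on to $\sigma_{k+1}$. If $W_e^A$ is dense, $Q_e$ eventually cycles through all $\sigma_k$, so every total $\Psi^B$ meets $W_e^A$. If $W_e^A$ is not dense, $Q_e$ gets stuck on exactly one $\sigma_k$, sacrificing only measure $2^{-(e+2)}$ from the totality set; summing over $e$ leaves $\Psi^B$ total on a set of measure at least $1/2$. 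Injuries simply reinitialize $Q_e$ to start over on its list of strings and cost no measure at all; the paper chooses $R_e$-witnesses fresh and large rather than imposing restraints, which is dual to your restraint viewpoint but makes the ``no measure lost to injury'' bookkeeping transparent.

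Finally, drop the Borel--Cantelli detour: meeting all but finitely many dense $W_e^A$ is not weak $1$-genericity, and there is no routine after-the-fact modification of a single functional $\Psi$ that repairs an unspecified, $B$-dependent finite set of failures. The union bound you mention at the end (with budgets summing to strictly less than $1$) together with the $0$--$1$ law is already the whole argument.
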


\begin{proof}
This proof is a finite-injury priority construction, based on Martin's
proof mentioned above that the hyperimmune degrees have measure
$1$. That proof, as presented for instance in \cite[Theorem
8.21.1]{DH}, can easily be adapted to give a direct proof that
almost every set computes a weakly $1$-generic (which is essentially
what we would get if we removed the $R$-requirements from the proof
below).

By Kolmogorov's $0$-$1$ Law, it is enough to build a Turing functional
$\Psi$ such that the set of $X$ for which $\Psi^X$ is weakly
$1$-generic relative to $A$ has positive measure. We will ensure that
the set of $X$ such that $\Psi^X$ is total has positive measure, while
satisfying requirements
\[
R_e : \Phi_e \neq A
\]
and
\[
Q_e : W_e^A \textrm{ dense and } \Psi^X \textrm{ total} \ \Rightarrow
\ \Psi^X \textrm{ meets } W_e^A.
\]
We arrange these in a priority ordering $Q_0,R_0,Q_1,R_1,\dots$. We
think of $\Psi$ as being defined in stages, where at stage $s$ we
define $\Psi^\tau$ for the strings $\tau$ of length $s$. During the
construction, certain strings will be \emph{claimed} by
$Q$-requirements. If $\tau$ extends such a string $\sigma$, then we do
not allow $\Psi^\tau$ to converge on any new values (i.e., we define
$\Psi^\tau=\Psi^{\tau \restr (s-1)}$), unless the strategy for the
requirement claiming $\sigma$ defines it otherwise, as discussed
below. Otherwise, we ensure that $\Psi^\tau(n)$ is defined for all $n
\leq s$. (The actual values do not matter in this case, so if
$\Psi^{\tau \restr (s-1)}(n)$ is not defined for such an $n$, then we
just let $\Psi^\tau(n)=0$.)

We satisfy $R_e$ in the usual way by choosing a witness $n$, waiting
until we see that $\Phi_e(n)=0$ (if ever), and then enumerating $n$
into $A$. Each time $R_e$ is initialized, it chooses a new witness
larger than any number previously mentioned in the construction, which
means that the value of $A$ on this number cannot affect any currently
existing computation.

To satisfy a single $Q_e$, we could proceed as follows. Let
$\sigma_0,\ldots,\sigma_{2^{e+2}-1}$ be the strings of length
$e+2$. We begin by claiming $\sigma_0$ at some stage $s > e+2$. Let
$\tau_0,\ldots,\tau_m$ be the extensions of $\sigma_0$ of length
$s-1$. For each such $\tau_i$, we have defined $\Psi^{\tau_i}=\mu_i$
for some $\mu_i$. We now wait until a stage $t \geq s$ such that for
each $i \leq m$, there is an extension $\nu_i$ of $\mu_i$ in
$W_e^A[t]$. If such a $t$ is never found, then $\sigma_0$ is
permanently claimed by $Q_e$, and $\Psi^X$ is not total for $X \succ
\sigma_0$, but the set of such $X$ has measure only $2^{-(e+2)}$. If
$t$ is found then we try to ensure that each $\nu_i$ is in $W_e^A$ by
initializing weaker priority $R$-requirements. (Of course, stronger
priority $R$-requirements might still act, but in that case we simply
restart our strategy for $Q_e$.) We then ensure that $\nu_i \prec
\Psi^X$ for each $i \leq m$ and each $X$ extending $\tau_i$, drop our
claim on $\sigma_0$, claim $\sigma_1$, and repeat our procedure.

In this way, we move through the strings of length $e+2$, with one of
two eventual outcomes. We might eventually permanently claim some
$\sigma_k$. If so, then $W_e^A$ is not dense, so $Q_e$ is satisfied,
and we have removed only $2^{-(e+2)}$ from the measure of $\{X : \Psi^X
\textrm{ total}\}$. Otherwise, we ensure that $\Psi^X$ extends some
element of $W_e^A$ for every $X$, again satisfying $Q_e$.

When considering all our $Q$-requirements at once, the only difference
is that we need to ensure that while $Q_e$ is claiming a string
$\sigma$, no $Q_i$ with $i>e$ can claim an extension of
$\sigma$. Notice that the total measure removed from $\{X : \Psi^X
\textrm{ total}\}$ by permanently claimed strings over the whole
construction is at most $\sum_e 2^{-(e+2)} = 1/2$.

We now proceed with the full construction. We think of $\Psi$ as a
computable function $2^{<\omega} \rightarrow 2^{<\omega}$, whose value
at $\sigma$ is denoted by $\Psi^\sigma$, such that if $\sigma \prec
\tau$ then $\Psi^\sigma \preccurlyeq \Psi^\tau$. Then $\Psi^X =
\bigcup_n \Psi^{X \restr n}$, so $\Psi^X$ is total if and only if
$\lim_n |\Psi^{X \restr n}| = \infty$. For two strings $\sigma$ and
$\tau$ of the same length, let $\sigma <\sub{l} \tau$ if $\sigma$
comes before $\tau$ in the lexicographic ordering.

When an $R$-strategy is initialized, its witness becomes
undefined. When a $Q$-strategy is initialized, it gives up any current
claims it might have, and is declared to be unsatisfied.

We begin with $\Psi^\lambda=\lambda$, where $\lambda$ is the empty
string.  At each stage $s>0$, we define $\Psi^\sigma$ for all $\sigma$
of length $s$, proceeding as follows.

First, for each $R_e$ with $e \leq s$ that is not yet satisfied and
does not currently have a witness, assign $R_e$ a witness larger than
any number appearing in the construction so far. Then, for each $R_e$
with $e \leq s$ that has a witness $n$ such that $\Phi_e(n)[s]=0$ and
$n$ is not yet in $A$, put $n$ into $A$, initialize all $Q_i$ with $i
> e$, and declare $R_e$ to be satisfied.

Now say that $Q_e$ with $e < s-2$ \emph{requires attention} if $Q_e$
is not currently declared to be satisfied, and either $Q_e$ is not
claiming any string or it is claiming a string $\sigma$, and for every
$\tau \succcurlyeq \sigma$ of length $s-1$, there is an extension
$\nu$ of $\Psi^\tau$ currently in $W_e^A$. For the least $e$ such that
$Q_e$ requires attention (if any), we act as follows.

If $Q_e$ is not claiming any string, then $Q_e$ claims the
$<\sub{l}$-least string $\tau$ of length $e+2$ such that no initial
segment of $\tau$ is currently being claimed by any $Q_i$ with
$i<e$. (Since each $Q_i$ can be claiming at most one string, and that
string must have length $i+2$, such a $\tau$ must exist.)

Otherwise, proceed as follows. For each $\tau \succcurlyeq \sigma$ of
length $s-1$, let $\nu$ be an extension of $\Psi^\tau$ currently in
$W_e^A$ and define $\Psi^{\tau^\frown0}=\Psi^{\tau^\frown1}=\nu$. Now
$Q_e$ drops its claim on $\sigma$ and claims the next $<\sub{l}$-least
string $\tau >\sub{l} \sigma$ of length $e+2$ such that no initial
segment of $\tau$ is currently being claimed by any $Q_i$ with
$i<e$. If there is no such $\tau$, then declare $Q_e$ to be satisfied.

In any case, initialize every $Q_i$ with $i>e$ and every $R_i$ with $i
\geq e$.

Finally, for each $\mu$ of length $s$ such that $\Psi^\mu$ is not yet
defined, if some initial segment of $\mu$ is currently claimed by a
$Q$-strategy then define $\Psi^{\mu}=\Psi^{\mu \restr (s-1)}$, and
otherwise define $\Psi^{\mu}=\Psi^{\mu \restr (s-1)}{}^\frown0$.

This completes the construction of $A$ and $\Psi$. We now verify its
correctness. Note that, for all $s$, no two compatible strings are
claimed by different requirements at the end of stage $s$. If no
initial segment of $X$ is ever permanently claimed, then there are
infinitely many stages $s$ such that no initial segment of $X$ is
claimed at the end of stage $s$, and hence $\Psi^X$ is total, so
$\mu(\{X : \Psi^X \textrm{ total}\}) \geq 1 - \sum_e 2^{-(e+2)} =
1/2$.

Whenever an $R$-strategy puts a number into $A$, it is permanently
satisfied. If $Q_e$ is not initialized, then it goes through the
strings of length $e+2$ in lexicographic order and thus eventually
stops requiring attention. Thus, by induction, each requirement is
initialized only finitely often.

Thus each $R_e$ has a final witness $n$, and we ensure that $\Phi_e(n)
\neq A(n)$.

If $Q_e$ ever permanently claims a string, then $W_e^A$ is not
dense. Otherwise, for each string $\sigma$ of length $e+2$ such that
no initial segment of $\sigma$ is eventually permanently claimed by
some $Q_i$, the strategy for $Q_e$ ensures that if $X$ extends
$\sigma$ then $\Psi^X$ has an initial segment in $W_e^A$. So if
$\Psi^X$ is total then it meets $W_e^A$.
\end{proof}

The above proof can be adapted to show that $A$ can be made to be
high, and also that it can be chosen to be any low set, as we show in
the next two theorems. (See Observation \ref{aedobs} for a remark on
how far these results could be extended.)

\begin{theorem}
\label{cethm2}
There is a high c.e.\ set $A$ such that almost every set computes a
set that is weakly $1$-generic relative to $A$, and hence $A$ is
dispersive.
\end{theorem}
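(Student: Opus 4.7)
The plan is to modify the finite-injury priority construction of Theorem \ref{cethm} by adding highness requirements, while preserving the analysis that keeps $\{X : \Psi^X \text{ total}\}$ of measure at least $1/2$. By Martin's characterization of high c.e.\ degrees, it suffices to build $A$ so that some $A$-computable function dominates every total computable function.

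For each $e$, I would add a requirement $H_e$ charged with ensuring that $\Phi_e$, if total, is dominated by an $A$-computable function. A standard implementation uses pairwise disjoint infinite computable intervals $I_e$, disjoint from the numbers used by the $R$-requirements, together with a marker system inside each $I_e$. The strategy for $H_e$ enumerates the next fresh marker in $I_e$ into $A$ each time a new convergence $\Phi_e(k)[s]\downarrow$ is observed, and a dominating function $g \leq\sub{T} A$ is then recovered in the usual way from $A \cap I_e$ (reading off, for each input, the stage by which the relevant markers have entered $A$).

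The main obstacle is combining these new requirements with the $Q$-strategies of the original proof without damaging the axioms that each $Q_e$ preserves in order to keep $\Psi^\tau$ pointing into $W_e^A$: enumerating a small number into $A$ could destroy such an axiom, and $H_e$, when $\Phi_e$ is total, must act infinitely often. To avoid this, I would impose a priority ordering such as $H_0, Q_0, R_0, H_1, Q_1, R_1, \dots$ and refine $H_e$ so that it only enumerates elements of $I_e$ lying above the current uses of axioms being preserved by higher-priority $Q$-requirements. The set of such forbidden numbers is finite at every stage and, by the finite-injury bookkeeping, eventually stabilizes; since $I_e$ is infinite, $H_e$ is never permanently blocked, and its eventual enumerations into $A$ still occur at stages reflecting the convergence of $\Phi_e$, so the dominating property of $g$ is retained.

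With this arrangement $H_e$'s enumerations cause no injury to any higher-priority $Q$- or $R$-requirement, so the verification of $R$ and $Q$ from Theorem \ref{cethm} carries over essentially unchanged, and the positive-measure conclusion that $\mathbb{P}(\Psi^X \text{ is total and weakly } 1\text{-generic relative to } A) \geq 1/2$ is preserved. By Kolmogorov's $0$-$1$ Law, this class in fact has measure $1$. The resulting $A$ is c.e.\ and high, almost every $X$ computes a set weakly $1$-generic relative to $A$, and hence $A$ is dispersive.
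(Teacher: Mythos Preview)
Your proposal has a genuine gap concerning how the $Q$-requirements interact with the highness requirements $H_e$. You arrange for $H_e$ to respect the uses of \emph{higher}-priority $Q$-requirements, which indeed prevents $H_e$ from injuring them. But you say nothing about how a \emph{lower}-priority $Q_j$ (with $j \geq e$) is protected from $H_e$. In your ordering $H_0, Q_0, R_0, H_1, Q_1, R_1, \ldots$, every $Q_j$ has $H_0, \ldots, H_j$ sitting above it, and none of these respects $Q_j$'s preserved uses. If some $\Phi_i$ with $i \leq j$ is total, then $H_i$ must act infinitely often, and each such enumeration can destroy a $W_j^A$-computation that $Q_j$ has already built into $\Psi$. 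So $Q_j$ may be injured infinitely often, and the verification from Theorem~\ref{cethm}---which relied on each $Q_e$ being injured only by the \emph{finitely many} actions of higher-priority $R$-strategies---does not carry over. Your claim that ``finite-injury bookkeeping'' suffices is therefore incorrect: the construction is genuinely infinite-injury.

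This is precisely why the paper abandons the linear priority framework and moves to a tree of strategies. There the highness requirements take the form $R_e : A^{[e]} =^* C^{[e]}$, which may be infinitary, but each $Q_\alpha$ guesses via its position on the tree which higher-priority $R_\beta$ are infinitary. When $Q_\alpha$ believes $R_\beta$ is infinitary (i.e., $\beta^\frown 0 \preccurlyeq \alpha$), it refuses to trust a computation $\nu \in W_e^A$ with use $u$ until every number in $[m_\beta, u)$ has already entered $A^{[|\beta|]}$; this guarantees that $R_\beta$'s future enumerations cannot injure the computation. Your domination-based $H_e$ could in principle be salvaged by an analogous tree argument (have $Q$-strategies guess whether each $H_i$ is infinitary and, on the infinitary guess, wait for $H_i$'s marker to exceed the use before believing a computation), but as written the proposal omits exactly this mechanism, and without it the $Q$-requirements cannot be shown to succeed.
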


\begin{proof}
For a set $X$, let $X^{[e]}=\{n : \langle e,n \rangle \in X\}$.  To
make $A$ high, we use the fact that there is a c.e.\ set $C$ such that
each $C^{[e]}$ is either finite or equal to $\omega^{[e]}$, and if
$A^{[e]} =^* C^{[e]}$ for all $e$ then $A$ is high. (See
e.g.\ \cite[Section 2.14.3]{DH}. Here $=^*$ is equality up to finitely
many elements.)

The basic idea of this proof is that we have the same requirements
\[
Q_e : W_e^A \textrm{ dense and } \Psi^X \textrm{ total} \ \Rightarrow
\ \Psi^X \textrm{ meets } W_e^A
\]
as in the previous proof, but the $R$-requirements in that proof are
replaced by
\[
R_e : A^{[e]} =^* C^{[e]}.
\]
If $C^{[e]}$ is finite, then the action we need to take to satisfy
$R_e$ is finitary, so its effect is similar to that of the
$R$-requirements in the previous proof. Otherwise, this action is
infinitary, but it is computable. A weaker priority strategy for
satisfying a $Q$-requirement can guess at an $m$ such that $n \in
A^{[e]}$ for all $n \geq m$. Then it does not believe an enumeration
into $W_e^A[s]$ with use $u$ unless every $n \in [m,u)$ is in
$A^{[e]}[s]$. Of course, we cannot know whether $C^{[e]}$ is finite or
not, so we proceed as usual and make this into an infinite-injury
priority construction, using a tree of strategies.

For each $e$ and $\alpha \in 2^e$, we have strategies $R_\alpha$ for
$R_e$ and $Q_\alpha$ for $Q_e$. The strategy $Q_\alpha$ works under
the assumption that $R_\beta$ is infinitary if $\beta^\frown0
\preccurlyeq \alpha$ and finitary if $\beta^\frown1 \preccurlyeq
\alpha$.

For binary strings $\alpha$ and $\beta$, write $\alpha <\sub{L} \beta$
if $\alpha$ is above or to the left of $\beta$, i.e., if $\alpha \prec
\beta$ or there is a $\gamma$ such that $\gamma^\frown0 \preccurlyeq
\alpha$ and $\gamma^\frown1 \preccurlyeq \beta$.

At stage $s$, we define a string $\gamma_s$ of length $s$ such that
$R_\alpha$ and $Q_\alpha$ are allowed to act at that stage if and only
if $\alpha \preccurlyeq \gamma_s$. We say these strategies are
\emph{accessible} at stage $s$, and allow them to act in order (i.e.,
$Q_{\gamma_s \restr n}$ acts before $R_{\gamma_s \restr n}$, which
acts before $Q_{\gamma_s \restr n+1}$).  We will describe the details
of these actions below, but we can already say how $\gamma_s$ is
defined. Suppose we have defined $\gamma_s \restr n$ for $n<s-1$. If
$R_{\gamma_s \restr n}$ enumerates any numbers into $A$ at stage $s$,
then $\gamma_s(n)=0$. Otherwise, $\gamma_s(n)=1$. As usual, the
\emph{true path} of the construction is $\liminf_s \gamma_s$, i.e.,
the leftmost path visited infinitely often.

The strategy $R_\alpha$ works as follows at stages at which it is
accessible. It chooses a number $m_\alpha$, which is picked to be a
fresh large number each time $R_\alpha$ is initialized. At any stage
$s$ at which $R_\alpha$ is accessible, it enumerates every $n \geq
m_\alpha$ in $C^{[e]}[s] \setminus A^{[e]}[s]$ into $A^{[e]}[s]$. If
there is at least one such $n$, then it initializes all $Q_\beta$ with
$\alpha <\sub{L} \beta$.

The strategy $Q_\alpha$ acts similarly to the strategies for
$Q$-requirements in the previous construction, but instead of working
with strings of a length fixed ahead of time, it has a parameter
$k_\alpha>1$, and works with strings of that length. Every time
$Q_\alpha$ is initialized, $k_\alpha$ is picked to be a fresh large
number (which ensures that it is larger than the lengths of the
strings currently being used by strategies above or to the left of
$Q_\alpha$ in the tree of strategies). As before, for strings $\sigma$
and $\tau$ of the same length, let $\sigma <\sub{l} \tau$ if $\sigma$
comes before $\tau$ in the lexicographic ordering.

Suppose that $Q_\alpha$ is accessible at stage $s$. If $Q_\alpha$ is
not claiming any string, is not currently satisfied, and was not
initialized at this stage, then $Q_\alpha$ claims the $<\sub{L}$-least
string $\tau$ of length $k_\alpha$ such that no initial segment of
$\tau$ is currently being claimed by any $Q_\beta$ with $\beta
<\sub{L} \alpha$.  To see that such a $\tau$ exists, note that
$k_\alpha$ is chosen to be larger than $k_\beta$ for all $\beta
<_L\alpha$, the $k_\beta$'s are distinct, and each strategy can claim
at most one string at a time. Hence there is at most one claimed
string of each length less than $k_\alpha$ that $Q_\alpha$ must
respect, so the total number of strings of length $k_\alpha$ that
cannot be chosen to be $\tau$ is at most the sum of $2^n$ for
$1<n<2^{k_\alpha}$. Thus not every $\tau$ of length $2^{k_\alpha}$ is
forbidden.

Now suppose that $Q_\alpha$ is claiming a string $\sigma$, and for
every $\tau \succcurlyeq \sigma$ of length $s-1$, there is an
extension $\nu$ of $\Psi^\tau$ currently in $W_e^A$ such that, for the
use $u$ of this enumeration and every $\beta$ such that $\beta^\frown0
\prec \alpha$, every number in $[m_\beta,u)$ is currently in
$A^{[|\beta|]}$. Then proceed as follows. For each $\tau \succcurlyeq
\sigma$ of length $s-1$, choose a $\nu$ as above and define
$\Psi^{\tau^\frown0}=\Psi^{\tau^\frown1}=\nu$. Now $Q_\alpha$ drops
its claim on $\sigma$ and claims the next $<\sub{l}$-least string
$\tau >\sub{l} \sigma$ of length $k_\alpha$ such that no initial
segment of $\tau$ is currently being claimed by any $Q_\beta$ with
$\beta <\sub{L} \alpha$. If there is no such $\tau$, then declare
$Q_\alpha$ to be satisfied.

In either of the above cases, initialize every $Q_\beta$ with $\alpha
<\sub{L} \beta$ and every $R_\beta$ with $\alpha \leq\sub{L} \beta$.

If neither of these cases holds, then $Q_\alpha$ does nothing at this
stage.

After all accessible strategies have acted at stage $s$, for each $\mu$
of length $s$ such that $\Psi^\mu$ is not yet defined, if some initial
segment of $\mu$ is currently claimed by a $Q$-strategy then define
$\Psi^{\mu}=\Psi^{\mu \restr (s-1)}$, and otherwise define
$\Psi^{\mu}=\Psi^{\mu \restr (s-1)}{}^\frown0$.

Now, as before, only one string of each length greater than $1$ can
ever be permanently claimed by any strategy, and $\Psi^X$ is total
unless some initial segment of it is eventually permanently claimed,
because whenever a strategy gives up its claim on a string, no other
strategy can claim an extension of that string at that stage. Thus
$\mu(\{X : \Psi^X \textrm{ total}\}) \geq 1 - \sum_n 2^{-(n+2)} =
1/2$.

An argument by induction, much as before, shows that any strategy on
the true path is initialized only finitely often. Thus, if $R_\alpha$
with $|\alpha|=e$ is on the true path then it puts all sufficiently
large elements of $C^{[e]}$ into $A^{[e]}$, and hence ensures that
$R_e$ is satisfied.

Now let $Q_\alpha$ with $|\alpha|=e$ be on the true path. Let
$\beta_0,\ldots,\beta_{k-1}$ be all of the strings such that
$\beta_i{}^\frown 0 \preccurlyeq \alpha$, and let $m_i$ be the final
value of $m_{\beta_i}$. If $Q_\alpha$ ever permanently claims a string
$\sigma$, then there is a $\tau \succcurlyeq \sigma$ such that for
every sufficiently large stage $s$ at which $Q_\alpha$ is accessible,
if there is an extension $\nu$ of $\tau$ in $W_e^A[s]$, then for the
use $u$ of this enumeration, there are an $i<k$ and an $n \in [m_i,u)$
such that $n \notin A^{[|\beta_i|]}[s]$. This $n$ will eventually be
put into $A^{[|\beta_i|]}$ by $R_{\beta_i}$, so $\tau$ has no
extension in $W_e^A$. Thus in this case $W_e^A$ is not dense.

Otherwise, for the final value of $k_\alpha$ and each string $\sigma$
of length $k_\alpha$ such that no initial segment of $\sigma$ is
eventually permanently claimed by some $Q_\beta$ with $\beta <\sub{L}
\alpha$, the strategy $Q_\alpha$ ensures that  if $X$ extends
$\sigma$ then $\Psi^X$ has an initial segment in $W_e^A$. So if
$\Psi^X$ is total then it meets $W_e^A$.
\end{proof}

An interesting consequence of the above theorem is that computable
randomness is not enough to ensure that a set is attractive, because
every high c.e.\ degree contains a computably random set, as shown by
Nies, Stephan, and Terwijn \cite{NST}. (A computably random dispersive
set gives an example of a set that is computably random,
but not computably random relative to $A$ for measure-$1$ many sets
$A$, which is a strong form of the failure of van Lambalgen's Theorem
for computable randomness, though there are easier means to obtain
this kind of example.)

\begin{theorem}
Let $A$ be a low c.e.\ set. Then almost every set computes a set that
is weakly $1$-generic relative to $A$, and hence $A$ is dispersive.
\end{theorem}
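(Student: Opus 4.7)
The plan is to adapt the construction of Theorem \ref{cethm2}, replacing the $R$-requirements (which there built $A$ to be high) with a priority tree structure whose branching is driven by computable approximations to $A$-relative facts, using the lowness of $A$. Since $A$ is c.e., we have a computable enumeration $A = \bigcup_s A[s]$, and since $A$ is low, $A' \leq\sub{T} \emptyset'$, so every $\Sigma^0_1(A)$ predicate is $\Delta^0_2$ and hence, by the Shoenfield limit lemma, admits a uniformly computable $\{0,1\}$-valued approximation. In particular, for each $e$ and $\tau$ there is a uniformly computable $g(e,\tau,s)$ whose limit in $s$ equals the truth value of ``$(\exists \sigma \succcurlyeq \tau)[\sigma \in W_e^A]$''; this approximation plays the role that, in Theorem \ref{cethm2}, was played by knowing which $R_e$-strategies were infinitary.

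We then build a computable Turing functional $\Psi$ along a priority tree $T \subseteq 2^{<\omega}$. At level $e$ the outcomes $\infty$ and $\textrm{fin}$ correspond to the two possibilities for whether $W_e^A$ is dense; the strategy $Q_\alpha$ at a node of length $e$ is assigned the meeting requirement
\[
Q_e : W_e^A \textrm{ dense and } \Psi^X \textrm{ total} \ \Rightarrow \ \Psi^X \textrm{ meets } W_e^A,
\]
claims a string of length $k_\alpha$ (chosen fresh upon each initialization, as in Theorem \ref{cethm2}), and then proceeds as in the $Q_e$-strategy of Theorem \ref{cethm}: it cycles lexicographically through strings of length $k_\alpha$, and for each claimed $\sigma$ it waits for a stage at which, for every $\tau \succcurlyeq \sigma$ of the current length, the approximation $g$ and the current enumeration $W_e^{A[s]}[s]$ jointly provide a witness $\nu \succcurlyeq \Psi^\tau$ that can safely be committed as $\Psi^{\tau^\frown 0} = \Psi^{\tau^\frown 1} = \nu$. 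The accessible path $\gamma_s$ at stage $s$ is obtained by following the outcome indicated by the current approximation at each visited node, strategies strictly to the right of $\gamma_s$ are initialized, and the true path is the leftmost path visited infinitely often. Along the true path, the $g$-values have stabilized to their correct values and the $A$-uses of committed enumerations have become final, so the commitments of true-path strategies are correct.

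The hard part will be bounding the measure lost from the class $\{X : \Psi^X \textrm{ is total and meets every dense } W_e^A\}$. On the true path, a strategy $Q_\alpha$ can contribute measure loss only by permanently claiming a string, which happens only when $W_e^A$ fails to be dense (so $Q_e$ is vacuously satisfied); as in Theorem \ref{cethm}, this loss is bounded by $\sum_e 2^{-(e+2)} = 1/2$. Off-true-path strategies may commit $\Psi^\tau$-values based on temporarily incorrect approximations, but each such strategy is re-initialized infinitely often with a fresh large $k_\alpha$, and choosing $k_\alpha$ to grow sufficiently rapidly at each re-initialization ensures that the total measure affected by bad commitments across all lives of all off-path nodes sums to an arbitrarily small amount. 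The resulting class $\{X : \Psi^X \textrm{ is total and weakly } 1\textrm{-generic relative to } A\}$ then has positive measure; since it is closed under Turing equivalence, Kolmogorov's $0$-$1$ Law upgrades this to measure one, and Theorem \ref{genthm} then yields that $A$ is dispersive.
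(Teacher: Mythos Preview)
Your proposal has the right high-level instinct---use the lowness of $A$ to get computable approximations to $\Sigma^0_1(A)$ facts---but the mechanism you describe does not address the central difficulty. The problem is not deciding whether $W_e^A$ is dense (the purpose you assign to your tree outcomes); the problem is certifying that a particular string $\nu$ enumerated into $W_e^{A[s]}[s]$ with use $u$ actually belongs to $W_e^A$, i.e., that $A \restr u$ has settled. Your approximation $g(e,\Psi^\tau,s)$ tells you only that \emph{some} extension of $\Psi^\tau$ lies in $W_e^A$, not that the specific $\nu$ you are about to commit does. So a true-path strategy can still commit a wrong $\nu$; your assertion that ``the commitments of true-path strategies are correct'' and that ``the $A$-uses of committed enumerations have become final'' is not justified by anything in the construction. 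A binary tree whose level-$e$ branching records only ``$W_e^A$ dense or not'' says nothing about the permanence of individual $A$-computations.

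The paper's argument takes a different route: it does not use a priority tree, and it adapts Theorem \ref{cethm} rather than Theorem \ref{cethm2}. The missing idea is a \emph{certification-and-restart} mechanism obtained via the Recursion Theorem. During the construction one builds uniformly c.e.\ sets $D_{e,k,\sigma}$ and, by the Recursion Theorem, has in advance a computable $f$ with $A'(f(e,k,\sigma))=1$ iff $A$ has an initial segment in $D_{e,k,\sigma}$. When $Q_e$ (currently claiming $\sigma$, having been initialized $k$ times) wants to commit witnesses with maximum use $u$, it first enumerates $A[s]\restr u$ into $D_{e,k,\sigma}$ and waits until either the lowness approximation to $A'$ says $A'(f(e,k,\sigma))[t]=1$ or $A$ visibly changes below $u$; it commits only in the former case. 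If the approximation later flips back to $0$, $Q_e$ is \emph{restarted} (not counted as initialized). Because the approximation to each $A'(f(e,k,\sigma))$ eventually settles and there are only finitely many $\sigma$ of length $e+2$, restarts occur only finitely often per initialization, and after the last one every committed $\nu$ is backed by a true $A'$-fact, hence is genuinely in $W_e^A$. This self-referential certification is precisely where lowness does real work, and it is absent from your outline.
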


\begin{proof}
This proof is again based on that of Theorem \ref{cethm}, but now we
have no $R$-requirements, since $A$ is given to us. Thus we have only
the requirements
\[
Q_e : W_e^A \textrm{ dense and } \Psi^X \textrm{ total} \ \Rightarrow
\ \Psi^X \textrm{ meets } W_e^A.
\]
The basic idea is the following. Suppose that at a stage $s$, the
requirement $Q_e$ is currently claiming a string $\sigma$ and seems to
require attention, i.e., for every $\tau \succcurlyeq \sigma$ of
length $s-1$, there is an extension $\nu$ of $\Psi^\tau$ currently in
$W_e^A$. Then we can test the enumerations of these strings $\nu$ into
$W_e^A$ using a computable approximation to $A'$. Roughly speaking, we
can use this approximation to $A'$ to guess at whether these $\nu$ are
truly in $W_e^A$, by waiting until either the approximation says that
they are, or at least one of them leaves $W_e^A$. If the former
happens, then we think of $A'$ as certifying the enumerations of the
strings $\nu$, and declare that $Q_e$ does in fact require
attention. The idea is that by using the approximation to $A'$
carefully, certified enumerations will be incorrect only finitely
often.

More precisely, during the construction we build uniformly c.e.\ sets
of strings $D_{e,k,\sigma}$ for $e,k \in \omega$ and $\sigma \in
2^{<\omega}$. By the Recursion Theorem, we can assume that we have a
computable function $f : \omega \times \omega \times 2^{<\omega}
\rightarrow \omega$ such that $A$ has an initial segment in
$D_{e,k,\sigma}$ if and only if $A'(f(e,k,\sigma))=1$. Suppose that
$Q_e$ seems to require attention at stage $s$ as above. For each $\tau
\succcurlyeq \sigma$ of length $s-1$, pick the extension $\nu_\tau$
of $\Psi^\tau$ currently in $W_e^A$ that has been in that set the
longest, let $u_\tau$ be the use of the current enumeration of
$\nu_\tau$ into $W_e^A$, and let $u$ be the maximum of $u_\tau$ over
all $\tau \succcurlyeq \sigma$ of length $s-1$. Let $k$ be the number
of times $Q_e$ has been initialized. Put $A[s] \restr u$ into
$D_{e,k,\sigma}$, and search for a $t \geq s$ such that either
$A'(f(e,k,\sigma))[t]=1$ or $A[t] \restr u \neq A[s] \restr u$. Such a
$t$ must exist. (Notice that, in the second case, $A \restr u \neq
A[s] \restr u$, since $A$ is c.e.) In the first case, $Q_e$ actually
requires attention at stage $s$. As before, we choose the least $e$
such that $Q_e$ requires attention at stage $s$ and act for it as in
the proof of Theorem \ref{cethm}, making sure that we use the strings
$\nu_\tau$ mentioned above. If later we find that we were mistaken,
i.e., that $A'(f(e,k,\sigma))[u]=0$ for some $u>t$, then we restart
$Q_e$ as if it had been initialized, though we do not count this as an
initialization (so that we keep working with the same sets
$D_{e,k,\sigma}$).

This restarting process can happen only finitely often between
initializations, as each occurrence requires a change in the
approximation to $A'(f(e,k,\sigma))$ for a fixed $k$ and one of the
finitely many strings $\sigma$ of length $2^{-(e+2)}$. Thus the proof
that each strategy is initialized only finitely often remains the same
as before. 

We can then argue that each $Q_e$ is satisfied more or less as before:
Let $k$ be the total number of times that $Q_e$ is initialized. If
$Q_e$ ever permanently claims a string $\sigma$, then it eventually
stops requiring attention, so there is at least one extension $\tau$
of $\sigma$ such that whenever an extension of $\tau$ is in $W_e^A[s]$
with use $u$, there is a $t \geq s$ such that $A[t] \restr u \neq A[s]
\restr u$. Since $A$ is c.e., $\tau$ has no extension in $W_e^A$, so
$W_e^A$ is not dense. Otherwise, for each string $\sigma$ of length
$e+2$ such that no initial segment of $\sigma$ is eventually
permanently claimed by some $Q_i$, the strategy for $Q_e$ eventually
requires attention at a stage at which it is claiming $\sigma$, such
that it never gets initialized or restarted after that stage. Thus
this strategy ensures that if $X$ extends $\sigma$ then $\Psi^X$ has
an initial segment in $W_e^A$. So if $\Psi^X$ is total then it meets
$W_e^A$.
\end{proof}

Moving away from c.e.\ sets, we have the following result. Note that
it does not imply any of our previous results, because a weakly
$2$-generic set cannot compute any noncomputable c.e.\ sets.

\begin{theorem}
\label{w2gthm}
Let $A$ be weakly $2$-generic. Then almost every set computes a set
that is weakly $1$-generic relative to $A$, and hence $A$ is
dispersive.

In fact, almost every set computes a set that is weakly $1$-generic
relative to every weakly $2$-generic set.
\end{theorem}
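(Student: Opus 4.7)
The plan is to construct a computable Turing functional $\Psi$, independent of $A$, such that $\mu(\{X : \Psi^X \text{ is total}\}) \geq 1/2$ and, for every weakly $2$-generic $B$, every $e$ with $W_e^B$ dense, and every $X$ with $\Psi^X$ total, $\Psi^X$ has an initial segment in $W_e^B$. Once this is established, the set of $X$ that compute (via $\Psi$) a set weakly $1$-generic relative to every weakly $2$-generic set is Turing-invariant and has measure at least $1/2$, so it has measure $1$ by Kolmogorov's $0$-$1$ Law. The relativization of Theorem \ref{genthm} then yields both the main statement (so $A$ is dispersive) and the ``in fact'' strengthening, since the same $\Psi$ handles every weakly $2$-generic set simultaneously.

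The construction is a variant of that in the proof of Theorem \ref{cethm}, with the $R$-requirements deleted (as $A$ is given rather than built) and only the $Q_e$-requirements retained. While a $Q_e$-strategy is claiming some $\sigma$ of length $e+2$, it searches at each stage $s$ for a finite $\alpha \in 2^{<\omega}$, a length $N$, and extensions $\mu_\tau \succcurlyeq \Psi^\tau$ for every $\tau \succcurlyeq \sigma$ of length $N$, such that $\mu_\tau \in W_e^\alpha[s]$ via a computation of use at most $|\alpha|$. When such data are found, the strategy sets $\Psi^{\tau^\frown b} := \mu_\tau$ for $b \in \{0,1\}$, drops its claim on $\sigma$, and claims the next $<\sub{l}$-least string of length $e+2$, exactly as in Theorem \ref{cethm}. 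The claim-discipline and priority ordering are unchanged, so only one string of each length is claimed at any time, and the total measure lost to permanently claimed strings is at most $\sum_e 2^{-(e+2)} = 1/2$.

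The verification is where weak $2$-genericity of $B$ enters. If $W_e^B$ is dense then for the strategy's current claim $\sigma$, valid data $(\alpha, N, \{\mu_\tau\})$ with $\alpha \prec B$ do exist: the $\Sigma^0_2$ set of finite strings that yield valid search data and are initial segments of $B$ is dense above every sufficiently long $B \restr k$, and by weak $2$-genericity $B$ meets it, so the strategy's selection rule (a suitable priority on $\alpha$'s) eventually lands on some $\alpha \prec B$, making the commitment $\mu_\tau \in W_e^\alpha \subseteq W_e^B$ correct. Conversely, if $Q_e$ permanently claims some $\sigma$, the corresponding dense $\Sigma^0_2$ ``blocking'' set (of $\alpha$'s defeating every candidate family $\{\mu_\tau\}$) is met by $B$, forcing $W_e^B$ to be non-dense. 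The main technical obstacle is the $\alpha$-selection: because $\Psi$ must be computable and independent of $B$, the strategy cannot directly test $\alpha \prec B$. I would handle this via a tree-of-strategies layout, with sub-strategies indexed by $\alpha \in 2^{<\omega}$ arranged so that, for every weakly $2$-generic $B$, only those sub-strategies along initial segments of $B$ are eventually influential, paralleling the infinitary bookkeeping in the proof of Theorem \ref{cethm2}.
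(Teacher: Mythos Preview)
Your overall architecture---a single computable $\Psi$ with sub-strategies $Q_e^\alpha$ indexed by $\alpha\in 2^{<\omega}$---is exactly the paper's approach, and your measure accounting via claimed strings is correct. However, the verification you sketch has a genuine gap, and the reference to the tree-of-strategies machinery of Theorem~\ref{cethm2} is a red herring.

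The problem is that the $\Sigma^0_2$ sets you propose to invoke depend on $B$: you write of ``the $\Sigma^0_2$ set of finite strings that yield valid search data and are initial segments of $B$,'' but genericity of $B$ cannot be applied to a set defined in terms of $B$. What is needed is a dense $\Sigma^0_2$ set produced \emph{by the construction itself}, independent of any particular $B$. The paper's mechanism is this: each strategy $Q_e^\alpha$ maintains a parameter $\beta_e^\alpha\succcurlyeq\alpha$, initially equal to $\alpha$; whenever $Q_e^\alpha$ acts using some oracle string $\gamma\succcurlyeq\beta_e^\alpha$, it redefines $\beta_e^\alpha:=\gamma$. The set of \emph{final} values $\{\beta_e^\alpha:\alpha\in 2^{<\omega}\}$ is dense (since $\beta_e^\alpha\succcurlyeq\alpha$) and $\emptyset'$-computable (hence $\Sigma^0_2$), so any weakly $2$-generic $A$ extends some final $\beta_e^\alpha$. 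For that $\alpha$, either $Q_e^\alpha$ eventually runs through all its strings, ensuring $\Psi^X$ meets $W_e^{\beta_e^\alpha}\subseteq W_e^A$ for all relevant $X$; or $Q_e^\alpha$ permanently claims some $\sigma$, which means no $\gamma\succcurlyeq\beta_e^\alpha$ ever yields extensions in $W_e^\gamma$, so $W_e^A$ is not dense.

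Two further points. First, the sub-strategies are organized in a single \emph{linear} priority ordering via an injective $g:\omega\times 2^{<\omega}\to\omega$, with $Q_e^\alpha$ working at string-length $g(e,\alpha)+2$; there is no tree of outcomes as in Theorem~\ref{cethm2}. Second, your ``blocking'' direction is handled not by a separate dense set but by the same $\beta_e^\alpha$ mechanism: if $Q_e^\alpha$ stalls, the failure is witnessed uniformly above $\beta_e^\alpha$, hence above $A$.
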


\begin{proof}
By Kolmogorov's $0$-$1$ Law, it is enough to show that there are
positive-measure many sets that compute a set that is weakly
$1$-generic relative to every weakly $2$-generic set. We do this by
defining a Turing functional $\Psi$ such that $\Psi^X$ is total for
positive-measure many $X$, and such that if $\Psi^X$ is total then it
is weakly $1$-generic relative to every weakly $2$-generic set.

The construction of $\Psi$ is once again based on the one in the proof
of Theorem \ref{cethm}. We no longer have any $R$-requirements, but
now have requirements
\[
Q_e :  A \textrm{ weakly $2$-generic, } W_e^A \textrm{ dense, and }
\Psi^X \textrm{ total} \ \Rightarrow \ \Psi^X \textrm{ meets } W_e^A.
\]
For each $Q_e$, we will have  infinitely many strategies $Q_e^\alpha$,
one for each binary string $\alpha$. Associated with each $Q_e^\alpha$
will be a string $\beta_e^\alpha \succcurlyeq \alpha$, whose value might
change during the construction. This string will be such that, for the
final value of $\beta_e^\alpha$, we will have ensured that $Q_e$ holds
as long as $A$ extends $\beta_e^\alpha$. As this value will be
computably approximated, and the set of $\beta_e^\alpha$ will be dense,
if $A$ is weakly $2$-generic then it will extend some $\beta_e^\alpha$.

Let $g : \omega \times 2^{<\omega} \rightarrow \omega$ be a computable
injective function. We arrange the $Q_e^\alpha$'s into a priority list
using $g$, declaring that $Q_e^\alpha$ is stronger that $Q_i^\beta$ if
$g(e,\alpha)<g(i,\beta)$. The string $\beta_e^\alpha$ is initially
equal to $\alpha$, and is again set to $\alpha$ every time
$Q_e^\alpha$ is initialized. At each stage, $Q_e^\alpha$ might be
claiming a string. Initially, no $Q_e^\alpha$ claims a string. When
$Q_e^\alpha$ is initialized, it gives up its current claim if it has
one. As before, for two strings $\sigma$ and $\tau$ of the same
length, let $\sigma <\sub{l} \tau$ if $\sigma$ comes before $\tau$ in
the lexicographic ordering.

At stage $s$, say that $Q_e^\alpha$ with $g(e,\alpha) < s-2$
\emph{requires attention} if $Q_e^\alpha$ is not currently declared to
be satisfied, and either $Q_e^\alpha$ is not claiming any string or it
is claiming a string $\sigma$, and there is a $\gamma \succcurlyeq
\beta_e^\alpha$ of length $s$ such for every $\tau \succcurlyeq
\sigma$ of length $s-1$, there is an extension $\nu$ of $\Psi^\tau$ in
$W_e^\gamma[s]$. For the least value of $g(e,\alpha)$ such that
$Q_e^\alpha$ requires attention (if any), we act as follows.

If $Q_e^\alpha$ is not claiming any string, then it claims the
$<\sub{l}$-least string $\tau$ of length $g(e,\alpha)+2$ such that no
initial segment of $\tau$ is currently being claimed by any
$Q_i^\delta$ with $g(i,\delta)<g(e,\alpha)$. Such a $\tau$ exists by
the same argument used to show the existence of the analogous string
$\tau$ in the proof of Theorem \ref{cethm2}.

Otherwise, proceed as follows. For each $\tau \succcurlyeq \sigma$ of
length $s-1$, let $\nu$ be an extension of $\Psi^\tau$ in
$W_e^\gamma[s]$ and define
$\Psi^{\tau^\frown0}=\Psi^{\tau^\frown1}=\nu$. Now $Q_e^\alpha$ drops
its claim on $\sigma$ and claims the next $<\sub{l}$-least string
$\tau >\sub{l} \sigma$ of length $g(e,\alpha)+2$ such that no initial
segment of $\tau$ is currently being claimed by any $Q_i^\delta$ with
$g(i,\delta)<g(e,\alpha)$. If there is no such $\tau$, then declare
$Q_e^\alpha$ to be satisfied. In either case, redefine
$\beta_\alpha=\gamma$.

In any case, initialize every $Q_i^\delta$ with
$g(i,\delta)>g(e,\alpha)$.

Finally, for each $\mu$ of length $s$ such that $\Psi^\mu$ is not yet
defined, if some initial segment of $\mu$ is currently claimed by a
strategy then define $\Psi^{\mu}=\Psi^{\mu \restr (s-1)}$, and
otherwise define $\Psi^{\mu}=\Psi^{\mu \restr (s-1)}{}^\frown0$.

As before, if no initial segment of $X$ is ever permanently claimed,
then there are infinitely many stages $s$ such that no initial segment
of $X$ is claimed at the end of stage $s$, and hence $\Psi^X$ is
total, so $\mu(\{X : \Psi^X \textrm{ total}\}) \geq 1 - \sum_n
2^{-(n+2)} = 1/2$. Also as before, by induction, each strategy is
initialized only finitely often.

Fix $e$ and a weakly $2$-generic $A$. Writing $\beta_\alpha$ for the
final value of that string, $\{\beta_\alpha : \alpha \in
2^{<\omega}\}$ is dense and $\emptyset'$-c.e. (In fact, it is
$\emptyset'$-computable.) Thus there is an $\alpha$ such that
$\beta_\alpha \prec A$. If $Q_e^\alpha$ ever permanently claims a
string, then $W_e^X$ is not dense for any $X$ extending
$\beta_\alpha$, so in particular $W_e^A$ is not dense. Otherwise, the
construction ensures that for each string $\sigma$ of length
$g(e,\alpha)+2$ such that no initial segment of $\sigma$ is eventually
permanently claimed by some strategy, if $X$ extends
$\sigma$ then $\Psi^X$ has an initial segment in $W_e^{\beta_\alpha}$,
and hence in $W_e^A$. So if $\Psi^X$ is total then it meets $W_e^A$.
\end{proof}

We do not know whether every $1$-generic set is dispersive.

\begin{corollary}
The class of attractive sets is meager, i.e.,
\[
(\cm A)(\aev B)[H(A,B)=1].
\]
\end{corollary}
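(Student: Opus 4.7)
The corollary follows very directly from Theorem \ref{w2gthm}, together with the classical fact that the class of weakly $2$-generic sets is comeager in $2^\omega$. The plan is to show that every weakly $2$-generic $A$ is a witness to the inner quantifier $(\aev B)[H(A,B)=1]$, and then observe that comeager many $A$ are weakly $2$-generic.

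First I would recall that weak $2$-genericity is a standard notion of genericity whose defining class, being the intersection of countably many dense open sets (namely, the upward closures of each dense $\Sigma^0_2$ set of strings), is comeager in $2^\omega$. So $(\cm A)[A \text{ is weakly $2$-generic}]$. Next, fix any weakly $2$-generic $A$. By Theorem \ref{w2gthm}, almost every $B$ computes a set $C$ that is weakly $1$-generic relative to $A$. For any such $B$, Theorem \ref{genthm} applied to $A$ and $C \leq\sub{T} B$ yields $H(A,B)=1$ (using the final sentence of that theorem, which says that $H(A,B')=1$ for every $B' \geq\sub{T} C$). Thus for every weakly $2$-generic $A$ we obtain $(\aev B)[H(A,B)=1]$, which means $A$ is dispersive and not attractive.

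Combining the two steps, the class $\{A : (\aev B)[H(A,B)=1]\}$ contains the comeager class of weakly $2$-generic sets, so it is itself comeager. Equivalently, the class of attractive sets is contained in the meager class of non-weakly-$2$-generic sets, hence meager. This gives $(\cm A)(\aev B)[H(A,B)=1]$.

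There is essentially no obstacle; all the technical work has been done in Theorems \ref{genthm} and \ref{w2gthm}. The only point worth mentioning is that one should not try to prove this by swapping quantifiers in Corollary \ref{H3} (which gives $(\forall A)(\cm B)[H(A,B)=1]$), since measure and category interact asymmetrically with the Hausdorff distance here; the quantifier-swap is legitimate only because Theorem \ref{w2gthm} supplies, for each weakly $2$-generic $A$, a \emph{measure}-theoretic (not just categorical) statement about $B$.
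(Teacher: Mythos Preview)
Your argument is correct and is exactly the approach the paper intends: the corollary is stated immediately after Theorem \ref{w2gthm} with no proof, since that theorem already asserts that every weakly $2$-generic is dispersive, and the weakly $2$-generics form a comeager class. Your additional unpacking via Theorem \ref{genthm} just makes explicit the ``hence $A$ is dispersive'' clause of Theorem \ref{w2gthm}.
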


The proof of Theorem \ref{w2gthm} yields the following more precise
version of the above corollary.

\begin{theorem}
If $A$ is weakly $2$-generic and $B$ is $2$-random, then $B$ computes
a set that is weakly $1$-generic relative to $A$, and hence $H(A,B)=1$.
\end{theorem}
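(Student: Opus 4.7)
The plan is to refine the construction from the proof of Theorem \ref{w2gthm} so that its ``total'' class has measure arbitrarily close to $1$, and to observe that the complementary negligible classes fit together as a Martin-L\"of test relative to $\emptyset'$. For each $n$, I would build a computable Turing functional $\Psi_n$ by repeating the construction in the proof of Theorem \ref{w2gthm} but with each strategy $Q_e^\alpha$ now required to claim strings of length $g(e,\alpha) + n + 2$ instead of $g(e,\alpha) + 2$. The only quantitative change is that the total measure of permanently claimed strings is now bounded by $\sum_k 2^{-(k+n+2)} \leq 2^{-n}$; property (b) from that proof (that if $A$ is weakly $2$-generic and $\Psi_n^X$ is total, then $\Psi_n^X$ is weakly $1$-generic relative to $A$) is unchanged, since the priority arrangement and the analysis of the $\beta$-parameters go through verbatim. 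Thus $\mathcal{T}_n := \{X : \Psi_n^X \text{ total}\}$ is a $\Pi^0_2$ class with $\mu(\mathcal{T}_n) \geq 1 - 2^{-n}$.

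The crucial step is to verify that $\mathcal{T}_n^c$ is effectively open relative to $\emptyset'$. Let $P_n$ consist of those strings $\sigma$ that are eventually permanently claimed by some strategy in the construction of $\Psi_n$, i.e., those $\sigma$ such that for some $(e,\alpha)$ and $s_0$, $Q_e^\alpha$ claims $\sigma$ at every stage $t \geq s_0$. This predicate is $\Sigma^0_2$, uniformly in $n$, so $[P_n] := \bigcup_{\sigma \in P_n}[\sigma]$ is a uniformly $\Sigma^0_1(\emptyset')$ open set. The main technical point is the identification $\mathcal{T}_n^c = [P_n]$: the $(\supseteq)$ direction is immediate, since a permanently claimed initial segment $\sigma \prec X$ prevents any further extension of $\Psi_n^X$ (no strategy ever releases $\sigma$ to perform its extension step). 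For $(\subseteq)$ one argues that if no initial segment of $X$ is permanently claimed then every strategy that ever claims an initial segment of $X$ must eventually release it, and each such release extends $\Psi_n$ along $X$; together with the ``default'' extensions at stages where no initial segment of $X$ is currently claimed, this forces $|\Psi_n^X| \to \infty$. Thus $\mu([P_n]) = \mu(\mathcal{T}_n^c) \leq 2^{-n}$.

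It follows that $\{[P_n]\}_n$ is a Martin-L\"of test relative to $\emptyset'$. Since $B$ is $2$-random, equivalently $1$-random relative to $\emptyset'$, it avoids this test: there exists some $n$ with $B \notin [P_n]$. For this $n$, $\Psi_n^B$ is total, so by property (b), $\Psi_n^B$ is weakly $1$-generic relative to $A$; since $\Psi_n^B \leq\sub{T} B$, this is the required set. The conclusion $H(A,B)=1$ is then immediate from Theorem \ref{genthm} (which gives $\gamma_A(\Psi_n^B)=0$ and hence $\Gamma_A(B)=0$) together with Corollary \ref{Hvalues}. The main obstacle is the bookkeeping needed to verify the identification $\mathcal{T}_n^c = [P_n]$, and in particular the $(\subseteq)$ direction, since one must track how temporary claims on initial segments of $X$ nevertheless produce extensions of $\Psi_n^X$ at each release; once this is in place, the application of the $2$-randomness of $B$ via the test $\{[P_n]\}_n$ is routine.
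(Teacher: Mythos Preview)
Your proposal is correct and follows essentially the same strategy as the paper: modify the construction of Theorem~\ref{w2gthm} to obtain functionals $\Psi_n$ with $\mu(\{X:\Psi_n^X\text{ not total}\})\leq 2^{-n}$, observe that these classes form a test that a $2$-random must pass, and conclude. One simplification worth noting: the identification $\mathcal{T}_n^c=[P_n]$ that you flag as the ``main obstacle'' is unnecessary, since $\{X:\Psi_n^X\text{ not total}\}=\{X:(\exists m)(\forall k)\,|\Psi_n^{X\restr k}|<m\}$ is visibly and uniformly $\Sigma^0_2$, so the paper simply cites this to obtain a $\Sigma^0_2$ Martin-L\"of test directly.
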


\begin{proof}
The proof of Theorem \ref{w2gthm} can easily be modified to show that
for each $i$ there is a Turing functional $\Psi_i$ that meets all the
requirements $Q_e$ with $\Psi_i$ replacing $\Psi$, and has the further
property that $\mu(\{X : \Psi_i^X \textrm{ total}\}) \geq 1 - 2^{-i}$.
Furthermore, an index for such a $\Psi_i$ can be obtained effectively
from $i$. Let $\mathcal{C}_i = \{X : \Psi_i^X \textrm{ not
  total}\}$. Then $\mathcal C_0,\mathcal C_1,\ldots$ are uniformly
$\Sigma^0_2$, and hence form a $\Sigma^0_2$ Martin-L\"of test. Since
$B$ is $2$-random, we can fix $i$ with $B \notin \mathcal{C}_i$.
Since all $Q_e$ are satisfied for $\Psi = \Psi_i$, the conclusion of
the theorem follows.
\end{proof}

Notice that we cannot improve the above theorem to all weakly
$2$-random $B$, because there are weakly $2$-randoms that have
hyperimmune-free degree (see e.g. \cite[Theorem 8.11.12]{DH}), and
hence do not compute any weakly $1$-generics. It would be interesting
to have an exact characterization of the attractive degrees that does
not mention Hausdorff distance. It would also be interesting to know
whether $A$ can be dispersive without it being the case that almost
every set computes a set that is weakly $1$-generic relative to
$A$. Finally, we do not know whether the above theorem can be improved
to hold of all $1$-generic $A$.

\begin{observation}
\label{aedobs}
A related notion that has been completely characterized is almost
everywhere domination. Dobrinen and Simpson \cite{DS} defined a set
$A$ to be \emph{almost everywhere dominating} if for almost every $B$,
every $B$-computable function is dominated by an $A$-computable
function. Binns, Kjos-Hanssen, Lerman, and Solomon \cite{BKLS} and
Kjos-Hanssen, Miller, and Solomon \cite{KMS} characterized the almost
everywhere dominating sets as those sets $A$ such that every set that
is $1$-random relative to $A$ is $2$-random. If almost every set
computes a set that is weakly $1$-generic relative to $A$, then $A$
cannot be almost everywhere dominating (because if a set is weakly
$1$-generic relative to $A$, then it computes a function that is not
dominated by any $A$-computable function).

We can strengthen this fact by noting that the proof in
\cite[Corollary 1.13(i)]{ACDJL} that if $B$ is $1$-random and has
hyperimmune-free degree then $\Gamma(B)=1/2$ relativizes to show that
if $B$ is $1$-random relative to $A$ and every $B$-computable function
is dominated by an $A$-computable function, then $\Gamma_A(B)=1/2$. Of
course, in this case we also have $\Gamma_B(A)=1/2$, since $B$ is
$1$-random relative to $A$, and thus $H(A,B)=1/2$. It follows that if
$A$ is almost everywhere dominating, then it is attractive. The other
direction does not hold, however, as there are $1$-random sets that
are not almost everywhere dominating, for instance any $1$-random set
of hyperimmune-free degree, since if $A$ has hyperimmune-free degree
and every $B$-computable function is dominated by an $A$-computable
function, then $B$ also has hyperimmune-free degree.  It is possible,
however, that for c.e.\ sets the two notions coincide. As explained
below Corollary 11.2.7 in \cite{DH}, there are incomplete c.e.\ sets
that are almost everywhere dominating. It follows that there are
attractive sets that do not compute any $1$-randoms.  We do not know
whether every attractive set of hyperimmune-free degree computes a
$1$-random.
\end{observation}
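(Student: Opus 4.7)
The plan is to establish in sequence the four assertions of Observation \ref{aedobs}: (i) if almost every $B$ computes a set weakly $1$-generic relative to $A$, then $A$ is not almost everywhere dominating (hereafter a.e.d.); (ii) if $B$ is $1$-random relative to $A$ and every $B$-computable function is dominated by an $A$-computable function, then $\Gamma_A(B)=1/2$; (iii) every a.e.d.\ set is attractive; and (iv) the converse of (iii) fails, with $1$-randoms of hyperimmune-free degree and incomplete c.e.\ a.e.d.\ sets as the two kinds of witnesses discussed.

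For (i), I would invoke the standard fact, relativized from the unrelativized version in \cite[Section 2.24]{DH}, that any set $C$ weakly $1$-generic relative to $A$ has hyperimmune degree relative to $A$: for each $A$-computable $f$, the collection of finite binary strings $\sigma$ in which some $1$ has been placed at a position exceeding $f$ applied to its rank among the $1$'s of $\sigma$ is dense and $\Sigma^{0,A}_1$, and meeting all such sets forces the principal function of $C$ to escape every $A$-computable function. Consequently, from such a $C$ one computes a function not dominated by any $A$-computable function, and if almost every $B$ computes some such $C$, this directly negates the definition of $A$ being a.e.d.

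For (ii), I would relativize \cite[Corollary 1.13(i)]{ACDJL} as the observation indicates. The domination hypothesis lets one compute, from $A$, a modulus bounding the $B$-use of any reduction $C \leq\sub{T} B$ on initial segments, so that $A$-effective rational approximations to the density of $C$ along a given $A$-computable partition are available. The original proof of $\gamma(C) \geq 1/2$ then goes through with ``computable'' systematically replaced by ``$A$-computable'', giving $\gamma_A(C) \geq 1/2$ for every $C \leq\sub{T} B$, and hence $\Gamma_A(B) \geq 1/2$. Since $B$ is $1$-random relative to $A$ we have $B \nleq\sub{T} A$, so by the relativization of Proposition \ref{rel}(2) we cannot have $\Gamma_A(B) = 1$; combined with Theorem \ref{Monin}, this pins $\Gamma_A(B)$ to $1/2$.

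For (iii), assume $A$ is a.e.d. The set of $B$ that are $1$-random relative to $A$ and the set of $B$ whose $B$-computable functions are $A$-dominated each have measure $1$, so their intersection does too. For any $B$ in this intersection, (ii) gives $\Gamma_A(B)=1/2$, Corollary \ref{randcor} gives $\Gamma_B(A) \geq 1/2$, and Proposition \ref{Hformula} therefore yields $H(A,B)=1/2$, showing that $A$ is attractive. For (iv), if $A$ is $1$-random of hyperimmune-free degree (obtained by applying the hyperimmune-free basis theorem to a $\Pi^0_1$ class of $1$-randoms), then $A$ is attractive (as every $1$-random is, noted earlier in the paper) but not a.e.d.: any $B$ whose $B$-computable functions are $A$-dominated would have those functions dominated by a computable function, hence be of hyperimmune-free degree, contradicting Martin's theorem placing almost every $B$ in the hyperimmune degrees. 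The existence of incomplete c.e.\ a.e.d.\ sets is cited from \cite{DH}; such a set is attractive by (iii) yet computes no $1$-random, being c.e.\ and incomplete. The principal obstacle is the careful relativization in (ii); the remaining items follow by bookkeeping from tools already established in the paper.
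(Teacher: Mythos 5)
Your proof is correct and reconstructs essentially the same justifications that the paper supplies inline for this observation: relativize the fact that weakly $1$-generic sets are hyperimmune, relativize \cite[Corollary 1.13(i)]{ACDJL}, combine with Corollary \ref{randcor} and Proposition \ref{Hformula}, and use the standard examples ($1$-random hyperimmune-free sets and incomplete c.e.\ a.e.d.\ sets, the latter computing no $1$-random by Ku\v{c}era's theorem that $1$-randoms compute DNC functions together with Arslanov's completeness criterion). One small simplification in step (ii): Proposition \ref{rel}(2) already gives $\Gamma_A(B) \leq 1/2$ directly from $B \nleq\sub{T} A$ (you only used the weaker part (1), ruling out the value $1$, and then patched with Theorem \ref{Monin}), so the appeal to Monin's theorem is unnecessary; likewise, in step (i) the dense sets should carry an extra index to force escapes at arbitrarily high ranks, not just one, but this is a routine adjustment to a standard fact.
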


Much of the work in this section has been devoted to determining the
truth of sentences of the form
\[
(Q_1 A)(Q_2 B)[H(A, B) = r],
\]
where each $Q_i$ is $\cm$ or $\aev$, and $r$ is $1/2$ or $1$.  We
pause to summarize the results of this form.  For $r = 1/2$, the above
statement is true if and only if both $Q_1$ and $Q_2$ are $\aev$.  For
$r = 1$ the above statement is true if and only if either $Q_1$ or
$Q_2$ is $\cm$.

\section{Isometric embeddings into the Turing Degrees}
\label{isosec}

The motivation for the results in this section is the question of
which finite metric spaces with every distance equal to
$0$, $1/2$, or $1$ are isometrically embeddable in $(\mathcal
D,H)$. We begin with a partial answer to this question.

By a graph we will mean an undirected graph with no loops. For a
metric space $\mathcal M$ with every distance equal to $0$, $1/2$, or
$1$, let $G_{\mathcal M}$ be the graph whose vertices are the points
of $\mathcal M$, with an edge between $x$ and $y$ if and only if the
distance between $x$ and $y$ is $1$. We denote the complement of this
graph, where there is an edge between $x$ and $y$ if and only if the
distance between $x$ and $y$ is $1/2$, by $G^{\textup{c}}_{\mathcal
M}$. We write $G_{\mathcal D}$ for $G_{(\mathcal D,H)}$. Notice that
every graph is $G_{\mathcal M}$ for some $0,1/2,1$-valued metric space
$\mathcal M$.

A graph $(V,E)$ is a \emph{comparability graph} if there is a partial
order $(V,\preccurlyeq)$ such that $E(x,y)$ if and only if $x \prec y$
or $y \prec x$.

\begin{theorem}
Let $\mathcal M$ be a countable metric space with every distance equal
to $0$, $1/2$, or $1$, such that $G_{\mathcal M}$ is a comparability
graph. Then $\mathcal M$ is isometrically embeddable in $(\mathcal
D,H)$.
\end{theorem}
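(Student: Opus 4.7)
The plan is to build, for each $x \in \mathcal{M}$, a set $A_x$ whose Turing degree realizes the desired Hausdorff distances. Let $\preccurlyeq$ be a partial order on $\mathcal{M}$ whose comparability graph is $G_{\mathcal{M}}$, and fix an enumeration $x_0, x_1, \ldots$ of $\mathcal{M}$ that is a linear extension of $\preccurlyeq$, so that $x_i \prec x_j$ implies $i < j$. The $A_n$ will be produced by induction on $n$, maintaining the invariant that for all distinct $i, j \leq n$, the Hausdorff distance $H(A_i, A_j)$ equals the distance between $x_i$ and $x_j$ in $\mathcal{M}$.

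At stage $n$, I need $A_n$ with $H(A_i, A_n) = 1$ when $x_i \prec x_n$ and $H(A_i, A_n) = 1/2$ when $x_i$ is $\preccurlyeq$-incomparable to $x_n$. By the relativized Monin theorem (Theorem \ref{Monin}) together with Corollary \ref{Hvalues}, once $\deg(A_i) \neq \deg(A_n)$ is arranged (which will be automatic from the construction below), $H(A_i, A_n)$ lies in $\{1/2, 1\}$, so the task reduces to deciding which. By Theorem \ref{genthm}, having $A_n$ compute a set weakly $1$-generic relative to $A_i$ forces $\Gamma_{A_i}(A_n) = 0$ and hence $H(A_i, A_n) = 1$, while by Corollary \ref{randcor}, mutual $1$-randomness with $A_i$ would force $H(A_i, A_n) = 1/2$ on its own.

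My proposed form is $A_n = R_n \oplus \bigoplus_{i < n,\ x_i \prec x_n} G_{i,n}$, where $R_n$ is $1$-random relative to $\bigoplus_{j < n} A_j$ and each $G_{i,n}$ is a designated set weakly $1$-generic relative to $A_i$. The generic summands immediately deliver $H(A_i, A_n) = 1$ for the comparable pairs, by Theorem \ref{genthm}. Since $R_n \leq\sub{T} A_n$ and $R_n$ is $1$-random relative to $A_j$, Corollary \ref{randcor} gives $\Gamma_{A_n}(A_j) \geq \Gamma_{R_n}(A_j) \geq 1/2$ for every $x_j$ incomparable to $x_n$, securing one of the two inequalities needed for $H(A_j, A_n) = 1/2$. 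Note also that the $1$-randomness of $R_n$ relative to all earlier $A_j$ ensures $\deg(A_n)$ is new, so degrees remain distinct.

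The main obstacle is to secure the reverse direction $\Gamma_{A_j}(A_n) \geq 1/2$ for each $x_j$ incomparable to $x_n$. Because $A_n$ computes each $G_{i,n}$, and weakly $1$-generic sets necessarily have $\overline{\rho} = 1$ and $\underline{\rho} = 0$, the $G_{i,n}$ cannot be coarsely approximated at density $1/2$ by $\emptyset$ or $\omega$, so a density-$1/2$ description of $G_{i,n}$ from $A_j$ must actually be tailored. The technical heart of the argument will be to construct each $G_{i,n}$ by a forcing that simultaneously (i) meets every $A_i$-c.e.\ dense set of binary strings, yielding weak $1$-genericity over $A_i$, and (ii) aligns the density pattern of $G_{i,n}$ with an $A_j$-computable template for every $j < n$ with $x_j$ incomparable to $x_n$, yielding $\gamma_{A_j}(G_{i,n}) \geq 1/2$. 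The inductively maintained Turing-independence of $A_i$ from each such $A_j$ is what makes these two families of dense requirements jointly satisfiable; once we show that all $A_n$-computable sets obtained this way have $\gamma_{A_j} \geq 1/2$, the relativized Monin theorem forces $\Gamma_{A_j}(A_n) \in \{1/2, 1\}$, completing the verification for stage $n$ and allowing the induction to continue.
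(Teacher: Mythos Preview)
Your argument has a genuine gap at exactly the point you flag as the ``main obstacle,'' and the forcing sketch you offer does not close it. To get $H(A_j,A_n)=1/2$ when $x_j$ is $\preccurlyeq$-incomparable to $x_n$ you need $\Gamma_{A_j}(A_n)\geq 1/2$, which by definition means $\gamma_{A_j}(C)\geq 1/2$ for \emph{every} $C\leq\sub{T} A_n$, not just for the summands $R_n$ and $G_{i,n}$. Your forcing proposal only targets the individual $G_{i,n}$; even if it succeeded in giving $\gamma_{A_j}(G_{i,n})\geq 1/2$, nothing in the plan controls arbitrary $A_n$-computable sets, and Monin's theorem does not let you promote bounds on a few sets to a bound on the infimum over the whole lower cone. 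Indeed, the natural way to guarantee $\Gamma_{A_j}(A_n)\geq 1/2$ via Corollary~\ref{randcor} is to have something $\leq\sub{T} A_j$ be $1$-random relative to $A_n$; but $A_j$ was built at an earlier stage with no reference to $A_n$, and your $A_j$ has explicit weakly generic summands, so it is not $1$-random relative to anything. The inductive, stage-by-stage architecture is exactly what makes this direction hard.

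The paper's proof avoids this trap entirely by a non-inductive global choice. It fixes a single $\Delta^0_2$ $1$-random $\bigoplus_{n}B_n$ and sets $A_n=\bigoplus_{i\preccurlyeq n}B_i$ (using a computable universal partial order). For $\preccurlyeq$-incomparable $i,j$, van Lambalgen's Theorem gives that $B_i$ is $1$-random relative to $A_j$, and since $B_i\leq\sub{T} A_i$ one gets $\Gamma_{A_i}(A_j)\geq\Gamma_{B_i}(A_j)\geq 1/2$; the symmetric argument handles the other direction. For $i\prec j$ one has $A_i<\sub{T} A_j$, and since $A_j$ is $\Delta^0_2$ it has hyperimmune degree relative to $A_i$ (relativizing the fact that nonzero $\Delta^0_2$ degrees are hyperimmune), hence $A_j$ computes a weakly $1$-generic relative to $A_i$ and Theorem~\ref{genthm} yields $H(A_i,A_j)=1$. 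The point is that the genericity needed for distance $1$ is obtained \emph{for free} from being $\Delta^0_2$ rather than by inserting generic summands, so the sets remain columns of a single $1$-random and van Lambalgen handles both inequalities for the incomparable case simultaneously.
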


\begin{proof}
There is a computable partial ordering $(\omega,\preccurlyeq)$ that is
countably universal, i.e., every countable partial ordering is
order-isomorphic to a subordering of $(\omega,\preccurlyeq)$. (See
e.g.\ \cite[Exercise 6.15]{Snew}.) It is enough to show that there are
pairwise distinct degrees $\mathbf{a}_i$ such that
$H(\mathbf{a}_i,\mathbf{a}_j)=1$ if and only if $i \neq j$ and $i$ and
$j$ are $\preccurlyeq$-comparable.

Let $\bigoplus_{n \in \omega} B_n$ be a $\Delta^0_2$ $1$-random. By
van Lambalgen's Theorem, each $B_n$ is $1$-random relative to
$\bigoplus_{m \neq n} B_m$. Let $A_n = \bigoplus_{i \preccurlyeq n}
B_i$ and let $\mathbf{a}_n$ be the degree of $A_n$.

If $i \prec j$ then $A_i <\sub{T} A_j$. Furthermore, $A_j$ is
$\Delta^0_2$, so $A_j$ has hyperimmune degree relative to $A_i$, by
relativizing the result that nonzero $\Delta^0_2$ degrees are
hyperimmune.  Since $A_i <\sub{T} A_j$ and $A_j$ has hyperimmune
degree relative to $A_i$, it follows that $A_j$ computes a weakly
$1$-generic relative to $A_i$, and hence, by Theorem \ref{genthm},
$H(\mathbf{a}_i,\mathbf{a}_j)=1$.

Now suppose that $i$ and $j$ are $\preccurlyeq$-incomparable. Then
$B_i$ is $1$-random relative to $\bigoplus_{k \preccurlyeq j} B_k$,
since the latter set is computable from $\bigoplus_{k \neq i} B_k$. By
Corollary \ref{randcor}, $\Gamma_{\mathbf{a}_i}(\mathbf{a}_j) \geq
\Gamma_{B_i}(\mathbf{a}_j) = 1/2$, so in fact
$\Gamma_{\mathbf{a}_i}(\mathbf{a}_j) = 1/2$. The symmetric argument
shows that $\Gamma_{\mathbf{a}_j}(\mathbf{a}_i) = 1/2$. Thus
$H(\mathbf{a}_i,\mathbf{a}_j)=1/2$.
\end{proof}

We do not know what other countable $0,1/2,1$-valued metric spaces (if
any) are isometrically embeddable in $(\mathcal D,H)$. An interesting
test case is the finite $0,1/2,1$-valued metric space $\mathcal M$
such that $G_{\mathcal M}$ is a cycle of length $5$, which is the
simplest example of a graph that is not a comparability graph.
Answering this question will likely require better knowledge of the
possible distances between elements of sets of pairwise incomparable
degrees. A first step in that direction is to show that there are
pairwise incomparable degrees $\mathbf{a},\mathbf{b},\mathbf{c}$ such
that $H(\mathbf{a},\mathbf{b})=H(\mathbf{b},\mathbf{c})=1/2$ and
$H(\mathbf{a},\mathbf{c})=1$. The following result will do so, and
indeed ensure that the degrees $\mathbf{a},\mathbf{b},\mathbf{c}$ are
$1$-random, which might be useful in obtaining further embeddings of
metric spaces into $(\mathcal D,H)$.

\begin{theorem}
\label{incrandthm}
There are incomparable $1$-random degrees $\mathbf{a},\mathbf{c}$ such
that $H(\mathbf{a},\mathbf{c})=1$.
\end{theorem}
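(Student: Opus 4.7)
The plan is to take $A$ to be any $1$-random set and $C$ to be any set that is $2$-random relative to $A$; such a $C$ exists, for instance by taking $C$ to be $1$-random relative to $A'$. Let $\mathbf{a}$ and $\mathbf{c}$ be the Turing degrees of $A$ and $C$. Both are $1$-random, since $C$ is in particular $1$-random relative to $A$, and hence $1$-random. It therefore remains to show that $\mathbf{a}$ and $\mathbf{c}$ are Turing incomparable and that $H(\mathbf{a},\mathbf{c}) = 1$.

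For incomparability: because $C$ is $1$-random relative to $A$, van Lambalgen's Theorem gives that $A$ is $1$-random relative to $C$ as well. In particular, $C \nleq\sub{T} A$ and $A \nleq\sub{T} C$, so $\mathbf{a} \mid \mathbf{c}$.

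For the Hausdorff distance, I will show $\Gamma_A(C) = 0$, which by Corollary \ref{Hvalues} suffices to conclude $H(\mathbf{a}, \mathbf{c}) = 1$. The key observation is the relativized form of Kautz's theorem: since $C$ is $2$-random relative to $A$ (equivalently, $1$-random relative to $A'$), $C$ has hyperimmune degree relative to $A$, i.e., $C$ computes a function not dominated by any $A$-computable function. By the relativized form of Kurtz's theorem (already invoked in the unrelativized form in Proposition \ref{M}), every degree that is hyperimmune relative to $A$ computes a set that is weakly $1$-generic relative to $A$. Applying Theorem \ref{genthm} to such a set then yields $\Gamma_A(C) = 0$, and hence $H(\mathbf{a}, \mathbf{c}) = 1$.

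No genuine obstacle arises here: the argument is essentially an assembly of relativized classical results, with $2$-randomness of $C$ over $A$ playing a dual role, providing incomparability (via van Lambalgen) and relative hyperimmunity (via Kautz) in one stroke, the latter then feeding into Kurtz and Theorem \ref{genthm}. The only mild care needed is to check that each of these standard theorems relativizes cleanly, which they do.
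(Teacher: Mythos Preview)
Your argument has a fatal error: the very setup you choose forces $H(\mathbf{a},\mathbf{c})=1/2$, not $1$. You take $A$ to be $1$-random and $C$ to be $1$-random relative to $A$ (indeed $2$-random relative to $A$); by van Lambalgen's Theorem, $A$ is then $1$-random relative to $C$ as well. But Corollary~\ref{randcor} says precisely that if $A$ and $C$ are relatively $1$-random then $H(A,C)=1/2$. So your pair cannot possibly witness the theorem.

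The specific step that fails is the ``relativized Kautz'' claim that $C$, being $2$-random relative to $A$, computes a function not dominated by any $A$-computable function. The correct relativization of ``every $2$-random has hyperimmune degree'' only yields that $C\oplus A$ computes such a function; the construction of the escaping function (or of a weakly $1$-generic relative to $A$) requires the oracle $A$. Your stronger version, with $C$ alone in place of $C\oplus A$, is false: if $A$ is almost everywhere dominating (for instance $A=\Omega$, which is $1$-random), then by definition almost every $C$---and hence almost every $C$ that is $2$-random relative to $A$---has all its $C$-computable functions dominated by $A$-computable ones. More conceptually: since every $1$-random is attractive, the class $\{B:H(A,B)=1\}$ has measure $0$ for your $A$, so no measure-typicality argument for $C$ can land you in it. The paper's proof sidesteps this by not making $\mathbf{a}$ and $\mathbf{c}$ mutually random at all: it uses Ku\v{c}era's theorem to get $1$-randomness for free above $\mathbf{0}'$, and uses \emph{genericity} (via a $2$-generic $G_0\oplus G_1$, setting $\mathbf{a}=\deg(\emptyset'\oplus G_0)$ and $\mathbf{c}=\deg(\emptyset'\oplus G_1)$) as the mechanism to force $H=1$.
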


\begin{proof}
By a theorem of Ku{\v c}era \cite{Ku}, every degree $\geq \mathbf{0'}$
is $1$-random. Hence it suffices to show that there are incomparable
degrees $\mathbf{a},\mathbf{c} \geq \mathbf{0'}$ such that
$H(\mathbf{a},\mathbf{c}) = 1$. This can be done via what is
essentially a relativization of a proof that there exist
incomparable degrees $\mathbf{a},\mathbf{c}$ such that $H(\mathbf{a},
\mathbf{c}) = 1$. That is, let $G_0 \oplus G_1$ be $2$-generic, let
$\mathbf{a}$ be the degree of $\emptyset' \oplus G_0$, and let
$\mathbf{c}$ be the degree of $\emptyset' \oplus G_1$. Then
$\mathbf{a}$ and $\mathbf{c}$ are incomparable, since $G_0 \oplus G_1$
is $1$-generic relative to $\emptyset'$. We also have
$\Gamma_{\mathbf{a}}(\mathbf{c}) \leq \gamma_{\mathbf{a}}(G_1) = 0$,
by the relativization of Theorem \ref{genthm} to $\emptyset'$.  It
follows that $H(\mathbf{a}, \mathbf{c}) = 1$.
\end{proof}

\begin{corollary}
There are pairwise incomparable $1$-random degrees
$\mathbf{a},\mathbf{b},\mathbf{c}$ such that
$H(\mathbf{a},\mathbf{b})=H(\mathbf{b},\mathbf{c})=1/2$ and
$H(\mathbf{a},\mathbf{c})=1$.
\end{corollary}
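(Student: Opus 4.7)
The plan is to produce $\mathbf{b}$ as a degree that is mutually $1$-random with each of the degrees $\mathbf{a}$ and $\mathbf{c}$ supplied by Theorem~\ref{incrandthm}. Then Corollary~\ref{randcor} will immediately yield $H(\mathbf{a},\mathbf{b}) = H(\mathbf{b},\mathbf{c}) = 1/2$, while $H(\mathbf{a},\mathbf{c}) = 1$ comes directly from the theorem.

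First, I would fix $A \in \mathbf{a}$ and $C \in \mathbf{c}$ as produced by Theorem~\ref{incrandthm}, so that both are $1$-random, $\mathbf{a}$ and $\mathbf{c}$ are incomparable, and $H(\mathbf{a},\mathbf{c}) = 1$. Next, I would choose $B$ to be $1$-random relative to $A \oplus C$; such a $B$ exists since the class of $(A \oplus C)$-$1$-randoms has measure $1$. Let $\mathbf{b}$ be the degree of $B$. Because any Martin-L\"of test relative to $A$ (or to $C$) is also a test relative to $A \oplus C$, $B$ is $1$-random relative to each of $A$ and $C$ separately, and in particular $\mathbf{b}$ is $1$-random.

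Now I invoke van Lambalgen's Theorem. Since $A$ is itself $1$-random and $B$ is $1$-random relative to $A$, van Lambalgen's Theorem yields that $A$ is $1$-random relative to $B$; thus $A$ and $B$ are relatively $1$-random. The same argument with $C$ in place of $A$ shows that $B$ and $C$ are relatively $1$-random. Corollary~\ref{randcor} then gives $H(\mathbf{a},\mathbf{b}) = H(\mathbf{b},\mathbf{c}) = 1/2$. Turing-incomparability of $\mathbf{b}$ with each of $\mathbf{a}$ and $\mathbf{c}$ is automatic from mutual relative randomness: if, say, $A \leq\sub{T} B$ then $A$ could not be $1$-random relative to $B$, and similarly in the other direction; likewise for $B$ and $C$. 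Together with the incomparability of $\mathbf{a}$ and $\mathbf{c}$ from Theorem~\ref{incrandthm}, this gives pairwise incomparability.

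There is no real obstacle here; the proof is essentially an assembly of Theorem~\ref{incrandthm}, van Lambalgen's Theorem (to upgrade one-sided relative randomness to mutual relative randomness), and Corollary~\ref{randcor} (to convert mutual relative randomness into Hausdorff distance $1/2$).
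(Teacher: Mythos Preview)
Your proof is correct and is essentially the same as the paper's: you take $\mathbf{a},\mathbf{c}$ from Theorem~\ref{incrandthm}, choose $\mathbf{b}$ to be $1$-random relative to $\mathbf{a}\vee\mathbf{c}$, and apply van Lambalgen's Theorem together with Corollary~\ref{randcor}. The only difference is that you spell out the incomparability argument explicitly, which the paper leaves implicit.
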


\begin{proof}
Let $\mathbf{a}$ and $\mathbf{c}$ be as in the theorem, and let
$\mathbf{b}$ be $1$-random relative to $\mathbf{a \vee c}$, so that by
van Lambalgen's Theorem, $\mathbf{a}$ and $\mathbf{b}$ are relatively
$1$-random, as are $\mathbf{b}$ and $\mathbf{c}$. Then
$H(\mathbf{a},\mathbf{b})=H(\mathbf{b},\mathbf{c})=1/2$ by Corollary
\ref{randcor}.
\end{proof}

The proof of Theorem \ref{incrandthm} does not seem very flexible,
relying as it does on $1$-random degrees above $\mathbf{0'}$, which
are atypical in several ways. It also does not relativize to show that
$\mathbf{a},\mathbf{c}$ can be chosen to have higher levels of
algorithmic randomness. Thus we give the following alternate proof,
which establishes the result in relativized form.

\begin{theorem}
Let $X$ be any oracle. There are incomparable degrees
$\mathbf{a},\mathbf{c}$ that are $1$-random relative to $X$ and such
that $H(\mathbf{a},\mathbf{c})=1$.
\end{theorem}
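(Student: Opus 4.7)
The plan is to mimic the proof of Theorem \ref{incrandthm}, but with $X'$ playing the role of $\emptyset'$ throughout. Let $G_0 \oplus G_1$ be $2$-generic relative to $X$, and set $A = X' \oplus G_0$ and $C = X' \oplus G_1$, with degrees $\mathbf{a}$ and $\mathbf{c}$ respectively. The verification has three parts: $A$ and $C$ are $1$-random relative to $X$, they are Turing incomparable, and $H(\mathbf{a},\mathbf{c})=1$.

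For the randomness, I would invoke Ku{\v c}era's theorem relativized to $X$: every degree $\geq X'$ is $1$-random relative to $X$. Since $A,C \geq\sub{T} X'$, both are $1$-random relative to $X$. For incomparability, since $G_0 \oplus G_1$ is in particular $1$-generic relative to $X'$, neither $G_0$ nor $G_1$ is computable from $X'$ joined with the other, so $A$ and $C$ are incomparable.

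For $H(\mathbf{a},\mathbf{c})=1$, by Corollary \ref{Hvalues} it suffices to show $\Gamma_{\mathbf{a}}(\mathbf{c})=0$. Since $G_1 \leq\sub{T} C$, we have $\Gamma_{\mathbf{a}}(\mathbf{c}) \leq \gamma_{\mathbf{a}}(G_1)$, so it is enough to show $\gamma_A(G_1) = 0$. By Theorem \ref{genthm} relativized to $A$, this will follow if $G_1$ is weakly $1$-generic relative to $A = X' \oplus G_0$. Here I would use Yu's analog \cite{Yuvl} of van Lambalgen's Theorem for $1$-genericity: since $G_0 \oplus G_1$ is $2$-generic relative to $X$, it is $1$-generic relative to $X'$, and therefore $G_1$ is $1$-generic (and so weakly $1$-generic) relative to $X' \oplus G_0$, which is what we needed.

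The main thing to be careful about is ensuring that the two applications of relativization — of Ku{\v c}era's theorem and of van Lambalgen's Theorem for genericity — are used at compatible oracles. Ku{\v c}era's theorem is applied relative to $X$, so we need $A,C \geq\sub{T} X'$, which forces the use of $X'$ in the join. Van Lambalgen for genericity is applied relative to $X'$, which requires $G_0 \oplus G_1$ to be $1$-generic relative to $X'$, i.e., $2$-generic relative to $X$; this is exactly the genericity level we chose. So both requirements mesh. No other subtleties arise, and the argument gives the theorem for arbitrary $X$, hence can itself be relativized further to yield pairs of degrees with any desired higher level of relative randomness, simply by replacing $X'$ with higher jumps of $X$ and strengthening the genericity of $G_0 \oplus G_1$ correspondingly.
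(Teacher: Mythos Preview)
Your proof is correct, but it takes a genuinely different route from the paper's. Indeed, the paper explicitly remarks, just before stating this theorem, that the proof of Theorem~\ref{incrandthm} ``does not seem very flexible, relying as it does on $1$-random degrees above $\mathbf{0'}$, which are atypical in several ways,'' and that it ``does not relativize to show that $\mathbf{a},\mathbf{c}$ can be chosen to have higher levels of algorithmic randomness.'' Your argument shows that in fact the relativization does go through once one explicitly invokes the relativized forms of Ku\v{c}era's theorem (every degree above $X'$ contains an $X$-random) and Yu's van Lambalgen analog for $1$-genericity; the paper's assessment appears to be overly cautious on this point.

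The paper instead argues as follows: choose $D$ to be $X$-random but not almost everywhere dominating, and $E$ to be $1$-random relative to $X \oplus D$ while computing a function not dominated by any $D$-computable function; set $C = D \oplus E$, which is $X$-random by van Lambalgen and computes a set $G$ weakly $1$-generic relative to $D$. Then $A$ is built by forcing with nonempty $\Pi^{0,X}_1$ subclasses of a fixed $\Pi^{0,X}_1$ class of $X$-randoms, using Ku\v{c}era's result that any such class of positive measure contains an element of the same degree as $D$; this lets one force $G$ to be weakly $1$-generic relative to $A$. The payoff of the paper's construction is flexibility: neither $\mathbf{a}$ nor $\mathbf{c}$ is forced to lie above $\deg(X')$, and $A$ can be taken inside any prescribed $\Pi^{0,X}_1$ class of positive measure, so the resulting randoms are ``typical.'' Your construction, by contrast, always produces degrees above $X'$, but it is considerably shorter and suffices for the theorem as stated (and, as you note, iterates to higher randomness levels by climbing the jump hierarchy).
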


\begin{proof}
Recall the notion of almost everywhere dominating sets from
Observation \ref{aedobs}. There are several ways to see that the class
of such sets has measure $0$, for instance because every such set is
high (see \cite[Section 10.6]{DH}), and the class of high sets has
measure $0$ (see the proof of \cite[Lemma 11.8.7]{DH})). Thus there is
a set $D$ that is $1$-random relative to $X$ and is not almost
everywhere dominating. Then there is a set $E$ that is $1$-random
relative to $X \oplus D$ and computes a function that is not dominated
by any $D$-computable function. Let $C = D \oplus E$. Then $C$ is
$1$-random relative to $X$ by van Lambalgen's Theorem relative to $X$.
Furthermore, $D \leq\sub{T} C$ and $C$ has hyperimmune degree relative
to $D$, so $C$ computes a set $G$ that is weakly $1$-generic relative
to $D$.

Let $\mathcal P$ be a nonempty $\Pi^{0,X}_1$ class of sets that are
$1$-random relative to $X$. We build a set $A$ by forcing with
nonempty $\Pi^{0,X}_1$ subclasses of $\mathcal P$. Since $C
\nleq\sub{T} X$, any set that is
sufficiently generic for this notion of forcing is Turing incomparable
with $C$. (See for instance \cite[Section 4]{DDS}.)

We claim that if $A$ is sufficiently generic for this notion of
forcing, then $G$ is weakly $1$-generic relative to $A$. Then we can
take $\mathbf{a}$ and $\mathbf{c}$ to be the degrees of $A$ and $C$,
respectively, and these will be incomparable degrees that are
$1$-random relative to $X$ and such
that $H(\mathbf{a},\mathbf{c})=1$, by Theorem \ref{genthm}. The key
here will be the result of Ku{\v c}era \cite{Ku} (in relativized form)
that if $Y$ is $1$-random relative to $X$ and $\mathcal Q$ is a
$\Pi^{0,X}_1$ class of positive measure, then $\mathcal Q$ contains an
element of the same degree as $Y$.

Thinking of c.e.\ operators as enumerating sets of binary strings, it
is enough to show that for each nonempty $\Pi^{0,X}_1$ subclass $\mathcal
Q$ of $\mathcal P$ and each $e$, there is a nonempty
$\Pi^{0,X}_1$ subclass $\mathcal R$ of $\mathcal Q$ such that either for
each $Z \in \mathcal R$, the set $W_e^Z$ is not dense, or for each $Z
\in \mathcal R$, there is an initial segment of $G$ in $W_e^Z$.

For each binary string $\sigma$, consider the $\Pi^{0,X}_1$ subclass
of $\mathcal Q$ consisting of all $Z \in \mathcal Q$ such that $W_e^Z$
does not contain an extension of $\sigma$. If any of these classes is
nonempty, we can take it to be $\mathcal R$. Otherwise, $W_e^Z$ is
dense for all $Z \in \mathcal Q$. Since $\mathcal Q$ is a nonempty
$\Pi^{0,X}_1$ class of sets that are $1$-random relative to $X$, it
has positive measure, and hence contains a set $B$ of the same degree
as $D$. Then $W_e^B$ is dense and $D$-c.e., so it contains an initial
segment $\rho$ of $G$. Let $\tau$ be an initial segment of $B$ such
that $\rho \in W_e^\tau$. Then we can take $\mathcal R$ to be the
restriction of $\mathcal Q$ to extensions of $\tau$.
\end{proof}

\begin{corollary}
For any $n$, there are pairwise incomparable $n$-random degrees
$\mathbf{a},\mathbf{b},\mathbf{c}$ such that
$H(\mathbf{a},\mathbf{b})=H(\mathbf{b},\mathbf{c})=1/2$ and
$H(\mathbf{a},\mathbf{c})=1$.
\end{corollary}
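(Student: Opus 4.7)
The plan is to mimic the proof of the preceding corollary (for $n=1$), but now applying the theorem at the appropriate oracle. Concretely, I would apply the theorem with $X = \emptyset^{(n-1)}$ to obtain incomparable degrees $\mathbf{a}, \mathbf{c}$ that are $1$-random relative to $\emptyset^{(n-1)}$---hence $n$-random---and satisfy $H(\mathbf{a},\mathbf{c})=1$.

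Next I would choose $B$ to be $1$-random relative to $A \oplus C \oplus \emptyset^{(n-1)}$ (which exists since the sets that are $1$-random relative to this oracle have measure $1$), and let $\mathbf{b}$ be its degree. Because $B$ is in particular $1$-random relative to $\emptyset^{(n-1)}$, the degree $\mathbf{b}$ is $n$-random.

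To verify the distances, I would use Corollary \ref{randcor}: it suffices to show that $A$ and $B$ are relatively $1$-random, and similarly for $C$ and $B$. Now $B$ is $1$-random relative to $A$ (being $1$-random relative to a larger oracle), and $A$ is $1$-random (being $1$-random relative to $\emptyset^{(n-1)}$), so van Lambalgen's Theorem yields that $A$ is $1$-random relative to $B$. The identical argument with $C$ in place of $A$ gives mutual $1$-randomness of $B$ and $C$. Hence $H(\mathbf{a},\mathbf{b}) = H(\mathbf{b},\mathbf{c}) = 1/2$ by Corollary \ref{randcor}. Mutual $1$-randomness also forces Turing incomparability (if the pair were comparable, the larger degree would compute a Martin-L\"of test covering the smaller random singleton), and $\mathbf{a}$ and $\mathbf{c}$ are incomparable by the theorem, so all three degrees are pairwise incomparable.

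There is no real obstacle here---the argument is a direct transcription of the $n=1$ case, using the theorem as a black box with the oracle $\emptyset^{(n-1)}$ and invoking van Lambalgen's Theorem in its plain (unrelativized) form to upgrade one-sided to mutual $1$-randomness.
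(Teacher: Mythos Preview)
Your proposal is correct and is exactly the argument the paper intends: the paper states this corollary immediately after the relativized theorem without proof, the implicit derivation being the straightforward adaptation of the $n=1$ corollary with $X=\emptyset^{(n-1)}$ and $B$ chosen $1$-random relative to $A\oplus C\oplus\emptyset^{(n-1)}$, precisely as you have written.
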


The graph $G_{\mathcal D}$ is connected and has diameter $2$, since
for any degrees $\mathbf{a}$ and $\mathbf{c}$, there is a degree
$\mathbf{b}$ that is weakly $1$-generic relative to $\mathbf{a \vee
c}$, and then
$H(\mathbf{a},\mathbf{b})=H(\mathbf{b},\mathbf{c})=1$. The graph
$G^{\textup{c}}_{\mathcal D}$ is also connected, as we now show, but
its diameter is more difficult to determine.

\begin{theorem}
\label{diam4}
The graph $G^{\textup{c}}_{\mathcal D}$ is connected and has diameter
at most $4$.
\end{theorem}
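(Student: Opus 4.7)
The plan is to exhibit, for arbitrary degrees $\mathbf{a}$ and $\mathbf{c}$, a path of length at most $4$ in $G^{\textup{c}}_{\mathcal{D}}$, namely $\mathbf{a}-\mathbf{b}_1-\mathbf{b}_2-\mathbf{b}_3-\mathbf{c}$ with each consecutive pair at Hausdorff distance exactly $1/2$; this simultaneously yields connectedness and the diameter bound.

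First, applying Corollary~\ref{excor} to representatives $A\in\mathbf{a}$ and $C\in\mathbf{c}$ produces $B_1$ and $B_3$ with $H(\mathbf{a},\mathbf{b}_1)=H(\mathbf{b}_3,\mathbf{c})=1/2$, where $B_1$ is $1$-random relative to $A$ and $B_3$ is $1$-random relative to $C$; in particular both $B_1$ and $B_3$ are $1$-random, since relative $1$-randomness implies absolute $1$-randomness.

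Next, I would pick $B_2$ to be any set that is $1$-random relative to $B_1\oplus B_3$, which exists because the class of such sets has measure one. Then $B_2$ is in particular $1$-random relative to $B_1$, and since $B_1$ is itself $1$-random, van Lambalgen's Theorem yields that $B_1\oplus B_2$ is $1$-random, whence $B_1$ is $1$-random relative to $B_2$. Thus $B_1$ and $B_2$ are mutually $1$-random, and Corollary~\ref{randcor} delivers $H(\mathbf{b}_1,\mathbf{b}_2)=1/2$. The same argument with $B_3$ in place of $B_1$ yields $H(\mathbf{b}_2,\mathbf{b}_3)=1/2$.

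Concatenating the four edges $\mathbf{a}-\mathbf{b}_1,\ \mathbf{b}_1-\mathbf{b}_2,\ \mathbf{b}_2-\mathbf{b}_3,\ \mathbf{b}_3-\mathbf{c}$ then furnishes the desired path of length at most $4$ (possibly shorter if any of the intermediate degrees happen to coincide). What at first looks like the main obstacle---forcing $B_2$ to satisfy the hyperimmune-freeness conditions used in the proof of Corollary~\ref{excor} \emph{simultaneously} relative to $B_1$ and to $B_3$, which the standard relativized hyperimmune-free basis theorem gives only for the join $B_1\oplus B_3$---is sidestepped entirely by appealing to Corollary~\ref{randcor} instead: mutual $1$-randomness already suffices for $H=1/2$, and choosing $B_2$ to be $1$-random relative to $B_1\oplus B_3$ yields mutual $1$-randomness with each of $B_1$ and $B_3$ via van Lambalgen.
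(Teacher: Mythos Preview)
Your proof is correct and follows essentially the same approach as the paper: use Corollary~\ref{excor} to get $1$-random intermediate degrees $\mathbf{b}_1,\mathbf{b}_3$ at distance $1/2$ from the endpoints, then choose a middle degree $1$-random relative to their join and apply van Lambalgen together with Corollary~\ref{randcor}. The only cosmetic difference is that the paper takes the middle degree $1$-random relative to the join of all four previously chosen degrees, whereas you (equivalently and more economically) take it $1$-random relative to $B_1\oplus B_3$.
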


\begin{proof}
Let $\mathbf{a}$ and $\mathbf{b}$ be any two degrees. By Corollary
\ref{excor}, there are $1$-random degrees $\mathbf{c}$ and
$\mathbf{d}$ such that $H(\mathbf{a},\mathbf{c})=1/2$ and
$H(\mathbf{d},\mathbf{b})=1/2$. Now let $\mathbf{e}$ be $1$-random
relative to $\mathbf{a} \vee \mathbf{b} \vee \mathbf{c} \vee
\mathbf{d}$. By van Lambalgen's Theorem, $\mathbf{c}$ and $\mathbf{e}$
are relatively $1$-random, as are $\mathbf{d}$ and $\mathbf{e}$. So by
Corollary \ref{randcor},
$H(\mathbf{c},\mathbf{e})=H(\mathbf{e},\mathbf{d})=1/2$. Thus we
conclude that
$H(\mathbf{a},\mathbf{c})=H(\mathbf{c},\mathbf{e})=H(\mathbf{e},\mathbf{d})=H(\mathbf{d},\mathbf{b})=1/2$.
\end{proof}

We now show that the diameter of $G_{\mathcal D}$ is at least $3$, via
a couple of results of independent interest. To prove them we use a
lemma due to Ng, Stephan, Yang, and Yu \cite{NSYY}. We include a proof
since one does not appear in \cite{NSYY}.

\begin{lemma}[Ng, Stephan, Yang, and Yu \cite{NSYY}]
\label{nsyylem}
If $\mathcal P$ is a $\Pi^{0,\emptyset'}_1$ class with a member $A$ of
hyperimmune-free degree, then $\mathcal P$ has a $\Pi^0_1$ subclass
containing $A$.
\end{lemma}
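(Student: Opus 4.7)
The plan is to construct a computable tree $S \subseteq 2^{<\omega}$ such that $A \in [S] \subseteq \mathcal{P}$; then $[S]$ is the desired $\Pi^0_1$ subclass. I would write $\mathcal{P} = [T]$ where $T$ is a $\emptyset'$-computable tree, and fix a uniformly computable approximation $(T_s)_{s \in \omega}$ by computable trees, arranged (using the $\emptyset'$-c.e.\ enumeration of $2^{<\omega} \setminus T$) so that $\bigcap_s T_s = T$ and, for $\sigma \preceq A$, the characteristic function $\chi_{T_s}(\sigma)$ stabilizes at $1$. For $\sigma \notin T$, let $u(\sigma)$ denote the stage by which $\sigma$ has permanently left the approximation.

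For any nondecreasing computable $g \colon \omega \to \omega$, the set
\[
S_g = \{\sigma : (\forall k \leq |\sigma|)\, \sigma \restr k \in T_{g(k)}\}
\]
is a computable tree, $A \in [S_g]$ whenever $g$ grows past the stabilization stages of $A \restr k$, and any path $X \in [S_g] \setminus [T]$ would, starting at the least level $n_0$ with $X \restr n_0 \notin T$, produce an infinite descending sequence of strings $\sigma \notin T$ satisfying $u(\sigma) > g(|\sigma|)$. A K\"onig's-lemma argument then shows that $[S_g] \subseteq [T] = \mathcal{P}$ as soon as $g$ eventually dominates the $\emptyset'$-computable function $U(n) = \max\{u(\sigma) : \sigma \in 2^n \setminus T\}$.

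Thus the problem reduces to producing a nondecreasing computable $g$ that eventually dominates $U$, and it is here that the hyperimmune-freeness of $A$ enters. I would exploit the hypothesis $A \in \mathcal{P}$ to extract from $A$ an $A$-computable function $f$ which bounds $U$ in the sense required for the K\"onig's-lemma step. Then by hyperimmune-freeness, $f$ is dominated by a computable function $g$, which is used as the control function in the definition of $S_g$ above.

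The principal obstacle is the construction of the $A$-computable $f$: since $U$ records the stabilization behavior of the $\emptyset'$-approximation on strings \emph{not} extending $A$, whereas $A$ only directly witnesses its own branch through $T$, the naive choices of $f$ do not suffice. I would expect the argument to require first replacing $T$ by a suitably chosen $\emptyset'$-computable subtree $T^{\ast} \subseteq T$ with $A \in [T^{\ast}]$, designed so that the analogous stabilization function becomes $A$-computable, and only then invoking hyperimmune-freeness to dominate it by a computable function.
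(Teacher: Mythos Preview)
Your framework is right---build a computable tree $S$ with $A \in [S] \subseteq [T]$ by using hyperimmune-freeness to dominate an $A$-computable function---but the proof has a genuine gap at exactly the point you flag, and your proposed fix does not close it.

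First, the setup: you cannot in general arrange a \emph{decreasing} sequence of computable trees with $\bigcap_s T_s = T$; that would force $T$ to be $\Pi^0_1$, which is the conclusion, not the hypothesis. What you do have (via the limit lemma) is a uniformly computable sequence of trees $T[s]$ with $\lim_s \chi_{T[s]}(\sigma) = \chi_T(\sigma)$ for every $\sigma$, with no monotonicity. Once you work with this approximation, your function $U(n)$ is no longer even well defined, and in any case depends on the behaviour of strings off $A$'s branch, which is why you get stuck. Your suggestion to pass to a subtree $T^\ast$ with $A \in [T^\ast]$ whose stabilization function is $A$-computable is circular: producing such a $T^\ast$ with $[T^\ast]$ a $\Pi^0_1$ class is precisely what the lemma asserts.

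The paper avoids all of this by choosing a different $A$-computable function and a different computable tree. With the limit-lemma approximation $T[s]$ as above, set
\[
f(n) = \text{the least } k \geq n \text{ such that } A \uhr n \in T[k].
\]
Since $A \uhr n \in T$, such $k$ exists, and $f \leq\sub{T} A$. Let $g$ be computable with $g \geq f$, and define $Q$ to be the downward closure of $\{\sigma : (\exists k \in [|\sigma|, g(|\sigma|)])\,[\sigma \in T[k]]\}$. Then $Q$ is a computable tree; $A \in [Q]$ because $f(n) \in [n, g(n)]$ witnesses $A \uhr n \in Q$; and if $X \notin [T]$, one can find $n$ with $X \uhr n \notin T[k]$ for all $k \geq n$ (take $n$ at least the settling stage of some $X \uhr m \notin T$, using that each $T[k]$ is a tree), so $X \uhr n \notin Q$. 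The point is that you only need to track when $A$'s own initial segments \emph{reappear} in the approximation, not when arbitrary strings permanently leave.
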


\begin{proof}
Let $T$ be a $\Delta^0_2$ binary tree whose paths are exactly the
elements of $\mathcal P$, and let $T[0],T[1],\ldots$ be uniformly
computable binary trees approximating $T$. Let $f(n)$ be the least $k
\geq n$ such that $A \upharpoonright n$ is in $T[k]$. Then $f
\leq\sub{T} A$, so $f$ is majorized by some computable function
$g$. Let $Q$ consist of all binary strings $\sigma$ such that every
predecessor of $\sigma$ is in $Q$ and $\sigma \in T[k]$ for some $k
\in [|\sigma|,g(|\sigma|)]$. Then $Q$ is a computable
tree. Furthermore, every $A \upharpoonright n$ is in $Q$, since $f(n)
\in [n,g(n)]$, so $A$ is a path on $Q$. Finally, if $X$ is not a path
on $T$, then there is an $n$ such that $X \upharpoonright n \notin
T[k]$ for all $k \geq n$. Then $X \upharpoonright n \notin Q$, so $X$
is not a path on $Q$. Thus the class of paths on $Q$ is our desired
$\Pi^0_1$ subclass of $\mathcal P$.
\end{proof}

\begin{theorem}
\label{pathm}
If $\mathbf{a}$ is hyperimmune-free and $\mathbf{b}$ is a $\Delta^0_2$
PA degree, then $\Gamma_{\mathbf{a}}(\mathbf{b})=0$, and hence
$H(\mathbf{a},\mathbf{b})=1$.
\end{theorem}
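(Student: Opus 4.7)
My plan is to verify that $\Gamma_{\mathbf{a}}(\mathbf{b}) < 1/2$, from which $\Gamma_{\mathbf{a}}(\mathbf{b}) = 0$ follows by Theorem \ref{Monin} and then $H(\mathbf{a},\mathbf{b}) = 1$ by Corollary \ref{Hvalues}. Concretely, I want to produce a set $C \leq\sub{T} \mathbf{b}$ with $\gamma_{\mathbf{a}}(C) < 1/2$, by adapting the \cite{HJMS} argument that an arbitrary PA degree $\mathbf{b}$ satisfies $\Gamma(\mathbf{b}) = 0$. That argument picks $C \leq\sub{T} \mathbf{b}$ from a nonempty $\Pi^0_1$ class of $\{0,1\}$-valued functions which, on each interval $I_n$ of a computably chosen partition of $\omega$, disagree with every total computable $\{0,1\}$-valued $\Phi_e$ with $e \leq n$ on a fraction of $I_n$ exceeding $1/2+\varepsilon$; such a class has positive measure by a Hoeffding estimate, and $\mathbf{b}$ being PA yields a member.

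For our setting I want to diagonalize against all $\mathbf{a}$-computable $\{0,1\}$-valued functions rather than only computable ones. The naive replacement produces a $\Pi^{0,\mathbf{a}}_1$ class, and membership in it would require $\mathbf{b}$ to be PA relative to $\mathbf{a}$, which is \emph{not} assumed. To bridge this gap I plan to use both the hyperimmune-freeness of $\mathbf{a}$ and the $\Delta^0_2$-ness of $\mathbf{b}$. Fix $A \in \mathbf{a}$: hyperimmune-freeness of $A$ gives $A' \equiv\sub{T} A \oplus \emptyset'$, and for each total $\Phi_e^A$ a (non-uniformly) computable bound on its use function. These facts should let me re-express the diagonalization condition as a $\Pi^{0,\emptyset'}_1$ class that still admits a hyperimmune-free witness, obtained by combining a random construction with the hyperimmune-free basis theorem. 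Lemma \ref{nsyylem} then yields a nonempty $\Pi^0_1$ subclass, and the PA degree $\mathbf{b}$ computes a member $C$, which by construction satisfies $\gamma_{\mathbf{a}}(C) < 1/2$.

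The main obstacle I expect is this $\Pi^{0,\emptyset'}_1$ reformulation. Because the bound on the use of $\Phi_e^A$ is only non-uniformly computable in $e$, the $\Pi^{0,\emptyset'}_1$ presentation must approximate these bounds on a per-index basis, and the per-interval constraints must still combine to give strict rather than merely weak inequality in the density estimate. Verifying that the resulting class contains a hyperimmune-free member, so that Lemma \ref{nsyylem} is directly applicable, is the other delicate point.
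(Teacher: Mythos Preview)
Your plan has a genuine gap at the step where you ``re-express the diagonalization condition as a $\Pi^{0,\emptyset'}_1$ class.'' The class you want consists of those $C$ that, on each interval $I_n$, disagree with every total $\Phi_e^A$ ($e\leq n$) on more than a $\tfrac12+\varepsilon$ fraction; membership in this class depends on the \emph{values} of the $\mathbf{a}$-computable functions on the intervals, not merely on bounds for their uses. Hyperimmune-freeness of $A$ buys you computable domination of $A$-computable functions and (non-uniformly) computable use bounds, but it does not let $\emptyset'$ recover $\Phi_e^A\upharpoonright I_n$. Since the only hyperimmune-free degree below $\mathbf{0}'$ is $\mathbf{0}$, in the nontrivial case $A\nleq\sub{T}\emptyset'$, and no $\emptyset'$-computable tree can encode enough of $A$ to force its paths to miss every $\mathbf{a}$-computable set by a fixed margin. (Your auxiliary claim $A'\equiv\sub{T}A\oplus\emptyset'$ would make $\mathbf{a}$ GL$_1$; hyperimmune-free degrees are GL$_2$ but need not be GL$_1$, and in any case this would only yield a $\Pi^{0,A\oplus\emptyset'}_1$ class, not a $\Pi^{0,\emptyset'}_1$ one.) There is also a second problem: even granting a $\Pi^{0,\emptyset'}_1$ class of positive measure, the hyperimmune-free basis theorem relativized to $\emptyset'$ produces a member that is hyperimmune-free \emph{relative to $\emptyset'$}, whereas Lemma~\ref{nsyylem} requires an unrelativized hyperimmune-free member.

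The paper avoids both obstacles by reversing the roles of $\mathbf{a}$ and $\mathbf{b}$ in the application of Lemma~\ref{nsyylem}. It uses the $\Delta^0_2$-ness of $\mathbf{b}$ to define the $\Pi^{0,\emptyset'}_1$ class, and lets the hyperimmune-free member be an $\mathbf{a}$-computable set (automatically hyperimmune-free since $\mathbf{a}$ is). Concretely, using Simpson's theorem one takes a PA degree $\mathbf{c}$ with $\mathbf{b}$ PA over $\mathbf{c}$, and uses Jockusch's theorem to let $\mathbf{b}$ uniformly list all $\mathbf{c}$-computable sets, yielding $\mathbf{b}$-computable ``traces'' $S_n\subseteq 2^{g(n)}$ with $|S_n|=n$. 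If some $A\leq\sub{T}\mathbf{a}$ avoided almost every $S_n$, the avoidance class would be $\Pi^{0,\emptyset'}_1$ with the hyperimmune-free member $A$, and Lemma~\ref{nsyylem} plus the PA-ness of $\mathbf{c}$ would give a $\mathbf{c}$-computable avoider---contradicting that every $\mathbf{c}$-computable set appears in the listing. With the trace claim in hand, one builds $B\leq\sub{T}\mathbf{b}$ with $\gamma_{\mathbf{a}}(B)=0$ directly by anti-copying each string of each $S_n$ on its own factorial interval; no appeal to Theorem~\ref{Monin} is needed.
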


\begin{proof}
We will need two properties of PA degrees. One is the relativized
version of the theorem due to Jockusch \cite[Proposition 4]{J72} that
if $\mathbf{b}$ is PA then there is a uniformly
$\mathbf{b}$-computable sequence of sets that includes all computable
sets. The second, due to Simpson \cite[Theorem 6.5]{Ssurv}, is that if
$\mathbf{b}$ is PA then there is another PA degree $\mathbf{c}$ such
that $\mathbf{b}$ is PA relative to $\mathbf{c}$.

Let $\mathbf{a}$ be hyperimmune-free and let $\mathbf{b}$ be a
$\Delta^0_2$ PA degree.  We begin with the following claim. Let $g$ be
a computable function. Then there are uniformly
$\mathbf{b}$-computable sets of strings $S_0,S_1,\ldots$ such that
$|S_n| = n$ for all $n$, every element of $S_n$ has length $g(n)$ for
all $n$, and for every $A \leq\sub{T} \mathbf{a}$, we have $A
\upharpoonright g(n) \in S_n$ for infinitely many $n$.

To establish the claim, suppose not. Let $\mathbf{c}$ be a PA degree
such that $\mathbf{b}$ is PA relative to $\mathbf{c}$. Let
$E_0,E_1,\ldots$ be uniformly $\mathbf{b}$-computable sets such that
every $\mathbf{c}$-computable set is on this list. For each $n$, let
$S_n=\{E_i \upharpoonright g(n) : i<n\}$. Then $S_0,S_1,\ldots$ are
uniformly $\mathbf{b}$-computable, and $|S_n| = n$ for all $n$, so
there is an $A \leq\sub{T} \mathbf{a}$ and an $m$ such that if $n>m$
then $A \upharpoonright g(n) \notin S_n$. Let $\mathcal P=\{X :
(\forall n>m) [X \upharpoonright g(n) \notin S_n]\}$. Then, since
$\mathbf{b} \leq \mathbf{0'}$, we have that $\mathcal P$ is a
$\Pi^{0,\mathbf{0'}}_1$ class containing $A$. By Lemma \ref{nsyylem},
$\mathcal P$ has a nonempty $\Pi^0_1$ subclass $\mathcal Q$. Since
$\mathbf{c}$ is PA, $\mathcal Q$ contains a $\mathbf{c}$-computable
set, which is equal to $E_i$ for some $i$. Let $n>i,m$. Then $E_i
\upharpoonright g(n) \in S_n$, contradicting the fact that $E_i \in
\mathcal Q \subseteq \mathcal P$. Thus we have established the claim.

The idea now is that by choosing $g$ to be sufficiently fast growing,
we can build a set $B \leq\sub{T} \mathbf{b}$ to diagonalize against
each element of each $S_n$ on a large segment, and thus in particular
to diagonalize against every $A \leq\sub{T} \mathbf{a}$ on infinitely
many large segments, ensuring that, for every such $A$, the upper
density of $A \sd B$ is equal to $1$, so that $\gamma_{\mathbf{a}}(B)
= 0$.  It then follows that $\Gamma_{\mathbf{a}}(\mathbf{b})=0$, and
hence $H(\mathbf{a},\mathbf{b})=1$.

Let $F_0,F_1,\ldots$ be consecutive segments of $\omega$ with
$|F_n|=n+1$. Let $I_k=[k! , (k+1)!)$. Let $g(n)= (k+1)!$ for the
largest $k \in F_n$. Apply the claim to this $g$ to obtain sets
$S_0,S_1,\ldots$ as above. Note that these sets are pairwise disjoint
since $g$ is injective. Assign each $\sigma \in S_n$ to a $k_\sigma
\in F_n$, so that $k_\sigma \neq k_\tau$ for $\sigma \neq \tau \in
S_n$. This is possible since $|F_n|=|S_n|+1$ and the sets $S_n$ are
pairwise disjoint.  Furthermore, this assignment can be made
computably in $\mathbf{b}$.  Define $B \leq\sub{T} \mathbf{b}$ as
follows.  For $i \in I_{k_\sigma}$, let $B(i)=1-\sigma(i)$. Now for
each of the infinitely many $n$ such that $A \restr g(n) \in S_n$,
letting $k = k_{A \restr g(n)}$, we have that $k \in F_n$, and by
definition $B(i)=1-A(i)$ for all $i \in I_k$. Hence $\rho_{(k+1)!} (A
\sa B) \leq k! / (k+1)! = 1 / (k + 1)$. Since there are arbitrarily
large such $k$, it follows that $\urho(A \sa B) = 0$.  Since $A$ was
an arbitrary $\mathbf{a}$-computable set, we have that
$\gamma_{\mathbf{a}}(B) = 0$.  It then follows as above that
$H(\mathbf{a},\mathbf{b})=1$.  
\end{proof}

Notice that this theorem gives us yet another proof of Theorem
\ref{incrandthm}, by considering a hyperimmune-free $1$-random degree
and $\mathbf{0}'$ (which is PA and $1$-random).

It would be interesting to know how far the above theorem can be
extended, and in particular whether it holds for all hyperimmune
PA degrees.

\begin{corollary}
\label{pacor}
There is a degree $\mathbf{b}$ such that for all degrees $\mathbf{a}$,
if $H(\mathbf{0},\mathbf{a})=1/2$ then $H(\mathbf{a},\mathbf{b})=1$.
\end{corollary}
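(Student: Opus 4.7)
The plan is to show that $\mathbf{b} = \mathbf{0}'$ works. The point is that the hypothesis $H(\mathbf{0},\mathbf{a})=1/2$ is quite restrictive: by Proposition \ref{Hformula} (in the unrelativized case) or the direct observation in the paragraph just before Proposition \ref{rel}, we have $H(\mathbf{0},\mathbf{a}) = 1 - \Gamma(\mathbf{a})$, so $H(\mathbf{0},\mathbf{a}) = 1/2$ means exactly $\Gamma(\mathbf{a}) = 1/2$. In particular, $\Gamma(\mathbf{a}) > 0$.

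Now I would invoke the results, recalled from \cite{HJMS} in the discussion following Equation \eqref{G}, that any hyperimmune degree $\mathbf{a}$ satisfies $\Gamma(\mathbf{a}) = 0$. Since $\Gamma(\mathbf{a}) = 1/2 \neq 0$, it follows that $\mathbf{a}$ must be hyperimmune-free.

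At this point the corollary is immediate from Theorem \ref{pathm}: we only need to observe that $\mathbf{0}'$ is a $\Delta^0_2$ PA degree (it is trivially $\Delta^0_2$, and the standard fact that $\mathbf{0}'$ computes a completion of Peano arithmetic by iteratively asking $\Pi^0_1$ consistency questions shows it is PA). Applying Theorem \ref{pathm} with this $\mathbf{b} = \mathbf{0}'$ and the hyperimmune-free $\mathbf{a}$ from the previous paragraph yields $H(\mathbf{a}, \mathbf{0}') = 1$, as required.

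There is no real obstacle here; the work has all been done in Theorem \ref{pathm} and the cited facts about $\Gamma$ on hyperimmune degrees. The only thing to be careful about is recording that $\mathbf{0}'$ fits the hypothesis ``$\Delta^0_2$ PA'' in Theorem \ref{pathm}, which it does.
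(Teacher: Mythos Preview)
Your proof is correct and follows essentially the same approach as the paper: both deduce from $H(\mathbf{0},\mathbf{a})=1/2$ that $\mathbf{a}$ is hyperimmune-free (the paper cites \cite[Theorem 2.2]{HJMS} directly, you go via $\Gamma(\mathbf{a})=1/2$ and the fact that hyperimmune degrees have $\Gamma=0$), and then apply Theorem \ref{pathm} with a $\Delta^0_2$ PA degree. The only cosmetic difference is that you specialize to $\mathbf{b}=\mathbf{0}'$ whereas the paper lets $\mathbf{b}$ be an arbitrary $\Delta^0_2$ PA degree.
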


\begin{proof}
Let $\mathbf{b}$ be a $\Delta^0_2$  PA degree. By \cite[Theorem
2.2]{HJMS}, if $H(\mathbf{0},\mathbf{a})=1/2$, then $\mathbf{a}$ is
hyperimmune-free, so by Theorem \ref{pathm},
$H(\mathbf{a},\mathbf{b})=1$. Thus there is no degree $\mathbf{a}$
such that $H(\mathbf{0},\mathbf{a})=H(\mathbf{a},\mathbf{b})=1/2$.
\end{proof}

This result has implications for the issue of extensions of isometric
embeddings of metric spaces into $(\mathcal D,H)$, which we will not
pursue further here.  Let $\mathcal M$ be the metric space with two
points $x$ and $y$ at distance $1$ from each other, and let $\mathcal
M'$ be the extension of $\mathcal M$ obtained by adding a point $z$
such that the distances between $x$ and $z$ and between $y$ and $z$
are both $1/2$. Notice that $\mathcal M'$ is isometrically embeddable
into $(\mathcal D,H)$. Let $\mathbf{b}$ be as in the proof of
Corollary \ref{pacor}. Then the isometric embedding of $\mathcal M$
into $(\mathcal D,H)$ obtained by mapping $x$ to $\mathbf{0}$ and $y$
to $\mathbf{b}$ cannot be extended to an isometric embedding of
$\mathcal M'$ into $(\mathcal D,H)$.

\begin{corollary}
\label{diam3}
The diameter of $G^{\textup{c}}_{\mathcal D}$ is at least $3$.
\end{corollary}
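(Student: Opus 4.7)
The plan is to use Corollary \ref{pacor} directly, by producing two degrees whose graph distance in $G^{\textup{c}}_{\mathcal D}$ is at least $3$. The natural candidates are $\mathbf{0}$ and the degree $\mathbf{b}$ supplied by Corollary \ref{pacor}, namely a $\Delta^0_2$ PA degree. To show the graph distance between them is at least $3$, I need to verify two things: (i) they are not adjacent in $G^{\textup{c}}_{\mathcal D}$, i.e., $H(\mathbf{0},\mathbf{b}) \neq 1/2$, and (ii) they have no common neighbor, i.e., no degree $\mathbf{a}$ satisfies $H(\mathbf{0},\mathbf{a}) = H(\mathbf{a},\mathbf{b}) = 1/2$.

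For (i), recall from the discussion before Theorem \ref{Monin} that every PA degree $\mathbf{b}$ satisfies $\Gamma(\mathbf{b}) = 0$, so by the formula $H(\mathbf{0},\mathbf{a}) = 1 - \Gamma(\mathbf{a})$ noted just after \eqref{G}, we have $H(\mathbf{0},\mathbf{b}) = 1$. In particular $\mathbf{0} \neq \mathbf{b}$ and they are not joined by an edge in $G^{\textup{c}}_{\mathcal D}$. For (ii), this is exactly the content of Corollary \ref{pacor}: no $\mathbf{a}$ can play the role of a midpoint at distance $1/2$ from both $\mathbf{0}$ and $\mathbf{b}$.

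Since any path of length at most $2$ in $G^{\textup{c}}_{\mathcal D}$ from $\mathbf{0}$ to $\mathbf{b}$ would require either direct adjacency (ruled out by (i)) or a common neighbor (ruled out by (ii)), the graph distance between $\mathbf{0}$ and $\mathbf{b}$ is at least $3$, and hence $\operatorname{diam}(G^{\textup{c}}_{\mathcal D}) \geq 3$. There is no real obstacle here; the work has already been done in Theorem \ref{pathm} and Corollary \ref{pacor}, and this corollary is essentially a repackaging of those facts.
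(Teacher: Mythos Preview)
Your proof is correct and follows exactly the approach the paper intends: the corollary is stated without proof immediately after Corollary~\ref{pacor}, and the paragraph between them already singles out $\mathbf{0}$ and the $\Delta^0_2$ PA degree $\mathbf{b}$ as the relevant pair of vertices. Your argument simply spells out the two needed facts (no edge between $\mathbf{0}$ and $\mathbf{b}$ since $\Gamma(\mathbf{b})=0$, and no common neighbor by Corollary~\ref{pacor}), which is precisely what the paper leaves implicit.
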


We do not know whether the diameter of $G^{\textup{c}}_{\mathcal D}$
is $3$ or $4$.

\section{Mycielski's Theorem, computability, and reverse mathematics}
\label{mycsec}

In this section, we analyze Mycielski's Theorems \ref{Ramsey} and
\ref{Ramseyforcat} and their consequences from the points of view of
computability theory and reverse mathematics. To talk about perfect
sets in this context, we use perfect trees, as defined in Definition
\ref{ptdef}.  We can think of a perfect tree $T$ as a $1$-$1$ function
from $2^\omega$ to $2^\omega$, so there are continuum many paths
through $T$. Indeed, the paths through $T$ form a perfect set. Notice
that for every $A$ we have $A \oplus T \equiv\sub{T} T(A) \oplus
T$. An equivalent way to think of a perfect tree is as a binary tree
(in the usual sense) that has no dead ends and no isolated paths.

\subsection{Mycielski's Theorem for measure and computability theory}
\label{mycsec1}

We begin by effectivizing Corollary \ref{perfmutcor}.  The following
notions will be useful.

\begin{definition}
For a binary string $\sigma$, let $[\sigma] = \{X \in 2^\omega :
\sigma \prec X\}$. For a measurable class $\mathcal C$ and $X \in
\mathcal C$, let
\[
d(X \mid \mathcal C) = \textstyle{\liminf_n}\, 2^n\mu([X \uhr n] \cap
\mathcal C)
\]
be the \emph{density of  $\mathcal C$ near $X$}. 

A set $X$ is a \emph{density-one point} if $d(X \mid \mathcal P)=1$
for all $\Pi^0_1$ classes $\mathcal P$ containing $X$. 

If $\sigma$ is a string, the \emph{relative measure of $\mathcal{C}$ above
$\sigma$} is $2^{|\sigma|} \mu([\sigma] \cap \mathcal{C})$.
\end{definition}

For example, every $1$-generic set is a density-one point, since it
lies in the interior of every $\Pi^0_1$ class it belongs to.

The following lemma will be useful below.

\begin{lemma}
\label{measlem}
Suppose that $d(X \mid \mathcal C)=1$. Let $c \in \omega$ and
$\delta>0$. For all sufficiently large $t$, if $\tau \succ X \uhr t$
has length $t+c$, then $\mu([\tau] \cap \mathcal C) >
(1-\delta)2^{-|\tau|}$.
\end{lemma}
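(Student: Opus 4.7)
The plan is to use the fact that $[X\uhr t]$ decomposes as a disjoint union of the $2^c$ cylinders $[\tau]$ with $\tau \succ X\uhr t$ and $|\tau| = t+c$, so that a shortfall on any single one of these cylinders forces a proportional shortfall on the whole of $[X\uhr t]$, which for large $t$ is ruled out by the hypothesis $d(X\mid \mathcal C)=1$.

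In more detail, since $2^n\mu([X\uhr n]\cap \mathcal C) \le 1$ for every $n$, the assumption that $\liminf_n 2^n\mu([X\uhr n]\cap\mathcal C) = 1$ actually gives $\lim_n 2^n \mu([X\uhr n]\cap\mathcal C) = 1$. Thus, given $c$ and $\delta$, I would choose $t_0$ large enough that
\[
2^t\mu([X\uhr t]\cap \mathcal C) > 1 - \delta\, 2^{-c}
\]
for all $t \geq t_0$. Now fix any $t \geq t_0$ and any $\tau \succ X\uhr t$ with $|\tau| = t+c$. Writing $\mathcal T$ for the set of the other $2^c - 1$ strings of length $t+c$ extending $X\uhr t$, and using that $\mu([\tau']\cap\mathcal C) \le 2^{-(t+c)}$ for each $\tau' \in \mathcal T$, I would estimate
\[
\mu([\tau]\cap \mathcal C) = \mu([X\uhr t]\cap\mathcal C) - \sum_{\tau'\in\mathcal T}\mu([\tau']\cap\mathcal C) > (1 - \delta\, 2^{-c})2^{-t} - (2^c-1)2^{-(t+c)}.
\]
A direct simplification turns the right-hand side into $(1-\delta)2^{-(t+c)} = (1-\delta)2^{-|\tau|}$, which is the desired conclusion.

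There is no real obstacle here beyond the elementary bookkeeping: the only point worth flagging is the initial observation that density-one forces actual convergence (not just a liminf bound) of $2^n\mu([X\uhr n]\cap\mathcal C)$ to $1$, which is what lets the single uniform threshold $t_0$ work for every one of the $2^c$ children simultaneously.
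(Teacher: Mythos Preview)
Your proof is correct and is essentially the same argument as the paper's, just organized slightly differently: you decompose $[X\uhr t]$ into its $2^c$ children and bound the sibling contributions from above, while the paper passes to the complement $\mathcal D = 2^\omega \setminus \mathcal C$ and uses the trivial containment $[\tau]\cap\mathcal D \subseteq [X\uhr t]\cap\mathcal D$ to get the same $2^c$ amplification of the shortfall directly. Both routes require exactly the same threshold $2^t\mu([X\uhr t]\cap\mathcal C) > 1 - \delta\,2^{-c}$ and yield the same conclusion.
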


\begin{proof}
Let $\mathcal D = 2^\omega \setminus \mathcal C$.  Since $[\tau] \cap
\mathcal D \subseteq [X \uhr t] \cap \mathcal D$, the relative measure of
$\mathcal D$ above $\tau$ is at most $2^c$ times the relative measure
of $\mathcal D$ above $X \uhr t$, and the latter relative measure goes
to $0$ as $t$ increases.
\end{proof}

We write $\bigoplus_{i  \leq n} X_i$ for the set given by the sequence
\[
X_0(0) \cdots X_n(0) X_0(1)\cdots X_n(1) X_0(2)\cdots
X_n(2) \cdots,
\] 
and for strings $\sigma_0,\ldots,\sigma_n$ of the same length $k$, we
write $\bigoplus_{i \leq n} \sigma_i$ for the string
\[
\sigma_0(0) \cdots \sigma_n(0) \sigma_0(1) \cdots \sigma_n(1) \cdots
\sigma_0(k-1) \cdots \sigma_n(k-1).
\]

The following basic properties are easy to check.

\begin{lemma}
\label{baslem}
Let $\bigoplus_{i < m} X_i$ be a density-one point.
\begin{enumerate}

\item For any pairwise distinct $i_0,\ldots,i_k < m$, the set
$\bigoplus_{j<k} X_{i_j}$ is a density-one point.

\item Let $n>m$. If $\mathcal P$ is a $\Pi^0_1$ class containing
$\bigoplus_{i < m} X_i$ and $\mathcal C = \{\bigoplus_{i < n}
Y_i : \bigoplus_{i < m} Y_i \in \mathcal P\}$ then
$d(\bigoplus_{i < n} X_i \mid \mathcal C)=1$ for all
$X_{m+1},\ldots,X_n$.

\end{enumerate}
\end{lemma}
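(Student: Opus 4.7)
The plan is to prove both parts by a common Fubini-style identity that relates relative measures across a computable coordinate projection of interleavings.

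For part~(1), fix a $\Pi^0_1$ class $\mathcal{Q}$ containing $Y := \bigoplus_{j<k} X_{i_j}$, let $\pi : 2^\omega \to 2^\omega$ be the computable map sending $\bigoplus_{i<m} W_i$ to $\bigoplus_{j<k} W_{i_j}$, and set $\mathcal{P} := \pi^{-1}(\mathcal{Q})$, which is $\Pi^0_1$ and contains $Z := \bigoplus_{i<m} X_i$. The key observation is that conditioning $W$ on $W \succ Z\uhr mt$ fixes $W_i\uhr t = X_i\uhr t$ for every $i < m$, hence pins down $\pi(W)\uhr kt = Y\uhr kt$ while leaving all subsequent bits of $\pi(W)$ independent and fair. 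A direct Fubini computation then yields
\[
2^{mt}\,\mu([Z\uhr mt]\cap \mathcal{P}) \;=\; 2^{kt}\,\mu([Y\uhr kt]\cap \mathcal{Q}),
\]
so the hypothesis $d(Z\mid\mathcal{P})=1$ forces the relative measure of $\mathcal{Q}$ near $Y$ to tend to $1$ along the aligned lengths $\ell = kt$.

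Part~(2) is an immediate analogue: take $\pi'$ to be the computable projection of an $n$-fold interleaving onto its first $m$-fold subinterleaving, write $Z_n := \bigoplus_{i<n} X_i$, and note that $\mathcal{C} = (\pi')^{-1}(\mathcal{P})$ is $\Pi^0_1$. The same conditioning argument yields
\[
2^{nt}\,\mu([Z_n\uhr nt]\cap \mathcal{C}) \;=\; 2^{mt}\,\mu([Z\uhr mt]\cap \mathcal{P}),
\]
which tends to $1$ directly from the density-one hypothesis on $Z$, regardless of the choice of the extra $X_m,\ldots,X_{n-1}$.

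To convert convergence along aligned lengths into convergence along all lengths (as required by the definition of $d$), I would use that $[Y\uhr kt]$ decomposes as the disjoint union of the $2^r$ cylinders $[Y\uhr kt \cdot \nu]$, $\nu \in \{0,1\}^r$, for any $r < k$. Since each such cylinder has relative measure at most $1$ but the whole has relative measure at least $1-\epsilon$, the distinguished extension $[Y\uhr(kt+r)]$ must carry relative measure at least $1 - 2^r\epsilon$; the analogous step for part~(2) is identical with $k,m$ replaced by $m,n$. I do not anticipate a serious obstacle: the only delicate point is lining up the Fubini identity so that aligned cylinders on the two sides of $\pi$ correspond exactly, and the aligned-to-arbitrary step is a routine averaging argument.
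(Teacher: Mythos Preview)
Your argument is correct. The paper does not actually give a proof of this lemma; it simply states that ``the following basic properties are easy to check.'' Your Fubini-style identity and the aligned-to-arbitrary step are a clean way to make those checks explicit, and they match what one would naturally do: pull back the given $\Pi^0_1$ class along the relevant computable coordinate map, observe that relative measures are preserved at aligned lengths $mt$, $kt$, $nt$, and then use the elementary averaging bound (essentially the content of Lemma~\ref{measlem}) to pass to arbitrary lengths. There is nothing missing.
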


By the Lebesgue Density Theorem (see e.g.\ \cite[Theorem 1.2.3]{DH}),
for each measurable class $\mathcal C$, for almost every $X$, if $X$
is in $\mathcal C$ then $d(X \mid \mathcal C)=1$. Since there are only
countably many $\Pi^0_1$ classes, we have the following.

\begin{lemma}
\label{leblem}
There are measure-$1$ many density-one points, so there are
measure-$1$ many density-one $1$-random points.
\end{lemma}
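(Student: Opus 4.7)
The plan is a direct application of the Lebesgue Density Theorem combined with the countability of the family of $\Pi^0_1$ classes. The key observation is that being a density-one point is a condition that must hold over a countable collection, so the ``bad'' set is a countable union of null sets.

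More precisely, I would fix an enumeration $\mathcal P_0, \mathcal P_1, \ldots$ of all $\Pi^0_1$ subclasses of $2^\omega$ (indexed in the standard way by c.e.\ sets of strings whose open complements they avoid). For each $e$, let $N_e = \{X \in \mathcal P_e : d(X \mid \mathcal P_e) < 1\}$. The Lebesgue Density Theorem, invoked in the sentence just before the lemma, gives $\mu(N_e) = 0$ for every $e$: for each measurable class, almost every point of the class has density one within the class, and a fortiori $\liminf_n 2^n \mu([X \uhr n] \cap \mathcal P_e) = 1$. Setting $N = \bigcup_e N_e$, this union is null as a countable union of null sets. Any $X \notin N$ satisfies $d(X \mid \mathcal P_e) = 1$ for every $e$ with $X \in \mathcal P_e$, and therefore is a density-one point by definition. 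Hence the class of density-one points has measure $1$.

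For the second assertion, the class of $1$-random sets has measure $1$ as well, so its intersection with the class of density-one points also has measure $1$. I anticipate no real obstacle; the entire argument amounts to pairing the Lebesgue Density Theorem (which the paper cites as a black box) with the fact that there are only countably many $\Pi^0_1$ classes.
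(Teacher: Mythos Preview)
Your proposal is correct and matches the paper's own argument exactly: the paper derives the lemma directly from the Lebesgue Density Theorem together with the countability of the $\Pi^0_1$ classes, which is precisely the decomposition you give. The second clause about $1$-random density-one points is also handled the same way, by intersecting two measure-$1$ classes.
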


We can also relativize this notion by saying that $X$ is a
\emph{density-one point relative to $A$} if $d(X \mid \mathcal P)=1$
for all $\Pi^{0,A}_1$ classes $\mathcal P$ containing $X$. The analogs
of the above properties continue to hold in this case.

\begin{theorem}
\label{perfeff}
For any $A$ there is an $A'$-computable perfect tree $T$ such that for
any nonempty finite collection $\mathcal F$ of paths through $T$, the
set $\bigoplus_{Y \in \mathcal F} Y$ is $1$-random relative to $A$,
and hence the joins of any two finite, disjoint, nonempty collections
of paths through $T$ are mutually $1$-random relative to $A$.
\end{theorem}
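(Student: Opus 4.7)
The plan is to effectivize Mycielski's Theorem (Theorem~\ref{Ramsey}) applied to the measure-one relations $R_k = \{(X_1,\ldots,X_k) \in (2^\omega)^k : \bigoplus_i X_i \text{ is } 1\text{-random relative to } A\}$. Fix a universal Martin-L\"of test $(\mathcal{U}_m)_{m \in \omega}$ relative to $A$, and for each $k \geq 1$ set $\mathcal{V}^k_m = \{(X_1,\ldots,X_k) \in (2^\omega)^k : \bigoplus_i X_i \in \mathcal{U}_m\}$, which is uniformly $\Sigma^{0,A}_1$ in $k$ and $m$ with measure less than $2^{-m}$. A tuple has $1$-random join relative to $A$ iff it escapes $\bigcap_m \mathcal{V}^k_m$.

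I would build $T$ in stages using $A'$. At stage $n$, I would have $T(\sigma)$ defined for all $\sigma \in 2^n$ as strings of a common length $\ell_n$, maintaining the invariant that for every $k \leq 2^n$, every $m \leq h(n)$, and every tuple of distinct $\sigma_1,\ldots,\sigma_k \in 2^n$, the relative measure of $\mathcal{V}^k_m$ inside $\prod_{i=1}^k [T(\sigma_i)]$ is at most $\epsilon(n,k,m)$, where $h$ and $\epsilon$ are computable with $h(n) \to \infty$ and $\epsilon(n,k,m) \to 0$ as $n \to \infty$ (for fixed $k,m$). Each such inequality is a $\Pi^{0,A}_1$ condition on the candidate assignment (since the absolute measure of $\mathcal{V}^k_m$ intersected with a cylinder is left-c.e.\ relative to $A$), so $A'$ can decide the invariant for any candidate and search for one satisfying it.

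Maintenance of the invariant at stage $n+1$ follows from the Lebesgue Density Theorem: for each $k$-tuple and each $m \leq h(n+1)$, almost every point of the complement of $\mathcal{V}^k_m$ inside the outer cylinder $\prod_i [T(\sigma_i \uhr n)]$ is a density-one point of that complement, so sufficiently deep sub-cylinders around it have arbitrarily small relative measure of $\mathcal{V}^k_m$. By countable additivity, almost every choice $(X_\sigma)_{\sigma \in 2^{n+1}}$ with $X_\sigma \in [T(\sigma \uhr n)]$ simultaneously has this density property for all the finitely many relevant tuples and values of $m$; for any such choice, picking $\ell_{n+1}$ large enough and setting $T(\sigma) = X_\sigma \uhr \ell_{n+1}$ yields the new invariant, and the incompatibility of $T(\sigma^\frown 0)$ and $T(\sigma^\frown 1)$ is easily arranged. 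Thus $A'$ can search through lengths $\ell_{n+1}$ and candidate assignments to find one verifying the invariant.

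For the verification, let $\mathcal{F} = \{T(Z_1),\ldots,T(Z_k)\}$ be a nonempty finite collection of distinct paths through $T$. If $\bigoplus_i T(Z_i)$ were not $1$-random relative to $A$, then $(T(Z_1),\ldots,T(Z_k)) \in \mathcal{V}^k_m$ for some $m$; since $\mathcal{V}^k_m$ is open, some cylinder neighborhood $\prod_i[\nu_i]$ of the tuple lies inside $\mathcal{V}^k_m$. For $n$ large enough that the $Z_i \uhr n$ are pairwise distinct, $\ell_n \geq \max_i |\nu_i|$, and $h(n) \geq m$, we have $\prod_i [T(Z_i \uhr n)] \subseteq \prod_i[\nu_i] \subseteq \mathcal{V}^k_m$, so the relative measure would be $1$, contradicting the invariant once $\epsilon(n,k,m) < 1$. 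Hence $\bigoplus_i T(Z_i)$ is $1$-random relative to $A$. For the final clause, the join of $\mathcal{F}_1 \cup \mathcal{F}_2$ is $1$-random relative to $A$ by what was just shown and is a computable bit-rearrangement of $(\bigoplus \mathcal{F}_1) \oplus (\bigoplus \mathcal{F}_2)$, so van Lambalgen's Theorem relativized to $A$ yields mutual $1$-randomness. The main obstacle is the simultaneous maintenance of the invariant for all active $k$-tuples and $m$-levels at each stage, which requires careful choice of the parameters $\ell_n$, $h(n)$, and $\epsilon(n,k,m)$.
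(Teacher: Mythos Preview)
Your overall strategy is in the right spirit and close to the paper's, but the maintenance step has a real gap that no tuning of $\ell_n$, $h(n)$, $\epsilon$ will fix. When you pass from stage $n$ to stage $n+1$, each node $T(\tau)$ splits into siblings $T(\tau^\frown 0)$ and $T(\tau^\frown 1)$, and among the new distinct $k$-tuples in $2^{n+1}$ there are tuples containing such a sibling pair. For these the ``outer cylinder'' $\prod_i [T(\sigma_i\uhr n)]$ has a repeated factor $[T(\tau)]^2$, and your stage-$n$ invariant (which speaks only about \emph{distinct} tuples in $2^n$) gives no control over the relative measure of $\mathcal V^k_m$ in such a product. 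Concretely, already at stage $1$ you need $\mathcal V^2_m$ to have small relative measure in $[T(\lambda)]^2$, equivalently $\mathcal U_m$ to have small relative measure above $T(\lambda)\oplus T(\lambda)$; your stage-$0$ invariant bounds only $\mathcal U_m$ above $T(\lambda)$, and nothing prevents $[T(\lambda)\oplus T(\lambda)]\subseteq \mathcal U_m$ for the small values $m\le h(1)$. So the Lebesgue-density argument you invoke (which needs the complement to have large, or at least positive, relative measure in the outer cylinder) does not go through. Strengthening the invariant to allow repeated coordinates does not obviously help either: projecting distinct stage-$(n{+}1)$ tuples to stage $n$ introduces multiplicity $2$, and iterating forces unbounded multiplicities at earlier stages.

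The paper confronts exactly this diagonal/splitting problem and solves it by replacing the raw test levels with engineered $\Pi^0_1$ classes $\mathcal P_n$. One fixes uniformly $\Pi^0_1$ classes $\mathcal R_m$ of $1$-randoms with $\mu(\mathcal R_m)\to 1$ and lets $\mathcal P_n$ contain every ``diagonal'' tuple with $X_{n-2}=X_{n-1}$, together with all tuples whose tail past the first disagreement of the last two coordinates lies in some $\mathcal R_m$. The payoff is a direct computation that $d(X\oplus X\mid \mathcal P_2)\ge 1/4$ for every $X$, so even at a freshly split pair the relevant class has \emph{uniformly} positive density. The induction is then run one coordinate at a time (Lemma~\ref{mainlem}): given an $n$-acceptable density-one point $\bigoplus_{i<n}X_i$, set $X_n=X_{n-1}$ and combine the positive diagonal density with Lemma~\ref{measlem} to find an $(n{+}1)$-acceptable tuple agreeing on a long prefix. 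This ``add one coordinate, handle the diagonal explicitly'' mechanism is the missing idea; once you have it, the rest of your outline (the $A'$-search over $\Pi^{0,A}_1$ conditions and the verification via openness of the test levels) is essentially correct.
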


\begin{proof}
We do the proof for $A=\emptyset$, as the full proof is a
straightforward relativization. We begin by discussing the intuition
behind the proof, which is an effectivization of a proof of
Mycielski's Theorem along the lines discussed in \cite{Tod}.

We want to build a perfect tree $T$. Let us ignore for now the
complexity of $T$, as showing that $\emptyset'$ is sufficient to build
$T$ will not be difficult. For ease of exposition, let us first
discuss only how to make joins of two paths $1$-random, the general
case below being similar. The idea is to define for each $\sigma$ a
set $X_\sigma$ and for each $n$ a number $k_n$ so that for each
$\sigma \in 2^n$, we have $X_{\sigma^\frown i} \uhr k_n = X_\sigma
\uhr k_n$ for $i=0,1$ and $X_{\sigma^\frown 0} \uhr {k_{n+1}} \neq
X_{\sigma^\frown 1} \uhr {k_{n+1}}$. We will then define $T(\sigma) =
X_\sigma \uhr k_n$, so that each path through $T$ will be a limit of
$X_\sigma$'s (with respect to the usual metric on $2^\omega$).

Suppose we were just trying to make each individual path
$1$-random. The first idea might be to pick each $X_\sigma$ to be
$1$-random, but that is not enough because the limit of $1$-randoms
might not be $1$-random. So instead we can fix a $\Pi^0_1$ class of
$1$-randoms $\mathcal P_1$ and choose each $X_\sigma$ to be in
$\mathcal P_1$, which ensures that so is their limit.  Notice that if
$X_\sigma \in \mathcal P_1$ then $\mathcal P_1 \cap [X_\sigma \uhr k]$
has positive measure for any $k$, so $X_{\sigma^\frown 0}$ and
$X_{\sigma^\frown 1}$ can be defined.

To make joins of pairs of distinct paths $1$-random, we want to keep
all $X_\sigma \oplus X_\tau$, where $\sigma$ and $\tau$ are distinct
strings of the same length, inside some $\Pi^0_1$ class $\mathcal
P_2$. This class cannot consist entirely of $1$-randoms, because
$X_\sigma$ and $X_\tau$ can be arbitrarily close, so to be closed,
$\mathcal P_2$ must include elements of the form $X \oplus X$. But we
can define $\mathcal P_2$ so that every element $X \oplus Y$ is
$1$-random unless $X = Y$, by letting it consist of all sets of the
form $X \oplus X$, together with all sets of the form $X \oplus Y$
such that, for the least $m$ such that $X(m) \neq Y(m)$, we have $X =
(X \uhr m+1)Z_0$ and $Y = (X \uhr m+1)Z_1$ for some $Z_0 \oplus Z_1
\in \mathcal R_m$, where the $\mathcal R_m$ are uniformly $\Pi^0_1$
classes of $1$-randoms. For reasons addressed below, it will be
important to choose these classes so that $\mu(\mathcal R_m)$
approaches $1$ as $m$ increases.

Say that a sequence of sets is \emph{acceptable} if the sets are
pairwise distinct and for every pair of distinct sets $X$ and $Y$ in
the sequence, $X \oplus Y \in \mathcal P_2$. We aim to make
$\{X_\sigma : \sigma \in 2^n\}$ acceptable for all $n$. Then for any
two distinct paths $X$ and $Y$ through $T$, we will have that $X
\oplus Y$ is the limit of elements of $\mathcal P_2$, and hence is in
$\mathcal P_2$. Since $X \neq Y$, this will ensure that $X \oplus Y$
is $1$-random. We proceed recursively. Suppose that we have defined an
acceptable family $\{X_\sigma : \sigma \in 2^n\}$ and want to do the
same for $n+1$.

We can do so in a step-by-step fashion as long as we can establish a
lemma stating that if $Z_0,\ldots,Z_{n-1}$ is acceptable and $k \in
\omega$, then there is an acceptable $Y_0,\ldots,Y_n$ such that $Y_i
\uhr k = Z_i \uhr k$ for all $i < n$ and $Y_n \uhr k = Z_{n-1} \uhr
k$. To give an example, suppose we can do this, and we have $X_0$ and
$X_1$ and want to build $X_{00}, X_{01}, X_{10}, X_{11}$ as
above. Recall that we also have a parameter $k_1$. Then we can find an
acceptable sequence $Y_0,Y_1,Y_2$ such that $Y_0 \uhr k_1 = X_0 \uhr
k_1$ and $Y_1 \uhr k_1 = Y_2 \uhr k_1 = X_1 \uhr k_1$. Then we repeat
this procedure with $Y_1,Y_2,Y_0$ to get an acceptable sequence
$Z_0,Z_1,Z_2,Z_3$ such that $Z_0 \uhr k_1 = Z_3 \uhr k_1 = Y_0 \uhr
k_1 = X_0 \uhr k_1$, while $Z_1 \uhr k_1 = Y_1 \uhr k_1 = X_1 \uhr
k_1$ and $Z_2 \uhr k_1 = Y_2 \uhr k_1 = X_1 \uhr k_1$. Finally, we let
$X_{00}=Z_0$, $X_{01}=Z_3$, $X_{10}=Z_1$, and $X_{11}=Z_2$.

The necessary lemma can be proved using a measure argument. First, we
can argue using Lemma \ref{leblem} that we can assume that
$\bigoplus_{i<n} Z_i$ is a density-one point. Let $Z_n=Z_{n-1}$. If we
consider a pair of distinct numbers $i,j \leq n$ other than $n-1,n$,
the fact that $Z_i \oplus Z_j$ is a density-one point implies that the
relative measure of $\mathcal P_2$ above $(Z_i \oplus Z_j) \uhr m$
goes to $1$ as $m$ increases. Thus if $m$ is large enough, all such
relative measures will be close to $1$. 

For the pair $n-1,n$, the relative measure of $\mathcal P_2$ above
$(Z_{n-1} \oplus Z_n) \uhr m$ also has a positive lim inf by the
definition of $\mathcal P_2$.  To see that this is the case, let
$\mathcal{E}_m = \mathcal P_2 \cap [(Z_{n-1} \oplus Z_n) \uhr m]
$. Note that if $U \oplus V$ extends $(Z_{n-1} \oplus Z_n) \uhr 2m$
and $U(m) \neq V(m)$, and $U \oplus V$ has the form $((U \oplus V)
\uhr (2m + 2))^\frown R$ for some $R \in \mathcal R_m$, then $U \oplus
V \in \mathcal E_{2m}$. As the three events just mentioned are
mutually independent, we have that
\[
\mu(\mathcal E_{2m}) \geq 2^{-2m} \cdot 1/2 \cdot \mu(\mathcal R_m),
\]
since the three factors on the right-hand side are the respective
probabilities of the three events just mentioned. Since $\mathcal
E_{2m} \subseteq \mathcal E_{2m-1}$, we also have
\[ 
\mu(\mathcal E_{2m-1}) \geq  2^{-2m} \cdot 1/2 \cdot \mu(\mathcal R_m)
= 2^{-(2m-1)} \cdot 1/4 \cdot \mu(\mathcal R_m)
\] 
for $m>0$. It follows that
\[
d(Z_{n-1} \oplus Z_n \mid \mathcal{P}_2) = \textstyle{\liminf_k 2^k}
\mu(\mathcal E_k) \geq 1/4,
\]
since $\lim_m \mu(\mathcal R_m) = 1$.

Thus we see that if $m$ is sufficiently large, then the classes
$\mathcal C_{i,j} = \mathcal P_2 \cap [(Z_i \oplus Z_j) \uhr m]$ have
large enough measure to ensure that the class of all $\bigoplus_{i
\leq n} Y_i$ such that each $Y_i \oplus Y_j$ for $i \neq j$ is in
$\mathcal C_{i,j}$ has positive measure, and in particular contains an
element such that $Y_i \neq Y_j$ for all $i<j \leq n$, since the class
of all $\bigoplus_{i \leq n} Y_i$ such that $Y_i \neq Y_j$ for all
$i<j \leq n$ has measure $1$.

We now proceed with the full construction. Let $\mathcal R_0,\mathcal
R_1,\ldots$ be uniformly $\Pi^0_1$ classes of $1$-randoms such that
$\mu(\mathcal R_m)$ goes to $1$ as $m$ increases (for instance, the
complements of the levels of a universal Martin-L\"of test). Let
$\mathcal P_1 = \mathcal R_0$. For $n>1$, let $\mathcal P_n$ consist
of all $\bigoplus_{k < n} X_k$ such that either $X_{n-2} = X_{n-1}$ or
for the least $m$ such that $X_{n-2}(m) \neq X_{n-1}(m)$, there is a
$Y \in \mathcal R_m$ such that $\bigoplus_{k< n} X_k=(\bigoplus_{k<n}
(X_k \uhr m+1))^\frown Y$.  Note that the $\mathcal P_n$ are uniformly
$\Pi^0_1$ classes. Note also that if $\bigoplus_{k<n} X_k \in \mathcal
P_n$ and $X_{n-1} \neq X_{n-2}$, then $\bigoplus_{k < n} X_k$ is
$1$-random.

Let $n \geq 1$. For $X=\bigoplus_{i < n} X_i$ and $0 < m \leq n$, let
$\langle X \rangle^m$ be the set of all $\bigoplus_{j < m} X_{i_j}$
such that the $i_j$ are distinct numbers less than $n$. We say that
$X$ is \emph{$n$-acceptable} if $X_i \neq X_j$ for every $i<j<n$, and
for every $m \leq n$, every element of $\langle X \rangle^m$ is in
$\mathcal P_m$. Note that if $X$ is $n$-acceptable, then $X \in
\langle X \rangle^n \subset\mathcal P_n$, so $X$ is $1$-random. Note
also that if $\bigoplus_{i < n} X_i$ is $n$-acceptable and $\pi$ is a
permutation of $0,1,\ldots,n-1$, then $\bigoplus_{i < n} X_{\pi(i)}$
is also $n$-acceptable.

\begin{lemma}
\label{auxlem}
Let $\bigoplus_{i<n} X_i$ be $n$-acceptable, and let $k \in
\omega$. Then there is an $n$-acceptable density-one point
$\bigoplus_{i<n} Z_i$ such that $Z_i \uhr k = X_i \uhr k$ for all $i <
n$.
\end{lemma}

\begin{proof}
Let $\mathcal P$ be the class of all $Z=\bigoplus_{i<n} Z_i$ such that
for every $0< m \leq n$, every element of $\langle Z \rangle^m$ is in
$\mathcal P_m$, and $Z_i \uhr k = X_i \uhr k$ for all $i < n$. Then
$\mathcal P$ is a $\Pi^0_1$ class containing the $1$-random set
$\bigoplus_{i<n} X_i$, and hence $\mathcal P$ has positive measure, so
it contains a density-one $1$-random point $Z=\bigoplus_{i<n} Z_i$, by
Lemma \ref{leblem}. Since $Z$ is $1$-random, $Z_i \neq Z_j$ for every
$i<j<n$, so $Z$ is $n$-acceptable.
\end{proof}

\begin{lemma}
\label{mainlem}
Let $\bigoplus_{i<n} X_i$ be $n$-acceptable, and let $k \in
\omega$. Let $X_n=X_{n-1}$. Then there is an $n+1$-acceptable
$\bigoplus_{i \leq n} Y_i$ such that $Y_i \uhr k = X_i \uhr k$ for all
$i \leq n$.
\end{lemma}

\begin{proof}
By Lemma \ref{auxlem}, we can assume that $\bigoplus_{i<n} X_i$ is a
density-one point.

Let $S$ be the set of all nonempty sequences of distinct numbers less
than or equal to $n$. For each $s=(i_0,\ldots,i_{m-1}) \in S$, let
$\mathcal C_s$ be the class of all $Y = \bigoplus_{i \leq n} Y_i$ such
that $\bigoplus_{j < m} Y_{i_j} \in \mathcal P_m$. Our goal is to show
that there is an element $\bigoplus_{i \leq n} Y_i$ of the
intersection of all of these classes such that $Y_i \neq Y_j$ for all
$i<j \leq n$ and $Y_i \uhr k = X_i \uhr k$ for all $i \leq n$. Our
strategy is first to define a sequence of strings $\{\sigma_l\}_{l \in
\omega}$ in such a way that we can show that, for all $s \in S$, the
relative measure of $\mathcal C_s$ above $\sigma_l$ approaches $1$ as
$l$ grows. Let
\[
\sigma_l = (X_0 \uhr l+1) \oplus \cdots \oplus (X_{n-2} \uhr l+1)
\oplus ((X_{n-1} \uhr l)^\frown 0) \oplus ((X_n \uhr l)^\frown 1).
\]

Let $s=(i_0,\ldots,i_{m-1}) \in S$ and suppose that $s$ does not
contain both $n - 1$ and $n$. Then $\bigoplus_{j < m} X_{i_j}$ is a
density-one point, by Lemma \ref{baslem}. Furthermore, since
$\bigoplus_{i<n} X_i$ is $n$-acceptable, $\bigoplus_{j < m} X_{i_j}
\in \mathcal P_m$. Thus $\bigoplus_{i \leq n} X_i \in \mathcal C_s$,
and hence, by Lemma \ref{baslem},
\[
d\left(\bigoplus_{i \leq n} X_i \mid \mathcal C_s\right) = 1.
\]
We claim that it now follows from Lemma \ref{measlem} that the
relative measure of $\mathcal C_s$ above $\sigma_l$ approaches $1$ as
$l$ increases. To justify this claim, let $X = \bigoplus_{i \leq n}
X_i$, so that $d\left(X \mid \mathcal C_s\right) = 1$. Then
$\bigoplus_{i \leq n} (X_i \uhr l) = X \uhr l(n+1)$.  For all $l$, the
string $\sigma_l$ is obtained from the latter string by adding $n + 1$
new bits at the end. So applying Lemma \ref{measlem} with $c = n+1$
and values of $t$ of the form $l(n+1)$ shows that the relative measure
of $\mathcal C_s$ above $\sigma_l$ approaches $1$ as $l$ increases.

If $s = (i_0,\ldots,i_{m-1}) \in S$ contains both $n-1$ and $n$, then
let $a<b$ be such that $\{i_a,i_b\}=\{n-1,n\}$. For any $l$, any
$\bigoplus_{j<m} Z_j \in \mathcal R_l$, and any $c_0,\ldots,c_{m-1}
\in \{0,1\}$ such that $c_a \neq c_b$, we have $\bigoplus_{j<m}
((X_{i_j} \uhr l)^\frown c_j^\frown Z_j) \in \mathcal P_m$.
Thus
\[
\mu([\sigma_l] \cap \mathcal C_s) \geq 2^{- \mid \sigma_l
\mid}\mu(\mathcal R_l).
\]
Since $\mu(\mathcal R_l)$ goes to $1$ as $l$ increases, it follows
that the relative measure of $\mathcal C_s$ above $\sigma_l$
approaches $1$ as $l$ increases.

It follows from the previous two paragraphs that for \emph{all} $s \in
\mathcal{S}$ the relative measure of $\mathcal C_s$ above $\sigma_l$
approaches $1$ as $l$ increases. Hence the relative measure of
$\bigcap_{s \in \mathcal S} \mathcal C_s$ above $\sigma_l$ approaches
$1$ as $l$ increases.

Let $k$ be as in the hypothesis of the lemma, and let $\beta =
\bigoplus_{i \leq n}(X_i \uhr k)$. If $l \geq k$, then $\sigma_l$
extends $\beta$. Fix $l$ sufficiently large so that $l \geq k$ and
$\mu([\sigma_l] \cap \bigcap_{s \in \mathcal S} \mathcal C_s) >
0$. Then
\[
\mu\left(\left[\beta\right] \cap \bigcap_{s \in S} \mathcal
C_s\right)  > 0.
\]
Let $\mathcal C$ be the class of all $n+1$-acceptable $\bigoplus_{i
\leq n} Y_i$ such that $Y_i \uhr k = X_i \uhr k$ for all
$i \leq n$, and let $\mathcal D = \{\bigoplus_{i \leq n} Y_i :
(\forall i<j \leq n)[Y_i \neq Y_j]\}$. Then $\mathcal D$ has measure
$1$, and 
\[
\mathcal C = [\beta] \cap \bigcap_{s \in S} \mathcal C_s \cap \mathcal
D,
\] 
so $\mathcal C$ has positive measure, and in particular is nonempty.
\end{proof}

We now build a $\emptyset'$-computable perfect tree as follows.  For
each $\sigma \in 2^n$, we will define a set $X_\sigma$, and for each
$n$ we will define a number $k_n$ so that
\begin{enumerate}

\item $\bigoplus_{\sigma \in 2^n} X_\sigma$ is $2^n$-acceptable,

\item $X_{\sigma^\frown i} \uhr k_n = X_\sigma \uhr k_n$ for $i=0,1$,
and 

\item $X_\sigma \uhr k_n \neq X_\tau \uhr k_n$ for all distinct
$\sigma,\tau \in 2^n$.

\end{enumerate}
We then take $T(\sigma) = X_\sigma \uhr k_n$. Note that, by the
closure of $n$-acceptability under permutations mentioned above Lemma
\ref{auxlem}, it does not matter in item (1) how we order $2^n$.

For the empty string $\lambda$, let $X_\lambda \in \mathcal P_1$ and
let $k_n=0$. Given $k_n$ and $X_\sigma$ for each $\sigma \in 2^n$,
apply Lemma \ref{mainlem} repeatedly to obtain sets $X_{\tau}$ for
each $\tau \in 2^{n+1}$ so that $\bigoplus_{\tau \in 2^{n+1}} X_\tau$
is $2^{n+1}$-acceptable, and each $X_{\sigma^\frown i}$ extends
$X_\sigma \uhr k_n$. (Here we again use the fact that
$n$-acceptability is closed under permutations.) We can do this
$\emptyset'$-computably because the class $\mathcal C$ in the proof of
Lemma \ref{mainlem} is nonempty and the class $\mathcal D$ in that
proof is $\Sigma^0_1$, so using $\emptyset'$ we can find a $\sigma$
such that $[\sigma] \subset \mathcal D$ and $[\sigma] \cap \mathcal C$
is nonempty, and then $[\sigma] \cap \mathcal C$ is a $\Pi^0_1$ class,
so $\emptyset'$ can find a path on it. Let $k_{n+1}$ be sufficiently
large so that $X_\sigma \uhr k_{n+1} \neq X_\tau \uhr k_{n+1}$ for all
distinct $\sigma,\tau \in 2^{n+1}$.

Then $T$ is a perfect tree. Let $Y_0,\ldots,Y_n$ be distinct paths
through $T$. Then each $Y_i$ is the limit of sets $X_{\tau_{i,k}}$
with $|\tau_{i,k}|=k$. If $k$ is sufficiently large, then
$\tau_{0,k},\ldots,\tau_{n,k}$ are pairwise distinct, so $\bigoplus_{i
\leq n} X_{\tau_{i,k}} \in \mathcal P_{n+1}$. Since $\mathcal
P_{n+1}$ is closed, it follows that the limit $\bigoplus_{i \leq n}
Y_i$ is also in $\mathcal P_{n+1}$, and hence is $1$-random.
\end{proof}

\begin{corollary}
For each $n \geq 1$, there is a $\emptyset^{(n)}$-computable perfect
tree $T$ such that for any nonempty finite collection $\mathcal F$ of
pairwise distinct paths through $T$, the set $\bigoplus_{Y \in
\mathcal F} Y$ is $n$-random, and hence any two finite, disjoint,
nonempty collections of pairwise distinct paths through $T$ are
mutually $n$-random.
\end{corollary}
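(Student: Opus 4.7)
The proof plan is simply to relativize Theorem \ref{perfeff} to the oracle $\emptyset^{(n-1)}$. Recall that Theorem \ref{perfeff} asserts that for any oracle $A$, there is an $A'$-computable perfect tree whose finite subjoins of pairwise distinct paths are all $1$-random relative to $A$. Setting $A=\emptyset^{(n-1)}$ yields $A'=\emptyset^{(n)}$, and by definition a set is $n$-random exactly when it is $1$-random relative to $\emptyset^{(n-1)}$, which gives the first conclusion immediately.

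In more detail, the plan has three steps. First, I would invoke Theorem \ref{perfeff} with $A=\emptyset^{(n-1)}$ to obtain a perfect tree $T \leq\sub{T} \emptyset^{(n)}$ with the stated property for joins of pairwise distinct paths. Second, I would translate the conclusion ``$1$-random relative to $\emptyset^{(n-1)}$'' into ``$n$-random'' using the standard definition, which establishes the main clause of the corollary. Third, for the ``hence'' clause, given two disjoint nonempty finite collections $\mathcal F_1$ and $\mathcal F_2$ of pairwise distinct paths, note that $\mathcal F_1 \cup \mathcal F_2$ is again a nonempty finite collection of pairwise distinct paths, so the join of its elements is $1$-random relative to $\emptyset^{(n-1)}$. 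Setting $U = \bigoplus_{Y \in \mathcal F_1} Y$ and $V = \bigoplus_{Y \in \mathcal F_2} Y$, the set $U \oplus V$ is (up to a trivial computable reshuffling) this join, so by van Lambalgen's Theorem relativized to $\emptyset^{(n-1)}$, we conclude that $U$ is $1$-random relative to $V \oplus \emptyset^{(n-1)}$ and symmetrically $V$ is $1$-random relative to $U \oplus \emptyset^{(n-1)}$, which is the appropriate notion of mutual $n$-randomness here.

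There is essentially no obstacle to carrying this out, since Theorem \ref{perfeff} has already been stated in the fully relativized form expressly to support this kind of application. The only thing worth double-checking is that the proof of Theorem \ref{perfeff} truly relativizes uniformly to an arbitrary $A$, but the authors state this explicitly (``the full proof is a straightforward relativization''), and nothing in the construction of the uniformly $\Pi^{0,A}_1$ classes $\mathcal R_m$ and $\mathcal P_n$, nor in the $\emptyset'$-computable selection of paths through the relevant $\Pi^0_1$ classes, depends on any special feature of the oracle. Hence the corollary follows by a single instantiation of the preceding theorem.
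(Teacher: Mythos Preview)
Your proposal is correct and is exactly the intended argument: the paper states this corollary immediately after Theorem \ref{perfeff} without proof, since it follows by instantiating that theorem with $A=\emptyset^{(n-1)}$ and using the definition of $n$-randomness. The ``hence'' clause is already part of the conclusion of Theorem \ref{perfeff} (via van Lambalgen's Theorem), so it relativizes along with everything else.
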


As another consequence, we get a proof of Mycielski's Theorem
\ref{Ramsey}, as noticed earlier by Miller and Yu \cite{MilYu}, who
gave their own direct proof of Corollary \ref{perfmutcor}, though
without a bound on the complexity of the perfect tree: Let $\mathcal
M_0,\mathcal M_1,\ldots$ be such that each $\mathcal M_i$ is a
measure-$0$ subset of $(2^{\omega})^{n_i}$ for some $n_i \geq 1$. Then
each $\mathcal N_i = \{\bigoplus_{j < n_i} X_i :
(X_0,\ldots,X_{n_i-1}) \in \mathcal M_i\}$ is a measure-$0$ subset of
$2^\omega$, and so is contained in a measure-$0$ $G_\delta$ subset
$\mathcal C_i$ of $2^\omega$. Each $\mathcal C_i$ is the intersection
of an $A_i$-Martin-L\"of test for some $A_i$. Let $A = \bigoplus_i
A_i$, and let $T$ be as in the theorem. If $X_0,\ldots,X_{n_i-1}$ are
distinct paths through $T$, then $\bigoplus_{j < n_i} X_i$ is
$1$-random relative to $A$, and hence relative to $A_i$, and so is not
in $\mathcal C_i$, and hence $(X_0,\ldots,X_{n_i-1})$ is not in
$\mathcal M_i$.

Indeed, this proof shows that Theorem \ref{perfeff} is basically a
``quantitative version'' of Mycielski's Theorem.

Notice that in the proof of Theorem \ref{perfeff}, $\mathcal
P_1=\mathcal R_0$ can be any nonempty $\Pi^0_1$ class of
$1$-randoms. So if we are given a $\Pi^0_1$ class $\mathcal C$ of
positive measure, then we can intersect $\mathcal C$ with a $\Pi^0_1$
class of $1$-randoms of sufficiently large measure, then take that
intersection as $\mathcal P_1$. Thus the theorem still holds if we
require that the paths through $T$ be in some given $\Pi^0_1$ class of
positive measure. This fact implies for instance that $T$ can be
chosen to be \emph{pathwise-random}, as defined by Barmpalias and Wang
\cite{BW}, which means that there is a $c$ such that every path $X$
through $T$ has randomness deficiency at most $c$ (i.e., $K(X \uhr n)
\geq n - c$ for all $n$, where $K$ is prefix-free Kolmogorov
complexity). (They call trees all of whose paths are $1$-random
\emph{weakly pathwise-random}.)

Ku{\v c}era \cite{Ku} showed that if $\mathcal C$ is a $\Pi^0_1$ class
of positive measure, then every $1$-random has a tail in $\mathcal C$.
The fact that Theorem \ref{perfeff} still holds if we require that the
paths through $T$ be in some given $\Pi^0_1$ class of positive measure
also follows from the following extension of Ku{\v c}era's result,
which has also been noted by Barmpalias and Wang \cite{BW}. For a tree
$T$ and $\sigma \in T$, we write $T_\sigma$ for the tree consisting of
all $\tau$ such that $\sigma^\frown \tau \in T$. We say that $\sigma
\in T$ is \emph{extendible} if $T_\sigma$ is infinite.

\begin{proposition}
\label{classprop}
Let $T$ be an infinite binary tree such that each path through $T$ is
$1$-random and let $\mathcal C$ be a $\Pi^0_1$ class of positive
measure. Then there is an extendible $\sigma \in T$ such that every
path through $T_\sigma$ is in $\mathcal C$.
\end{proposition}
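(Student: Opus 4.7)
I will argue by contradiction: suppose no extendible $\sigma \in T$ has $[T_\sigma] \subseteq \mathcal C$. Then for every extendible $\sigma \in T$ there is some $Y$ with $\sigma^\frown Y \in [T]$ and $Y \notin \mathcal C$, and I will use this repeatedly to build a single path $X \in [T]$ that is captured by a Solovay test derived from $\mathcal C^c$, contradicting the $1$-randomness of $X$ forced by $X \in [T]$.

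The preparatory step concerns $\mathcal C^c$ alone. Write $\alpha = \mu(\mathcal C^c) < 1$ and let $B = \{\tau : [\tau] \subseteq \mathcal C^c\}$, a c.e.\ set of strings that is upward closed under prefix extension and satisfies $\bigcup_{\tau \in B}[\tau] = \mathcal C^c$. I plan to extract from $B$ a c.e.\ antichain $A \subseteq B$ by the obvious greedy procedure: enumerate $B$, and add the next element $\tau$ to $A$ iff $\tau$ is incomparable with every element already in $A$. By construction every $\tau \in B$ is prefix-comparable with some element of $A$, so $A$ is a \emph{maximal} antichain in $B$. A standard maximal-antichain calculation, applied in the tree of extensions above each prefix-minimal $\tau \in B$, then yields $\mu([\tau] \setminus \bigcup_{\sigma \in A}[\sigma]) = 0$ for every $\tau \in B$. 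This difference is $\Pi^0_1$ (as $[\tau]$ is clopen and $\bigcup_{\sigma \in A}[\sigma]$ is open) and null, hence Martin-L\"of null, so every $1$-random avoids it. In particular, every $1$-random point of $\mathcal C^c$ has a prefix in $A$.

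Now I will build $X \in [T]$ recursively. Let $\sigma_0 = \lambda$; given an extendible $\sigma_n \in T$, the contradiction hypothesis supplies $Y$ with $\sigma_n^\frown Y \in [T]$ and $Y \notin \mathcal C$. Since $\sigma_n^\frown Y$ is $1$-random and shifts preserve $1$-randomness, $Y$ is $1$-random, so by the preceding paragraph $Y$ has a prefix $\rho_n \in A$; then $\sigma_{n+1} := \sigma_n^\frown \rho_n$ is a prefix of $\sigma_n^\frown Y \in [T]$ and hence extendible in $T$. Set $X = \bigcup_n \sigma_n \in [T]$. Define $\mathcal V_k = \bigcup\{[\rho_0^\frown \cdots^\frown \rho_k] : \rho_0, \ldots, \rho_k \in A\}$. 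The classes $\mathcal V_k$ are uniformly $\Sigma^0_1$, and because $A$ is an antichain distinct tuples yield disjoint cylinders, so $\mu(\mathcal V_k) = \alpha^{k+1}$ and $\sum_k \mu(\mathcal V_k) = \alpha/(1-\alpha) < \infty$. Thus $\{\mathcal V_k\}$ is a Solovay test, and every $1$-random belongs to $\mathcal V_k$ for only finitely many $k$; but by construction $X \in \mathcal V_k$ for every $k$, contradicting the $1$-randomness of $X$. The main obstacle I expect is verifying $\mu(\bigcup_{\sigma \in A}[\sigma]) = \alpha$: since the greedy $A$ need not consist of the prefix-minimal elements of $B$, one cannot simply read this measure off and must run the maximal-antichain analysis, and then the right bridge to randomness is the $\Pi^0_1$-null avoidance property rather than any appeal to $\Pi^0_2$-null sets (which in general do contain $1$-randoms).
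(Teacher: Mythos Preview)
Your overall strategy matches the paper's: both build a path $X\in[T]$ by repeatedly extending into $\mathcal C^c$ and show $X$ is captured by a test built from iterated concatenations of a prefix-free generator of $\mathcal C^c$. The gap is the claim that your greedy maximal antichain $A$ satisfies $\mu([\tau]\setminus\bigcup_{\sigma\in A}[\sigma])=0$. Maximality of an antichain in $B$ is a \emph{category} statement: it forces $\bigcup_{\sigma\in A}[\sigma]$ to be open and dense in $[\tau]$, so the leftover is closed and nowhere dense---but nowhere-dense closed sets can have positive measure. Concretely, fix a computable closed nowhere-dense $P\subseteq[\tau]$ with $\mu(P)>0$ (a fat Cantor set). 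One can computably enumerate $B$ so that whenever a string $\sigma$ with $[\sigma]\cap P\neq\emptyset$ is about to appear, a proper extension $\rho\succ\sigma$ with $[\rho]\cap P=\emptyset$ is enumerated first; the greedy $A$ then consists entirely of such $\rho$'s, so $P\subseteq[\tau]\setminus\bigcup_{\sigma\in A}[\sigma]$. This $\Pi^0_1$ class has positive measure and therefore contains $1$-randoms, so your inductive step ``$Y$ is $1$-random, so $Y$ has a prefix $\rho_n\in A$'' can fail. There is no ``standard maximal-antichain calculation'' that rescues this; the analogy you want holds in complete Boolean algebras, not in Cohen forcing.

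The fix is exactly the paper's choice: take $W$ to be the set of $\prec$-minimal strings not in a fixed computable tree $S$ with $[S]=\mathcal C$. This $W$ is computable (both ``$\sigma\notin S$'' and ``$\sigma^-\in S$'' are decidable), prefix-free, and satisfies $\bigcup_{\sigma\in W}[\sigma]=\mathcal C^c$ exactly, so \emph{every} $Y\in\mathcal C^c$ has a prefix in $W$, with no appeal to randomness needed at that step. With $W$ in place of $A$, your construction of $X$ and your Solovay test $\{\mathcal V_k\}$ go through unchanged and coincide with the paper's argument (which simply passes to a subsequence to present the same test as a Martin-L\"of test).
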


\begin{proof}
This proof is a minor variation on that of the aforementioned
result of Ku{\v c}era \cite{Ku}. Suppose that no such $\sigma$
exists. We can assume that $\mathcal C \neq 2^\omega$.  Let $W$ be a
prefix-free set of strings generating the complement of $\mathcal C$
(i.e., this complement is $\bigcup_{\sigma \in W} [\sigma]$). Let $S_0
= W$ and $S_{n+1} = \{\sigma^\frown \tau : \sigma \in S_n
\,\mathbin{\&}\, \tau \in W\}$. Let $\mathcal U_n$ be the $\Sigma^0_1$
class generated by $S_n$. Then the $\mathcal U_n$ are uniformly
$\Sigma^0_1$ classes, and $\mu(\mathcal U_{n+1})=\mu(\mathcal
U_n)\mu(\mathcal U_0)=\mu(\mathcal U_0)^{n+2}$, so we can find a
Martin-L\"of test $\mathcal U_{n_0},\mathcal U_{n_1},\dots$, since
$\mu(U_0) = 1 - \mu(\mathcal{C}) < 1$.

Since not every path through $T$ is in $\mathcal C$, there is an
extendible $\sigma_0 \in T$ that is in $S_0$. Since not every path
through $T_{\sigma_0}$ is in $\mathcal C$, there is a $\sigma_1$ such
that $\sigma_0^\frown\sigma_1$ is an extendible element of $T$ and is
in $S_1$. Proceeding in this way, we build $\sigma_0,\sigma_1,\ldots$
such that $\sigma_0^\frown\cdots^\frown\sigma_n$ is an extendible
element of $T$ and $T_{\sigma_0^\frown\cdots^\frown\sigma_n} \subseteq
\mathcal U_n$. Then $\sigma_0^\frown\sigma_1^\frown\cdots$ is a path
through $T$, but is also in every $\mathcal U_n$, and hence is not
$1$-random.
\end{proof}

It is interesting to consider whether $A'$ is the best we can do in
Theorem \ref{perfeff}. While we do not know the answer to this
question, we can give a lower bound using the following fact. A degree
$\mathbf{x}$ is a \emph{strong minimal cover} of a degree $\mathbf{a}$
if $\mathbf{a} < \mathbf{x}$ and every degree strictly below
$\mathbf{x}$ is below $\mathbf{a}$.

\begin{theorem}
\label{negthm}
If a degree has a strong minimal cover then it does not compute any
perfect tree all of whose paths are $1$-random.
\end{theorem}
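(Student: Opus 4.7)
The plan is to argue by contradiction: suppose $\mathbf{a}$ computes a perfect tree $T$ all of whose paths are $1$-random, and let $\mathbf{x}$ be a strong minimal cover of $\mathbf{a}$. I will derive a contradiction in two stages. First I will show that $\mathbf{x}$ itself must contain a $1$-random set; then I will split that set via van Lambalgen's Theorem to produce two proper predecessors of $\mathbf{x}$ whose join is again in $\mathbf{x}$, and derive the contradiction from strong minimality.

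For the first stage, fix $A \in \mathbf{a}$ with $T \leq\sub{T} A$, pick any $X \in \mathbf{x}$, and set $Y = T(X)$. By hypothesis $Y$ is $1$-random, and the definition of a perfect tree gives the standard two-way reductions $Y \equiv\sub{T} T \oplus X$ and $X \equiv\sub{T} T \oplus Y$, the second obtained by inductively peeling back $Y$ along $T$ to recover $X$ one bit at a time; in particular $Y \leq\sub{T} X$ and $X \leq\sub{T} A \oplus Y$. If $Y \leq\sub{T} A$ then $X \leq\sub{T} A$, contradicting $\mathbf{x} > \mathbf{a}$. Hence $\deg(Y)$ is below $\mathbf{x}$ but not below $\mathbf{a}$, and the strong minimal cover assumption forces $Y \equiv\sub{T} X$. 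Thus $\mathbf{x}$ contains the $1$-random set $Y$.

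For the second stage, write $Y = Y_0 \oplus Y_1$ with $Y_0(n) = Y(2n)$ and $Y_1(n) = Y(2n+1)$. By van Lambalgen's Theorem, $Y_0$ and $Y_1$ are mutually $1$-random, and so Turing-incomparable (since any set computable from a $1$-random oracle is not $1$-random relative to that oracle). Combined with $Y_0, Y_1 \leq\sub{T} Y$, incomparability upgrades to strict inequality: if $Y_0 \equiv\sub{T} Y$ then $Y_1 \leq\sub{T} Y \equiv\sub{T} Y_0$, contradicting incomparability, and symmetrically for $Y_1$. So the degrees of $Y_0$ and $Y_1$ lie strictly below $\mathbf{x}$, and strong minimality forces $Y_0, Y_1 \leq\sub{T} A$. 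But then $Y = Y_0 \oplus Y_1 \leq\sub{T} A$, so $\mathbf{x} \leq \mathbf{a}$, contradicting $\mathbf{x} > \mathbf{a}$.

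The only tools beyond the two stages are the perfect-tree equivalence between a path and its ``address'' and van Lambalgen's Theorem, both standard. I do not anticipate any real obstacle; the only point requiring care is that strong minimal coverness is invoked on strict inequalities, so in the second stage the full mutual randomness of $Y_0$ and $Y_1$ (not merely relative noncomputability) is needed to rule out $Y_0 \equiv\sub{T} Y$ and $Y_1 \equiv\sub{T} Y$.
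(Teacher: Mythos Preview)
Your proof is correct and follows essentially the same approach as the paper's, just with the two stages in the opposite order: the paper first shows that no element of $\mathbf{x}$ can be $1$-random (your second stage, via the van Lambalgen split) and then uses the tree to force a $1$-random into $\mathbf{x}$ (your first stage). One small slip: the displayed equivalence should read $Y \oplus T \equiv\sub{T} X \oplus T$ rather than $Y \equiv\sub{T} T \oplus X$, but you only use the correct consequences $Y \leq\sub{T} X$ and $X \leq\sub{T} A \oplus Y$, so the argument stands.
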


\begin{proof}
Suppose the degree $\mathbf{a}$ has a strong minimal cover
$\mathbf{x}$. If $X \in \mathbf{x}$ then $X$ cannot be $1$-random, as
otherwise if we write $X = X_0 \oplus X_1$ then both $X_0$ and $X_1$
have degree strictly below $\mathbf{x}$, and hence are
$\mathbf{a}$-computable, whence so is $X$.

Suppose that there is an $\mathbf{a}$-computable perfect tree $T$ all
of whose paths are $1$-random. Let $X \in \mathbf{x}$ and let $B =
T(X)$. Then $B \leq\sub{T} T \oplus X \equiv\sub{T} X$, but $B$ is a
path through $T$, and hence is $1$-random, so in fact $B <\sub{T} X$,
and hence $B$ is $\mathbf{a}$-computable. From $B$ and $T$ we can
compute $X$, however, so $X$ is $\mathbf{a}$-computable, which is a
contradiction.
\end{proof}

Lewis \cite{Lew} showed that there is a $1$-random degree with a
strong minimal cover. Indeed, Barmpalias and Lewis \cite{BL} showed
that every $2$-random degree has a strong minimal cover. Thus we have
the following corollary, which can be seen as an analog to the fact
that in the set-theoretic context, adding a random real does not
necessarily add a perfect set of random reals. (See for instance
Bartoszynski and Judah \cite{BJ}.)

\begin{corollary}
\label{negcor}
There is a $1$-random that does not compute any perfect tree all of
whose paths are $1$-random. Indeed, every $2$-random has this
property.
\end{corollary}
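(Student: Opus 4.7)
The plan is to combine Theorem \ref{negthm} with the two cited external results on strong minimal covers; essentially no additional work is required.

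First I would handle the second, stronger sentence. By the theorem of Barmpalias and Lewis \cite{BL} cited in the paragraph above the corollary, every $2$-random degree has a strong minimal cover. By Theorem \ref{negthm}, any degree with a strong minimal cover fails to compute a perfect tree all of whose paths are $1$-random. Composing these two facts gives the claim for $2$-randoms.

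The first sentence is then a direct consequence, but it is also useful to note that it follows already from the weaker result of Lewis \cite{Lew}: there exists a $1$-random degree admitting a strong minimal cover, and that single degree, by Theorem \ref{negthm}, cannot compute a perfect tree of $1$-randoms. So both sentences follow formally, with the second entailing the first.

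There is no genuine obstacle here, since the heavy lifting has been done in Theorem \ref{negthm} and in the cited strong-minimal-cover theorems; the only ``step'' is to check that these inputs are cited in the correct direction. In particular, I would make explicit that Theorem \ref{negthm} is being applied to the \emph{lower} degree of the pair (the degree admitting a strong minimal cover), not to the cover itself, to avoid any confusion on the reader's part.
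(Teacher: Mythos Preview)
Your proposal is correct and matches the paper's approach exactly: the paper derives the corollary immediately from Theorem~\ref{negthm} together with the cited results of Lewis and Barmpalias--Lewis, with no further argument.
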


Barmpalias and Wang \cite{BW} have independently proved a stronger
version of this result, showing that $2$-randomness can be replaced by
the weaker notion of difference randomness, shown by Franklin and Ng
\cite{FN} to be equivalent to being $1$-random and not computing
$\emptyset'$. Notice that if $T$ is a perfect tree such that every
path through $T$ is $1$-random, then since $\mathcal C=\{X : K(X \uhr
n) \geq n-c\}$ is a $\Pi^0_1$ class of positive measure for all
sufficiently large $c$, it follows from Proposition \ref{classprop}
that there is an extendible $\sigma \in T$ such that every path
through $T_\sigma$ is in $\mathcal C$. Then $T_\sigma$ is a
$T$-computable pathwise-random tree that is perfect, and hence has
infinitely many paths.

\begin{theorem}[Barmpalias and Wang \cite{BW}]
\label{bwthm}
Let $X \ngeq\sub{T} \emptyset'$ be $1$-random. Then $X$ does not
compute a pathwise-random tree with infinitely many paths. Thus $X$
does not compute a perfect tree all of whose paths are $1$-random.
\end{theorem}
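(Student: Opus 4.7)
The plan is to prove the first assertion (about pathwise-random trees with infinitely many paths) and derive the ``hence'' from it via Proposition~\ref{classprop}. For the reduction, suppose $X$ computes a perfect tree $T : 2^{<\omega} \to 2^{<\omega}$ all of whose paths $T(A)$ are $1$-random, and let $\hat T = \{\rho : \exists \sigma\,\rho \preceq T(\sigma)\}$ be the induced $X$-computable binary tree, whose paths are exactly $\{T(A) : A \in 2^\omega\}$. By Schnorr's theorem each such path lies in $\mathcal{P}_c := \{Z : (\forall n)\,K(Z \uhr n) \geq n - c\}$ for \emph{some} $c$, so for $c$ large enough that $\mathcal{P}_c$ has positive measure, Proposition~\ref{classprop} yields an extendible $\sigma \in \hat T$ with $[\hat T_\sigma] \subseteq \mathcal{P}_c$. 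Because $T$ is a perfect map (hence continuous), the preimage $\{A : T(A) \succeq \sigma\}$ is a nonempty open subset of $2^\omega$ and so uncountable; thus $\hat T_\sigma \leq_T X$ is an $X$-computable pathwise-random tree with constant $c$ and (uncountably many, hence) infinitely many paths, reducing the second assertion to the first.

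For the first assertion I would contrapose: assume $X$ is $1$-random, $X \ngeq_T \emptyset'$, and $T = \Phi^X$ is pathwise-random with constant $c$ and has infinitely many paths; then derive a contradiction. By the Franklin--Ng theorem \cite{FN}, $X$ is \emph{difference random}, meaning $X$ passes every test of the form $\{\mathcal V_n \cap \mathcal Q\}_{n \in \omega}$ with $\{\mathcal V_n\}$ uniformly $\Sigma^0_1$, $\mathcal Q$ a fixed $\Pi^0_1$ class, and $\mu(\mathcal V_n \cap \mathcal Q) \leq 2^{-n}$. The plan is to exhibit such a test capturing $X$. Using a standard totality-truncation of $\Phi$, take $\mathcal Q$ to be the class of oracles $Y$ for which the ``$c$-pruned'' tree
\[
T_Y^* = \{\sigma : \Phi^Y(\sigma){\downarrow}=1 \text{ and } (\forall \rho \preceq \sigma)\,K(\rho) \geq |\rho| - c\}
\]
is infinite; then $X \in \mathcal Q$ because every extendible node of $T$ is a prefix of a path in $\mathcal P_c$ and hence survives the pruning. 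Let $\mathcal V_n$ be the $\Sigma^0_1$ class of $Y$ for which some finite stage certifies that the stage-$s$ approximation to $T_Y^*$ contains at least $2^n$ pairwise $\preceq$-incomparable nodes at a common level. Since $T$ has infinitely many paths, the extendible nodes of $T$ (all in $T_X^*$) provide arbitrarily many incomparable nodes at high levels, so $X \in \bigcap_n \mathcal V_n$.

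The main obstacle is the measure estimate $\mu(\mathcal V_n \cap \mathcal Q) \leq 2^{-n + O(1)}$. The heuristic is a Kraft--Chaitin bound: the total weight $\sum 2^{-|\sigma|}$ over $c$-incompressible strings is bounded by $2^{c+O(1)}$, so forcing $\Phi^Y$ to produce $2^n$ pairwise incomparable incompressible nodes in $T_Y^*$ ought to cost $\asymp 2^{-n}$ in oracle measure. Making this rigorous calls for a Fubini-style estimate averaging over pairs $(Y, Z)$ with $Z$ a path through $T_Y^*$, bounding $\int \mu([T_Y^*])\,dY$ via the Kraft weight of incompressible prefixes enumerable by any $\Phi^Y$. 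An additional subtlety is ensuring $\mathcal Q$ is genuinely $\Pi^0_1$: both ``$K(\rho) \geq |\rho| - c$'' and ``$T_Y^*$ has a node at every level'' involve quantifier checks, and one must replace $K$ with a stage-discretized version $K_{|\sigma|}$ (adjusting $c$ by a constant) so that the level-check becomes $\Pi^0_1$ and König's lemma gives the $\Pi^0_1$-ness of infiniteness. These measure-theoretic and formalization details---essentially the counting machinery of Barmpalias--Wang \cite{BW}---are the technical heart of the proof.
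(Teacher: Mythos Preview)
The paper does not prove this theorem; it is stated as a result of Barmpalias and Wang \cite{BW}. The only argument the paper supplies is the reduction from the second sentence to the first, given in the paragraph immediately preceding the theorem: from a perfect tree all of whose paths are $1$-random, use Proposition~\ref{classprop} with the $\Pi^0_1$ class $\{Z : (\forall n)\,K(Z\uhr n)\geq n-c\}$ to find an extendible $\sigma$ such that $T_\sigma$ is pathwise-random with constant $c$, and note that $T_\sigma$ is perfect and hence has infinitely many paths. Your reduction is the same argument, phrased slightly differently.

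For the first assertion, your sketch goes beyond what the paper contains. The high-level strategy is the right one: by Franklin--Ng \cite{FN}, ``$1$-random and $\ngeq\sub{T}\emptyset'$'' is exactly difference randomness, so one should exhibit a difference test $\{\mathcal V_n\cap\mathcal Q\}$ capturing any oracle that computes a pathwise-random tree with infinitely many paths, and your choice of $\mathcal Q$ (oracles for which the $c$-pruned tree is infinite) and $\mathcal V_n$ (oracles for which this tree has $2^n$ incomparable nodes) is natural. You correctly identify the two nontrivial points: making $\mathcal Q$ genuinely $\Pi^0_1$ despite the $\Sigma^0_1$ convergence clause in $\Phi^Y(\sigma){\downarrow}$ and the co-c.e.\ complexity clause, and---more substantially---the measure bound $\mu(\mathcal V_n\cap\mathcal Q)\leq 2^{-n+O(1)}$. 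Your Kraft--Chaitin heuristic points in the right direction, but as written it is only a heuristic: the link between ``many incomparable $c$-incompressible nodes appear in $\Phi^Y$'' and ``small measure of such $Y$'' is not established by the sketch, and you yourself flag this as the technical heart deferred to \cite{BW}. So your proposal is a correct outline that matches the paper on the part the paper actually argues, and points toward the Barmpalias--Wang proof for the rest without completing it.
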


Chong, Li, Wang, and Yang \cite[Question 4.2]{CLWY} asked whether
there is a (computable or not) tree $T$ such that the set of paths
through $T$ has positive measure but there are only measure-$0$ many
oracles that compute a perfect subtree of $T$. Corollary \ref{negcor}
and Theorem \ref{bwthm} give strong positive answers to this question,
as they show that no $2$-random, and even no difference random, can
compute a perfect subtree of a tree all of whose paths are $1$-random.

If we weaken Theorem \ref{perfeff} to say only that each individual
path on $T$ is $1$-random, then it has an obvious proof, since we can
take a computable binary tree $R$ all of whose paths are $1$-random,
and use $\emptyset'$ to find a perfect subtree of $R$, using the fact
that $R$ has no isolated paths. But in this case we can improve the
theorem from $A'$ to any set that has PA degree relative to $A$, by
work of Greenberg, Miller, and Nies \cite{GMN} and of Chong, Li, Wang,
and Yang \cite{CLWY}. Say that a binary tree has positive measure if
the set of paths through $T$ does. The former group of authors showed
that if $T$ is a tree of positive measure and $B$ has PA degree
relative to $T$, then there is a $B$-computable nonempty subtree $S$
of $T$ such that for every $\sigma \in S$, the tree $S_\sigma$ has
positive measure, and such an $S$ is perfect (i.e., it has no dead
ends and no isolated paths). The latter group of authors showed
directly and independently that if $T$ is a tree of positive measure
and $B$ has PA degree relative to $T$, then $T$ has a $B$-computable
perfect subtree (which can be chosen to have positive
measure). Greenberg, Miller, and Nies showed that the PA degrees are
not a lower bound for their result, and Chong, Li, Wang, and Yang
noted that Patey did the same in their context. We will return to
these results in the setting of reverse mathematics below.

Another way we could weaken Theorem \ref{perfeff} is to replace
$1$-randomness by a weaker notion of randomness. Of course, notions of
computability-theoretic randomness (as opposed to, say,
complexity-theoretic randomness) generally do not admit computable
instances, so for such notions we can never have a fully effective
version of the theorem. However, for the weak notion of independence
in Theorem \ref{perfhalfcor}, which was the original motivation for
the work in this section, we have the following result.

\begin{theorem}
\label{compperfthm}
There is a computable perfect tree $T$ such that every path through
$T$ has density $1/2$, and the symmetric difference of any two
distinct paths through $T$ also has density $1/2$.
\end{theorem}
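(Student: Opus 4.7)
The plan is to build $T$ explicitly by prescribing, at each stage $n$, a ``block'' $B_n$ of new bits to append to each current level-$(n+1)$ node. The key combinatorial input is a Hadamard-type family: the first $2^{n+1}$ Walsh--Hadamard functions $w_0,\dots,w_{2^{n+1}-1}$, which depend only on the low $n+1$ bits of their input and so are periodic with period $2^{n+1}$. Two features will be used: (i) for $i \ne j$, the XOR $w_i \oplus w_j$ equals $w_{i \oplus j}$ and hence has weight exactly $2^n$ in each period and exactly $L_n/2$ after being tiled to length $L_n = 2^{n+1}\cdot T_n$; (ii) within any prefix of length $r \le L_n$, the partial sum of $w_k$ for any $k \in \{1,\dots,2^{n+1}-1\}$ differs from $r/2$ by at most $2^n$. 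To also secure the individual density for free, I will pair each new bit with its complement: fix an injection $\sigma \mapsto i_\sigma$ from $2^{n+1}$ to $\{0,\dots,2^{n+1}-1\}$, set $T_n = 2^{n+1}$, and define the length-$|B_n| = 2L_n$ string
\[
s_\sigma(2j)=w_{i_\sigma}(j),\qquad s_\sigma(2j+1)=1-w_{i_\sigma}(j) \qquad (0\le j<L_n).
\]
Recursively put $T(\lambda)=\lambda$ and $T(\sigma^\frown c)=T(\sigma)\cdot s_{\sigma^\frown c}$.

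It will be immediate that $T$ is computable, and perfection follows because $i_{\sigma^\frown 0}\ne i_{\sigma^\frown 1}$ forces $s_{\sigma^\frown 0}\ne s_{\sigma^\frown 1}$, so $T(\sigma^\frown 0)$ and $T(\sigma^\frown 1)$ are incompatible proper extensions of $T(\sigma)$. The alternating-complement structure gives $\bigl||s_\sigma \uhr r|-r/2\bigr|\le 1/2$ for all $r$, whence $\bigl||X\uhr m|-m/2\bigr|\le 1/2$ for every path $X = T(A)$, and so $\rho(X)=1/2$.

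The heart of the argument is the pairwise symmetric-difference density. Let $X=T(A)$ and $Y=T(B)$ be distinct paths and let $n^*$ be least with $A(n^*)\ne B(n^*)$. For each $n\ge n^*$ the Walsh indices at level $n+1$ differ, so on each block $B_n$ the XOR of $s_{A\uhr n+1}$ and $s_{B\uhr n+1}$ agrees at positions $2j$ and $2j+1$ and at both equals $w_{i_{A\uhr n+1}\oplus i_{B\uhr n+1}}(j)$; property (ii) then yields, for every $m$ inside $B_n$, the estimate $\bigl||(X\sd Y)\uhr m|-(m-\ell_{n^*})/2\bigr| = O(2^n)$, where $\ell_n = \sum_{k<n}|B_k|$. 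Choosing $T_n = 2^{n+1}$ makes $\ell_n = \Theta(4^n)$, so this error is $O(2^{-n})$ relative to $m$ and $\rho(X\sd Y)=1/2$. The main obstacle (and the reason for choosing low-frequency tiled Walsh rows rather than arbitrary equidistant codes) is property (ii): a generic Hadamard family would let the partial disagreement count oscillate by a constant fraction of the block length, defeating the limit; the tiling and padding construction is exactly what makes the fluctuation negligible next to $\ell_n$.
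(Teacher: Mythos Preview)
Your construction is correct and takes a genuinely different route from the paper's proof, though the two share the same skeleton: at each level, assign to the $2^{n+1}$ nodes codewords drawn from a family with the property that each codeword and each XOR of distinct codewords is balanced, and arrange that the partial-sum fluctuation within a block is negligible compared to the accumulated length.

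The paper realises this with the strings $\mu_i = 0^{2^i}1^{2^i}$ and their powers (in effect, Rademacher functions with indices up to $2^{n+1}$). The key orthogonality lemma there is that a power of $\mu_i$ and a power of $\mu_j$ of the same length agree on exactly half their positions; balancedness of individual nodes is handled via a ``good numbers'' argument. Because the fluctuation bound is $b_n = 2^{2^n+1}$, the block length is forced to be doubly exponential, $l_{n+1}-l_n = 2^{2^{n+4}}$. Your proof instead uses the full Walsh--Hadamard family at each level, tiled to a length that is a multiple of the period, together with the bit-doubling trick $s_\sigma(2j)=w_{i_\sigma}(j)$, $s_\sigma(2j+1)=1-w_{i_\sigma}(j)$. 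The doubling gives individual density $1/2$ with deviation at most $1/2$ at every position for free, and the tiling bounds the XOR fluctuation by $O(2^n)$; since $\ell_n = \Theta(4^n)$, the relative error is $O(2^{-n})$ and the limit follows. The payoff is that your tree has node lengths growing only like $4^n$ rather than $2^{2^{n+4}}$, at the cost of invoking Walsh orthogonality rather than the more elementary $\mu_i$ calculation. One cosmetic point: you reuse the letter $T$ for both the tree and the tiling count $T_n$; rename the latter to avoid confusion.
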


\begin{proof}
Call $\sigma \in 2^{< \omega}$ \emph{balanced} if $|\sigma^{-1} (0)| =
|\sigma^{-1}(1)|$, i.e.\ $\sigma$ has the same number of $0$'s as
$1$'s.  Call a pair $(\sigma , \tau)$ of strings \emph{balanced} if
$\sigma$ and $\tau$ have the same length and agree on exactly half
of their arguments.  If $\sigma$ is a string and $n \in \omega$, let
$\sigma^n$ denote the concatenation of $n$ copies of $\sigma$, and
call $\sigma^n$ a \emph{power} of $\sigma$.  Note that any power of a
balanced string is balanced.

Let $\mu_i = 0^{2^i} 1^{2^i}$.  Then $\mu_i$ is balanced and has
length $2^{i+1}$.  Our tree $T$ will have the following properties:
\begin{enumerate}

\item $T(\sigma)$ is balanced for every $\sigma$.

\item The length of $T(\sigma)$ depends only on the length of
$\sigma$.  Let $l_n$ denote the length of $T(\sigma)$ for all
$\sigma$ of length $n$.

\item For all strings $\sigma $ and $i \leq 1$, the string
$T(\sigma)^\frown i$ will be of the form $T(\sigma)^\frown \mu_j^k$
for some $j$ and $k$.
  
\end{enumerate}

We now give the definition of $T$.  Let $T(\lambda) = \lambda$, where
$\lambda$ is the empty string.  Assume inductively that $T(\sigma)$ is
defined for every string $\sigma$ of length $n$, and that all of these
strings are balanced and have the same
length $l_n$.  We will choose $l_{n+1} = l_n + 2^{2^{n+4}}$.  (The
reason for this outlandish choice will become clear later.)

Let the strings of length $n$ be $\sigma_1, \sigma_2, \dots,
\sigma_{2^n}$.  For each $k \leq 2^n$ and $i \leq 1$, define
$T(\sigma_k^\frown i)$ to be $T(\sigma_k)^\frown \gamma$, where
$\gamma$ is the unique power of $\mu_{2k + i - 1}$ of length $l_{n+1}
- l_n$.  (There is such a power because $l_{n+1} - l_n = 2^{2^{n+4}}$
is a multiple of $|\mu_{2k + i - 1}| = 2^{2k + i}$.)

It follows immediately from the construction by induction that if
$|\sigma| = n$ then $|T(\sigma)| = l_n$.  It is also easy to see that
$T(\sigma)$ is balanced for all $\sigma$.
 
Let $b_n = |\mu_{2^n}| = 2^{2^n + 1}$.  Call a number $l$ \emph{good}
if every string on $T$ of length $l$ is balanced.  We have already
remarked that $l_n$ is good for every $n$.  The next lemma gives
further examples of good numbers.

\begin{lemma}
\label{cl1}
If $l_n \leq j \leq l_{n+1}$ and $j - l_n$ is divisible by $b_n$, then
$j$ is good.
\end{lemma}

\begin{proof}
To prove the lemma, assume that $j$ is as in its hypothesis, and let
$\gamma$ be a string on $T$ of length $j$.  We must show that $\gamma$
is balanced.  Write $\gamma$ as $T(\sigma) ^\frown \nu$, where
$\sigma$ has length $n$.  Since $T(\sigma)$ is balanced, it suffices
to show that $\nu$ is balanced.  Let $i \leq 1$ be such that $\gamma
\preccurlyeq T(\sigma ^\frown i)$.  Then by construction, $\nu$ is
extended by a power of $\mu_{2k + i - 1}$, where $\sigma = \sigma_k$.
Since $|\mu_{2k + i - 1}|$ divides $b_n$, and $b_n$ divides $k - l_n =
|\gamma|$, it follows that $|\mu_{2k + i - 1}|$ divides $|\nu|$.
Since $\nu$ is extended by a power of $\mu_{2k + i - 1}$ and $|\mu_{2k
+ i - 1}|$ divides $|\nu|$, it follows that $\nu$ is a power of
$\mu_{2k + i - 1}$.  Since $\mu_{2k + i - 1}$ is balanced, and powers
of balanced strings are balanced, $\nu$ is balanced.
\end{proof}

\begin{lemma}
\label{cl2}
Every path through $T$ has density $1/2$.
\end{lemma}

\begin{proof}
To prove the lemma, let $C$ be a path through $T$, so $C \restr k$ is
on $T$ for every $k$.  Let $l_n \leq k < l_{n+1}$.  Let $j$ be maximal
such that $j \leq k$ and $j - l_n$ is divisible by $b_n$.  Then $k - j
\leq b_n$.  Also, $j$ is good by Lemma \ref{cl1}, so $|C \restr j| = j
/ 2$.  It follows that
\[
|C \restr k| \leq |C \restr j| + b_n.
\]
Dividing through by $k$, we obtain that
\[
\rho_k(C) \leq \frac{|C \restr j|}{k} + \frac{b_n}{k} \leq \frac{|C
  \restr j|}{ j} + \frac{b_n}{l_n} = \frac{1}{2} + \frac{b_n}{l_n}.
\]
It follows from the definitions of $b_n$ and $l_n$ and a
straightforward computation that $\lim_n \frac{b_n}{l_n} = 0$, and so
$\overline{\rho}(C) \leq 1/2$.  Replacing $C$ by its complement $\neg
C$ in the above argument we obtain that $\overline{\rho}(\neg C) \leq
1/2$, and so $\underline{\rho}(C) \geq 1/2$.  It follows that $\rho(C)
= 1/2$.
\end{proof}

It remains to be shown that if $A$ and $B$ are distinct branches of
$T$, then $\rho(A \sd B) = 1/2$. The following fact will be useful.

\begin{lemma}
\label{cl3}
Let $i \neq j$, and let $\sigma$ and $\tau$ be such that $|\sigma| =
|\tau|$. If $\sigma$ is a power of $\mu_i$ and $\tau$ is a power of
$\mu_j$ then $(\sigma, \tau)$ is balanced.
\end{lemma}

\begin{proof}
Suppose that $i < j$.  Break up the numbers less than $|\tau|$ into
consecutive intervals of length $2^j$.  This is possible because
$\tau$ is a power of $\mu_j$, and hence $2^j$ divides $|\tau|$.  On
each such interval, $\tau$ has the form $0^{2^j}$ or $1^{2^j}$, and
$\sigma$ is a power of $0^{2^i} 1^{2^i}$.  Hence $\sigma$ and $\tau$
agree on exactly half of each interval, so $(\sigma, \tau)$ is
balanced.
\end{proof}
 
\begin{lemma}
\label{cl4}
Let $A$ and $B$ be distinct paths through $T$. Then $\rho(A \sd B) =
1/2$.
\end{lemma}

\begin{proof}
For every sufficiently large $n$, there are distinct strings $\sigma$
and $\tau$ of length $n$ such that $A$ extends $T(\sigma)$ and $B$
extends $T(\tau)$. We assume that this fact holds for every $n > 0$,
since the general case is essentially the same. Then, by induction and
the previous lemma, $(A \restr l_n, B \restr l_n)$ is balanced for
every $n > 0$. If $l_n \leq j \leq l_{n+1}$ and $j - l_n$ is divisible
by $b_n$, then as in the proof of Lemma \ref{cl1}, $A \restr j$ is of
the form $(A \restr l_n)^\frown \nu_0$ for a power $\nu_0$ of some
$\mu_{i_0}$, and $B \restr j$ is of the form $(B \restr l_n)^\frown \nu_1$
for a power $\nu_1$ of some $\mu_{i_1}$, and it follows from the
definition of $T$ that $i_0 \neq i_1$. So by the previous lemma, $(A
\restr j, B \restr j)$ is balanced. Now essentially the same argument
as that in the proof of Lemma \ref{cl2} shows that $\rho(A \sd B) =
1/2$.
\end{proof}
 
The theorem follows from Lemmas \ref{cl2} and \ref{cl4}.
\end{proof}

We can also consider other variants on Theorem \ref{perfeff} where we
relax the conditions on $T$, possibly leading to better upper bounds
on its complexity. One possibility is to require only that pairs of
distinct paths be mutually $1$-random. Another is to replace the
condition that $T$ be perfect by the condition that $T$ be infinite
and have no isolated paths (which still ensures that the set of paths
through $T$ is perfect), and of course we can do both at the same
time. We do not know whether any of these variants give rise to better
upper bounds. For infinite trees with no isolated paths, we also no
longer have the same lower bound, but notice that if $T$ is a
computable infinite tree all of whose paths are $1$-random, then there
are paths through $T$ that are not mutually $1$-random: Take
incompatible strings $\sigma$ and $\tau$ in $T$ such that $T$ is
infinite above both $\sigma$ and $\tau$. Then the leftmost paths of
$T$ above $\sigma$ and above $\tau$ are both left-c.e.\ $1$-randoms,
so they have the same degree, namely $\mathbf{0'}$, and hence cannot
be mutually $1$-random.

If we go even further and require only that $T$ be an infinite tree,
or an infinite tree with no dead ends, then we do get a better upper
bound for fairly trivial reasons: If $X$ is $1$-random relative to $A$
then the set of all strings $X \uhr n$ is the desired tree. (It does
not change things if we add the condition that $T$ have infinitely
many paths, because if we write $X = \bigoplus_i X_i$ then we can form
a tree by taking $X_0$ and appending a copy of $X_{i+1}$ with root $(X
\uhr i)^\frown(1-X(i))$ for each $i$.) If we do require that $T$ have
no dead ends, then $1$-randomness is also a lower bound.

\subsection{Mycielski's Theorem for measure and reverse mathematics}
\label{mycsec2}

We now turn to reverse mathematics. From now on, all implications and
equivalences we mention are over the usual weak base system RCA$_0$. A
binary tree $T$ is \emph{positive} if there is a $q>0$ such that
$\frac{|T \cap 2^n|}{2^n} > q$ for all $n$. The system WWKL$_0$
consists of RCA$_0$ together with WWKL, the principle that every
positive tree has an infinite path. This principle is equivalent to
the one stating that for each $A$, there is a set that is $1$-random
relative to $A$, where the latter notion is formalized using
Martin-L\"of tests. (See Avigad, Dean, and Rute \cite{ADR} for
details.) We will implicitly use van Lambalgen's Theorem (for instance
in not having to distinguish between saying that $A$ and $B$ are
relatively $1$-random and saying that $A \oplus B$ is $1$-random), so
it is worth noting that a standard proof of this theorem, for instance
the one given in \cite[Section 6.9]{DH}, can be carried out in RCA$_0$.

Corresponding to their computability-theoretic work mentioned above,
Greenberg, Miller, and Nies \cite{GMN} defined the following
principles, whose strength they showed to be strictly intermediate
between WWKL$_0$ and WKL$_0$.

\medskip

\noindent\textbf{WSWWKL}: Every positive tree has a positive subtree
with no dead ends.

\medskip

\noindent\textbf{SWWKL}: Every positive tree has a positive subtree $T$
such that for every $\sigma \in T$, the restriction of $T$ to strings
compatible with $\sigma$ is positive.

\medskip

At about the same time, Chong, Li, Wang, and Yang \cite{CLWY} studied
the following principle, which has also been studied by Barmpalias and
Wang \cite{BW}, who denoted it by P.

\medskip

\noindent \textbf{PSUB}: Every positive tree has a perfect subtree.

\medskip

Chong, Li, Wang, and Yang \cite{CLWY} showed that PSUB is provable in
WKL$_0$ and noted that Patey gave an argument showing that it does not
imply WKL. (The fact that PSUB is provable in WKL$_0$ is also implicit
in an earlier proof by Barmpalias, Lewis, and Ng \cite{BLN}; see also
\cite[proof of Theorem 8.8.8]{DH}.) These facts also follow from the
results of Greenberg, Miller, and Nies \cite{GMN}, since WSWWKL
clearly implies PSUB. Indeed, it implies the following principle,
which was shown by Barmpalias and Wang \cite{BW} (who denote it by
P$^+$) to be strictly stronger than PSUB.

\medskip

\noindent \textbf{SPSUB}: Every positive tree has a positive perfect
subtree.

\medskip

Clearly, PSUB implies WWKL, since from a perfect subtree of a positive
tree all of whose paths are $1$-random relative to $A$, we can compute
a set that is $1$-random relative to $A$. Chong, Li, Wang, and Yang
\cite[Question 4.1]{CLWY} asked whether PSUB is provable in
WWKL$_0$. We can answer this question with the following result, which
has also been obtained by Barmpalias and Wang \cite{BW}. (Here an
\emph{$\omega$-model} is a structure in the language of second-order
arithmetic whose first-order part is standard.)

\begin{theorem}
\label{psubthm}
There is an $\omega$-model of $\textup{WWKL}_0$ that is not a model of
\textup{PSUB}.
\end{theorem}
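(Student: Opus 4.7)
The plan is to build an $\omega$-model $\mathcal{M}$ of WWKL$_0$ that fails PSUB at a specific computable positive tree $\hat T$ all of whose paths are $1$-random. The key tool is Theorem \ref{bwthm}: a $1$-random that does not compute $\emptyset'$ cannot compute any perfect tree all of whose paths are $1$-random. I will make $\mathcal{M}$ consist of sets Turing reducible to iterated joins that are simultaneously $1$-random and low, so that no element of $\mathcal{M}$ computes $\emptyset'$. Theorem \ref{bwthm} will then prevent any perfect subtree of $\hat T$ from lying in $\mathcal{M}$, while the inductively chosen relative randoms will supply WWKL.

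First I would produce $\hat T$. Fix a $\Pi^0_1$ class $\mathcal P$ of $1$-randoms with $\mu(\mathcal P) > 1/2$ (for example, $\mathcal P = \{X : K(X \upharpoonright n) \geq n - c$ for all $n\}$ for a sufficiently large constant $c$), and let $W$ be a c.e.\ prefix-free set with $[W] = 2^\omega \setminus \mathcal P$, with stage-$s$ approximation $W_s$. Setting
\[
\hat T = \{\sigma \in 2^{<\omega} : (\forall \tau \preccurlyeq \sigma)\, \tau \notin W_{|\sigma|}\}
\]
gives a computable subset of $2^{<\omega}$ that is closed under initial segments (because $W_{|\sigma'|} \subseteq W_{|\sigma|}$ when $\sigma' \preccurlyeq \sigma$), and a measure bound on $[W_n]$ yields $|\hat T \cap 2^n| \geq 2^{n-1}$, so $\hat T$ is positive. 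An $X$ is a path through $\hat T$ iff every initial segment of $X$ avoids $W$ altogether, iff $X \in \mathcal P$; hence every path through $\hat T$ is $1$-random.

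To build $\mathcal{M}$, I would apply the low basis theorem to $\mathcal P$ to obtain $A_0 \in \mathcal P$ with $A_0' \leq\sub{T} \emptyset'$, and inductively, given $B_n := A_0 \oplus \cdots \oplus A_n$, apply the $B_n$-relativized low basis theorem to a $\Pi^{0,B_n}_1$ class of sets $1$-random relative to $B_n$ of positive measure to pick $A_{n+1}$ that is $1$-random relative to $B_n$ and satisfies $(B_n \oplus A_{n+1})' \leq\sub{T} B_n'$. By induction each $B_n$ is low, and iterated van Lambalgen gives that each $B_n$ is $1$-random. Let $\mathcal{M} = \{X : \exists n\,(X \leq\sub{T} B_n)\}$; this is closed under Turing reducibility and finite joins, so it is an $\omega$-model of RCA$_0$. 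WWKL holds in $\mathcal{M}$ via the equivalent formulation that for every $X$ there is a $Y$ that is $1$-random relative to $X$: given $X \in \mathcal{M}$ with $X \leq\sub{T} B_n$, the set $A_{n+1} \in \mathcal{M}$ is $1$-random relative to $B_n$ and hence relative to $X$.

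Finally, to show PSUB fails at $\hat T$ in $\mathcal{M}$, I would argue as follows: $\hat T$ is computable and hence in $\mathcal{M}$, and if some $S \in \mathcal{M}$ were a perfect subtree of $\hat T$, then $S \leq\sub{T} B_n$ for some $n$, and every path through $S$ would be a path through $\hat T$ and thus $1$-random, so $B_n$ would compute a perfect tree all of whose paths are $1$-random; but $B_n$ is $1$-random and does not compute $\emptyset'$, contradicting Theorem \ref{bwthm}. The main technical delicacy is maintaining lowness while simultaneously securing enough relative randomness across the induction, but the relativized low basis theorem applied to the $\Pi^{0,B_n}_1$ class of relative $1$-randoms handles both in a single step at each stage.
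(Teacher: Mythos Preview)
Your proof is correct, and the high-level strategy---build an $\omega$-model of WWKL$_0$ in which every set is computable from some $1$-random that cannot compute a perfect tree of $1$-randoms---matches the paper's. The implementations differ in two respects, however. First, the paper invokes Corollary~\ref{negcor} (there is a $1$-random, e.g.\ any $2$-random, that computes no perfect tree of $1$-randoms), whereas you invoke the stronger Theorem~\ref{bwthm} (any $1$-random not computing $\emptyset'$ has this property). Second, and more substantively, the paper avoids your iterative low-basis construction entirely: it simply takes a single $1$-random $A$ as in Corollary~\ref{negcor}, writes $A=\bigoplus_i Z_i$ via columns, and lets $\mathcal{M}=\{Y:(\exists k)\,Y\leq\sub{T}\bigoplus_{i\leq k}Z_i\}$; van Lambalgen's Theorem then gives WWKL directly, and everything in $\mathcal{M}$ is $A$-computable, so PSUB fails by the choice of $A$. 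This is shorter and needs no lowness bookkeeping. Your route, on the other hand, yields the extra information that the model can be taken to consist entirely of low sets, which is not needed here but is a pleasant byproduct. Your explicit construction of $\hat T$ is fine but also not strictly necessary: the paper just appeals to the existence of a computable positive tree all of whose paths are $1$-random.
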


\begin{proof}
As noted above, WWKL is equivalent over RCA$_0$ to the principle that
for each $X$, there is a set that is $1$-random relative to $X$. If
$Z$ is $1$-random then let $Z_i = \{n : \langle i,n \rangle \in
Z\}$, and let $\mathcal S = \{Y : (\exists k)[Y \leq\sub{T}
\bigoplus_{i \leq k} Z_i]\}$. Then $\mathcal S$ is a Turing ideal, and
hence is an $\omega$-model of RCA$_0$. Furthermore, by van Lambalgen's
Theorem, each $Z_i$ is $1$-random relative to $\bigoplus_{j \neq i}
Z_j$, so $\mathcal S$ is in fact a model of WWKL$_0$.
  
Let $A$ be a $1$-random as in Corollary \ref{negcor}. By the previous
paragraph, there is an $\omega$-model of WWKL$_0$ all of whose members
are $A$-computable. If PSUB holds in this model then, applying it to a
computable positive tree all of whose paths are $1$-random, we obtain
an $A$-computable perfect tree all of whose paths are $1$-random,
contradicting the choice of $A$.
\end{proof}

\begin{corollary}
$\textup{RCA}_0 + \textup{PSUB}$ is strictly intermediate between
\textup{WWKL}$_0$ and \textup{WKL}$_0$.
\end{corollary}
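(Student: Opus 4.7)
The plan is to assemble the corollary from four pieces, all of which have been either proved in the preceding discussion or cited from the literature. First, to see that PSUB implies WWKL over RCA$_0$, I would formalize the one-line observation made just above Theorem \ref{psubthm}: given a set $A$, apply PSUB to an $A$-computable positive tree all of whose paths are $1$-random relative to $A$ (for instance, the prefix-closed tree $T_A = \{\sigma \in 2^{<\omega} : (\forall n \leq |\sigma|)[K^A(\sigma \uhr n) \geq n - c]\}$ for a sufficiently large constant $c$), and observe that any path through the resulting perfect subtree is $1$-random relative to $A$, which is the form of WWKL used in \cite{ADR}. Second, for the inclusion that RCA$_0 +$ PSUB is contained in WKL$_0$, I would cite directly the theorem of Chong, Li, Wang, and Yang \cite{CLWY} mentioned in the paragraph preceding Theorem \ref{psubthm}; alternatively, WKL$_0$ proves the principle WSWWKL of \cite{GMN}, and WSWWKL implies PSUB by computably extracting a perfect subtree from any positive subtree with no dead ends, iteratively searching for branching nodes (which must exist above every node, as a branchless subtree has measure zero).

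For the two strict separations, no new construction is needed. The separation WWKL$_0 \not\vdash$ PSUB is exactly Theorem \ref{psubthm}, which has just produced an $\omega$-model of WWKL$_0$ in which PSUB fails. The separation PSUB $\not\vdash$ WKL I would obtain by invoking the unpublished argument of Patey alluded to in the paragraph above Theorem \ref{psubthm}; alternatively, one can observe that since WSWWKL implies PSUB, any $\omega$-model witnessing WSWWKL $\not\vdash$ WKL (as constructed by Greenberg, Miller, and Nies \cite{GMN}) automatically separates PSUB from WKL, since it is a model of PSUB but not of WKL.

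The only genuine formalization work lies in the first step, where one must verify that the tree $T_A$ is definable in RCA$_0$ from $A$ using prefix-free complexity relativized to $A$, and that it is positive in the sense required by PSUB. Both are routine in the reverse-mathematics literature on algorithmic randomness (see \cite{ADR}), so the corollary amounts to pure bookkeeping: there is no substantive obstacle, only the assembly of four already-established facts into a single chain of strict implications.
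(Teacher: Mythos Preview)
Your proposal is correct and follows essentially the same approach as the paper: the corollary has no proof in the paper precisely because it is just the assembly of the four facts you identify (PSUB $\Rightarrow$ WWKL from the remark before Theorem~\ref{psubthm}, WKL$_0 \vdash$ PSUB from \cite{CLWY}, PSUB $\not\Rightarrow$ WKL from Patey or \cite{GMN}, and WWKL$_0 \not\vdash$ PSUB from Theorem~\ref{psubthm}). Your only slight over-elaboration is in the first step, where the direct route---a perfect subtree of any positive tree computes a path, hence PSUB $\Rightarrow$ WWKL immediately---avoids the detour through the $1$-randomness characterization, but your version is also what the paper sketches and is perfectly valid.
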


Notice that, by Corollary \ref{negcor}, the set $A$ in the proof of
Theorem \ref{psubthm} can be chosen to be any $2$-random, and hence to
have any level of randomness we would like. So, for example, PSUB is
not implied by the statement that for each $A$, there is a set
that is arithmetically random relative to $A$.

We can begin to connect these principles with Mycielski's Theorem by
showing that PSUB is equivalent to the following statement.

\medskip

\textbf{PTR}: For each $A$, there is a perfect tree $T$ such that
every path through $T$ is $1$-random relative to $A$.

\medskip

To see that PTR and PSUB are equivalent, we use the following version
of Proposition \ref{classprop}. The restriction to trees with no dead
ends is necessary because in the absence of WKL, an infinite tree can
satisfy the condition that all of its paths be $1$-random simply by
not having any paths, an issue we will return to below.

\begin{proposition}
\label{classrca}
$\textup{RCA}_0$ proves that if $S$ is a positive tree, and $T$ is an
infinite binary tree with no dead ends such that each path through $T$
is $1$-random relative to $S$, then there is an extendible $\sigma \in
T$ such that $T_\sigma$ is a subtree of $S$.
\end{proposition}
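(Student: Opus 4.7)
The plan is to adapt the proof of Proposition \ref{classprop} to RCA$_0$ via contradiction: assuming no $\sigma \in T$ satisfies $T_\sigma \subseteq S$, I will build a path through $T$ that fails $1$-randomness relative to $S$. First, by positivity of $S$, fix a rational $q \in (0,1)$ with $|S \cap 2^n| > q \cdot 2^n$ for all $n$, set $p = 1 - q$, and let $W$ be the $\Delta^0_1(S)$ set of strings $\sigma \notin S$ all of whose proper prefixes lie in $S$. Since every string in $2^n$ either lies in $S$ or has a unique prefix in $W$ of length $\leq n$, one obtains $\sum_{\sigma \in W,\; |\sigma| \leq n} 2^{-|\sigma|} = 1 - |S \cap 2^n|/2^n \leq p$ by straightforward induction on $n$ in RCA$_0$.

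Next, define by primitive recursion $S_0 = W$ and $S_{k+1} = \{\sigma^\frown \tau : \sigma \in S_k \,\mathbin{\&}\, \tau \in W\}$; each $S_k$ is prefix-free, and the bound $\sum_{\rho \in S_k,\; |\rho| \leq n} 2^{-|\rho|} \leq p^{k+1}$ follows by induction on $k$ from the unique decomposition $\rho = \sigma^\frown\tau$ and the previous bound. Choosing indices $n_k$ with $p^{n_k+1} \leq 2^{-k}$, the uniformly $\Sigma^{0,S}_1$ classes generated by the $S_{n_k}$ form a Martin-L\"of test relative to $S$ in the formalization of \cite{ADR}. Now, under the contradiction hypothesis, for each $\sigma \in T$ the least $\tau$ with $\sigma^\frown \tau \in T$ and $\tau \notin S$ exists, and it lies in $W$ by minimality: any proper prefix $\tau' \prec \tau$ has $\sigma^\frown\tau' \in T$ since $T$ is closed under prefixes, forcing $\tau' \in S$. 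Iterating this by primitive recursion, with totality of the step function propagated by $\Sigma^0_1$ induction from the contradiction hypothesis, I obtain a sequence $\sigma_0, \sigma_1, \ldots \in W$ whose partial concatenations all lie in $T$; the union $X = \bigcup_n \sigma_0^\frown \cdots^\frown \sigma_n$ is a path through $T$ whose initial segment $\sigma_0^\frown \cdots^\frown \sigma_{n_k}$ lies in $S_{n_k}$ for every $k$, so $X$ fails every level of the test, contradicting the hypothesis that every path through $T$ is $1$-random relative to $S$.

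The main delicate point I expect is the second step: establishing the ``measure'' bounds on the $S_k$ purely as finitary inductive assertions about partial sums in RCA$_0$, and checking that the resulting sequence of classes meets the ambient definition of a Martin-L\"of test relative to $S$. Everything else is routine bookkeeping: $W$ and the $S_k$ are uniformly $\Delta^0_1(S)$ by primitive recursion, and the construction of $X$ uses an unbounded $\Delta^0_1$ search whose termination at each step is supplied by the contradiction hypothesis and propagated by $\Sigma^0_1$ induction. Incidentally, the ``no dead ends'' hypothesis on $T$ plays no role in the construction itself (each $\sigma_k \in W$ has positive length, since $\lambda \in S$ forces $\lambda \notin W$, so $X$ is automatically infinite); it serves only to make the ``extendible'' clause of the conclusion trivially satisfied by any $\sigma \in T$ the proof produces.
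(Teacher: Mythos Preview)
Your proposal is correct and follows essentially the same approach as the paper: the paper's proof simply says that the argument for Proposition~\ref{classprop} goes through in RCA$_0$, noting that since $T$ has no dead ends one need not track extendibility and the strings $\sigma_i$ can be produced by $\Sigma^0_1$-induction. You have spelled out exactly this, supplying the details (the finitary partial-sum bounds for the $S_k$, the explicit construction of the $\Sigma^{0,S}_1$ Martin-L\"of test, and the $\Sigma^0_1$-inductive construction of the path) that the paper leaves to the reader. One small remark on your closing observation: the ``no dead ends'' hypothesis does a bit more than make the word ``extendible'' redundant in the conclusion---it is what lets you phrase the contradiction hypothesis as ``no $\sigma\in T$ has $T_\sigma\subseteq S$'' rather than ``no \emph{extendible} $\sigma$,'' and hence what guarantees that your recursive search never stalls at a leaf; but this is exactly the role the paper assigns it as well.
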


\begin{proof}
The proof of Proposition \ref{classprop} can be carried out in
RCA$_0$. The only thing to note is that, since we do not have to worry
about extendibility when $T$ has no dead ends, the existence of the
strings $\sigma_i$ in that proof follows by $\Sigma^0_1$-induction,
and then the sequence $\sigma_0,\sigma_1,\ldots$ can be obtained
computably in $T$.
\end{proof}

Thus PTR is equivalent to the statement that for each $A$,
each positive tree $S$ has a perfect subtree $T$ such that every path
through $T$ is $1$-random relative to $A$. Since (provably in RCA$_0$)
for each $A$ there is a positive tree whose paths are $1$-random
relative to $A$, PTR and PSUB are equivalent. Thus, however we
formalize Mycielski's Theorem for measure using perfect trees, it
should imply PSUB.

Theorem \ref{perfeff} and the proof of Mycielski's Theorem for measure
using this theorem suggest that the following is a reasonable way to
formulate Mycielski's Theorem for measure as a statement of
second-order arithmetic.

\medskip

\noindent \textbf{MYC-M}: For each $A$, there is a perfect tree $T$
such that for any nonempty finite collection $\mathcal F$ of pairwise
distinct paths through $T$, the set $\bigoplus_{Y \in \mathcal F} Y$
is $1$-random relative to $A$.

\medskip

By Proposition \ref{classrca}, MYC-M is equivalent to the statement
that for each $A$, each positive tree $S$ has a perfect subtree $T$
such that for any nonempty finite collection $\mathcal F$ of pairwise
distinct paths through $T$, the set $\bigoplus_{Y \in \mathcal F} Y$
is $1$-random relative to $A$.

We can also consider the following version of MYC-M restricted to
joins of pairs of paths, which corresponds to Corollary
\ref{perfrandcor}.

\medskip

\noindent\textbf{RMYC-M}: For each $A$, there is a perfect tree $T$
such that any two distinct paths through $T$ are mutually $1$-random
relative to $A$.

\medskip

RMYC-M implies PSUB, since it clearly implies PTR, and hence it
properly implies WWKL. The proof of Theorem \ref{perfeff} shows that
MYC-M is provable in ACA$_0$: Showing that the class $\mathcal C$ in
the proof of Lemma \ref{mainlem} has positive measure, under the
assumption that $\bigoplus_{i<n} X_i$ is a density-one point, requires
only (the relativized version of) the fact that if $d(Z \mid \mathcal
C_i)=1$ for each of finitely many $\Pi^0_1$ classes $\mathcal
C_0,\ldots, \mathcal C_n$, then $d(Z \mid \bigcap_{i \leq n} \mathcal
C_i)=1$, for which arithmetical induction is more than enough. Then
building $T$ arithmetically is straightforward. So the only possible
issue is Lemma \ref{auxlem}, but all the proof of that lemma requires
is (the relativized version of) the fact that every $\Pi^0_1$ class
$\mathcal P$ of positive measure contains a density-one $1$-random
point $X$. This fact is provable in ACA$_0$, for instance by proving
the Lebesgue Density Theorem as in \cite[Proposition 3.3]{CLWY},
applied to the intersection $\mathcal C$ of $\mathcal P$ with a
sufficiently large $\Pi^0_1$ class of $1$-randoms, then taking
$\sigma_0 \prec \sigma_1 \prec \cdots$ such that the relative measure
of $\mathcal C$ above $\sigma_n$ goes to $1$, and defining $X =
\bigcup_n \sigma_n$.

We thus have the following diagram, where double arrows represent
implications that are known not to reverse, single arrows represent
implications for which we do not know whether the reversal holds, and
all missing arrows not implied by transitivity represent open
questions.

\begin{equation}
\label{mdiag}
\xymatrix@C=15pt@R=12pt{
 & {\textup{ACA}}\ar@{=>}[dl]\ar[dr]\\
{\textup{WKL}}\ar@{=>}[d]  & & {\textup{MYC-M}}\ar[d]\\
{\textup{SWWKL}}\ar[dd]\ar[dr] & & {\textup{RMYC-M}}\ar[dd]\\
 & {\textup{SPSUB}}\ar@{=>}[dr]\\
{\textup{WSWWKL}}\ar@{=>}[dr] & & {\textup{PSUB}}\ar@{=>}[dl]\\
 & {\textup{WWKL}}
}
\end{equation}

\bigskip

Although PTR and PSUB are equivalent, there is a significant formal
difference between them: the former statement is $\Pi^1_3$, while the
latter is $\Pi^1_2$. We often think of a $\Pi^1_2$ statement of the
form $(\forall X)[\Phi(X) \, \rightarrow \, (\exists Y)\Psi(X,Y)]$
with $\Phi$ and $\Psi$ arithmetic as a \emph{problem}. An
\emph{instance} is an $X$ such that $\Phi(X)$ holds, and a
\emph{solution} to this instance is a $Y$ such that $\Psi(X,Y)$
holds. (For the sake of this discussion, let us assume we are working
over the standard natural numbers.) We can do the same with a
statement of this form where $\Psi$ is now $\Pi^1_1$, but we then need
to be careful what we mean by saying that $\Psi(X,Y)$ holds, since
that might be model-dependent. That is, for an instance $X$ in an
$\omega$-model $\mathcal M$, there might be a $Y \in \mathcal M$ such
that $\mathcal M \vDash \Psi(X,Y)$ but $\mathcal N \nvDash \Psi(X,Y)$
for some $\mathcal N \supset \mathcal M$. This fact leads to some
peculiar situations.

For example, let us consider reverse-mathematical analogs of some of
the variants of MYC-M discussed following the proof of Theorem
\ref{compperfthm}. The one where $T$ is required to be a tree with no
dead ends is equivalent to WWKL$_0$, by the same argument as in the
computability-theoretic case, but the one where $T$ is required only
to be an infinite tree is provable in RCA$_0$: It is provable in
WWKL$_0$, but also in RCA$_0$ plus the negation of WWKL, or even just
of WKL, because if $T$ is an infinite tree with no infinite paths,
then it satisfies any universal condition on paths, an issue already
mentioned above Proposition \ref{classrca}. This proof, which is made
possible by the $\Pi^1_3$ form of this statement, feels like something
of a cheat, particularly as it allows us to prove the statement in
RCA$_0$ without giving us any more idea of its computability-theoretic
complexity than we previously had.

A similar kind of cheating allows us to show that even the version
where $T$ is required to be an infinite tree with no isolated paths is
provable in RCA$_0$ (despite the fact mentioned above that every
computable infinite tree all of whose paths are $1$-random has paths
that are not mutually $1$-random). One can define the notion of having
no isolated paths by quantifying over paths, in which case trees with
no paths automatically have no isolated paths. A first-order way to
define this notion is to say that a tree $T$ has an isolated path if
there is a $\sigma \in T$ such that $T$ is infinite above $\sigma$,
and for each $\tau \succcurlyeq \sigma$, at most one of $T$ above
$\tau^\frown 0$ and $T$ above $\tau^\frown 1$ is infinite. However,
even with this definition we can still prove in RCA$_0$ that if $T$ is
infinite but has no infinite paths, then it cannot have an isolated
path: Assume $T$ has an isolated path in this sense, and let $\sigma$
be as above. Then we can produce a path on $T$ computably in $T$ by
beginning with $\sigma$, and given $\tau$, waiting for $T$ to
terminate above some $\tau^\frown i$, then continuing on to
$\tau^\frown (1-i)$. It follows by $\Pi^0_1$ induction that this
process produces an infinite path. Thus for the following proposition
it does not matter which definition of having no isolated paths we
take.

\begin{proposition}
\textup{RCA}$_0$ proves that for each $A$, there is an infinite tree
$T$ with no isolated paths such that for any nonempty finite
collection $\mathcal F$ of pairwise distinct paths through $T$, the
set $\bigoplus_{Y \in \mathcal F} Y$ is $1$-random relative to $A$.
\end{proposition}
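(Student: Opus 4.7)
The plan is a case analysis in $\textup{RCA}_0$, exploiting the $\Pi^1_3$ form of the conclusion so that ``paths through $T$'' means only paths existing in our model, and a tree with no infinite paths in the model vacuously satisfies the universal condition on finite joins. This is in the same spirit as the earlier remark that the variant requiring only $T$ to be infinite is provable in $\textup{RCA}_0$.

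If WWKL relative to $A$ fails, take $T$ to be an $A$-computable positive binary tree with no infinite path in the model. Then $T$ is infinite and has no infinite paths, so by the argument just before the proposition (which extracts an infinite path in the model from any isolated-path witness), $T$ has no isolated paths in the first-order sense; the universal condition on joins of paths is then vacuous.

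If WWKL relative to $A$ holds, fix $Z$ in the model that is $1$-random relative to $A$, and view $Z$ as $\bigoplus_{\sigma \in 2^{<\omega}} Z_\sigma$; by van Lambalgen's theorem, any finite join of pairwise distinct $Z_\sigma$'s is $1$-random relative to $A$. Recursively define a $Z$-computable perfect tree $T$ by $T(\emptyset) = \lambda$ and $T(\sigma^\frown b) = T(\sigma)\cdot (Z_{\sigma^\frown b}\uhr f(|\sigma|))$, with $f$ computed from $Z$ large enough to make the strings $Z_\tau \uhr f(n)$ pairwise distinct across $\tau \in 2^{n+1}$; let $T$ be the downward closure of $\{T(\sigma)\}$. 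Being perfect, $T$ has no isolated paths. Paths through $T$ in the model have the form $T(X) = \bigcup_n T(X \uhr n)$ for $X$ in the model, with bits drawn from the columns $Z_{X \uhr n}$; for distinct $X_0, \ldots, X_{k-1}$ in the model, the columns used become pairwise disjoint beyond some level, and the $1$-randomness of $\bigoplus_j T(X_j)$ relative to $A$ should follow from transferring $Z$'s $A$-randomness through the measure-preserving map $Z \mapsto \bigoplus_j T(X_j)$.

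The main obstacle is the randomness transfer in the second case when the parameters $X_j$ are not themselves random over $A \oplus Z$: the map $Z \mapsto \bigoplus_j T(X_j)$ is computable only relative to the $X_j$'s, so pulling back an $A$-Martin-L\"of test yields a test relative to $A \oplus \bigoplus_j X_j$, which $Z$ avoids only insofar as it is random relative to that oracle. One resolves this by noting that for $X_j \leq\sub{T} A \oplus Z$ the map can be rewritten as an $A$-computable function of $Z$ alone (using the Turing reduction of $X_j$ to $A \oplus Z$), so the standard preservation of $1$-randomness under computable measure-preserving maps applies uniformly across the relevant paths.
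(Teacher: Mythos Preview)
Your Case 1 is fine, but Case 2 has a genuine gap. Having fixed $Z$ that is $1$-random relative to $A$, you build a $Z$-computable perfect tree $T$ and need every finite join of distinct paths through $T$ in the model to be $1$-random relative to $A$. Your resolution only treats $X_j \leq\sub{T} A \oplus Z$, but nothing in the Case~2 hypothesis restricts the second-order part of the model to such sets. Concretely, take $A = \emptyset$ in the full $\omega$-model: your Case~2 applies, yet by Corollary~\ref{negcor} (or Theorem~\ref{bwthm}), if $Z$ happens to be $2$-random then no $Z$-computable perfect tree has all paths $1$-random, so your $T$ already fails for $|\mathcal F| = 1$. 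Even under the restriction $X_j \leq\sub{T} A \oplus Z$ your argument is not correct: once $X_j$ is itself read off from $Z$ via some reduction, the map $Z \mapsto T(X_j)$ is in general not measure-preserving, since the choice of which column $Z_{X_j \uhr n}$ to read from can depend on the very bits of $Z$ being output.

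The paper's proof uses a three-way split: if ACA holds, invoke MYC-M; if WKL fails, take an infinite pathless tree; if WKL holds but ACA fails, fix $B$ whose jump $B'$ does not exist in the model and build $T$ from the columns $X_i$ of a $1$-random, with truncations triggered by enumerations into $B'$. These truncations guarantee that any path through $T$ that does not eventually coincide with a tail of a single column would let one compute $B'$; since $B'$ is absent from the model, every path present in the model is such a tail, and joins of distinct columns are $1$-random relative to $A$ by van Lambalgen. The idea you are missing is that one must exploit a specific failure of comprehension in the model to limit which paths can exist there; the mere existence of a $1$-random is not enough.
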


\begin{proof}
If ACA holds then so does MYC-M, so we are done. If WKL fails then
there is an infinite tree $T$ with no paths. This tree cannot have an
isolated path, as otherwise it would be able to compute such a path,
and it vacuously satisfies the condition on randomness of joins of
paths. So we can assume that WKL (or even just WWKL) holds but ACA
fails. Then there is a $B$ such that $B'$ does not exist.

Now fix $A$, and let $X = \bigoplus_i X_i$ be $1$-random relative to
$A$. We define a tree $S$ as follows. Fix a $B$-computable enumeration
of $B'$. Also fix an ordering of $2^{<\omega}$. For a tree $R$, say
that $\sigma$ is an \emph{off-node} of $R$ if $\sigma$ is not in $R$
but its immediate predecessor is. Let $S_0$ consist of every initial
segment of $X_{\langle 0,0 \rangle}$. Given $S_{n-1}$, let
$\sigma_0,\sigma_1,\ldots$ be the off-nodes of $S_{n-1}$ in
order. Form $S_n$ by appending to $S_{n-1}$ a copy of $X_{\langle n,k
\rangle}$ with root $\sigma_k$ for each $k$. Let $S = \bigcup_n
S_n$. It is not difficult to see that $S$ can be built computably from
$X$.

Now define $T$ as follows. Start with $S$, and whenever we find at a
stage $s$ that $n$ is in $B'$, truncate $S$ so that if $k<s$ then $T$
is finite above the $k$th off-node of $S_n$. Again, it is easy to see
that such a $T$ can be built $X$-computably. It is also easy to see
that $T$ has no isolated paths.

Let $Y_0,\ldots,Y_m$ be distinct paths on $T$. For each $i \leq m$, it
cannot be the case that $Y_i$ goes through an off-node of $S_n$ for
every $n$, since then we could compute $B'$ from knowing which
off-nodes $Y_i$ goes through, so each $Y_i$ is equal to $\sigma^\frown
X_{j_i}$ for some $\sigma$ and $j_i$. By construction, the $j_i$'s are
pairwise distinct, so $\bigoplus_{i \leq m} X_{j_i}$ is $1$-random,
and hence so is $\bigoplus_{i \leq m} Y_i$.
\end{proof}

Natural $\Pi^1_3$ statements can exhibit even stranger
behavior. Consider for instance the following statement. (See
\cite{ADR} for a formalization of the notion of $2$-randomness.)

\medskip

\textbf{T$\mathbf{2}$R}: For each $A$, there is an infinite tree $T$
such that every path through $T$ is $2$-random relative to $A$.

\medskip

For the same reason as above, T$2$R is implied by the negation of WKL
(or equivalently, RCA$_0$ is the infimum of T$2$R and WKL). It is also
follows from $2$-RAND, the principle stating that for every $A$, there
is a set that is $2$-random relative to $A$, since if $X$ is
$2$-random relative to $A$, then the initial segments of $X$ form the
required $T$. Indeed, T$2$R is equivalent to $\neg \textup{WKL} \vee
2\textup{-RAND}$, since T$2$R and WKL together imply $2$-RAND. Unlike
the previous principles for $1$-randomness, however, T$2$R is not
provable in RCA$_0$, or even in WKL$_0$, because if WKL$_0$ proves
T$2$R then it proves the existence of $2$-randoms, and there are
$\omega$-models of WKL$_0$ not containing any $2$-randoms (e.g., any
$\omega$-model of WKL$_0$ below a $\Delta^0_2$
 PA degree). Nevertheless, suppose we take any model $\mathcal N$ of
$\Sigma^0_1$-PA (the first-order part of RCA$_0$) and consider the
model $\mathcal M$ of RCA$_0$ with first-order part $\mathcal N$ and
second-order part consisting of all $\Delta^0_1$-definable subsets of
$\mathcal N$. Then $\mathcal M$ is not a model of WKL, so it is a
model of T$2$R, and hence any principle that does not hold in
$\mathcal M$ is not implied by T$2$R. Almost all natural principles
not provable in RCA$_0$ that have been studied have this property for
some $\mathcal M$ of this form, so the position of T$2$R in the
reverse-mathematical universe is rather strange. We will discuss
genericity below, but note here that the principle that for each $A$,
there is an infinite tree $T$ such that every path through $T$ is
$1$-generic relative to $A$ has the same properties. These are not the
first examples of these kinds of ``monsters in the reverse mathematics
zoo'': Belanger \cite{Bel1,Bel2} found several natural model-theoretic
principles equivalent to $\neg \textup{WKL} \vee \textup{ACA}$.

With this discussion in mind, it is worth seeking a $\Pi^1_2$ version
of MYC-M. We can be guided here by the proof of Theorem \ref{perfeff},
as well as by the formulation of PSUB. Assume that the
$\Pi^0_1$ classes $\mathcal R_0,\mathcal R_1,\ldots$ in that proof are
nested, i.e., $\mathcal R_0 \subseteq \mathcal R_i \subseteq
\cdots$. The construction of $T$ in that proof ensures that if
$\sigma_0,\ldots,\sigma_n$ are pairwise distinct strings of the same
length, then there is an $m$ such that for all paths
$T(\sigma_0)^\frown X_0,\ldots,T(\sigma_n)^\frown X_n$ through $T$, we
have $\bigoplus_{i \leq n} X_i \in \mathcal R_m$. By taking the
$\mathcal R_m$'s to be the complements of the levels of a universal
Martin-L\"of test, this fact gives us a way to talk about the
$1$-randomness of joins of paths through $T$ without quantifying over
the paths themselves. There are several ways in which we could
formalize this idea as a reverse-mathematical statement, and it is not
clear how much of a difference the exact choice makes to the strength
of the resulting principle. The following seems like a reasonable way
to do it, which is also adaptable to the variants of MYC-M discussed
above, and is weak enough to be as close to MYC-M as possible. Here we
think of a perfect tree as a binary tree with no dead ends and no
isolated paths. Let $T$ be any binary tree. Recall that for $\sigma
\in T$, we write $T_\sigma$ for the tree consisting of all $\tau$ such
that $\sigma^\frown \tau \in T$. For $\sigma_0,\ldots,\sigma_n \in T$,
we write $T_{(\sigma_0,\ldots,\sigma_n)}$ for the tree consisting of
the closure under initial segments of the set of all strings of the
form $\bigoplus_{i \leq n} \tau_n$ where $\tau_0 \in T_{\sigma_0},
\ldots, \tau_n \in T_{\sigma_n}$ are strings of the same length. By a
\emph{bar} for $T$ we mean a finite collection $F$ of pairwise
incompatible elements of $T$ such that every element of $T$ is
compatible with some element of $F$.

\medskip

\textbf{MYC-M$^+$}: Let $A$ be a set and let $S_0,S_1,\ldots$ be
positive trees such that for each $k$, we have $\frac{|S_k \cap
2^m|}{2^m} > 2^{-(k+1)}$ for all $m$. Then there is a perfect tree $T$
with the following property. For any pairwise distinct
$\sigma_0,\ldots,\sigma_n \in T$ of the same length, there are bars
$F_0,\ldots,F_n$ for $T_{\sigma_0},\ldots,T_{\sigma_n}$, respectively,
such that for each $\tau_0,\ldots,\tau_n$ with $\tau_i \in F_i$ for $i
\leq n$, there is a $k$ for which $T_{\tau_0,\ldots,\tau_n}$ is a
subtree of $S_k$.

\medskip

We can similarly define RMYC-M$^+$ by restricting the above to
$n=1$. Clearly MYC-M$^+$ and RMYC-M$^+$ imply MYC-M and RMYC-M,
respectively, and Diagram (\ref{mdiag}) remains unchanged if we
replace MYC-M and RMYC-M by MYC-M$^+$ and RMYC-M$^+$, respectively. We
can also define versions of the weakenings of MYC-M and RMYC-M to
infinite trees with no isolated paths, or just to infinite trees. For
example, we can consider the following principle.

\medskip

\textbf{WMYC-M$^+$}: Let $A$ be a set and let $S_0,S_1,\ldots$ be
positive trees such that for each $k$, we have $\frac{|S_k \cap
2^m|}{2^m} > 2^{-(k+1)}$ for all $m$. Then there is an infinite tree $T$
with no isolated paths satisfying the following property. For any pairwise
distinct $\sigma_0,\ldots,\sigma_n \in T$ of the same length, there
are bars $F_0,\ldots,F_n$ for $T_{\sigma_0},\ldots,T_{\sigma_n}$,
respectively, such that for each $\tau_0,\ldots,\tau_n$ with $\tau_i
\in F_i$ for $i \leq n$, there is a $k$ for which
$T_{\tau_0,\ldots,\tau_n}$ is either finite or is a subtree of $S_k$.

\medskip

We do not know what relationships hold between these various
principles beyond the obvious ones, and the fact that even the
weakening of WMYC-M$^+$ obtained by removing the requirement that $T$
have no isolated paths is not provable in RCA$_0$, by the fact
mentioned above that every computable infinite tree all of whose paths
are $1$-random has paths that are not mutually $1$-random. It might
also be interesting to consider the computability-theoretic and
reverse-mathematical content of versions of Corollary \ref{perfhalf},
given that mutual $1$-randomness is sufficient but not necessary to
ensure Hausdorff distance $1/2$.

\subsection{Mycielski's Theorem for category}
\label{mycsec3}

As noted in Theorem \ref{Ramseyforcat}, Mycielski's Theorem has an
analog for category in place of measure that is much easier to
prove. As in the measure case, we can do so via the effective case, by
considering $1$-genericity and then relativizing. A useful fact here
is the analog of van Lambalgen's Theorem for $1$-genericity in place
of $1$-randomness proved by Yu \cite{Yuvl}. It is straightforward to
check that the proof given in that paper can be carried out in
RCA$_0$.

\begin{theorem}[Mycielski \cite{My}]
\label{myccatthm}
Let $\mathcal C_0 \subseteq (2^\omega)^{n_0}, \mathcal C_1 \subseteq
(2^\omega)^{n_1}, \ldots$ be comeager. Then there is a perfect subset
$P$ of $2^\omega$ such that $P^{n_i} \in \mathcal C_i$ for all $i$.
\end{theorem}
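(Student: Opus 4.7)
The plan is to deduce Theorem \ref{myccatthm} from an effective perfect-tree analog, paralleling the route from Theorem \ref{perfeff} to Theorem \ref{Ramsey} in the measure case. The key effective result to establish is the following (called Theorem \ref{ancthm} in Proposition \ref{treeexthm}): for every set $A$ there is an $A'$-computable perfect tree $T$ such that for every nonempty finite collection $\mathcal F$ of pairwise distinct paths through $T$, the join $\bigoplus_{Y \in \mathcal F} Y$ is $1$-generic relative to $A$. Granting this, the theorem follows by a standard coding. Each comeager $\mathcal C_i$ contains a dense $G_\delta$ set $\bigcap_k U_{i,k}$, and each $U_{i,k}$ is generated by a dense subset of tuples of strings of a common length. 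Fixing an oracle $A$ relative to which the double sequence of these generating sets is uniformly $\Sigma^{0,A}_1$, apply the effective statement to $A$ and let $P$ be the set of paths through the resulting tree $T$. Then $P$ is perfect in $2^\omega$, and whenever $Y_1,\ldots,Y_{n_i} \in P$ are pairwise distinct, the $1$-genericity of $\bigoplus_j Y_j$ relative to $A$ places $(Y_1,\ldots,Y_{n_i})$ in every $U_{i,k}$, hence in $\mathcal C_i$, which is the required conclusion.

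For the construction of $T$, I would run a $\Sigma^{0,A}_1$-forcing (essentially Cohen) construction with an $A'$-oracle, in the style of the proof of Theorem \ref{perfeff} but without any measure-theoretic overhead. Uniformly in $\sigma \in 2^{<\omega}$, define a string $X_\sigma$ of a common length $k_n$ for $|\sigma|=n$, with $X_\sigma \prec X_{\sigma^\frown 0}, X_{\sigma^\frown 1}$ and the latter two incompatible. At stage $s$, enumerate the pairwise-distinct $m$-tuples $\vec\sigma = (\sigma_1,\ldots,\sigma_m)$ of strings of length $s$ together with $A$-c.e.\ indices $e \leq s$, and for each such pair use the $A'$-oracle to ask whether the current strings $X_{\sigma_j}$ can be simultaneously lengthened (fixing all $X_\tau$ with $\tau \notin \vec\sigma$) so that their new join lies in $W_e^A$; act positively if so, and otherwise record the current $\bigoplus_j X_{\sigma_j} \uhr k_s$ as a witness that $W_e^A$ is not dense above it. Close the stage by lengthening all the $X_\sigma$ to a common length $k_{s+1}$ that separates siblings, and put $T(\sigma) = X_\sigma \uhr k_{|\sigma|}$. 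Since only finitely many requirements are processed at each stage, $T \leq\sub{T} A'$ and $T$ is perfect.

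The main obstacle will be verifying that every $1$-genericity requirement is satisfied for every join of pairwise distinct paths through $T$, while ensuring that acting on behalf of one tuple does not spoil commitments made for other tuples at the same level. For fixed distinct paths $X_1,\ldots,X_m$ and fixed $e$, choose $s \geq e$ large enough that the initial segments $X_j \uhr k_s$ are pairwise distinct; then at stage $s$ either an extension forcing $\bigoplus_j X_j$ to meet $W_e^A$ was committed, or a finite witness to the non-denseness of $W_e^A$ above $\bigoplus_j X_j \uhr k_s$ was recorded, and in either case $\bigoplus_j X_j$ is $1$-generic relative to $A$. The non-collision bookkeeping requires sequentializing the stage-$s$ actions and observing that since $W_e^A$ restricted to extensions of a fixed box remains a $\Sigma^{0,A}_1$ set of strings, each commitment made for one tuple constrains but does not destroy the feasibility of satisfying later tuples; this is the category analog of the acceptability argument of Lemmas \ref{auxlem} and \ref{mainlem}, with the Baire-category fact that dense open sets remain dense in every open box replacing the Lebesgue density estimates used there. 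Once this bookkeeping is in place, the deduction of Theorem \ref{myccatthm} described in the first paragraph completes the proof.
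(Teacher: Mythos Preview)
Your proposal is correct and is essentially the paper's proof: both first build (relative to any oracle $A$) a perfect tree whose finite joins of pairwise distinct paths are $1$-generic relative to $A$, and then absorb the $G_\delta$ data for the $\mathcal C_i$ into that oracle; the paper merely packages the tree construction as Cohen forcing on strings encoding finite partial perfect trees, proving density of each $D_e$ by a maximal-successful-sequence argument that is precisely your sequential stage-by-stage processing unwound. One small correction: your ``non-collision'' worry is overstated and needs no category analog of Lemmas~\ref{auxlem}--\ref{mainlem}, since once a tuple has been forced to meet or to avoid $W_e^A$ both outcomes are preserved under any further extension of the leaves, so the only bookkeeping is the order in which requirements are handled.
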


\begin{proof}
By a \emph{finite partial perfect tree} we mean the restriction of a
perfect tree to $\sigma \in 2^{<n}$ for some $n$. It is
straightforward to encode finite partial perfect trees as binary
strings so that if $\sigma$ and $\tau$ encode $S$ and $T$
respectively, then $\tau \succ \sigma$ if and only if $T$ properly
extends $S$. Let $W_0,W_1,\ldots$ list the c.e.\ sets of binary
strings. Let $D_e$ be the set of all $\sigma$ such that $\sigma$
encodes a finite partial perfect tree $T$ and for every tuple
$\tau_0,\ldots,\tau_n$ of distinct leaves of $T$, we have that $\tau =
\bigoplus_{i \leq n} \tau_i$ meets or avoids $W_e$ (i.e., either
$\tau$ has an initial segment in $W_e$, or no extension of $\tau$ is
in $W_e$).

We claim that each $D_e$ is dense, so that if $X$ is sufficiently
generic and $T$ is the union of the finite partial perfect trees
encoded by initial segments of $X$, then $T$ is a perfect tree such
that any join of finitely many pairwise distinct paths through $T$ is
$1$-generic. To prove the claim, fix a string $\sigma$. By extending
$\sigma$ if needed, we can assume that $\sigma$ encodes a tree
$S$. Let $S(\tau_0),\ldots,S(\tau_n)$ be the leaves of $S$. Let $P$ be
the set of all tuples of pairwise distinct pairs $(i,j)$ with $i \leq
n$ and $j \in \{0,1\}$. Let $F_0,\ldots,F_{m-1} \in P$ be pairwise
distinct, and consider the following procedure. For $i \leq n$ and $j
\in \{0,1\}$, let $\mu_{i,j,0}=S(\tau_j)^\frown i$. If we have defined
all the strings $\mu_{i,j,k}$ for some $k < m$, search for strings
$\nu_{i,j}$ for $j \leq n$ and $i \in \{0,1\}$ such that each
$\nu_{i,j}$ extends $\mu_{i,j,k}$, and $\bigoplus_{(i,j) \in F_k}
\nu_{i,j} \in W_e$ (where this join is ordered as in $F_k$). If such
strings are found then let $\mu_{i,j,k+1} = \nu_{i,j}$. Let $Q$ be the
set of all pairwise distinct $F_0,\ldots,F_{m-1} \in P$ such that this
procedure ends up defining the strings $\mu_{i,j,m}$. Take an element
of $Q$ of maximal size, use that element to define the strings
$\mu_{i,j,m}$, extend $S$ by defining $S(\tau_i^\frown
j)=\mu_{i,j,m+1}$, and let $\rho$ be a string encoding this new
tree. It is easy to see that $\rho$ is an extension of $\sigma$ in
$D_e$.

Now, each $\mathcal C_i$ contains the intersection of dense open sets
$\mathcal U_i^0,\mathcal U_i^1,\dots$. Let $V_i^j$ be a set of strings
generating the open set $\{\bigoplus_{k<n_i} Y_k :
(Y_0,\ldots,Y_{n_i-1}) \in \mathcal U_i^j\}$. Each $V_i^j$ is
$\Sigma^{0,A_i^j}_1$ for some $A_i^j$. Let $A=\bigoplus_{i,j}
A_i^j$. Then relativizing the above argument to $A$ produces a perfect
tree $T$ such that for each $i$ and each sequence
$Y_0,\ldots,Y_{n_i-1}$ of distinct paths through $T$, we have
$(Y_0,\ldots,Y_{n_i-1}) \in \mathcal U_i^j$ for all $j$, and hence
$(Y_0,\ldots,Y_{n_i-1}) \in \mathcal C_i$.
\end{proof}

The proof of Theorem \ref{myccatthm} shows that, in contrast with the
second part of Theorem \ref{negcor}, if $X$ is sufficiently generic,
then it computes a perfect tree such that the join of any nonempty
finite collection of distinct paths is $1$-generic. (This distinction
mirrors the set-theoretic one, as, unlike in the case of random reals,
adding a single Cohen generic real does add a perfect set of mutually
Cohen generic reals (see e.g.\ \cite{Ham}).)  A natural question now
is how generic such an $X$ needs to be.  It is not quite enough to
have $X$ be $1$-generic, because Kumabe \cite{Kumabe} showed that
there is a $1$-generic degree with a strong minimal cover, so the
proof of Theorem \ref{negthm} with ``$1$-random'' replaced by
``$1$-generic'' throughout yields the following result.

\begin{theorem}
\label{ptgthm}
There is a $1$-generic that does not compute any perfect tree all of
whose paths are $1$-generic.
\end{theorem}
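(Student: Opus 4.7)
The plan is to adapt, nearly verbatim, the proof of Theorem \ref{negthm} from the $1$-random setting to the $1$-generic one. By Kumabe's theorem \cite{Kumabe}, there is a $1$-generic degree $\mathbf{a}$ that admits a strong minimal cover $\mathbf{x}$; this degree $\mathbf{a}$ will be the desired $1$-generic, and the task is to verify that the two ingredients of the proof of Theorem \ref{negthm} survive the translation.

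The first step is to observe that no $X \in \mathbf{x}$ can itself be $1$-generic. For if $X$ were $1$-generic, then writing $X = X_0 \oplus X_1$, Yu's analog of van Lambalgen's theorem for $1$-genericity \cite{Yuvl} (used in the same manner as van Lambalgen's theorem was used for $1$-randomness in the proof of Theorem \ref{negthm}) yields that $X_0$ and $X_1$ are mutually $1$-generic relative to one another, and in particular Turing-incomparable; hence both have degree strictly below $\mathbf{x}$. By the strong minimality of $\mathbf{x}$ over $\mathbf{a}$, both $X_0$ and $X_1$ would then be $\mathbf{a}$-computable, so $X \equiv\sub{T} X_0 \oplus X_1 \leq\sub{T} \mathbf{a}$, contradicting $\mathbf{a} < \mathbf{x}$.

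The second step is the main diagonalization, identical in form to the random case. Suppose for contradiction that $\mathbf{a}$ computes a perfect tree $T$ all of whose paths are $1$-generic. Fix any $X \in \mathbf{x}$ and let $B = T(X)$. Then $B \leq\sub{T} T \oplus X \equiv\sub{T} X$, so $B$ has degree at most $\mathbf{x}$. Since $B$ is a path through $T$ it is $1$-generic, and by the previous step it cannot have degree exactly $\mathbf{x}$; hence $B$ has degree strictly below $\mathbf{x}$, and strong minimality gives $B \leq\sub{T} \mathbf{a}$. But $X$ can be recovered computably from $T$ and $B$ (since $T$, viewed as a map $2^\omega \to 2^\omega$, is $1$-$1$), so $X \leq\sub{T} T \oplus B \leq\sub{T} \mathbf{a}$, contradicting $\mathbf{a} < \mathbf{x}$ once more.

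The hard part is not in the translation itself, which is genuinely routine, but in having the right black box available: one needs a $1$-generic with a strong minimal cover, which is Kumabe's theorem, and one needs splittings of $1$-generics to produce incomparable parts, which is Yu's theorem. With both in hand, the argument is line-by-line the proof of Theorem \ref{negthm} with ``$1$-random'' replaced by ``$1$-generic'' and ``van Lambalgen'' by ``Yu.''
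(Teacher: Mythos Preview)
Your proof is correct and matches the paper's approach exactly: the paper's proof consists of the single sentence that Kumabe's theorem gives a $1$-generic degree with a strong minimal cover, so the proof of Theorem~\ref{negthm} with ``$1$-random'' replaced by ``$1$-generic'' throughout yields the result. You have correctly identified that Yu's analog of van Lambalgen's theorem is the tool needed to make this substitution go through in the first step.
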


The sets $D_e$ above are uniformly $\Delta^0_2$, and indeed
can be replaced by uniformly $\Pi^0_1$ dense sets, because if a
$\Delta^0_2$ set $D$ is dense then so is the $\Pi^0_1$ set $C=\{\tau :
(\forall s > |\tau|)(\exists \sigma \preccurlyeq \tau)[\sigma \in
D[s]]\}$, and clearly a set meets $D$ if and only if it meets
$C$. Thus the notion of $\Pi^0_1$-genericity becomes relevant
here. (We will return to this notion in the reverse-mathematical
context below.) We can analyze these sets a bit further, though.

A set of strings $D$ is \emph{pb-dense} if there is a function $f$
that is computable from $\emptyset'$ with a primitive recursive bound
on the use function, such that for each $\sigma$, we have that
$f(\sigma) \in D$ and $f(\sigma) \succcurlyeq \sigma$. A set is
\emph{pb-generic} if it meets every pb-dense set of strings. Downey,
Jockusch, and Stob \cite[Theorem 3.2]{DoJoSto} showed that a degree
$\mathbf{a}$ computes a pb-generic if and only if it is array
noncomputable, which means that for each $g \leq\sub{wtt} \emptyset'$,
there is an $\mathbf{a}$-computable function $h$ such that $h(n) \geq
g(n)$ for infinitely many $n$.

The argument in the proof of Theorem \ref{myccatthm} that the sets
$D_e$ are dense shows that given $\sigma$, we can find an extension
$\tau$ of $\sigma$ in $W_e$ by asking enough existential questions to
determine the elements of $Q$, since once we have an element of $Q$ of
maximal length, we can find such a $\tau$ computably. There clearly is
a primitive recursive bound on the numbers $n$ for which we need to
query $\emptyset'(n)$ to answer all of these questions. Thus each
$D_e$ is pb-dense, and hence any array noncomputable degree can compute a
set that meets all of them. Relativizing this argument and that of
Downey, Jockusch, and Stob \cite{DoJoSto}, we have the following
result.

\begin{theorem}
\label{ancthm}
For any $A$ and any $B$ that is array noncomputable relative to $A$,
there is a $B$-computable perfect tree $T$ such that for any nonempty
finite collection $\mathcal F$ of paths through $T$, the set
$\bigoplus_{Y \in \mathcal F} Y$ is $1$-generic relative to $A$, and
hence the joins of any two finite, disjoint, nonempty collections of
paths through $T$ are mutually $1$-generic relative to $A$.
\end{theorem}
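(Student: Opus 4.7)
The plan is to extract the necessary density facts from the construction already carried out in the proof of Theorem \ref{myccatthm}, verify that they refine to pb-density relative to $A$, and then invoke the relativization of the Downey--Jockusch--Stob characterization of array noncomputable degrees as exactly those that compute pb-generics.

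First, I would recall the setup from the proof of Theorem \ref{myccatthm}: strings are used to encode finite partial perfect trees, and for each c.e.\ set $W_e$ of binary strings we have the dense set $D_e$ consisting of those strings $\sigma$ that code a finite partial perfect tree $T$ such that, for every tuple of pairwise distinct leaves $\tau_0,\ldots,\tau_n$ of $T$, the join $\bigoplus_{i\le n}\tau_i$ either meets or avoids $W_e$. The density argument given there is really a \emph{uniform} procedure: starting from $\sigma$ coding some finite partial perfect tree $S$, one enumerates, for each subset $F\subseteq P$ in decreasing order of size, the $\Sigma^0_1$ question ``is there a simultaneous extension of all the currently active leaf-joins meeting $W_e$ along $F$?'', and once a maximal realizable $F$ is found, the answer can be converted computably into the required extension of $\sigma$ in $D_e$.

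Next, I would observe that the $\emptyset'$-queries made by this procedure are bounded in a primitive-recursive way in $|\sigma|$, because the number of subsets $F$ considered and the lengths of the strings to be searched for depend only on the size of $S$ (hence on $|\sigma|$) and on $e$. Thus the function $\sigma\mapsto f_e(\sigma)\in D_e$ with $f_e(\sigma)\succcurlyeq\sigma$ can be computed from $\emptyset'$ with a primitive recursive use bound, so each $D_e$ is pb-dense; relativizing the whole construction to $A$ makes $D_e$ pb-dense relative to $A$, uniformly in $e$.

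Now, by the relativization of the Downey--Jockusch--Stob theorem \cite[Theorem 3.2]{DoJoSto}, any $B$ that is array noncomputable relative to $A$ computes a set $X$ that is pb-generic relative to $A$, i.e.\ meets every pb-dense-in-$A$ set of strings. Such an $X$ then meets every $D_e$, so from $X$ (and hence from $B$) we may computably assemble the perfect tree $T$ as the union of the finite partial perfect trees coded by a cofinal chain of initial segments of $X$ lying in the various $D_e$'s. By construction, for any pairwise distinct paths $Y_0,\ldots,Y_n$ through $T$ and any $e$, a sufficiently long common initial segment of $Y_0,\ldots,Y_n$ factors through a leaf-tuple in some $\sigma\in D_e$, so $\bigoplus_{i\le n}Y_i$ meets or avoids $W_e^A$; hence every such join is $1$-generic relative to $A$. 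The mutual $1$-genericity of disjoint finite collections of paths then follows from Yu's analog of van Lambalgen's theorem for $1$-genericity \cite{Yuvl}.

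The one step that needs genuine care is the primitive-recursive bound on the $\emptyset'$-use in the density argument; everything else is bookkeeping plus citation. I would expect that to be the main obstacle: namely, making sure that each of the finitely many $\Sigma^0_1$ queries involved in producing $f_e(\sigma)$ ranges only over witnesses of length primitive-recursively bounded in $|\sigma|$ and $e$, so that the total use function is primitive recursive, matching the definition of pb-density used by Downey, Jockusch, and Stob.
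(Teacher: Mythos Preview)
Your proposal is correct and follows essentially the same route as the paper: the paper argues that the density procedure for each $D_e$ from Theorem~\ref{myccatthm} requires only finitely many $\Sigma^0_1$ queries whose $\emptyset'$-indices are primitive-recursively bounded in $|\sigma|$, so each $D_e$ is pb-dense, and then relativizes and invokes \cite[Theorem~3.2]{DoJoSto}. One small correction to your last paragraph: what is primitive-recursively bounded is not the \emph{length of the witnesses} to the $\Sigma^0_1$ queries (the strings $\nu_{i,j}$ found by the search can be arbitrarily long), but rather the \emph{indices} queried to $\emptyset'$ --- each question ``does the procedure for $(F_0,\ldots,F_{m-1})$ terminate?'' is a single $\Sigma^0_1$ sentence whose G\"odel number is primitive-recursive in $|\sigma|$ and $e$, and the number of such questions is likewise bounded.
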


On the reverse mathematics side, the following principles have been
well-studied. (See e.g.\ \cite[Section 9.3]{Hbook} for more on the
reverse mathematics of model theory.)

\medskip

\noindent \textbf{$\mathbf{\Pi^0_1}$G}: For any family of uniformly
$\Pi^0_1$ dense predicates $P_0,P_1,\ldots$ on $2^{<\omega}$, there is
a set $G$ such that each $P_i$ holds of some initial segment of $G$.

\medskip

\noindent \textbf{AMT}: Every complete atomic theory has an atomic
model.

\medskip

It follows from computability-theoretic work of Csima, Hirschfeldt,
Knight, and Soare \cite{CHKS} and Conidis \cite{Con} that $\Pi^0_1$G
implies AMT, and that the two are equivalent over RCA$_0$ together
with $\Sigma^0_2$ induction. Hirschfeldt, Shore, and Slaman
\cite{HSS} showed that AMT does not imply $\Pi^0_1$G over RCA$_0$
alone. They also showed that both principles, while provable in
ACA$_0$, are incomparable with WKL and WWKL. The existence of
pb-generics does not seem to have been studied from this point of
view. It is not difficult to show that it follows from $\Pi^0_1$G, but
we do not know whether it is strictly weaker.

Corresponding to the principles for measure in the previous
subsection, we have the following principles.

\medskip

\noindent \textbf{MYC-C}: For each $A$, there is a perfect tree $T$
such that for any nonempty finite collection $\mathcal F$ of pairwise
distinct paths through $T$, the set $\bigoplus_{Y \in \mathcal F} Y$
is $1$-generic relative to $A$.

\medskip

\noindent\textbf{RMYC-C}: For each $A$, there is a perfect tree $T$
such that any two distinct paths through $T$ are mutually $1$-generic
relative to $A$.

\medskip

\noindent\textbf{PTG}: For each $A$, there is a perfect tree $T$ such
that every path through $T$ is $1$-generic relative to $A$.

\medskip

These also have natural versions that avoid quantification over
paths. (Here a string $\sigma$ \emph{meets or avoids} a predicate $P$
if either $P(\sigma)$ holds or $P(\tau)$ fails to hold for every $\tau
\succcurlyeq \sigma$.)

\medskip

\noindent \textbf{MYC-C$^+$}: For each $A$, there is a perfect tree
$T$ such that for every $\Sigma^0_1$ predicate $P$ on binary strings,
every $n$, and every $k$, there is an $m>k$ such that for every
pairwise distinct $\sigma_0,\ldots,\sigma_n \in T \cap 2^m$, the
string $\bigoplus_{i \leq n} \sigma_n$ meets or avoids $P$.

\medskip

\noindent\textbf{RMYC-C$^+$}: For each $A$, there is a perfect tree
$T$ such that for every $\Sigma^0_1$ predicate $P$ on binary strings
and every $k$, there is an $m>k$ such that for every distinct
$\sigma_0,\sigma_1 \in T \cap 2^m$, the string $\sigma_0 \oplus
\sigma_1$ meets or avoids $P$.

\medskip

\noindent\textbf{PTG$^+$}: For each $A$, there is a perfect tree $T$
such that for every $\Sigma^0_1$ predicate $P$ on binary strings and
every $k$, there is an $m>k$ such that every $\sigma \in T \cap 2^m$
meets or avoids $P$.

\medskip

The argument we gave above that $\Pi^0_1$-genericity is enough for the
proof of Theorem \ref{myccatthm} can be carried out over RCA$_0$ to
show that $\Pi^0_1$G implies MYC-C$^+$. (In the proof of Theorem
\ref{myccatthm}, the existence of the set $Q$ is ensured by bounded
$\Sigma^0_1$-comprehension, which holds in RCA$_0$.) PTG clearly
implies $1$-GEN, the principle stating that for every $A$ there is a
set that is $1$-generic relative to $A$. The latter principle has been
shown by Cholak, Downey, and Igusa \cite{CDI} to be equivalent to the
Finite Intersection Principle (FIP) studied by Dzhafarov and Mummert
\cite{DM}. Day, Dzhafarov, and Miller [unpublished] and Greenberg and
Hirschfeldt [unpublished] showed that AMT implies $1$-GEN.  We have
mentioned that $\Pi^0_1$G is incomparable with WKL and WWKL. The same
is true of $1$-GEN, since it follows from $\Pi^0_1$G, and hence cannot
imply WWKL, and is not implied by WKL because there are
hyperimmune-free  PA degrees, which cannot compute $1$-generics. Thus
we are in a different section of the reverse-mathematical universe as
in the measure case.

For each $1$-generic $A$, there is an $\omega$-model of $1$-GEN all
of whose elements are computable in $A$, by the same argument as in
the first paragraph of the proof of Theorem \ref{psubthm}, with
randomness replaced by genericity. Thus Theorem \ref{ptgthm} has the
following consequence.

\begin{corollary}
There is an $\omega$-model of $\textup{RCA}_0+1\textup{-GEN}$ that is
not a model of \textup{PTG$^+$}.
\end{corollary}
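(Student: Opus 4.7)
The plan is to follow the template of Theorem \ref{psubthm}, substituting randomness by genericity throughout. First I would invoke Theorem \ref{ptgthm} to fix a $1$-generic set $A$ that does not compute any perfect tree all of whose paths are $1$-generic. Splitting $A = \bigoplus_{i \in \omega} A_i$, I would form the Turing ideal $\mathcal S = \{Y : (\exists k)[Y \leq\sub{T} \bigoplus_{i \leq k} A_i]\}$, which is automatically an $\omega$-model of $\textup{RCA}_0$. By Yu's \cite{Yuvl} analog of van Lambalgen's Theorem for $1$-genericity, each $A_i$ is $1$-generic relative to $\bigoplus_{j \neq i} A_j$; consequently for every $Y \in \mathcal S$ there is some $k$ with $Y \leq\sub{T} \bigoplus_{i \leq k} A_i \in \mathcal S$, and then $A_{k+1} \in \mathcal S$ is $1$-generic relative to $Y$. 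Hence $\mathcal S \models 1\textup{-GEN}$, exactly as in the parallel step sketched just before the corollary (and in the first paragraph of the proof of Theorem \ref{psubthm}).

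The next step would be to show $\mathcal S \not\models \textup{PTG}^+$. Suppose toward a contradiction that $\mathcal S \models \textup{PTG}^+$. Applying $\textup{PTG}^+$ in $\mathcal S$ (say with parameter $\emptyset$) yields a perfect tree $T \in \mathcal S$, hence $T \leq\sub{T} A$, such that for every $\Sigma^0_1$ predicate $P$ on binary strings and every $k$, there is an $m > k$ with every $\sigma \in T \cap 2^m$ meeting or avoiding $P$. Perfectness of $T$ and the ``meets or avoids'' clause for a given $\Sigma^0_1$ predicate are both arithmetical conditions and thus absolute between $\mathcal S$ and the ambient universe, so $T$ genuinely has these properties. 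Now for any path $X$ through $T$ and any $\Sigma^0_1$ set $W$ of strings, choosing $m$ so that every $\sigma \in T \cap 2^m$ meets or avoids $W$ forces $X \restr m \in T \cap 2^m$ to meet or avoid $W$; therefore $X$ is $1$-generic. So $A$ computes a perfect tree all of whose paths are $1$-generic, contradicting the choice of $A$ from Theorem \ref{ptgthm}.

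There is essentially no substantive obstacle: the argument just threads together previously established results, namely Theorem \ref{ptgthm} for the choice of $A$, Yu's theorem for the model-construction step, and arithmetical absoluteness for the transfer from $\mathcal S$ to the real world. The only mildly delicate point worth verifying is that the PTG$^+$ conclusion, once taken in $\mathcal S$, really does yield a real perfect tree whose real paths are $1$-generic; but this is immediate from the arithmetical nature of the clauses defining PTG$^+$ together with the fact that every $\Sigma^0_1$ set of strings in the real world is represented in $\mathcal S$.
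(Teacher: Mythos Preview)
Your proposal is correct and follows exactly the approach the paper intends: the paper's own ``proof'' is just the sentence preceding the corollary, pointing to the first paragraph of the proof of Theorem~\ref{psubthm} with randomness replaced by genericity (via Yu's theorem) and invoking Theorem~\ref{ptgthm}. Your write-up fills in precisely these steps, including the absoluteness observation that the PTG$^+$ condition, being arithmetical in $T$, transfers from $\mathcal S$ to the real world and forces every path through $T$ to be $1$-generic.
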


Thus we have the following picture, where the arrows and missing
arrows have the same meaning as in Diagram (\ref{mdiag}).

\begin{equation}
\xymatrix{
 & {\Pi^0_1\textup{G}}\ar[d]\ar@{=>}[drr] \\
 & {\textup{MYC-C}^+}\ar[dl]\ar[dr] & & {\textup{AMT}}\ar@{=>}@/^5pc/[ddddll]\\
{\textup{RMYC-C}^+}\ar[d]\ar[drr] & & {\textup{MYC-C}}\ar[d] \\
{\textup{PTG}^+}\ar@{=>}[ddr]\ar[drr] & & {\textup{RMYC-C}}\ar[d] \\
 & & {\textup{PTG}}\ar[dl]\\
 & {1\textup{-GEN}=\textup{FIP}}
}
\end{equation}

\bigskip

As in the measure case, we can also define versions of the above
principles where we replace perfect trees by infinite trees without
isolated paths, or just by infinite trees. We will not study these
principles further here. It might also be interesting to
consider the computability-theoretic and reverse-mathematical content
of versions of Corollary \ref{distance1}, given that relative
$1$-genericity is sufficient but not necessary to ensure Hausdorff
distance $1$.

\section{Cauchy sequences in $(\mathcal S,\delta)$}
\label{compsec}

In this section we pursue the question raised following Theorem
\ref{complthm} regarding the effectiveness of the completeness of
$(\mathcal S,\delta)$. We need to work with particular representatives
of coarse equivalence classes, so, keeping in mind that $\delta$ is
defined (as a pseudo-metric) on sets as well as on elements of
$\mathcal S$, we say that $A_0,A_1,\ldots$ form a
\emph{$\delta$-Cauchy sequence} if for every $k$ there is an $m$ such
that $\delta(A_m,A_n) \leq 2^{-k}$ for all $n>m$. We say that
$A_0,A_1,\ldots$ form a \emph{strongly $\delta$-Cauchy sequence} if
$\delta(A_m,A_n) \leq 2^{-m}$ for all $m<n$. We say that $A$ is a
\emph{limit} of the $\delta$-Cauchy sequence $A_0,A_1,\ldots$ if for
every $k$ there is an $m$ such that $\delta(A_n,A) \leq 2^{-k}$ for
all $n>m$. By Theorem \ref{complthm}, every $\delta$-Cauchy sequence
has a limit. While this limit is not unique, all the limits of a given
$\delta$-Cauchy sequence are coarsely equivalent (i.e., are coarse
descriptions of each other).

For the purposes of reverse mathematics, these definitions need to be
rephrased a bit, because the existence of the $\delta$ function cannot
be proven in RCA$_0$. (Indeed, because it involves the existence of
the upper density, and there is a computable set whose density
computes $\emptyset'$, as shown in \cite[Theorem 2.21]{JS1}, the
existence of the $\delta$ function can be shown to be equivalent to
ACA$_0$.) However, the definitions above do not actually require the
$\delta$ function to exist (i.e., they can be written in classically
equivalent forms that do not mention $\delta$). So when working in
RCA$_0$, we say that $A_0,A_1,\ldots$ form a \emph{$\delta$-Cauchy
sequence} if
\[
(\forall k)(\exists m)(\forall n>m)(\exists i)(\forall j>i)
[\rho_j(A_m \sd A_n) \leq 2^{-k}]
\]
and that it is a \emph{strongly $\delta$-Cauchy sequence} if
\[
(\forall m)(\forall n>m)(\exists i)(\forall j>i)
[\rho_j(A_m \sd A_n) \leq 2^{-m}].
\]
We say that $A$ is a
\emph{limit} of the $\delta$-Cauchy sequence $A_0,A_1,\ldots$ if
\[
(\forall k)(\exists m)(\forall n>m)(\exists i)(\forall
j>i)[\rho_j(A_m \sd A) \leq 2^{-k}].
\]

The reason we consider strongly $\delta$-Cauchy sequences is the
following theorem, which was stated in a different form in
\cite[Theorem 5.9]{HJMS}.

\begin{theorem}[Miller, see {\cite[Theorem 5.9]{HJMS}}]
\label{milthm}
Every strongly $\delta$-Cauchy sequence has a limit computable from the
sequence.
\end{theorem}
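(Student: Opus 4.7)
The plan is to effectivize the construction from Theorem \ref{complthm}. The only obstruction there is the choice of cutoffs $k_n$, which is specified by a $\Pi^0_1$ condition ($d_i(C_m \sd C_n) < 2^{-m}$ for \emph{all} $i \geq k_n$) and so is not in general computable from the sequence. The trick is to replace this by a construction whose certifying density inequalities only need to hold at single prefixes and hence can be checked finitely.

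Fix checkpoints $p_k = 2^k$ and blocks $B_k = [p_k, p_{k+1})$ for $k \geq 0$. Define, computably from $\{A_n\}_{n \in \omega}$, the function
\[
\sigma(k) = \max \{ s \leq k : (\forall m < s)\ \rho_{p_{k+1}}(A_m \sd A_s) \leq 2^{-m+1} \},
\]
which is well-defined since $s = 0$ always qualifies vacuously. Let $C$ agree with $A_{\sigma(k)}$ on $B_k$ (setting $C(0)$ arbitrarily); given $x$, one locates the unique $k$ with $x \in B_k$, computes $\sigma(k)$ from finitely many bits of the oracle, and reads off $A_{\sigma(k)}(x)$, so $C$ is computable from the sequence.

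I would first check that $\sigma(k) \to \infty$. Fix $s$. Since $\delta(A_m, A_s) \leq 2^{-m}$, we have $\limsup_j \rho_j(A_m \sd A_s) \leq 2^{-m}$, so there is $J_s$ past which $\rho_j(A_m \sd A_s) \leq 2^{-m+1}$ for all $m < s$; once $k \geq s$ and $p_{k+1} \geq J_s$, the value $s$ qualifies, so $\sigma(k) \geq s$. I would then bound $\delta(A_m, C)$. Fix $m$ and let $K_m$ be the (necessarily finite) largest $k$ with $\sigma(k) \leq m$. Split
\[
|(A_m \sd C) \cap [0, p_{K+1})| = \sum_{k = 0}^{K} |(A_m \sd A_{\sigma(k)}) \cap B_k|
\]
into the contribution from $k \leq K_m$ (each term at most $|B_k|$, summing to at most $p_{K_m+1}$) and from $k > K_m$, where the defining inequality of $\sigma(k)$ applies to $m < \sigma(k)$ and gives $|(A_m \sd A_{\sigma(k)}) \cap B_k| \leq p_{k+1} \cdot 2^{-m+1}$. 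Using $\sum_{k=0}^{K} p_{k+1} \leq 2 p_{K+1}$, one obtains $\rho_{p_{K+1}}(A_m \sd C) \leq p_{K_m+1}/p_{K+1} + 2^{-m+2}$, and for intermediate $j \in [p_K, p_{K+1})$ the bound $p_{K+1}/p_K = 2$ extends this to $\rho_j(A_m \sd C) \leq p_{K_m+1}/p_K + 2^{-m+3}$. Letting $j \to \infty$ kills the first summand, so $\delta(A_m, C) \leq 2^{-m+3}$, which tends to $0$ as $m \to \infty$.

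The main obstacle I anticipate is the bookkeeping around $\sigma$, which is not obviously monotone in $k$: because the condition defining $\sigma(k)$ is a sharp single-prefix inequality at $p_{k+1}$ that changes with $k$, one must be careful that the key bound $\rho_{p_{k+1}}(A_m \sd A_{\sigma(k)}) \leq 2^{-m+1}$ used in the density calculation is the one certified by the definition at step $k$ itself, not inherited from any earlier step. Once this is pinned down, the geometric choice $p_k = 2^k$ makes the arithmetic go through.
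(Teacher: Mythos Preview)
Your proof is correct and follows essentially the same architecture as the paper's (as sketched after the theorem statement): partition into dyadic blocks, on block $k$ copy $A_{\sigma(k)}$ where $\sigma(k)$ is the largest index $\leq k$ passing a finitely-checkable density test against all earlier terms, show $\sigma(k)\to\infty$, and read off the convergence bound. The one substantive variation is the test itself: the paper's ``trusting'' condition is stated in terms of the local block density $d_k$ on $J_k$ and then invokes Lemma~\ref{factor2} to pass between $\overline d$ and $\overline\rho$, whereas you certify directly via the global prefix density $\rho_{p_{k+1}}$. Your choice is a mild simplification---it eliminates the factor-of-two lemma entirely and gives the $\rho$-bounds without translation---at the cost of a slightly coarser per-block estimate (you bound $|(A_m\sd A_{\sigma(k)})\cap B_k|$ by the full prefix count $p_{k+1}\cdot 2^{-m+1}$ rather than a block-local quantity), which the geometric growth of $p_k$ absorbs anyway.
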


The proof of this theorem (as given in \cite{HJMS}) consists of taking
a strongly $\delta$-Cauchy $C_0,C_1,\ldots$, defining the notion of
$C_m$ \emph{trusting} $C_n$ on $J_k$ in terms of the density function
$d_k$ (where $J_k$ and $d_k$ are as in Definition \ref{jddef}), and
then defining a limit $C$ of this sequence by letting $C \restr J_k =
C_n \restr J_k$ for the largest $n \leq k$ such that $C_n$ is trusted
on $J_k$ by all $C_m$ with $m<n$. Since the definition of trusting is
computable, this definition of $C$ can be carried out in RCA$_0$. The
rest of the proof is a verification that $C$ is indeed a limit of the
sequence, using Lemma \ref{factor2}, which can also be carried out in
RCA$_0$. Thus we have the following fact.

\begin{theorem}
\label{milrcathm}
\textup{RCA}$_0$ proves that every strongly $\delta$-Cauchy sequence has a
limit.
\end{theorem}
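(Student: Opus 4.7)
The plan is to carry out Miller's construction of a limit in a uniformly computable way from the sequence, verifying that every step lies within what RCA$_0$ can handle. Given a strongly $\delta$-Cauchy sequence $C_0,C_1,\dots$, I will say that $C_m$ \emph{trusts} $C_n$ on $J_k$ if $d_k(C_m \sd C_n) < 2^{-m}$. Since $d_k$ is defined by a finite count on the interval $J_k$, this relation is computable (indeed $\Delta^0_1$) uniformly in $m,n,k$ and the given sequence, so it is available in RCA$_0$. I then define $C$ stage by stage: at stage $k$, let $n(k)$ be the largest $n\le k$ such that $C_n$ is trusted on $J_k$ by every $C_m$ with $m<n$, and set $C \restr J_k = C_{n(k)} \restr J_k$. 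The function $k\mapsto n(k)$ is given by a bounded search, so $C$ is definable by $\Delta^0_1$-comprehension relative to the sequence and therefore exists in RCA$_0$.

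The verification that $C$ is a $\delta$-limit proceeds as in \cite[Theorem 5.9]{HJMS}. Fix $m$. Because the sequence is strongly $\delta$-Cauchy, for every $n>m$ there is an $i$ such that $\rho_j(C_m \sd C_n) \le 2^{-m}$ for all $j > i$, and the argument in Lemma \ref{factor2} (which goes through in RCA$_0$, as it only uses elementary manipulations of lim sups that are invariant under finite changes) then gives that $d_k(C_m \sd C_n) < 2^{-m}$ for all sufficiently large $k$. Thus, for every $n>m$ there is a $k_0$ such that $C_m$ trusts $C_n$ on every $J_k$ with $k \ge k_0$. From the definition of $n(k)$, it follows that for all sufficiently large $k$, the index $n(k)$ is at least $m+1$ and $C_m$ trusts $C_{n(k)}$ on $J_k$, so $d_k(C_m \sd C) = d_k(C_m \sd C_{n(k)}) < 2^{-m}$.

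Hence $\overline{d}(C_m \sd C) \le 2^{-m}$, and another application of Lemma \ref{factor2} yields $\limsup_j \rho_j(C_m \sd C) \le 2 \cdot 2^{-m}$; equivalently, there is an $i$ such that $\rho_j(C_m \sd C) \le 2^{1-m}$ for all $j>i$. Since $m$ is arbitrary, the RCA$_0$-form of the statement "$C$ is a limit of $\{C_m\}$" is established (after harmlessly re-indexing to absorb the factor of $2$). The only place one might worry about induction strength is in the proof of Lemma \ref{factor2} itself, but inspection shows that its proof uses only elementary arithmetic together with the fact that cofinitely many $i$ satisfy $d_i(C) < \overline{d}(C)+\varepsilon$, both of which are available in RCA$_0$; hence the whole argument is formalizable in RCA$_0$.

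The main (mild) obstacle is conceptual rather than technical: in RCA$_0$ the function $\delta$ itself does not exist as a set, so all statements must be rephrased in terms of the $\rho_j$ and $d_k$, and the quantifier structure of "strongly $\delta$-Cauchy" and "is a limit" must be handled as arithmetic schemes. Once one writes these out as above, no induction beyond $\Sigma^0_1$ is required, and the construction is uniformly computable, yielding both Theorem \ref{milthm} and its RCA$_0$ version Theorem \ref{milrcathm} in one stroke.
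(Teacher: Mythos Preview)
Your approach is the same as the paper's: Miller's construction via the ``trusting'' relation on the intervals $J_k$, followed by verification using Lemma~\ref{factor2}, all of it formalizable in RCA$_0$. One small numerical slip: from $\rho_j(C_m \sd C_n) \le 2^{-m}$ for large $j$, Lemma~\ref{factor2} yields only $d_k(C_m \sd C_n) \le 2^{-m+1}$ eventually, not $d_k < 2^{-m}$; loosening your trusting threshold to, say, $d_k(C_m \sd C_n) < 2^{-m+2}$ repairs this, and the rest of your argument then goes through with a slightly larger constant absorbed at the end exactly as you indicate.
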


Theorem \ref{milthm} cannot be strengthened by dropping the
requirement that the $\delta$-Cauchy sequence be strongly
$\delta$-Cauchy, as we now show.

\begin{theorem}
\label{highthm}
There is a computable $\delta$-Cauchy sequence such that every limit
is high.
\end{theorem}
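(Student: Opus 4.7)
The plan is to construct a computable $\delta$-Cauchy sequence whose limit robustly encodes a $\emptyset'$-computable dominating function $d$, so that any coarse description of the limit computes a function dominating every total computable function and is therefore high by Martin's characterization of highness. The encoding nests the operators $\mathcal R$ and $\mathcal J$: information about $d(n)$ is spread out over the density-$2^{-(n+1)}$ interval $R_n$ from Definition \ref{rdef}, and inside $R_n$ (viewed as a copy of $\omega$ via its order enumeration) $d(n)$ is encoded by a $\mathcal J$-style pattern that survives coarse modifications within $R_n$.

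Concretely, take $d(n)=\min\{s:\emptyset'[s]\uhr(n+1)=\emptyset'\uhr(n+1)\}$, the modulus of convergence for $\emptyset'$, which is $\emptyset'$-computable and dominates every total computable function, and pick a computable approximation $(d_s)$ with $d(n)=\lim_s d_s(n)$. Enumerate $R_n$ in increasing order as $m^n_1<m^n_2<\cdots$, and define
\[
A_s \;=\; \bigcup_n \bigl\{\, m^n_{j+1} : j \in \mathcal J([d_s(n),\infty))\,\bigr\},
\]
so the characteristic function of $A_s$ along $R_n$, read in order, is exactly the characteristic function of $\mathcal J([d_s(n),\infty))=[2^{d_s(n)}-1,\infty)$ on $\omega$. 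This sequence is uniformly computable in $s$. For the $\delta$-Cauchy property, note that $A_s\sd A_t\subseteq\bigcup_{n:\,d_s(n)\ne d_t(n)} R_n$, so $\overline\rho(A_s\sd A_t)\le\sum_{n:\,d_s(n)\ne d_t(n)} 2^{-(n+1)}$, and since $d_s\to d$ pointwise this sum tends to $0$ as $s,t\to\infty$.

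Let $C$ be any coarse description of a limit $L$. For each $n$, an induced-density calculation using $m^n_K\approx K\cdot 2^{n+1}$ gives
\[
\textstyle\limsup_K \frac{|C\sd L \cap \{m^n_1,\ldots,m^n_K\}|}{K} \;\le\; 2^{n+1}\,\overline\rho(C\sd L) \;=\; 0,
\]
so through the identification $R_n\cong\omega$, the set $C\cap R_n$ is a coarse description of $\mathcal J([d(n),\infty))$ at $\delta$-distance $<1/4$. Lemma \ref{jlem} applied inside this induced copy of $\omega$ then yields $[d(n),\infty)\le_T C$, and taking the minimum gives $d(n)\le_T C$ pointwise in $n$.

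The hard part will be upgrading these pointwise reductions to a uniform one, because Lemma \ref{jlem} only guarantees correct majority-based decoding ``for all but finitely many'' $J_k$, with the exception set depending non-uniformly on $C$. I would address this by running, uniformly in $n$ with oracle $C$, a majority-vote search inside $R_n$ that accepts the least $k$ at which majority-one values persist over a verification window of length growing with $n$: for $n$ large enough that $d(n)$ exceeds the (finite) exception set inside $R_n$, this returns $d(n)$, and otherwise returns an overestimate bounded by the window size. The result is a total $C$-computable function $d'$ with $d'(n)\ge d(n)$ for almost all $n$, and since $d$ dominates every total computable function, so does $d'$. By Martin's theorem $C$ is high, completing the proof.
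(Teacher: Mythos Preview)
Your construction does not work: the limit $L$ of your sequence is coarsely equivalent to $\omega$, so $\omega$ itself is a limit of the sequence, and $\omega$ is computable, hence not high.

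The flaw is in the inner encoding. Since $\mathcal J([d(n),\infty))=[2^{d(n)}-1,\infty)$ is cofinite, inside each $R_n$ your limit $L$ omits only the first $2^{d(n)}-1$ elements. Thus $\omega\setminus L$ is, apart from $0$, the union over $n$ of the first $2^{d(n)}-1$ elements of $R_n$. Because the modulus $d$ is nondecreasing, this set has density $0$: setting $n_*=n_*(N)=\max\{n:d(n)+n+1\le\log_2 N\}$, the contribution to $\rho_N(\omega\setminus L)$ from $n\le n_*$ is at most $(n_*+1)\cdot 2^{d(n_*)}/N\le(n_*+1)2^{-(n_*+1)}$, and from $n>n_*$ at most $\sum_{n>n_*}2^{-(n+1)}+O((\log N)/N)\le 2^{-(n_*+1)}+o(1)$; since $n_*\to\infty$ as $N\to\infty$, we get $\rho(\omega\setminus L)=0$.

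Your decoding therefore cannot succeed. With $C=\omega$, inside every $R_n$ the majority on every $J_k$-block is $1$, so your ``least $k$ with a persistent run of ones'' search returns $0$ for every $n$, which does not dominate $d$. The sentence ``for $n$ large enough that $d(n)$ exceeds the (finite) exception set inside $R_n$'' assumes each column carries only finitely many errors, but a global density-$0$ modification can alter each $R_n$ on an arbitrary (even infinite) density-$0$ portion---and filling in the initial zeros is exactly such a modification. The deeper point is that a single number like $d(n)$ cannot be robustly stored in the coarse equivalence class of a column: coarse equivalence only retains density-level information, and cofinite sets are coarsely indistinguishable from $\omega$.

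The paper's proof sidesteps this by encoding $\emptyset''$ itself rather than a dominating function. It takes a $\emptyset'$-computable approximation $A_i\to\emptyset''$, uses the (uniform) coarse computability of $\mathcal R(A_i)$ to get a computable $\delta$-Cauchy sequence with limit $\mathcal R(\emptyset'')$, and then invokes Lemma~\ref{Rlem2}: any coarse description $C$ of $\mathcal R(\emptyset'')$ satisfies $\emptyset''\le_T C'$. Here each bit $\emptyset''(n)$ is recorded as the \emph{density} of the limit inside the positive-density set $R_n$, which is exactly the kind of information that survives coarse equivalence.
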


\begin{proof}
Recall the sets $\mathcal R(A)$ from Definition \ref{rdef}. By Theorem
2.19 of \cite{JS1}, if $A \leq\sub{T} \emptyset'$ then $\mathcal R(A)$
is coarsely computable. In fact, the proof of that theorem is uniform,
and hence shows that if $A_0,A_1,\ldots$ are uniformly
$\emptyset'$-computable, then there are uniformly computable sets
$C_0,C_1,\ldots$ such that $C_i$ is a coarse description of $\mathcal
R(A_i)$.

Let $A_0,A_1,\ldots$ be a $\emptyset'$-computable approximation to
$\emptyset''$. Then $\mathcal R(A_0),\mathcal R(A_1),\ldots$ is a
$\delta$-Cauchy sequence with limit $\mathcal R(\emptyset'')$. Let
$C_0,C_1,\ldots$ be as above. Then $C_0,C_1,\ldots$ is a computable
$\delta$-Cauchy sequence that also has $\mathcal R(\emptyset'')$ as a
limit. By Lemma \ref{Rlem2}, any coarse description of $\mathcal
R(\emptyset'')$ is high.
\end{proof}

One way to get an upper bound on the minimal complexity of limits of
computable $\delta$-Cauchy sequences is to consider the complexity of
passing from a $\delta$-Cauchy sequence to a strongly $\delta$-Cauchy
subsequence. If $A_0,A_1,\ldots$ is a computable $\delta$-Cauchy
sequence then, using $\emptyset'''$ as an oracle, for each $k$ we can
find an $m_k$ such that $(\forall n>m_k)(\exists i)(\forall
j>i)[\rho_j(A_{m_k} \sd A_n) \leq 2^{-k}]$. Thus $A_0,A_1,\ldots$ has a
$\emptyset'''$-computable strongly $\delta$-Cauchy subsequence. On the
other hand, it is straightforward to define a computable
$\delta$-Cauchy sequence such that any strongly $\delta$-Cauchy
subsequence computes $\emptyset'$, and to use this construction to show
that the statement that every $\delta$-Cauchy sequence has a strongly
$\delta$-Cauchy subsequence is equivalent to ACA$_0$. We can do
better, however, by also allowing ourselves to replace the elements of
our $\delta$-Cauchy sequence by coarsely equivalent ones.

\begin{theorem}
Let $A_0,A_1,\ldots$ be a $\delta$-Cauchy sequence, and let
$(\bigoplus_i A_i)''' \leq\sub{T} B'$. Then there is a $B$-computable
strongly $\delta$-Cauchy sequence $C_0,C_1,\ldots$ such that for some
sequence $i_0<i_1<\ldots$, each $C_j$ is a coarse description of
$A_{i_j}$.
\end{theorem}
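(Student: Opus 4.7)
The plan has three steps: (i) select indices $i_0 < i_1 < \cdots$ by an $A'''$-computable (hence $B'$-computable) search, (ii) construct $C_j \sim\sub{c} A_{i_j}$ as uniformly $B$-computable sets via (the relativization of) Lemma \ref{hjmslem}, and (iii) verify that $(C_j)$ is strongly $\delta$-Cauchy. Write $A=\bigoplus_i A_i$.

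For (i), the predicate $\overline{\rho}(A_i \sd A_n) \leq r$ is $\Pi^0_3$ in $A$ for rational $r$ (unpacking $\limsup$ as $\forall \epsilon\, \exists N\, \forall n \geq N$), so $P(i,j) \equiv (\forall n \geq i)[\overline{\rho}(A_i \sd A_n) \leq 2^{-(j+2)}]$ is $\Pi^0_3$ in $A$. By the $\delta$-Cauchy hypothesis some $i$ satisfies $P(i,j)$ for each $j$, and using $A''' \leq\sub{T} B'$ the least such $i_j$ is $B'$-computable; this yields $i_0 < i_1 < \cdots$ with $\delta(A_{i_j}, A_{i_k}) \leq 2^{-(j+2)}$ for $j < k$. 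For (ii), I aim to produce, uniformly in $j$, a $B'$-computable function $f_j$ such that each $\Phi^B_{f_j(e)}$ is total $\{0,1\}$-valued with $\delta(A_{i_j}, \Phi^B_{f_j(e)}) \leq 2^{-e}$; the relativized Lemma \ref{hjmslem} will then yield $C_j \leq\sub{T} B$ with $C_j \sim\sub{c} A_{i_j}$, uniformly in $j$. Given $j$ and $e$, I $B'$-computably select $n_{j,e} \geq i_j$ with $\delta(A_{i_j}, A_{n_{j,e}}) \leq 2^{-(e+2)}$ (again by a $\Pi^0_3$-in-$A$ search) and then produce a $B$-computable $D_{j,e}$ with $\delta(D_{j,e}, A_{n_{j,e}}) \leq 2^{-(e+2)}$; the triangle inequality gives $\delta(A_{i_j}, D_{j,e}) \leq 2^{-e}$. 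Step (iii) is immediate, since $\delta(C_j, C_k) = \delta(A_{i_j}, A_{i_k}) \leq 2^{-(j+2)} < 2^{-j}$.

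The main obstacle is the construction of the $B$-computable set $D_{j,e}$ in step (ii). The naive $B$-computable pointwise approximation to $A_n$ obtained from $A \leq\sub{T} B'$ via the limit lemma need not $\delta$-converge, so a frozen snapshot at a single stage will not suffice. This is where the full strength of the hypothesis enters: by Shoenfield's lemma, $A''' \leq\sub{T} B'$ is equivalent to $A' \leq\sub{T} B'$, which makes the stabilization-time function for the $B$-computable approximation to $A$ itself $B$-limit-computable. Coupling this with a Miller-style trusting argument on the blocks $J_k$ in the spirit of the proof of Theorem \ref{milrcathm}, applied to the $B$-computable snapshots $A_{n_{j,e}}[s]$, lets one define $D_{j,e}$ (or, alternatively, the $C_j$ themselves directly, bypassing an explicit appeal to Lemma \ref{hjmslem}) so that the trusting computations are correct on density-$1$ many intervals and the resulting set is $\delta$-close to $A_{n_{j,e}}$. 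Carrying this out uniformly in $j$ so that $(C_j)_{j\in\omega}$ is a single $B$-computable sequence is the principal bookkeeping task.
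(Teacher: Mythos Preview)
The paper's argument is far simpler than yours: it takes a $B$-computable approximation $m_k[s]$ to the $B'$-computable index function $k \mapsto m_k$, sets $C_k(n) = A_{m_k[n]}(n)$, and observes that since $m_k[n] = m_k$ for all large $n$ we get $C_k =^* A_{m_k}$, so $C_k$ is a coarse description of $A_{m_k}$. No appeal to Lemma~\ref{hjmslem} or trusting is made; your elaborate step~(ii) is replaced by a one-line definition.

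You correctly put your finger on the obstacle this glosses over: the paper's $C_k$ is $B$-computable only if one can evaluate $A_i(n)$ from $B$, i.e., if $A = \bigoplus_i A_i \leq\sub{T} B$, and this does \emph{not} follow from $A''' \leq\sub{T} B'$. But your attempt to remove that assumption cannot succeed. First, the assertion ``by Shoenfield's lemma, $A''' \leq\sub{T} B'$ is equivalent to $A' \leq\sub{T} B'$'' is simply false (take $A=\emptyset$ and $B$ low). Second, and decisively, the statement as literally written is false: let $A_i = \mathcal J(Y)$ for all $i$ with $Y$ noncomputable and low, so that $A \equiv\sub{T} Y$ and $A''' = \emptyset'''$; Friedberg jump inversion with cone avoidance (as in the proof of Corollary~\ref{friedcor}) gives $B$ with $B' \equiv\sub{T} \emptyset'''$ and $Y \nleq\sub{T} B$. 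By Lemma~\ref{jlem} every coarse description of $\mathcal J(Y)$ computes $Y$, so no $B$-computable $C_0$ can coarsely describe $A_{i_0}$. Thus your ``Miller-style trusting argument'' cannot be completed. The paper's argument is correct under the tacit hypothesis $A \leq\sub{T} B$ (which suffices for every application in the paper, since one may replace $B$ by $A \oplus B$), and under that hypothesis your step~(ii) is unnecessary: just use the paper's one-line definition.
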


\begin{proof}
We can $B$-computably approximate the function $k \mapsto m_k$ defined
above. We can assume that this function is strictly increasing. Let
$m_k[s]$ be the stage $s$ approximation to $m_k$. Define $C_k$ by
letting $C_k(n)=A_{m_k[s]}(n)$. Then $C_k =^* A_{m_k}$ for all $k$, so
it is easy to check that $C_0,C_1,\ldots$ has the desired
properties. (Recall that $=^*$ is equality up to finitely many
elements.)
\end{proof}

\begin{corollary}
\label{cauchcor}
If $A_0,A_1,\ldots$ is a $\delta$-Cauchy sequence and $(\bigoplus_i
A_i)''' \leq\sub{T} B'$, then $A_0,A_1,\ldots$ has a $B$-computable
limit.
\end{corollary}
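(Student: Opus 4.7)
The plan is to assemble the corollary directly from the preceding theorem together with Theorem \ref{milthm} (Miller's theorem), with only a small amount of routine verification at the end.

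First I would apply the preceding theorem to the given $\delta$-Cauchy sequence $A_0, A_1, \ldots$ and the set $B$ with $(\bigoplus_i A_i)''' \leq\sub{T} B'$, obtaining a $B$-computable strongly $\delta$-Cauchy sequence $C_0, C_1, \ldots$ together with indices $i_0 < i_1 < \cdots$ such that each $C_j$ is a coarse description of $A_{i_j}$. Then I would apply Theorem \ref{milthm} to $C_0, C_1, \ldots$ to produce a limit $C$ computable from the sequence $C_0, C_1, \ldots$, and hence from $B$.

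The only thing left to check is that $C$ is also a limit of the original sequence $A_0, A_1, \ldots$, not merely of the subsequence given by the $A_{i_j}$'s. For the subsequence, this is immediate: since $\delta(C_j, A_{i_j}) = 0$ (as $C_j$ is a coarse description of $A_{i_j}$) and $\delta(C_j, C) \to 0$, the triangle inequality gives $\delta(A_{i_j}, C) \to 0$. To pass from the subsequence to the whole sequence, use the fact that $A_0, A_1, \ldots$ is itself $\delta$-Cauchy: given $k$, choose $m$ so that $\delta(A_m, A_n) \leq 2^{-(k+1)}$ for all $n > m$ (in the classical sense), then choose $j$ large enough that $i_j > m$ and $\delta(A_{i_j}, C) \leq 2^{-(k+1)}$, so that $\delta(A_n, C) \leq 2^{-k}$ for all $n > m$ by the triangle inequality.

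I do not expect any real obstacle here: the preceding theorem does all the work of moving from an arbitrary $\delta$-Cauchy sequence to a strongly $\delta$-Cauchy one within the $B$-computable sets, and Theorem \ref{milthm} handles the strongly Cauchy case. The only mild subtlety is the cosmetic one of unwinding $\delta$ via the $\rho_j$ quantifiers if one wishes to keep the argument formally inside the language of the paper; the triangle-inequality step above goes through on the classes $[A_n]$ and $[C]$ without modification since $\delta$ is a genuine metric on $\mcS$.
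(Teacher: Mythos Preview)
Your proposal is correct and follows essentially the same approach as the paper: apply the preceding theorem to get a $B$-computable strongly $\delta$-Cauchy sequence $C_0,C_1,\ldots$, then apply Theorem~\ref{milthm} to obtain a $B$-computable limit. The paper compresses your verification step into the single remark that $C_0,C_1,\ldots$ ``has the same limits as $A_0,A_1,\ldots$''; your triangle-inequality argument is exactly the routine unpacking of that claim.
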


\begin{proof}
Let $C_0,C_1,\ldots$ be as in the theorem. Then $C_0,C_1,\ldots$ has
the same limits as $A_0,A_1,\ldots$, so we can apply Theorem
\ref{milthm}.
\end{proof}

There is a gap between Theorem \ref{highthm} and Corollary
\ref{cauchcor}, which we can close as follows.

\begin{theorem}
\label{veryhighthm}
There is a computable $\delta$-Cauchy sequence such that for any limit
$C$, we have $\emptyset''' \leq\sub{T} C'$.
\end{theorem}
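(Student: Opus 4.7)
My plan is to iterate the construction in the proof of Theorem \ref{highthm} by going one more level up the jump hierarchy, producing a uniformly computable $\delta$-Cauchy sequence whose limit class is $[\mathcal R(\emptyset''')]$ rather than $[\mathcal R(\emptyset'')]$. Once that is achieved, Lemma \ref{Rlem2} applied with $A = \emptyset'''$ will give $\emptyset''' \leq\sub{T} C'$ for every limit $C$. The reason a direct repetition of Theorem \ref{highthm} fails is that its proof relied on $\emptyset''$ being $\emptyset'$-limit-computable together with the uniform form of Theorem~2.19 of \cite{JS1}, whereas $\emptyset'''$ is not $\emptyset'$-limit-computable but only $\emptyset''$-limit-computable, so an additional approximation layer is needed.

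First I would fix a uniformly $\emptyset''$-computable family $\{A_s\}_{s \in \omega}$ with $A_s(k) \to \emptyset'''(k)$ pointwise (which exists because $\emptyset''' = (\emptyset'')'$ is c.e.\ in $\emptyset''$), and then a uniformly $\emptyset'$-computable double-indexed family $\{A_{s,t}\}$ with $A_{s,t}(k) \to A_s(k)$ pointwise as $t \to \infty$. Applying the uniform form of Theorem~2.19 of \cite{JS1} (just as in the proof of Theorem \ref{highthm}) to the $\emptyset'$-computable family $\{A_{s,t}\}$ produces a uniformly computable family $\{C_{s,t}\}$ of coarse descriptions of $\mathcal R(A_{s,t})$. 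Using that $\mathcal R(X) \sd \mathcal R(Y) \subseteq \bigcup_{k \in X \sd Y} R_k$ has upper density at most $2^{-K}$ whenever $X \restr K = Y \restr K$, pointwise convergence of $A_{s,t} \to A_s$ on initial segments yields $\delta(C_{s,t}, \mathcal R(A_s)) \to 0$ as $t \to \infty$ for each fixed $s$, and the same argument gives $\delta(\mathcal R(A_s), \mathcal R(\emptyset''')) \to 0$ as $s \to \infty$.

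The main obstacle will be extracting from $\{C_{s,t}\}_{s,t}$ a single uniformly computable $\delta$-Cauchy sequence $\{D_n\}$ whose limits all lie in $[\mathcal R(\emptyset''')]$: the inner convergence rate (in $t$) is $\emptyset'$-computable in $s$ and the outer rate (in $s$) is $\emptyset''$-computable, so the naive diagonal $D_n = C_{n,n}$ need not be $\delta$-Cauchy. I would handle this via a certification strategy in the spirit of Miller's trust mechanism from Theorem \ref{milthm}: at stage $n$, computably search for the largest $\ell \leq n$ together with $s, t \leq n$ such that the observed values $A_{s', t'}(k)$ for $s' \leq s$, $t \leq t' \leq n$, $k < \ell$ have remained unchanged since stage $t$, and set $D_n = C_{s,t}$ for such a choice, repeating a previously chosen value if no new certification appears in order to maintain the $\delta$-Cauchy condition. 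Because the genuine stabilization thresholds for $A_{s,t} \to A_s$ and $A_s \to \emptyset'''$ exist (even though they are not computable), the certified selections will eventually be correct to any prescribed level $\ell$, which will drive $\delta(D_n, \mathcal R(\emptyset''')) \to 0$. Then every limit $C$ of $\{D_n\}$ is a coarse description of $\mathcal R(\emptyset''')$, and Lemma \ref{Rlem2} yields $\emptyset''' \leq\sub{T} C'$.
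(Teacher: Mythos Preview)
Your overall plan---produce a uniformly computable $\delta$-Cauchy sequence whose limit class is $[\mathcal R(\emptyset''')]$ and then invoke Lemma~\ref{Rlem2}---is a natural one, and it is genuinely different from the paper's argument. But the extraction step has a real gap.

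First, a concrete error: you propose to ``computably search'' at stage $n$ using ``the observed values $A_{s',t'}(k)$''. The sets $A_{s,t}$ in your setup are only $\emptyset'$-computable, so their bits are not available at any finite stage of a computable construction. To inspect anything computably you would need a further layer $A_{s,t,u}$ with $\lim_u A_{s,t,u}(k)=A_{s,t}(k)$, giving a \emph{triple} nested limit $\lim_s\lim_t\lim_u$ toward $\emptyset'''$. Your sketch does not set this up, and once it is set up, the difficulty becomes sharper rather than easier.

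Second, and more seriously, the certification idea does not ensure $\delta$-Cauchyness. Apparent stabilization at a level $\ell$ can be overturned later by changes at any of the three limit layers, and nothing in a purely computable test prevents such false certifications from recurring infinitely often for a fixed bit $k$. Each time your guess at $\emptyset'''(k)$ flips, the chosen $D_n$ jumps by at least $\rho(R_k)=2^{-(k+1)}$ in $\delta$, so infinitely many flips at bit $0$ alone would keep $\delta(D_m,D_n)\geq 1/2$ along a subsequence. ``Repeating a previously chosen value if no new certification appears'' does not address this: the problem occurs precisely when a new (but ultimately wrong) certification \emph{does} appear. You have not provided any mechanism that bounds the number of wrong certifications at each level, and it is exactly such a mechanism that carries the weight of the theorem.

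By contrast, the paper does not try to hit the specific limit $[\mathcal R(\emptyset''')]$ at all. It encodes a function $f$ (whose majorants compute $\emptyset'''$) into the \emph{rate} of convergence: the sequence is designed so that $C_j\cap U_n$ is finite when $j<f(n)$ and has substantial upper density for all large $j$ when $f(n)>0$. The $\delta$-Cauchy property is obtained directly from a $\Pi^0_2$ description of ``$f(n)=s>0$'' with a \emph{unique} witness, which is what makes the tail behavior of $C_j\cap U_n$ eventually independent of $j$. This uniqueness trick is the substitute for the reliable certification your approach lacks.
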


\begin{proof}
We first outline the proof and then fill in the details. To begin, we
construct a certain function $f$ such that every function that
majorizes $f$ computes $0'''$. We then partition the natural numbers
into uniformly computable sets $U_0, U_1, \dots$ and define a
computable $\delta$-Cauchy sequence $C_0, C_1, \dots$ with the
following properties for all $n$ and $k$:
\begin{enumerate}

\item[(i)] If $k < f(n)$ or $f(n) = 0$, then $C_k \cap U_n$ is
finite.

\item[(ii)]  If $f(n) > 0$, then  $\orho(C_j \cap U_n) \geq 2^{-n -
1}$ for all sufficiently large $j$.

\end{enumerate}

These properties suffice to prove the theorem.  To see that this is
the case, let $C$ be a limit of $C_0,C_1,\dots$.  We must show that
$\emptyset''' \leq\sub{T} C'$.  Let $n$ be given.  Using a
$C'$-oracle, find a $k$ such that $\delta(C, C_k) < 2^{-n - 2}$.  Such
$k$ exist by the choice of $C$, and one can be computed from $C'$
since the predicate $\delta(C, C_k) 2^{-n-2}$ is c.e.\ in $C'$.  We
claim that $k \geq f(n)$.  This fact is obvious if $f(n) = 0$, so
assume that $f(n) > 0$, and also assume for the sake of a
contradiction that $k < f(n)$.  Hence by (i), $C_k \cap U_n$ is
finite.  Now choose $j$ so large that $\orho(C_j \cap U_n) \geq 2^{-n
- 1}$ and $\delta(C, C_j) < 2^{-n-2}$.  We have that
\[
\delta(C_k, C_j) \geq \delta(C_k \cap U_n, C_j \cap U_n)  =
\delta(\emptyset, C_j \cap U_n) = \orho(C_j \cap U_n) \geq 2^{-n-1}.
\]
However, by the triangle inequality,
\[
\delta(C_k, C_j) \leq \delta(C_k, C) + \delta(C , C_j) < 2^{-n - 2} +
2^{-n-2} = 2^{-n-1}.
\]    
This contradiction proves the claim.  Thus, if we let $g(n)$ be the
first such $k$ that is found, then $g$ majorizes $f$, and we have
$0''' \leq_T g \leq\sub{T} C'$, which completes the proof of the
theorem from the above properties.

It remains to construct $f$, and sets $U_n$ and $C_j$ as above.      

Fix $e_1,e_2,e_3$ such that $W_{e_1}=\emptyset'$,
$W_{e_2}^{\emptyset'}=\emptyset''$, and
$W_{e_3}^{\emptyset''}=\emptyset'''$. Define a function $f$ as
follows. Let $f(3n)=0$ if $n \notin \emptyset'$ and otherwise let
$f(3n)$ be the least $s+1$ such that $n \in W_{e_1,s}$. Let
$f(3n+1)=0$ if $n \notin \emptyset''$ and otherwise let $f(3n+1)$ be
the least $s+1$ such that $n \in W_{e_2,s}^{\emptyset'}$. Let
$f(3n+2)=0$ if $n \notin \emptyset'''$ and otherwise let $f(3n+2)$ be
the least $s+1$ such that $n \in W_{e_3,s}^{\emptyset''}$.  It is not
difficult to see that if $f(k) \leq g(k)$ for all $k$, then
$\emptyset''' \leq\sub{T} g$.

Note that the predicate $f(n) = s > 0$ is a $\emptyset''$-computable predicate
of $n$ and $s$, and hence is $\Delta^0_3$. Thus there is a $\Pi^0_2$
predicate $P(n, s, x)$ such that for all $n$ and $s$,
\[
f(n) = s > 0 \iff (\exists x)P(n, s, x).
\]

We will need the above $x$ to be unique when it exists to prove that
$C_0,C_1,\ldots$ is a $\delta$-Cauchy sequence, which we can achieve
by modifying $P$.  Since $f(n) = s > 0$ is a $\emptyset''$-computable
predicate of $n$ and $s$, we can apply the limit lemma relative to
$\emptyset'$ to obtain a $\emptyset'$-computable function $g_0(n, s,
t)$ such that, for all $n$ and $s$, if $f(n) = s > 0$ then $\lim_t
g_0(n, s, t) = 1$, and otherwise $\lim_t g_0(n, s, t) = 0$.  Now
define $P_0 (n, s, x)$ to hold if $(\forall t \geq x) [g_0(n, s, t) =
1]$ and either $x=0$ or $g_0(n,s,x-1)=0$. Then $P_0$ is a $\Pi^0_2$
predicate, and $P_0 (n, s, x)$ holds if and only if $x$ is minimal
with the property that $(\forall t \geq x) [g(n, s, x) = 1]$.  Hence,
for all $n$ and $s$,
\[
f(n) = s > 0 \iff (\exists x)P_0(n, s, x) \iff (\exists ! x) P_0 (n,
s, x).
\]
Since $\{e : W_e \text{ is infinite} \}$ is $\Pi^0_2$ complete (with
respect to $1$-reducibility), there is a computable function $h(n, s,
x)$ such that for all $n$ and $s$,
\[
f(n) = s > 0 \iff (\exists x)[W_{h(n, s, x)} \text{ is infinite}]
\iff (\exists ! x) [W_{h(n, s, x)} \text{ is infinite}].
\]

Recall that $J_s$ is the interval $[2^z - 1 , 2^{z+1} - 1)$. Let $U_n =
\bigcup_t J_{\langle n , t \rangle}$. We assume that our pairing
function is bijective, so to define $C_j$ it suffices to define $C_j
\restr U_n$ for each $n$. The various values of $n$ will be treated
independently of each other.  To define $C_j \restr U_n$ one might
attempt to make the conclusion of condition (ii) hold whenever $j >
f(n) > 0$.  This is difficult to achieve since the condition $j > f(n)
> 0$ is only $\Delta^0_3$.  To overcome this problem, we require that
$j$ not only bound $f(n)$ but also bound an $x$ witnessing that $j >
f(n) = s > 0$ as in the above displayed formula.  Doing so replaces
the $\Delta^0_3$ condition above by a $\Pi^0_2$ condition and is
crucial for the proof.  This $\Pi^0_2$ condition is true if and only
if there are infinitely many stages $t$ at which we think it is true,
and at each such stage we can add a finite set to $C_j \cap U_n$,
chosen to make progress towards satisfying the conclusion of condition
(ii).

For a nonempty finite set $F$ and a real number $r$ with $0 \leq r
\leq 1$, let $F[r]$ be the shortest initial segment $G$ of $F$ such
that $|G| \geq r |F|$.  Note that $2^{-r} \leq d_w (J_w[r]) \leq
2^{-r+1}$ for all $w$ and all $r \in [0,1]$, where $d$ is as in
Definition \ref{jddef}.

We can now define $C_j \restr U_n$, for which it suffices to define
$C_j \restr J_{\langle n , t \rangle}$ for each $t$.  To do this,
check whether there exist $s$ and $x$ less than $j$ such that $W_{h(n,
s, x), t+1} \neq W_{h(n, s, x), t}$.  If so, let $C_j \restr
J_{\langle n, t \rangle} = J_{\langle n, t \rangle}[2^{-n}]$.
Otherwise, let $C_j \restr J_{\langle n, t \rangle} = \emptyset$.
Clearly the sets $C_0, C_1, \dots$ are uniformly computable.

We now check that conditions (i) and (ii) from the beginning of the
proof hold.  To prove (i), assume that $k < f(n)$.  Then we must prove
that $C_k \cap U_n$ is finite, i.e., $C_k \cap J_{\langle n, t
\rangle}$ is empty for all sufficiently large $t$.  This fact is
clear since there are only finitely many pairs $(s , x)$ with $s$ and
$x$ each less than $j$, and for each such pair the set $W_{h(n, s,
x)}$ is finite.  The case in (i) where $f(n) = 0$ is similar.

To prove (ii), assume that $f(n) = s > 0$.  Fix $x$ such that $W_{h(n,
s, x)}$ is infinite.  Then if $j$ is greater than both $x$ and $s$,
there are infinitely many $t$ such that $C_j \restr C_{\langle n, t
\rangle} = J_{\langle n, t \rangle}[2^{-n}]$.  For such $t$ we have
$d_{\langle n, t \rangle}(C_j \cap U_n) \geq 2^{-n}$.  It follows that
$\overline{d}(C_j \cap U_n) \geq 2^{-n}$.  By Lemma 2.5 it follows
that $\orho({C_j \cap U_n}) \geq 2^{-n-1}$, which completes the proof
of (ii).

It remains to be shown that $C_0,C_1,\ldots$ is a $\delta$-Cauchy
sequence.  Fix $n$.  We claim first that, if $j$ and $k$ are
sufficiently large, then $(C_j \sd C_k) \cap U_n$ is finite.  This is
clear from condition (i) if $f(n) = 0$.  Suppose now that $f(n) = s >
0$.  Let $x$ be the unique number such that $W_{h(n, s, x)}$ is
infinite.  Then if $j$ is greater than both $s$ and $x$, the set $C_j
\cap U_n$ differs only finitely from $\bigcup \{J_{\langle n, i
\rangle} : W_{h(n, s, x), i + 1} \neq W_{h(n, s, x), i} \}$.  Since
the latter set does not depend on $j$, the claim follows. By applying
it to all $m < n$, we see that if $j$ and $k$ are sufficiently large,
then $\bigcup_{m < n} ((C_j \sd C_k) \cap U_m)$ is finite.  Hence, if
$j$ and $k$ are sufficiently large, then $\delta(C_j, C_k) = \orho(C_j
\sd C_k) = \orho( (C_j \sd C_k) \cap \bigcup_{m \geq n} U_m)$.  Note
that $C_j \cap \bigcup_{m \geq n} U_m$ is a union of finite sets $F$
with $d_{\langle m , t \rangle} (F) \leq 2^{-m+1}$ for some $m \geq
n$.  It follows that $\overline{d}(C_j \cap \bigcup_{m \geq n} U_m)
\leq 2^{-n+1}$.  By Lemma 2.5, we have that $\orho(C_j \cap \bigcup_{m
\geq n} U_m) \leq 2^{-n+2}$.  The same fact holds with $j$ replaced
by $k$.  The symmetric difference of two sets is a subset of their
union, and the upper density of their union is at most the sum of
their upper densities.  Hence, for all $n$, if $j$ and $k$ are
sufficiently large, then $\delta(C_j, C_k) \leq 2^{-n+3}$. It follows
that $C_0,C_1,\ldots$ is a $\delta$-Cauchy sequence.
\end{proof}

For the purposes of reverse mathematics, let us take the completeness
of $(\mathcal S, \delta)$ to be the statement that every
$\delta$-Cauchy sequence has a limit. Let HIGH be the statement that
for every $A$ there is a $B$ such that $A'' \leq\sub{T} B'$. More
precisely, we can take this statement as saying that for every $A$
there is a $0,1$-valued binary function $f$ such that for each $e$, we
have that $\lim_s f(e,s)$ exists, and is equal to $1$ if and only if
$\Phi_e^A$ is total. (Basic computability theory, including the notion
of a (partial) $X$-computable function, an enumeration
$\Phi_0^X,\Phi_1^X,\ldots$ of all partial $X$-computable functions,
and so on, can be developed in RCA$_0$ using universal $\Sigma^0_1$
formulas.)

H\"olzl, Raghavan, Stephan, and Zhang \cite{HRSZ} and H\"olzl, Jain,
and Stephan \cite{HJS} studied the principle DOM, which they stated in
terms of the notion of weakly represented families of functions, but
can be equivalently stated as saying that for every $X$, there is a
function that dominates all $X$-computable functions. Martin \cite{mart}
showed that a set is high if and only if it computes a set that
dominates all computable functions. His proof, as given for instance
in \cite[Theorem 2.23.7]{DH}, can be relativized and carried out in
RCA$_0$ to show that DOM and HIGH are equivalent over RCA$_0$. We
state our results in terms of DOM since that is the name used in the
aforementioned papers, in which the authors obtain several results on
the reverse-mathematical strength of this principle. For instance,
they show that DOM implies the cohesive set principle COH over
$\textup{RCA}_0 + \textup{B}\Sigma^0_2$, but does not imply SRT$^2_2$
(stable Ramsey's Theorem for pairs), even over $\omega$-models. They
also show that DOM is restricted $\Pi^1_2$-conservative over RCA$_0$
(see \cite{HRSZ} for the definition), but implies full arithmetic
induction over B$\Sigma^0_2$, and that it is equivalent to several
results in the theory of inductive inference.

Recall that two statements of second-order arithmetic are
\emph{$\omega$-equivalent} if they hold in the same $\omega$-models of
RCA$_0$.

\begin{theorem}
The completeness of $(\mathcal S,\delta)$ is $\omega$-equivalent to
\textup{DOM}.
\end{theorem}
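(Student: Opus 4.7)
The plan is to prove both directions of the $\omega$-equivalence using the two quantitative results just established, namely Theorem \ref{veryhighthm} and Corollary \ref{cauchcor}, together with (the relativization of) Martin's characterization of the high sets as exactly those computing a function dominating every computable function, which yields the equivalence of DOM and HIGH used in the problem formulation.

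For the direction $\textup{DOM} \Rightarrow$ completeness, I would fix an $\omega$-model $\mathcal M$ of $\textup{RCA}_0 + \textup{DOM}$ and a $\delta$-Cauchy sequence $A_0,A_1,\ldots \in \mathcal M$. Set $X = \bigoplus_i A_i \in \mathcal M$. Applying DOM (in its HIGH form) twice inside $\mathcal M$, I would first obtain $B_1 \in \mathcal M$ with $X'' \leq_T B_1'$, and then $B_2 \in \mathcal M$ with $B_1'' \leq_T B_2'$. Combining these yields $X''' \leq_T B_1'' \leq_T B_2'$, so Corollary \ref{cauchcor} produces a $B_2$-computable limit $C$ of the sequence, and $C \in \mathcal M$ by closure of $\mathcal M$ under Turing reducibility. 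Thus completeness holds in $\mathcal M$.

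For the direction completeness $\Rightarrow \textup{DOM}$, I would fix an $\omega$-model $\mathcal M$ of $\textup{RCA}_0$ plus completeness and an arbitrary $A \in \mathcal M$. Invoking the relativization of Theorem \ref{veryhighthm} to $A$ (whose proof relativizes routinely, since it rests only on the $A$-relativized limit lemma, $\Pi^{0,A}_2$-completeness of the index set of infinite $W_e^A$, and Lemma \ref{factor2}), I would produce an $A$-computable $\delta$-Cauchy sequence $C_0,C_1,\ldots$ every limit $C$ of which satisfies $A''' \leq_T C'$. This sequence lies in $\mathcal M$, so by completeness it has a limit $C \in \mathcal M$. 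Then $A'' \leq_T A''' \leq_T C'$, so by the relativization of Martin's theorem, $C$ computes a function dominating all $A$-computable functions, and this function is in $\mathcal M$. Since $A$ was arbitrary, $\mathcal M \vDash \textup{DOM}$.

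The whole argument is essentially an arithmetical-complexity accounting: the $\delta$-Cauchy property is $\Pi^0_3$ in the sequence, so Corollary \ref{cauchcor} demands a $B$ with $B' \geq_T X'''$, and Theorem \ref{veryhighthm} shows that nothing less suffices even for the unrelativized case. DOM advances the jump hierarchy by one level per application, so two iterations are exactly what is needed to meet the Corollary's hypothesis, while conversely a single application of completeness to the sequence of Theorem \ref{veryhighthm} already encodes enough of $A'''$ into $C'$ to recover highness relative to $A$. The only routine step that deserves mention is checking that Theorem \ref{veryhighthm} relativizes to arbitrary $A$, which requires no new idea.
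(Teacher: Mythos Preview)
Your proof is correct and follows essentially the same two-step approach as the paper: use Corollary~\ref{cauchcor} together with two applications of HIGH for the DOM $\Rightarrow$ completeness direction, and relativize a construction of a computable $\delta$-Cauchy sequence with complex limits for the converse. The one difference is that for completeness $\Rightarrow$ DOM the paper invokes the simpler Theorem~\ref{highthm} (every limit is high) rather than Theorem~\ref{veryhighthm} (every limit $C$ has $\emptyset''' \leq_T C'$); since you only need $A'' \leq_T C'$ to conclude HIGH, the extra strength of Theorem~\ref{veryhighthm} is unused, and its relativization---while correct---is considerably more to check than that of Theorem~\ref{highthm}.
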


\begin{proof}
Let $\mathcal M$ be a Turing ideal. If every $\delta$-Cauchy sequence
in $\mathcal M$ has a limit in $\mathcal M$, then relativizing the
proof of Theorem \ref{highthm} shows that $\mathcal M$ is a model of
HIGH, and hence of DOM.

Conversely, suppose that $\mathcal M$ is a model of DOM, and hence of
HIGH, and let $A_0,A_1,\ldots$ be a $\delta$-Cauchy sequence in
$\mathcal M$. Then there is a $D \in \mathcal M$ such that $(\bigoplus
A_i)'' \leq\sub{T} D'$. Applying HIGH again, there is a $B \in
\mathcal M$ such that $(\bigoplus A_i)''' \leq\sub{T} D'' \leq\sub{T}
B'$. By Corollary \ref{cauchcor}, $A_0,A_1,\ldots$ has a limit in
$\mathcal M$.
\end{proof}

By Theorem \ref{milrcathm}, the second half of the above proof carries
through in RCA$_0$.

\begin{theorem}
\label{domrcathm}
\textup{DOM} implies the completeness of $(\mathcal S,\delta)$ over
\textup{RCA}$_0$.
\end{theorem}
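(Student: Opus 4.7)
The plan is to transcribe into $\textup{RCA}_0$ the argument from the preceding $\omega$-equivalence result that $\textup{HIGH}$ implies completeness. As noted just before the theorem, $\textup{DOM}$ and $\textup{HIGH}$ are equivalent over $\textup{RCA}_0$ via an $\textup{RCA}_0$-formalizable relativization of Martin's characterization of high degrees as those computing functions dominating all computable functions, so it suffices to derive completeness from $\textup{HIGH}$.

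Given a $\delta$-Cauchy sequence $A_0, A_1, \ldots$, I would first apply $\textup{HIGH}$ to $\bigoplus_i A_i$ to obtain a set $D$ with $(\bigoplus_i A_i)'' \leq\sub{T} D'$, and then apply $\textup{HIGH}$ again to $D$ to obtain $B$ with $D'' \leq\sub{T} B'$, yielding $(\bigoplus_i A_i)''' \leq\sub{T} D'' \leq\sub{T} B'$. Next I would reproduce the construction from Corollary \ref{cauchcor} inside $\textup{RCA}_0$: the predicate $(\forall n > m)(\exists i)(\forall j > i)[\rho_j(A_m \sd A_n) \leq 2^{-k}]$ is $\Pi^0_3$ in $\bigoplus_i A_i$, so the function $k \mapsto m_k$ that returns the least witness is computable from $(\bigoplus_i A_i)'''$, and hence from $B'$. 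The usual limit-lemma argument (provable in $\textup{RCA}_0$) yields a $B$-computable approximation $m_k[s]$; defining $C_k(n) = A_{m_k[n]}(n)$ produces a $B$-computable sequence with $C_k =^* A_{m_k}$ for each $k$. Because the $m_k$ are strictly increasing and $\delta(A_{m_k}, A_n) \leq 2^{-k}$ for $n > m_k$, the sequence $C_0, C_1, \ldots$ is strongly $\delta$-Cauchy, and its limits coincide with those of the original sequence.

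Finally, I would invoke Theorem \ref{milrcathm}, which guarantees in $\textup{RCA}_0$ that every strongly $\delta$-Cauchy sequence has a limit, to extract a limit for the $C_k$ and therefore for the $A_k$. The main obstacle, and the only place where genuine care is needed, is formalizing the two applications of $\textup{HIGH}$ and the ensuing search for $m_k$ entirely through indices of Turing functionals, rather than treating the jumps $(\bigoplus_i A_i)''$, $D''$, and $(\bigoplus_i A_i)'''$ as actual sets in the model; since $\textup{RCA}_0$ accommodates universal $\Sigma^0_1$ formulas and the standard relativized limit-lemma arguments, this composition is routine but has to be spelled out carefully.
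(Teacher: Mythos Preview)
Your proposal is correct and follows essentially the same route as the paper: the paper's proof is the one-liner ``By Theorem \ref{milrcathm}, the second half of the above proof carries through in RCA$_0$,'' which unpacks to precisely the two applications of HIGH, the construction of the strongly $\delta$-Cauchy subsequence from the theorem preceding Corollary \ref{cauchcor}, and the final appeal to Theorem \ref{milrcathm} that you outline. Your added remarks about handling the iterated jumps via indices and universal $\Sigma^0_1$ formulas rather than as sets are exactly the care the paper's terse statement is implicitly assuming.
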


In the proof of Theorem \ref{highthm}, the sequence $\mathcal
R(A_0),\mathcal R(A_1),\ldots$ is $\delta$-Cauchy because the
approximation to $\emptyset''$ settles on initial segments, i.e., for
each $n$, there is an $s$ such that $A_t(m)=\emptyset''(m)$ for all
$t>s$ and $m<n$. This will not generally be the case over nonstandard
models, however, so that proof does not immediately give us an
implication over RCA$_0$. We do have the following, however.

\begin{theorem}
\label{dombthm}
The completeness of $(\mathcal S,\delta)$ implies \textup{DOM} over
$\textup{RCA}_0 + \textup{I}\Sigma^0_2$.
\end{theorem}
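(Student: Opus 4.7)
The plan is to reverse the construction of Theorem \ref{highthm}. Given $X$ in a model of $\textup{RCA}_0+\textup{I}\Sigma^0_2+$ completeness of $(\mathcal{S},\delta)$, we will produce a set $B$ with $X''\leq_T B'$; from this, DOM follows by the $\textup{RCA}_0$-formalization of Martin's theorem noted above.

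Fix an index $e$ with $n\in X''\iff\Phi_e^{X'}(n)\!\downarrow$ and the standard monotone $X$-computable approximation $\langle X'[s]\rangle$ to $X'$. We define a uniformly $X$-computable monotone sequence $A_0\subseteq A_1\subseteq\cdots$ that enumerates $X''$: at stage $s$ we place $n$ into $A_s$ when we witness a stable convergence $\Phi_e^{X'[s]}(n)\!\downarrow$ of the appropriate form (e.g., the use has been fixed for the last $f(n)$ stages, for a suitable delay function $f$). The uniformity of the proof of Theorem~2.19 of~\cite{JS1}, as used in Theorem~\ref{highthm}, then yields uniformly $X$-computable sets $C_s$ with $C_s$ a coarse description of $\mathcal{R}(A_s)$. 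Because each bit $A_s(n)$ is monotone in $s$, it stabilizes pointwise to some $Y(n)\in\{0,1\}$; using $\textup{B}\Sigma^0_1$ the stabilization of $A_s\uhr k$ is uniform in $s$, so $\mathcal{R}(A_s)\sd\mathcal{R}(A_t)\subseteq\bigcup_{j\geq k}R_j$ has upper density at most $2^{-k}$ for $s,t$ past the stabilization stage for $A_s\uhr k$. Combined with Lemma~5.10 of~\cite{HJMS} this makes $\langle C_s\rangle$ a $\delta$-Cauchy sequence, and the completeness hypothesis produces a limit $C$.

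Let $B=C\oplus X$. For each $n$, the density of $C$ within $R_n$ is close to that of $\mathcal{R}(A_s)$ for large $s$, hence stabilizes to $Y(n)$; the argument of Lemma~\ref{Rlem2} therefore gives a $B$-computable $g$ with $\lim_s g(n,s)=Y(n)$ for each $n$. The remaining and crucial point is that $Y$ genuinely captures $X''$, i.e., $Y(n)=1\iff\Phi_e^{X'}(n)\!\downarrow$. Soundness (``enumerated implies converges'') follows from the stability built into the enumeration criterion, since monotonicity of $X'[s]$ ensures that an observed stable use is correct. Completeness (``converges implies enumerated'') is where $\textup{I}\Sigma^0_2$ enters: given $n\in X''$ with true use $u(n)$, we must produce a stage in the model at which $X'[s]\uhr u(n)$ is the true initial segment of $X'$ and has been for long enough to trigger enumeration. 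Each individual bit $X'[s](m)$ stabilizes by monotonicity and excluded middle, but collecting these stabilization stages uniformly for all $m\leq u(n)$ uses $\textup{B}\Sigma^0_2$, which is a consequence of $\textup{I}\Sigma^0_2$.

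The hard part is precisely this last step. In a nonstandard model, the use $u(n)$ of a genuine $X''$-computation at a nonstandard $n$ may itself be nonstandard, so that one is collecting nonstandardly many $\Sigma^0_2$ stabilization facts. The enumeration criterion must be designed so that the triggering event ``$n$ is eventually placed into $A_s$'' is a \emph{genuinely $\Sigma^0_2$} consequence of the truth of ``$\Phi_e^{X'}(n)\!\downarrow$'', not something stronger; with the right formulation, $\textup{I}\Sigma^0_2$ closes the gap between truth and enumeration, completeness of $(\mathcal{S},\delta)$ produces the limit $C$, and $B=C\oplus X$ witnesses $X''\leq_T B'$, yielding DOM.
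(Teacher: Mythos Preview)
Your approach has a fatal flaw at the very first step. You propose a uniformly $X$-computable \emph{monotone} sequence $A_0 \subseteq A_1 \subseteq \cdots$ whose union is $X''$ (this is exactly what your ``soundness'' and ``completeness'' conditions together assert). But any such sequence would make $X''$ an $X$-c.e.\ set, hence $X'' \leq\sub{T} X'$, contradicting $X' <\sub{T} X''$; so no such sequence exists, for any $X$ whatsoever. No choice of ``stability'' criterion or delay function $f$ can repair this: monotonicity together with soundness forces $\bigcup_s A_s$ to be an $X$-c.e.\ subset of $X''$, and completeness then forces equality. Your justification of soundness is also wrong on its own terms: monotonicity of the approximation $X'[s]$ only guarantees that elements already in $X'[s]$ remain; it does not prevent a new element from entering below the observed use at some later stage, invalidating a computation that looked ``stable'' for $f(n)$ many stages.

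The paper notes just before the theorem that the argument of Theorem~\ref{highthm} relied on the approximation to $\emptyset''$ settling on initial segments, and that this cannot be arranged over nonstandard models; your attempt to force settling via monotonicity runs straight into the obstruction above. The paper's proof therefore takes a genuinely different route that avoids approximating $X''$ altogether and proves DOM directly rather than via HIGH. For a given $A$ it encodes totality information: set $B_{e,n} = \{s : (\forall m < n)\,[\Phi_e^A(m)[s]\converges]\}$ and $C_n = \bigoplus^{\mathcal R}_e B_{e,n}$. To verify that $C_0,C_1,\ldots$ is $\delta$-Cauchy, given $k$ one uses bounded $\Sigma^0_2$-comprehension (available from $\textup{I}\Sigma^0_2$) to form the finite set $F$ of those $e \leq k$ for which some $\Phi_e^A(m)$ diverges, and then $\textup{B}\Sigma^0_2$ to find a single $b$ bounding a point of divergence for each $e \in F$; for $n \geq b$ the sets $C_b$ and $C_n$ agree up to finite error on each $R_e$ with $e \leq k$, so $\delta(C_b,C_n) \leq 2^{-k}$. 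From a limit $C$ one then reads off directly a function dominating every total $\Phi_e^A$.
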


\begin{proof}
Fix a set $A$. Working in $\textup{RCA}_0 + \textup{I}\Sigma^0_2$
together with the completeness of $(\mathcal S,\delta)$, we show that
there is a function dominating all $A$-computable functions. We use
the notation in Observation \ref{hilbert}.

Let $B_{e,n} = \{s : (\forall m<n)[\Phi_e^A(n)[s]\converges]\}$ and let
$C_n = \bigoplus^{\mathcal R}_e B_{e,n}$. Given $k$, we can use
bounded $\Sigma^0_2$ comprehension (which is provable in
$\textup{RCA}_0 + \textup{I}\Sigma^0_2$), to obtain the set $F$ of all
$e \leq k$ such that $\Phi_e^A(m)[s]\diverges$ for some $m$. We can then
use B$\Sigma^0_2$ to obtain a $b$ such that for each $e \in F$, we
have $\Phi_e^A(m)[s]\diverges$ for some $m<b$. Let $n > b$. Then
$B_{e,n}=B_{e,b}=\emptyset$ for all $e \in F$. Furthermore, there is
a $t$ such that $s \in B_{e,n}$ and $s \in B_{e,b}$ for all $s \geq t$
and $e \leq k$ with $e \notin F$, so $\delta(C_b,C_n) \leq 2^{-k}$.

Thus $C_0,C_1,\ldots$ is a $\delta$-Cauchy sequence. Let $C$ be a
limit of this sequence. If $\Phi_e^A$ is total then $B_{e,n} =^* \mathbb
N$ for all $n$, so $C \uhr R_n$ has density $1$. Otherwise, $B_{e,n} =
\emptyset$ for all sufficiently large $n$, so $C \uhr R_n$ has density
$0$. Thus we can define the function $f$ as follows. Given $n$, search
for an $s \geq n$ such that for each $e \leq n$, either
$\Phi_e^A(n)[s]\converges$ or $\rho_s(C \uhr R_n) \leq 1/2$. Such an
$s$ must exist. Let $f(n) = \max\{\Phi_e^A(n) :
\Phi_e^A(n)[s]\converges\}+1$.

If $\Phi_e^A$ is total, then $f(n)>\Phi_e^A(n)$ for all sufficiently large
$n$, since $\rho_s(C \uhr R_n) > 1/2$ for all sufficiently large $s$.
\end{proof}

Notice that by Theorem \ref{domrcathm} and the conservativity of DOM,
the completeness of $(\mathcal S,\delta)$ does not imply
I$\Sigma^0_2$. We do not know whether it implies DOM over RCA$_0$, or
over $\textup{RCA}_0 + \textup{B}\Sigma^0_2$.

It follows from the results on DOM mentioned above and Theorem
\ref{dombthm} that the completeness of $(\mathcal S,\delta)$ implies
both arithmetic induction and COH over $\textup{RCA}_0 +
\textup{I}\Sigma^0_2$. We do not know whether these implications hold
over $\textup{RCA}_0 + \textup{B}\Sigma^0_2$ or, in the case of COH,
over RCA$_0$.

\section{Open Questions}

In this section, we gather some open questions discussed above.

\begin{question}
Without assuming CH, is there a nonempty $\mathcal U \subsetneq
\mathcal S$ that is generated by a Turing invariant set $U$ and is
closed in $(\mathcal S,\delta)$? What if $U$ is required to be an
ideal? (See the discussion at the end of Section \ref{invsec}).
\end{question}

Recall the notions of attractive and dispersive sets from Definition
\ref{attdef}.

\begin{question}
Is there a natural characterization of the attractive degrees that
does not mention Hausdorff distance? Do the notions of being
attractive and being almost everywhere dominating coincide for
c.e.\ degrees? [Raised by T. A. Slaman:] Is every attractive
c.e.\ degree high? Can $A$ be dispersive without it being the case
that almost every set computes a set that is weakly $1$-generic
relative to $A$? Is every $1$-generic set dispersive? Does every
attractive set of hyperimmune-free degree compute a $1$-random set?
\end{question}

\begin{question}
Which finite (or countable) metric spaces can be isometrically
embedded in $(\mathcal{D},H)$?
\end{question}

It seems conceivable that every finite metric space with every
distance equal to $0$, $1/2$, or $1$ is isometrically embeddable in
$(\mathcal{D},H)$. As mentioned above, an interesting test case is the
$0,1/2,1$-valued metric space $\mathcal M$ such that $G_{\mathcal M}$
is a cycle of length $5$.

One way to answer this question would be to show that if $\mathcal A$
and $\mathcal B$ are disjoint finite sets of degrees, then there is a
degree $\mathbf{c}$ such that $H(\mathbf{a},\mathbf{c})=1/2$ for all
$\mathbf{a} \in \mathcal A$ and $H(\mathbf{b},\mathbf{c})=1$ for all
$\mathbf{b} \in \mathcal B$. Corollary \ref{pacor} shows that this is
not the case, but it might still be true within some class of
degrees. The $1$-random degrees seem potentially promising in this
regard.

\begin{question}
Let $\mathcal A$ and $\mathcal B$ be disjoint finite sets of
$1$-random degrees. Must there be a $1$-random degree $\mathbf{c}$
such that $H(\mathbf{a},\mathbf{c})=1/2$ for all $\mathbf{a} \in
\mathcal A$ and $H(\mathbf{b},\mathbf{c})=1$ for all $\mathbf{b} \in
\mathcal B$?
\end{question}

However, even the following basic question remains open.

\begin{question}
If $\mathbf{a}$ is $1$-random, must there be a $1$-random $\mathbf{b}$
that is incomparable with $\mathbf{a}$ such that
$H(\mathbf{a},\mathbf{b})=1$? 
\end{question}

\begin{question}
If $\textbf{a}$ is hyperimmune-free and $\textbf{b}$ is a hyperimmune
PA degree, must $H(\textbf{a},\textbf{b})=1$?
\end{question}

\begin{question}
What is the diameter of $G^{\textup{c}}_{\mathcal D}$? We know by
Corollary \ref{diam3} and Theorem \ref{diam4} that it is  $3$ or $4$.
\end{question}

\begin{question}
Can we improve on the $A'$ bound in Theorem \ref{perfeff}? In
particular, can we replace $A'$ by any set that has  PA degree relative
to A? Relatedly, is MYC-M provable in WKL$_0$? Does it imply WKL$_0$?
Does it imply ACA$_0$?
\end{question}

\begin{question}
Can we improve the bound in Theorem \ref{ancthm}?
\end{question}

\begin{question}
What else can we say about the computability-theoretic and
reverse-mathematical strength of the principles discussed in Section
\ref{mycsec}?
\end{question}

\begin{question}
Does the completeness of $(\mathcal S,\delta)$ imply DOM over RCA$_0$,
or over $\textrm{RCA}_0+\textrm{B}\Sigma^0_2$? Does it imply COH over
either of these systems? Does it imply arithmetic induction over over
$\textrm{RCA}_0+\textrm{B}\Sigma^0_2$?
\end{question}


\begin{thebibliography}{99}

\bibitem{ACDJL} U. Andrews, M. Cai, D. Diamondstone, C. Jockusch, and
S. Lempp, \emph{Asymptotic density, computable traceability, and
$1$-randomness}, Fundamenta Mathematicae \textbf{234} (2016), 41--53.

\bibitem{AHJ} E. P. Astor, D. R. Hirschfeldt and C. G. Jockusch, Jr.,
\emph{Dense computability, upper cones, and minimal pairs},
Computability \textbf{8} (2019), 155--177.

\bibitem{ADR} J. Avigad, E. T. Dean, and J. Rute, \emph{Algorithmic
randomness, reverse mathematics, and the dominated convergence
theorem}, Annals of Pure and Applied Logic \textbf{163} (2012),
1854--1864.

\bibitem{BL} G. Barmpalias and A. E. M. Lewis, Andrew, \emph{Measure
and cupping in the Turing degrees}, Proceedings of the American
Mathematical Society \textbf{140} (2012), 3607--3622.

\bibitem{BLN} G. Barmpalias, A. E. M. Lewis, and K. M. Ng, \emph{The
importance of $\Pi^0_1$ classes in effective randomness}, Journal of
Symbolic Logic \textbf{75} (2010), 387--400.

\bibitem{BW} G. Barmpalias and W. Wang, \emph{Pathwise-random tress
and models of second-order arithmetic}, to appear,
\texttt{arXiv:2104.12066}.

\bibitem{BJ} T. Bartoszynski and H. Judah, \emph{Jumping with random
reals}, Annals of Pure and Applied Logic \textbf{48} (1990), 197--213.

\bibitem{Bel1} D. R. Belanger, \emph{WKL$_0$ and induction principles
in model theory}, Annals of Pure and Applied Logic \textbf{166}
(2015), 767--799.

\bibitem{Bel2} D. R. Belanger, \emph{Reverse mathematics of
first-order theories with finitely many models},
Journal of Symbolic Logic \textbf{79} (2014), 955--984. 

\bibitem{Bes} A. S. Besicovitch, \emph{Almost Periodic Functions},
Dover, New York, 1954.

\bibitem{BKLS}  S. Binns, B. Kjos-Hanssen, M. Lerman, and R. Solomon,
\emph{On a conjecture of Dobrinen and Simpson concerning almost
everywhere domination}, Journal of Symbolic Logic \textbf{71}
(2006), 119--136.

\bibitem{BS} S. Binns and S. G. Simpson, \emph{Embeddings into the
Medvedev and Muchnik lattices of $\Pi^0_1$ classes}, Archive for
Mathematical Logic 43 (2004), 399--414.

\bibitem{BFK} F. Blanchard, E. Formenti, and P. K\r{u}rka,
\emph{Cellular automata in the Cantor, Besicovitch and Weyl
topological spaces}, Complex Systems \textbf{11} (1997), 107--123.

\bibitem{CDI} P. Cholak, R. G. Downey, and G. Igusa, \emph{Any FIP
real computes a 1-generic}, Transactions of the American Mathematical
Society \textbf{369} (2017), 5855--5869.

\bibitem{CLWY} C. T. Chong, W. Li, W. Wang, and Y. Yang, \emph{On the
computability of perfect subsets of sets with positive measure},
Proceedings of the American Mathematical Society \textbf{147}
(2019), 4021--4028.

\bibitem{Con} C. J. Conidis, \emph{Classifying model-theoretic
properties}, Journal of Symbolic Logic \textbf{73} (2008),
885--905.

\bibitem{CHKS} B. F. Csima, D. R. Hirschfeldt, J. F. Knight, and
R. I. Soare, \emph{Bounding prime models}, Journal of Symbolic Logic
\textbf{69} (2004), 1117--1142.

\bibitem{DDS} D. E. Diamondstone, D. D. Dzhafarov, and R. I. Soare,
\emph{$\Pi^0_1$ classes, Peano arithmetic, randomness, and computable
domination}, Notre Dame Journal of Formal Logic \textbf{51} (2010),
127--159.

\bibitem{DS}  N. L. Dobrinen and S. G. Simpson, \emph{Almost
everywhere domination}, Journal of Symbolic Logic \textbf{69}
(2004), 914--922.

\bibitem{DH} R. G. Downey and D. R. Hirschfeldt, \emph{Algorithmic
Complexity and Randomness}, Theory and Applications of
Computability, Springer, New York, 2010.
 
\bibitem{DJS} R. G. Downey, C. G. Jockusch, Jr., and P. E. Schupp,
\emph{Asymptotic density and computably enumerable sets}, Journal of
Mathematical Logic \textbf{13} (2013), 1350005 (43 pages).

\bibitem{DoJoSto}  R. G. Downey, C. G. Jockusch, Jr., and M. Stob,
\emph{Array nonrecursive degrees and genericity},  in S. B. Cooper,
T. A. Slaman, and S. S. Wainer, eds., \emph{Computability,
Enumerability, Unsolvability}, London Mathematical Society Lecture
Notes Series, Cambridge University Press, Cambridge, 1996, 93--104. 

\bibitem{DM}  D. D. Dzhafarov and C. Mummert, \emph{On the strength of
the finite intersection property}, Israel Journal of Mathematics
\textbf{196} (2013), 345--361.

\bibitem{FN} J. N. Y. Franklin and K. M. Ng, \emph{Difference
randomness}, Proceedings of the American Mathematical Society
\textbf{139} (2011), 345--360.

\bibitem{GMN} N. Greenberg, J. S. Miller, and A. Nies, \emph{Highness
properties close to PA-completeness}, to appear in the Israel Journal
of Mathematics, \texttt{arXiv:1912.03016}.

\bibitem{GMSW} N. Greenberg, J. S. Miller, A. Shen, and
L. B. Westrick, \emph{Dimension 1 sequences are close to randoms},
Theoretical Computer Science \textbf{705} (2018), 99--112.

\bibitem{Ham} J. D. Hamkins, Reals added after Cohen forcing,
\texttt{https://mathoverflow.net/q/99025}.

\bibitem{HM}  K. P. Hadeler, J. M\"{u}ller, \emph{Cellular Automata:
Analysis and Applications}, Springer Monographs in Mathematics, 2017.

\bibitem{Hbook} D. R. Hirschfeldt, \emph{Slicing the Truth: On the
Computable and Reverse Mathematics of Combinatorial Principles},
Lecture Note Series, Institute for Mathematical Sciences, National
University of Singapore, vol. 28, World Scientific, Singapore, 2014.
  
\bibitem{HJKS} D. R. Hirschfeldt, C. G. Jockusch, Jr., R. Kuyper, and
P. E. Schupp, \emph{Coarse reducibility and algorithmic randomness}, 
Journal of Symbolic Logic \textbf{81} (2016), 1028--1046.

\bibitem{HJMS} D. R. Hirschfeldt, C. G. Jockusch, Jr., T. McNicholl and
P. E. Schupp, \emph{Asymptotic density and the coarse computability
bound}, Computability \textbf{5} (2016), 13--27.

\bibitem{HSS}  D. R. Hirschfeldt, R. A. Shore, and T. A. Slaman,
\emph{The Atomic Model Theorem and type omitting}, Transactions of
the American Mathematical Society \textbf{361} (2009), 5805--5837.

\bibitem{HRSZ} R. H\"olzl, D. Raghavan, F. Stephan, and J. Zhang,
\emph{Weakly represented families in reverse mathematics}, in A. Day,
M. Fellows, N. Greenberg, B. Khoussainov, A. Melnikov, and
F. Rosamond, eds., \emph{Computability and Complexity}, Lecture Notes
in Computer Science, 10010, Springer, Cham, 2017, 160--187.

\bibitem{HJS} R. H\"olzl, S. Jain, and F. Stephan, \emph{Inductive
inference and reverse mathematics}, Annals of Pure and Applied Logic
\textbf{167} (2016), 1242--1266. 

\bibitem{J72} C. G. Jockusch, Jr., \emph{Degrees in which the
recursive sets are uniformly recursive}, Canadian Journal of
Mathematics \textbf{24} (1972), 1092--1099.

\bibitem{JS1} C. G. Jockusch, Jr. and P. E. Schupp, \emph{Generic
computability, Turing degrees, and asymptotic density}, Journal of
the London Mathematical Society, Second Series \textbf{85} (2012),
472--490.

\bibitem{JSsurv} C. G. Jockusch, Jr. and P. E. Schupp,
\emph{Asymptotic density and the theory of computability: a partial
survey}, in A. Day, M. Fellows, N. Greenberg, B. Khoussainov,
A. Melnikov, and F. Rosamond, eds., \emph{Computability and
Complexity}, Lecture Notes in Computer Science, 10010, Springer, Cham,
2017, 160--187.

\bibitem{K}  A. S. Kechris, \emph{Classical Descriptive Set Theory},
Graduate Texts in Mathematics, 156, Springer-Verlag, New York, 1995.

\bibitem{KMS}  B. Kjos-Hanssen, J. S. Miller, and R. Solomon,
\emph{Lowness notions, measure and domination}, Journal of the London
Mathematical Society, Second Series \textbf{85} (2012), 869--888.

\bibitem{Ku} A. Ku{\v c}era, \emph{Measure, $\Pi^0_1$ classes, and
complete extensions of PA}, in H.-D.  Ebbinghaus, G. H. M\"uller, and
G. E. Sacks, eds., \emph{Recursion Theory Week}, Lecture Notes in
Mathematics 1141, 245--259, Springer, Berlin, 1985.

\bibitem{Kumabe} M. Kumabe, \emph{A 1-generic degree with a strong
minimal cover}, Journal of Symbolic Logic \textbf{65} (2000),
1395--1442.

\bibitem{KM} R. Kuyper and J. S. Miller, \emph{Nullifying randomness
and genericity using symmetric difference}, Annals of Pure and
Applied Logic \textbf{168} (2017), 1692--1699.

\bibitem{Ler} M. Lerman, \emph{Degrees of Unsolvability},
Springer-Verlag, Berlin, 1983.

\bibitem{Lew} A. E. M. Lewis, \emph{A random degree with strong
minimal cover}, Bulletin of the London Mathematical Society
\textbf{39} (2007), 848--856.

\bibitem{mart} D. A. Martin, \emph{Classes of recursively enumerable
sets and degrees of unsolvability}, Zeitschrift f\"ur Mathematische
Logik und Grundlagen der Mathematik \textbf{12} (1966), 295--310.

\bibitem{MilYu} J. S. Miller and L. Yu, An application of randomness
theory to measure theory, unpublished.

\bibitem{M} B. Monin, An answer to the Gamma question, LICS  Annual
ACM/IEEE Symposium on Logic in Computer Science, 9 pp., ACM, New
York, 2018.

\bibitem{My} J. Mycielski, \emph{Algebraic independence and measure},
Fundamenta Mathematicae \textbf{61} (1967), 165--169.

\bibitem{NSYY} K. M. Ng, F. Stephan, Y. Yue, and L. Yu, \emph{The
computational aspects of hyperimmune-free degrees}, in R. Downey,
J. Brendle, R. Goldblatt, and B. Kim, eds.,  \emph{Proceedings of the
12th Asian Logic Conference}, 271--284, World Scientific, Hackensack,
NJ, 2013.

\bibitem{NST} A. Nies, F. Stephan, and S. A. Terwijn,
\emph{Randomness, relativization, and Turing degrees}, Journal of
Symbolic Logic \textbf{70} (2005), 515--535.

\bibitem{Ssurv} S. G. Simpson, \emph{Degrees of unsolvability: a
survey of results}, in J. Barwise, ed., \emph{Handbook of Mathematical
Logic}, North-Holland, Amsterdam, 1977, 1133--1142.

\bibitem{Simpson} S. G. Simpson, \emph{Subsystems of Second Order
Arithmetic}, Second edition, Perspectives in Logic, Cambridge
University Press, Cambridge and Association for Symbolic Logic,
Poughkeepsie, NY, 2009.

\bibitem{Snew} R. I. Soare, \emph{Turing Computability}, 
Theory and Applications of Computability, Springer-Verlag, Berlin,
2016.

\bibitem{Spec} C. Spector, \emph{On degrees of recursive
unsolvability}, Annals of Mathematics 64 (1956), 581--592.

\bibitem{SY} F. Stephan and L. Yu. \emph{Lowness for weakly 1-generic
and Kurtz-random}, In J.-Y. Cai, S. B. Cooper, and A. Li, eds.,
\emph{Theory and Applications of Models of Computation}, Lecture Notes
in Computer Science 3959, Springer, Berlin, 2006, 756--764.

\bibitem{Tod} S. Todorcevic, \emph{Introduction to Ramsey Spaces},
Annals of Mathematics Studies, Princeton University Press,
Princeton, NJ, 2010.

\bibitem{Yuvl} L. Yu, \emph{Lowness for genericity}, Archive for
Mathematical Logic \textbf{45} (2006), 233--238.

\bibitem{Yu} L. Yu, \emph{Measure theory aspects of locally countable
orderings}, Journal of Symbolic Logic \textbf{71} (2006), 958--968. 

\end{thebibliography}
\end{document}